\pgfplotsset{
    compat=newest
}
\theoremstyle{plain}
\newtheorem{thm}{Theorem}[section]
\newtheorem{lem}[thm]{Lemma}
\newtheorem{cor}[thm]{Corollary}
\newtheorem{prop}[thm]{Proposition}
\newtheorem*{thm*}{Theorem}
\newtheorem*{prop*}{Proposition}
\newtheorem*{problem*}{Problem}
\theoremstyle{definition}
\newtheorem{defn}[thm]{Definition}
\newtheorem{rem}[thm]{Remark}
\newtheorem{ex}[thm]{Example}
\numberwithin{equation}{section}
\newcommand{\R}{{\mathbb R}}
\newcommand{\deloop}{\mathsf{B}}
\newcommand{\set}[1]{\mathcal{#1}}
\newcommand{\cat}[1]{\mathsf{#1}}
\newcommand{\LPC}[1]{\mathfrak{#1}}
\newcommand{\weight}{\mathrm{W}}
\newcommand{\DeltaYY}{\Delta^{\cat{Y}}}
\begin{document}

\title{Interleaving Distances, Monoidal Actions and 2-Categories} 

\author{Patrick K. McFaddin and Tom Needham}

\begin{abstract}
    Interleaving distances are used widely in Topological Data Analysis (TDA) as a tool for comparing topological signatures of datasets. The theory of interleaving distances has been extended through various category-theoretic constructions, enabling its usage beyond standard constructions of TDA, while clarifying certain observed stability phenomena by unifying them under a common framework. Inspired by metrics used in the field of statistical shape analysis, which are based on minimizing energy functions over group actions, we define three new types of increasingly general interleaving distances. Our constructions use ideas from the theories of monoidal actions and 2-categories. We show that these distances naturally extend the category with a flow framework of de Silva, Munch and Stefanou and the locally persistent category framework of Scoccola, and we provide a general stability result. Along the way, we give examples of distances that fit into our framework which connect to ideas from differential geometry, geometric shape analysis, statistical TDA and multiparameter persistent homology.
\end{abstract}

\maketitle

\setcounter{tocdepth}{1}
\tableofcontents

\section{Introduction}

Interleaving distances are used in the field of Topological Data Analysis (TDA) to compare certain topological invariants extracted from datasets~\cite{chazal2009proximity}. Most commonly, these invariants take the form of persistence modules. A \emph{persistence module} $M$ associates to each real number $t \in \R$ a vector space $M(t)$ and to each pair of numbers $s,t \in \R$ with $s \leq t$, a linear map $M(s \leq t): M(s) \to M(t)$, and this data satisfies certain natural axioms (see Section \ref{sec:interleaving_group_actions} for a precise definition); in practice, a persistence module typically encodes the multiscale clustering (or higher-order topological) structure of a dataset, so that the interleaving distance between two such modules gives a quantitative comparison of the topological structures of the underlying datasets. We say that persistence modules $M$ and $N$ are \emph{$t$-interleaved} if there are linear maps $\phi_s:M(s) \to N(s+t)$ and $\psi_s:N(s) \to M(s+t)$ for all $s \in \R$ such that the following diagrams commute:
\begin{equation}\label{eqn:basic_interleaving_distance_1}\begin{tikzcd}
	{M(r)} && {M(s)} \\
	{N(r+t)} && {N(s+t)}
	\arrow["{M(r \leq s)}", from=1-1, to=1-3]
	\arrow["{\phi_r}"', from=1-1, to=2-1]
	\arrow["{\phi_s}", from=1-3, to=2-3]
	\arrow["{N(r+t \leq s+t)}"', from=2-1, to=2-3]
\end{tikzcd}
\begin{tikzcd}
	{N(r)} && {N(s)} \\
	{M(r+t)} && {M(s+t)}
	\arrow["{N(r \leq s)}", from=1-1, to=1-3]
	\arrow["{\psi_r}"', from=1-1, to=2-1]
	\arrow["{\psi_s}", from=1-3, to=2-3]
	\arrow["{M(r+t \leq s+t)}"', from=2-1, to=2-3]
\end{tikzcd}\end{equation}
\begin{equation}\label{eqn:basic_interleaving_distance_2}\begin{tikzcd}
	{M(s)} && {M(s+2t)} \\
	& {N(s+t)}
	\arrow["{M(s \leq s+2t)}", from=1-1, to=1-3]
	\arrow["{\phi_s}"', from=1-1, to=2-2]
	\arrow["{\psi_{s+t}}"', from=2-2, to=1-3]
\end{tikzcd}
\begin{tikzcd}
	{N(s)} && {N(s+2t)} \\
	& {M(s+t)}
	\arrow["{M(s \leq s+2t)}", from=1-1, to=1-3]
	\arrow["{\psi_s}"', from=1-1, to=2-2]
	\arrow["{\phi_{s+t}}"', from=2-2, to=1-3]
\end{tikzcd}\end{equation}
The \emph{interleaving distance} between $M$ and $N$ is then defined to be the infimum over $t \geq 0$ such that they are $t$-interleaved. This definition has a natural category-theoretic interpretation, and this interpretation has led to several generalizations of the concept, allowing for comparison of a wide variety of persistence module-like objects~\cite{bubenik2014categorification,bubenik2015metrics,bubenik2017interleaving,de2018theory,stefanou2018dynamics,scoccola2020locally} (many of these generalized constructions are described in detail throughout the paper). The category-theoretic viewpoint also allows for conceptually simple proofs of important theoretical properties of TDA such as the stability of invariants---roughly, if two datasets are similar with respect to, say, Hausdorff distance, then their topological signatures are guaranteed to be similar with respect to interleaving distance.

This paper constructs further generalizations of the notion of interleaving distance, inspired by ideas rooted in differential geometry and statistical shape analysis. Our main motivation was an analogy between interleaving distances and metrics which arise through group actions: namely, suppose that a group $\set{G}$ endowed with a left-invariant metric $d_\set{G}$ acts transitively from the left on a topological space $\set{X}$; we can then define the distance between points $x$ and $y$ in $\set{X}$ as 
\begin{equation}\label{eqn:group_distance}
\inf_{g \in \set{G}} \{d_\set{G}(e,g) \mid gx = y\},
\end{equation}
where $e$ denotes the identity element of $\set{G}$. It is straightforward to show that this defines a pseudometric, and that it defines a true metric under further mild assumptions on the action. Such metrics arise in the context of homogeneous spaces, where they are geodesic distances with respect to natural Riemannian metrics, and in medical imaging and shape analysis, where the space $\set{X}$ represents a collection of medical images and the group $\set{G}$ is a group of diffeomorphisms of the image domain which acts by deforming the images~\cite{grenander1996elements,bauer2014overview} (see Section \ref{subsect:examples_of_interleaving_distances} for more details). 

Consider the following interpretation of the distance defined in \eqref{eqn:group_distance}: if the quantity $d_\set{G}(e,g)$ is taken to represent the ``energy" of the group element $g \in \set{G}$, then \eqref{eqn:group_distance} measures the distance between points $x$ and $y$ by determining the minimum energy required to ``align" $x$ to $y$ via the group action. It is natural to consider a relaxation of this idea which doesn't require a perfect alignment---for example, such a relaxation is necessary to obtain a meaningful distance if the group action is not transitive. The interleaving distance between persistence modules can be viewed as such a relaxation. Indeed, the group in question is $\R$ (under addition), and a real number $t$ acts on a persistence module $M$ to produce the persistence module $t \cdot M$ with $(t \cdot M)(s) = M(s + t)$. The computation of interleaving distance between modules $M$ and $N$ can be viewed as finding a ``rough alignment" between $M$ and $t \cdot N$, and between $N$ and $t \cdot M$, consisting of the collections of linear maps $(\phi_s)_{s \in \R}$ and $(\psi_s)_{s \in \R}$, respectively. The diagrams \eqref{eqn:basic_interleaving_distance_1} can be understood to say that these collections of linear maps are consistent with the persistence structures of $M$ and $N$ (so that the collections are instances of an appropriate notion of morphism in the category of persistence modules), while the diagrams \eqref{eqn:basic_interleaving_distance_2} tell us that $M$ and $t \cdot N$ are roughly isomorphic, in the sense that $\phi_s$ and $\psi_{s + t}$ are close to being inverses (likewise for $N$ and $t \cdot M$). The energy of a group element in this setting is simply $|t|$, so that we may finally interpret the interleaving distance between $M$ and $N$ to be the minimum energy required to find a rough alignment between them.

The goal of this paper is to identify a framework for interleaving distances which captures the notion of the energy of a rough alignment, as described above, and which generalizes the notions of interleaving that have appeared previously in the literature. To achieve this, we use ideas from the theories of \emph{actegories} (actions of monoidal categories) and \emph{2-categories} (categories endowed with ``morphisms between morphisms"). 

\subsection{Main Results} Let us now summarize our main results. They are stated here somewhat informally, but are given with pointers to the precise results in the main body of the paper.

\smallskip
\paragraph{{\bf New Constructions of Interleaving Distances.}} For an arbitrary small category $\cat{X}$, let $\cat{X}_0$ denote its set of objects. We define three increasingly general notions of interleaving distance on $\cat{X}_0$, with each notion defined in terms of different forms of auxiliary data. We sketch the ideas here, with proper definitions provided in the body of the paper; in particular, we give an overview of basic 2-category theory in Section \ref{sec:background} for readers who are unfamiliar with this terminology.
\begin{itemize}
    \item When $\cat{X}$ is endowed with an action of a monoidal category $\cat{G}$ (i.e., a monoidal functor from $\cat{G}$ to the monoidal category of endomorphisms of $\cat{X}$), and when the objects of $\cat{G}$ are endowed with a ``weight" or ``energy" function satisfying certain simple axioms (called a \emph{monoidal weight}---see Definition \ref{def:monoidal_weight}), one obtains an associated as \emph{$\cat{G}$-interleaving distance} on $\cat{X}_0$ (Definition \ref{def:interleaving_distance_actegory}).
    \item Given a 2-functor from a 2-category $\cat{C}$ to the 2-category of small categories $\cat{Cat}$ such that $\cat{X}$ is the join of the categories in its image, together with a Lawvere weight on $\cat{C}$ (Definition \ref{def:lawvere_weight}; see also~\cite{lawvere1973metric,bubenik2017interleaving}), one obtains an associated \emph{$2$-functor interleaving distance} on $\cat{X}_0$ (Definition \ref{def:interleaving_distance}).  
    \item Suppose that $\cat{X}$ is a 2-category. When endowed with a \emph{Lawvere 2-weight} (Definition \ref{def:lawvere_2_weight}), one obtains an associated \emph{2-weighted 2-category interleaving distance} on $\cat{X}_0$ (Definition \ref{def:2-weighted-2-category-interleaving}). 
\end{itemize}

We show that these interleaving distances are true distances in the following sense.

\begin{thm*}[Theorems \ref{thm:actegory_interleaving}, \ref{thm:interleaving_distance} and \ref{thm:interleaving_weighted_2_category}]
    All three versions of interleaving distance described above define extended pseudometrics.
\end{thm*}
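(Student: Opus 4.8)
The plan is to verify the three defining properties of an extended pseudometric---$d(x,x)=0$, symmetry $d(x,y)=d(y,x)$, and the triangle inequality---for each construction, exploiting the fact that all three share a common shape: $d(x,y)$ is an infimum, over all choices of \emph{interleaving data} relating $x$ and $y$, of a nonnegative weight assigned to that data. Because an infimum over an empty family is $+\infty$, the extended-valued (rather than finite-valued) nature is automatic, and it suffices in each case to produce witnessing data of controlled weight. For reflexivity I would exhibit a distinguished \emph{trivial} interleaving of $x$ with itself built entirely from identity data: the monoidal unit of $\cat{G}$ together with identity morphisms of $\cat{X}$ in the actegory case; identity functors and identity $2$-cells in the $2$-functor case; and identity $1$-cells together with identity $2$-cells in the $2$-category case. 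In every case the relevant normalization axiom (a monoidal weight sends the unit to $0$; a Lawvere weight and a Lawvere $2$-weight send identities to $0$) forces this trivial interleaving to have weight $0$, so $d(x,x)=0$. Symmetry is essentially formal: each notion of interleaving data between $x$ and $y$ packages a forward and a backward comparison together with witnesses that the two round-trips are suitably close to the respective identities, and this package is manifestly invariant under swapping $x\leftrightarrow y$ (interchanging the two comparisons); since the weight is defined symmetrically in the two components, the swap carries interleaving data for $(x,y)$ to data of equal weight for $(y,x)$, and the two infima coincide.

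The heart of the argument is the triangle inequality, and the key step is a \emph{composition of interleavings}. Given data $D$ witnessing $d(x,y)\le a$ and data $E$ witnessing $d(y,z)\le b$, I would construct composite data $D\ast E$ relating $x$ and $z$. Concretely, in the actegory setting one takes the tensor product $g\otimes h$ of the two translation objects and forms the comparisons by whiskering one interleaving through the action of the other, e.g.\ $x \to g\cdot y \xrightarrow{\ g\cdot(-)\ } g\cdot h\cdot z$, using functoriality of the action; in the $2$-categorical settings one composes the $1$-cells and pastes the $2$-cell witnesses via whiskering and the interchange law. The required compatibility---that the two round-trips for $D\ast E$ remain close to the identity---then follows from the coherence of the action (associators and unitors in the actegory case) and from the interchange law together with $2$-functoriality in the $2$-categorical cases. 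Finally, the weight of $D\ast E$ is bounded by $a+b$ using exactly the subadditivity axiom of the weight: $\weight(g\otimes h)\le \weight(g)+\weight(h)$ for a monoidal weight, and the analogous subadditivity under composition for the Lawvere $(2\text{-})$weights. Passing to infima over $D$ and $E$ yields $d(x,z)\le d(x,y)+d(y,z)$.

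I expect the main obstacle to be the coherence bookkeeping in this composition step rather than any conceptual difficulty. The subtle point is that composing two interleavings naturally produces $g\otimes h$ in one direction and $h\otimes g$ in the other, so one must verify that the monoidal weight controls both contributions---either because the weight is insensitive to the order of the tensor factors or because the definition of interleaving is arranged to absorb the discrepancy---and, simultaneously, that the pasted $2$-cell witnesses satisfy the relevant compatibility identities on the nose. Checking these identities amounts to a diagram chase through the coherence isomorphisms, and organizing it uniformly so that the $2$-functor and $2$-category arguments genuinely specialize the actegory argument is where the care is required. Once $D\ast E$ is shown to be well-defined interleaving data, the weight estimate and the passage to infima are routine, and the three statements follow in parallel.
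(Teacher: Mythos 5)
Your proposal is correct and follows essentially the same route as the paper: the trivial identity interleaving gives $d(x,x)=0$, symmetry is by swapping the two halves of the data, and the triangle inequality is proved by composing interleavings—whiskering the second interleaving's $2$-cell witness through the first's $1$-cells (the paper's Lemma \ref{lem:interleavability}) and invoking subadditivity of the weight, with the $\max$ over both directions absorbing the $gk$ versus $\ell h$ asymmetry exactly as you anticipate. The only organizational difference is that the paper proves the two general cases directly and obtains the actegory case as a corollary of the $2$-functor case via delooping, rather than running three parallel arguments.
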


We also show that these notions generalize constructions of interleaving distance which have appeared previously in the literature.

\begin{thm*}[Propositions \ref{prop:equivalence_to_flow} and  \ref{prop:bubenik_de_silva_scott_interleaving} and Theorems \ref{thm:2-Functors_to_2-categories} and \ref{thm:LPC_to_2_functor}]
    The interleaving distances described above include the notions of generalized interleaving distance~\cite{bubenik2015metrics}, interleaving distance on a category with a flow~\cite{de2018theory} and interleaving distance on a locally persistent category~\cite{scoccola2020locally}.
\end{thm*}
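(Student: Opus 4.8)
The statement is a conjunction of four independent comparison results, so the plan is to prove it by establishing each recovery separately, in each case exhibiting the earlier framework as a special instance of one of the three new constructions and then checking that the two resulting distances agree exactly. The common template I would follow has four moves: (i) starting from the data of the older framework, build the data required by the newer one (a monoidal category acting on $\cat{X}$ together with a monoidal weight, or a 2-functor into $\cat{Cat}$ with a Lawvere weight, or a Lawvere 2-weight on a 2-category); (ii) verify that the constructed data satisfies the axioms demanded by the new construction; (iii) show that, for each fixed parameter value, the notion of ``$t$-interleaving'' supplied by the new framework coincides with the corresponding relation in the old one; and (iv) conclude that the two infima, and hence the two distances, are equal. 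Because each of the three new distances is already known to be an extended pseudometric (by the first starred theorem), steps (iii)--(iv) carry the content.

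For the category-with-a-flow distance of de Silva--Munch--Stefanou (Proposition \ref{prop:equivalence_to_flow}) and the generalized interleaving distance of Bubenik--de Silva--Scott (Proposition \ref{prop:bubenik_de_silva_scott_interleaving}), I would work entirely inside the actegory framework. In both cases the ambient translation structure is an action of a one-parameter family of endofunctors, which I would package as a monoidal functor from the poset $([0,\infty),\leq,+,0)$---or the relevant translation monoidal category $\cat{G}$---into the endomorphism monoidal category of $\cat{X}$, with monoidal weight given by the translation magnitude $w(\epsilon)=\epsilon$. The main thing to check is that this $w$ satisfies the monoidal weight axioms of Definition \ref{def:monoidal_weight} (nonnegativity, behavior under the monoidal product, and compatibility with the unit), after which the defining diagrams \eqref{eqn:basic_interleaving_distance_1}--\eqref{eqn:basic_interleaving_distance_2} of a rough alignment translate term-by-term into the commuting-triangle conditions of an $\epsilon$-interleaving in each of the older theories. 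Matching the two families of ``soft inverse'' data and confirming that the energy $|t|$ coincides with the old parameter then forces the infima to agree.

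The harder half concerns the locally persistent categories of Scoccola (Theorem \ref{thm:LPC_to_2_functor}) and the passage from 2-functors to 2-weighted 2-categories (Theorem \ref{thm:2-Functors_to_2-categories}). Here I would first turn a locally persistent category into a 2-functor from an appropriate 2-category $\cat{C}$ into $\cat{Cat}$ whose image has join $\cat{X}$, reading the persistence/enrichment structure on the hom-objects as the 1- and 2-cells that the 2-functor must produce, and equipping $\cat{C}$ with a Lawvere weight recording the shift parameter. I would then verify the 2-functoriality and Lawvere-weight axioms and check that Scoccola's interleavings become exactly the rough alignments of Definition \ref{def:interleaving_distance}. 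Finally, to reach the most general construction, I would show that any such weighted 2-functor induces a Lawvere 2-weight (Definition \ref{def:lawvere_2_weight}) for which the 2-weighted 2-category interleaving distance of Definition \ref{def:2-weighted-2-category-interleaving} reproduces the 2-functor interleaving distance.

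I expect the main obstacle to be precisely this last chain of 2-categorical bookkeeping, where the subtlety is coherence. Translating the triangle conditions asserting that $\phi_s$ and $\psi_{s+t}$ are ``nearly inverse'' into genuine 2-morphisms, and ensuring that the weight assigned to a composite of shifts matches the additive energy in the old frameworks, requires the interchange law and the naturality of the whiskered 2-cells to cooperate. I would isolate this difficulty by first proving the comparison at the level of the $t$-interleaving relations for each fixed $t$---where the coherence conditions reduce to equalities of specific pasting diagrams---and only afterwards passing to infima, so that the metric statement follows formally from the relation-level bijection together with the already-established monotonicity and triangle-inequality-type properties of the weights.
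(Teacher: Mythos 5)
Your overall template (build the new framework's data from the old one, verify the axioms, match the interleaving relations at each fixed parameter, then pass to infima) is the right shape, and for the two actegory-level comparisons (Propositions \ref{prop:equivalence_to_flow} and \ref{prop:bubenik_de_silva_scott_interleaving}) it is close to what the paper does. However, there are concrete gaps.

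First, in the category-with-a-flow comparison the term-by-term translation of diagrams is not the content of the proof: the flow interleaving distance quantifies only over symmetric $(t,t)$-interleavings, whereas the $\cat{G}$-interleaving distance of Definition \ref{def:interleaving_distance_actegory} quantifies over arbitrary $(s,t)$-interleavings with $s\neq t$ allowed. The substance of the paper's proof of Proposition \ref{prop:equivalence_to_flow} is that an $(s,t)$-interleaving with $s\leq t$ can be promoted to a $(t,t)$-interleaving by pasting in the natural transformations $T(s\leq t)$, so that allowing asymmetric pairs cannot lower the infimum. Your proposal does not address this reduction. For the Bubenik--de Silva--Scott case you also elide the step of transporting the action of $\mathrm{Trans}_\set{P}$ on the proset $\cat{P}$ to an action on the functor category $\cat{Fun}(\cat{P},\cat{Y})$ (the paper's Yoneda-like construction, Corollary \ref{cor:monoidal_yoneda}); defining the action by precomposition does the job, but it needs to be said.

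Second, and more seriously, your route for Theorems \ref{thm:2-Functors_to_2-categories} and \ref{thm:LPC_to_2_functor} is missing its central construction. You say you would ``show that any such weighted 2-functor induces a Lawvere 2-weight,'' but a Lawvere 2-weight must live on a 2-category whose \emph{objects} are the things being compared, and those are elements of $\mathrm{Im}(\Delta)$ (objects of the image categories), not objects of the indexing 2-category $\cat{C}$. The paper's Theorem \ref{thm:2-Functors_to_2-categories} rests on building a new 2-category $\cat{C}^\Delta$ with $\cat{C}^\Delta_0=\mathrm{Im}(\Delta)$, 1-morphisms given by pairs $(g,\phi)$ with $\phi:X\to\Delta_g(Y)$, and a deliberately restricted class of 2-morphisms (Definition \ref{def:2_functor_to_weighted_2_category}); this Grothendieck-style construction is the whole point and is absent from your plan. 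Likewise, for locally persistent categories the paper does \emph{not} factor through a 2-functor into $\cat{Cat}$: in an LPC the weight is carried by the grading $s$ on individual morphisms $g\in\LPC{D}_1(A,B)_s$ rather than by a global translation action, so there is no evident indexing 2-category or monoidal action to extract, and your proposed first step (LPC $\to$ 2-functor) is precisely the step likely to fail. The paper instead passes directly to the 2-weighted 2-category $\cat{C}^{\LPC{D}}$ with 1-morphisms $(g,s)$ weighted by $s$ and 2-morphisms recording the shift maps $\mathrm{S}_{s,t}$, after which the comparison of interleavings is immediate. (The cited theorems also assert functoriality of these assignments, which your plan does not address, though that is not needed for the bare ``includes'' statement.)
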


In particular, the $\cat{G}$-interleaving distance construction is directly inspired by the construction of interleaving distance on a category with a flow~\cite{de2018theory,stefanou2018dynamics}---when the underlying group $\set{G}$ is $\R$, one immediately recovers the latter notion of interleaving. The $\cat{G}$-interleaving distances are arguably the most intuitive construction in this paper; most of the examples we provide are of this type, and this is the most natural way to connect interleaving distance with metrics defined in terms of group actions, as in \eqref{eqn:group_distance}. On the other hand, the 2-functor interleaving distances and 2-weighted 2-category interleaving distances offer a level of generality that is sometimes simpler to work in. We note that there is a brief discussion in the conclusion of \cite{de2018theory} on the interpretation of a category with a flow as a 2-functor, but details are not developed therein. The 2-weighted 2-category formalism was inspired by the locally persistent category framework of \cite{scoccola2020locally}; the difference is that our framework is based on 2-categories (informally, categories enriched in categories) rather than categories enriched in persistent sets.

Let us remark here that the category with a flow framework~\cite{de2018theory} is developed in the more general setting of lax monoidal functors. We have opted to work in the strict setting throughout the paper. This choice was made to keep the exposition simpler, and because the examples we are interested in fit into the strict functor formalism. We expect that most of our constructions and results can be extended to the lax setting, given motivation from a compelling example.

\smallskip
\paragraph{{\bf Connections to Shape Analysis and Persistent Homology.}} We give several examples of specific interleaving distances which are interesting from the perspective of TDA or other areas of mathematics. Firstly, we accomplish our goal of showing that the group action distance \eqref{eqn:group_distance} can be viewed as an interleaving distance---more specifically, we prove the following.

\begin{prop*}[Proposition \ref{prop:group_action_on_set}]
    Let $\set{G}$ be a group with left-invariant metric $d_\set{G}$ and left action on a set $\set{X}$. There is a monoidal category $\cat{G}$, a category $\cat{X}$ with $\cat{X}_0 = \set{X}$ and an action of $\cat{G}$ on $\cat{X}$ such that the associated $\cat{G}$-interleaving distance is equal to \eqref{eqn:group_distance}.
\end{prop*}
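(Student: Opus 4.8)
The plan is to explicitly construct the monoidal category $\cat{G}$, the ordinary category $\cat{X}$, the action of $\cat{G}$ on $\cat{X}$, and a monoidal weight on $\cat{G}$, and then to verify that the resulting $\cat{G}$-interleaving distance (Definition~\ref{def:interleaving_distance_actegory}) recomputes the group-action distance~\eqref{eqn:group_distance}. The natural choice is to take $\cat{G}$ to be the one-object delooping of the group $\set{G}$, regarded as a monoidal category: it has a single object, its morphisms are the elements of $\set{G}$, composition is the group multiplication, and the monoidal structure is also given by group multiplication (so it is a strict monoidal category with the unit object corresponding to the identity $e$). For the category $\cat{X}$, the natural choice is the codiscrete (indiscrete) category on the set $\set{X}$, where $\cat{X}_0 = \set{X}$ and there is a unique morphism between any ordered pair of objects; this is the standard way to encode a set as a category whose interleaving structure records only ``which objects can be aligned.'' The action of $\cat{G}$ on $\cat{X}$ is induced by the given left action $g \cdot x$ of $\set{G}$ on $\set{X}$, defining a monoidal functor from $\cat{G}$ into the endomorphism monoidal category of $\cat{X}$.

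First I would fix the monoidal weight: since $\cat{G}$ has objects corresponding to elements of $\set{G}$ (in the delooping picture, the ``objects'' relevant to weighting are really the morphisms $g$), I would define the weight of $g$ to be $d_\set{G}(e,g)$. Here I would verify that this satisfies the axioms of a monoidal weight (Definition~\ref{def:monoidal_weight}): the weight of the unit should be $0$, which follows since $d_\set{G}(e,e) = 0$, and the subadditivity/compatibility axiom under the monoidal product should follow from the triangle inequality together with left-invariance of $d_\set{G}$, namely $d_\set{G}(e, gh) \le d_\set{G}(e,g) + d_\set{G}(g, gh) = d_\set{G}(e,g) + d_\set{G}(e,h)$. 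This left-invariance step is the crucial algebraic input and is exactly why the hypothesis that $d_\set{G}$ is left-invariant appears in the statement.

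Next I would unwind the definition of $\cat{G}$-interleaving in this concrete setting. A $\cat{G}$-interleaving of two objects $x,y \in \set{X}$ at the level of cost $\epsilon$ amounts to a pair of group elements (together with the structural morphisms), and because $\cat{X}$ is codiscrete every required diagram commutes automatically---there is only one morphism between any two objects, so the commutativity conditions that are the substance of an interleaving impose no constraint beyond the existence of the aligning group elements. The key computation is that an interleaving of weight at most $\epsilon$ exists precisely when there is a $g \in \set{G}$ with $g x = y$ and $d_\set{G}(e,g) \le \epsilon$ (the reverse alignment being handled by $g^{-1}$, whose weight equals that of $g$ by left-invariance: $d_\set{G}(e, g^{-1}) = d_\set{G}(g,e) = d_\set{G}(e,g)$ when $d_\set{G}$ is additionally symmetric, or is supplied directly by the interleaving axioms otherwise). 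Taking the infimum over such $\epsilon$ then yields exactly the right-hand side of~\eqref{eqn:group_distance}.

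The main obstacle I anticipate is reconciling the bookkeeping between the two directions of the interleaving (the $\phi$ and $\psi$ data) with the single group element appearing in~\eqref{eqn:group_distance}. In the interleaving formalism one must supply aligning data from $x$ to $y$ \emph{and} from $y$ to $x$, whereas~\eqref{eqn:group_distance} only asks for a single $g$ with $gx = y$; the resolution is that $g^{-1}$ provides the reverse alignment and has the same weight by left-invariance, so the two-sided interleaving cost collapses to the one-sided quantity. Care must also be taken to confirm that the composite-alignment (round-trip) conditions encoded by diagrams of the form~\eqref{eqn:basic_interleaving_distance_2} are vacuous here, which they are precisely because $\cat{X}$ is codiscrete. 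Once these points are checked, equality of the two distances follows directly.
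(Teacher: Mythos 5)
Your choice of monoidal weight and your handling of the two-sidedness via $g^{-1}$ (using invariance to get $\weight(g^{-1})=\weight(g)$) match the paper, but both categorical choices at the heart of your construction are wrong, and the ``key computation'' you rely on fails for them. First, the one-object delooping of $\set{G}$ does not fit the actegory framework: a monoidal weight (Definition \ref{def:monoidal_weight}) is a function on the \emph{objects} of the monoidal category, and an action $T:\cat{G}\to\cat{End}(\cat{X})$ assigns an endofunctor $T_g$ to each \emph{object} $g$. With a single object you get a single endofunctor and a single weight value, so the group action on $\set{X}$ is not encoded at all; you even write that the objects ``relevant to weighting are really the morphisms,'' which is exactly the mismatch. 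The paper instead takes $\cat{G}$ with $\cat{G}_0=\set{G}$ and a unique morphism between every pair of objects (Definition \ref{def:associated_monoidal}), so that the tensor product is the group law on objects and the weight $\weight(g)=d_\set{G}(e,g)$ lives where Definition \ref{def:interleaving_distance_actegory} requires it.

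Second, and fatally, taking $\cat{X}$ to be the codiscrete category on $\set{X}$ makes every pair of objects isomorphic, and the interleaving distance between isomorphic objects is always $0$: for any $x,y$ the unique morphisms $\phi:x\to T_e(y)=y$ and $\psi:y\to T_e(x)=x$ give an $(e,e)$-interleaving (the triangles commute because there is only one morphism $x\to x$), so $d_T(x,y)\le\weight(e)=0$ identically. Your claim that an interleaving of weight at most $\epsilon$ exists ``precisely when'' there is $g$ with $gx=y$ and $d_\set{G}(e,g)\le\epsilon$ is therefore false in your model: the morphism $\phi:x\to T_g(y)$ exists for \emph{every} $g$, aligned or not, because codiscreteness has erased the information of which objects the action relates. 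The paper instead uses the action groupoid (Definition \ref{def:action_groupoid}), whose morphisms $x\to y$ are labeled by the group elements carrying $x$ to $y$; the commutativity of the interleaving triangles then becomes an equation among these labels, which is what ties the interleaving data to the group action and is needed for the lower-bound direction of the argument. To repair your proof you would need to replace both $\cat{G}$ and $\cat{X}$ by categories that remember the action in this way.
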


We use this result to realize other metrics of interest as interleaving distances; namely, geodesic distances on homogeneous spaces (Corollary \ref{cor:homogeneous_spaces}) and a metric on configuration spaces which is used frequently in the statistical shape analysis literature (Corollary \ref{cor:configuration_space}), and which is related to the Large Deformation Diffeomorphic Metric Matching framework used in medical imaging applications~\cite{beg2005computing}. The latter gives a new connection between concepts from the fields of TDA and pattern theory, in the sense of Grenander~\cite{grenander1996elements}.

In Section \ref{sec:interleaving_group_actions}, we focus on interleaving distances in the context of persistence modules. Extending the definition given above, we consider \emph{generalized persistence modules}, which are functors $M:\cat{P} \to \cat{C}$ from some poset category $\cat{P}$ to another category $\cat{C}$ (when the poset is the real numbers and $\cat{C}$ is the category of vector spaces, this recovers the definition above)~\cite{bubenik2015metrics}---we denote the collection of generalized persistence modules as $\cat{Fun}(\cat{P},\cat{C})$. Using a Yoneda-like construction (Proposition \ref{prop:yoneda_construction}), we construct interleaving distances on the space of generalized persistence modules from group (or, more generally, monoid) actions on $\cat{P}$

\begin{thm*}[Theorem \ref{thm:generalized_persistence_module_interleaving}]
    Let $\cat{P}$ be a poset category, let $\cat{C}$ be an arbitrary category and let $\set{G}$ be a monoid endowed with a monoidal weight. From an action of $\set{G}$ on (the underlying poset of) $\cat{P}$,  one obtains an associated interleaving distance on the space of generalized persistence modules $\cat{Fun}(\cat{P},\cat{C})$. 
\end{thm*}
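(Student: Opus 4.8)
The plan is to exhibit this distance as an instance of the $\cat{G}$-interleaving distance of Definition \ref{def:interleaving_distance_actegory}, so that its status as an extended pseudometric follows directly from Theorem \ref{thm:actegory_interleaving}. Accordingly, the work lies entirely in producing, from the given data, a monoidal category $\cat{G}$ acting on $\cat{X} := \cat{Fun}(\cat{P},\cat{C})$ together with a monoidal weight on $\cat{G}$ in the sense of Definition \ref{def:monoidal_weight}; once this is in place, the distance and its properties are obtained formally.

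First I would organize the $\set{G}$-action into a monoidal category. The action assigns to each $g \in \set{G}$ an order-preserving self-map $\alpha_g$ of $\cat{P}$, with $\alpha_e = \id_{\cat{P}}$ and $\alpha_{gh} = \alpha_g \circ \alpha_h$; regarding $\cat{P}$ as a category, each $\alpha_g$ is an endofunctor. Let $\cat{G}$ have the elements of $\set{G}$ as objects, monoidal product given by the monoid multiplication with unit $e$, and a unique morphism $g \to h$ precisely when $\alpha_g(p) \leq \alpha_h(p)$ for all $p \in \cat{P}$, equivalently when there is a natural transformation $\alpha_g \Rightarrow \alpha_h$ of endofunctors of the poset $\cat{P}$. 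Since a poset has subsingleton hom-sets, all coherence is automatic and $g \mapsto \alpha_g$ is a strict monoidal functor from $\cat{G}$ to the monoidal category $\cat{End}(\cat{P})$ of endofunctors of $\cat{P}$. This is the step in which the underlying poset of $\cat{P}$ is used: the morphisms of $\cat{G}$ are exactly what will supply the comparison maps appearing in the triangle conditions \eqref{eqn:basic_interleaving_distance_2}.

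Next I would invoke the Yoneda-like construction of Proposition \ref{prop:yoneda_construction} to transport the action along precomposition: an endofunctor $\alpha_g$ of $\cat{P}$ yields the endofunctor $M \mapsto M \circ \alpha_g$ of $\cat{X}$, and a natural transformation $\alpha_g \Rightarrow \alpha_h$ yields the whiskered transformation between the corresponding precomposition functors. The output is a strict monoidal functor $\cat{G} \to \cat{End}(\cat{X})$, i.e. an action $\cat{G} \curvearrowright \cat{X}$ with $\cat{X}_0 = \cat{Fun}(\cat{P},\cat{C})$. Because $\Ob(\cat{G}) = \set{G}$, the given monoidal weight on the monoid is already a function on the objects of $\cat{G}$; I would check that it satisfies Definition \ref{def:monoidal_weight}, noting that the normalization $\weight(e)=0$ and the subadditivity $\weight(gh) \leq \weight(g) + \weight(h)$ are exactly the monoid-level hypotheses, while any remaining axiom is verified directly against the monoidal structure of $\cat{G}$. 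Feeding the actegory $\cat{G} \curvearrowright \cat{X}$ and this weight into Definition \ref{def:interleaving_distance_actegory} produces the desired interleaving distance, and Theorem \ref{thm:actegory_interleaving} certifies that it is an extended pseudometric.

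The main obstacle is the bookkeeping in the previous paragraph rather than the metric axioms, which are outsourced to Theorem \ref{thm:actegory_interleaving}. Precomposition is contravariant on composition of functors, so $\alpha_g^* \circ \alpha_h^* = (\alpha_{hg})^*$, and one must align this with the product in $\cat{G}$---either by passing to $\cat{G}^{\mathrm{op}}$ or, in the group case, by precomposing with $\alpha_{g^{-1}}$---so that the monoidal functor $\cat{G} \to \cat{End}(\cat{X})$ carries the correct variance. One then verifies that the comparison natural transformations $\id \Rightarrow \alpha_g^*$ arising from the order on $\cat{P}$ are precisely the maps used in the triangle conditions, so that when $\set{G} = \R$ acts on $\cat{P} = \R$ by translation the construction recovers the structure maps $M(s \leq s')$ and hence the classical interleaving distance. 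This compatibility---naturality of whiskering together with monoidality of $g \mapsto \alpha_g$---is the technical heart of the argument.
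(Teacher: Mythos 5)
Your proposal is correct and follows essentially the same route as the paper: it builds the proset monoidal category $\cat{G}^{\set{P}}$ (Definition \ref{def:proset_monoidal_category}) with the proset action monoidal functor (Definition \ref{def:preordered_set_action_functor}), transports the action to $\cat{Fun}(\cat{P},\cat{C})$ by precomposition via the Yoneda-like construction (Corollary \ref{cor:monoidal_yoneda}), transfers the monoidal weight, and invokes Theorem \ref{thm:actegory_interleaving}. Your explicit attention to the contravariance of precomposition is a point the paper handles only implicitly through the identification $\cat{B}\cat{G}^{\mathrm{op}} = \cat{B}\cat{G}$, but this does not change the substance of the argument.
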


We provide several examples of this construction in Section \ref{subsec:examples_of_2_functor_interleaving}, including when the group $\set{G}$ is:
\begin{itemize}
    \item the group of positive real numbers under multiplication---this gives a connection to recently observed statistical properties of random persistence diagrams~\cite{bobrowski2023universal} (see Remark \ref{rem:interpretation_of_multiplication_group});
    \item a group of diffeomorphisms---this provides another connection to ideas from statistical shape analysis;
    \item the additive monoid of vectors in $\R^n$ with nonnegative entries, when the underlying poset is $\R^n$ with its product poset structure---i.e., the usual setting of \emph{multiparameter persistent homology}. 
\end{itemize}

\smallskip
\paragraph{{\bf Stability.}} A general theme in TDA is the notion of stability---in this context, this means that the maps which transform datasets into topological descriptors should be Lipschitz with respect to appropriate metrics. The categorical perspective lends itself well to this sort of question, and one can derive quite general and conceptually simple stability results in this setting. We prove the following general result. 

\begin{thm*}[Theorem \ref{thm:stability}]
    A 2-functor between 2-weighted 2-categories which is weight non-increasing yields a Lipschitz map with respect to the associated interleaving distances.
\end{thm*}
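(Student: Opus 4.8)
The plan is to show that a weight non-increasing $2$-functor $F \colon \cat{X} \to \cat{Y}$ is $1$-Lipschitz on object sets, i.e. that $d_{\cat{Y}}(Fx, Fy) \leq d_{\cat{X}}(x,y)$ for all objects $x, y \in \cat{X}_0$, where $d_{\cat{X}}$ and $d_{\cat{Y}}$ are the $2$-weighted $2$-category interleaving distances of Definition \ref{def:2-weighted-2-category-interleaving}. First I would unwind that definition: $d_{\cat{X}}(x,y)$ is an infimum, over all \emph{interleaving data} relating $x$ and $y$, of a cost assembled by applying the Lawvere $2$-weight $\weight_{\cat{X}}$ to the $2$-cells appearing in that data. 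Concretely, such a witness consists of a pair of $1$-morphisms together with a collection of $2$-morphisms—the $2$-categorical analogues of the triangle conditions in \eqref{eqn:basic_interleaving_distance_2}—required to satisfy a list of equations built from vertical and horizontal composition, whiskering, and identities. The heart of the argument is a pushforward construction: given any witness in $\cat{X}$, apply $F$ to every $1$- and $2$-morphism occurring in it, and verify that the result is a legitimate interleaving witness in $\cat{Y}$ of no greater cost.

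Carrying this out, I would proceed in three steps. \textbf{Step one} is to check that the image under $F$ of an interleaving witness is again a witness; this is precisely where $2$-functoriality is essential. Because $F$ strictly preserves vertical and horizontal composition of $2$-cells, whiskering, and all identity $1$- and $2$-morphisms (we work in the strict setting throughout, as stipulated in the paper), each defining equation of the witness in $\cat{X}$ is carried term by term to the corresponding defining equation in $\cat{Y}$. \textbf{Step two} is to compare costs: the cost is a monotone aggregation of the values $\weight$ takes on the constituent $2$-cells, and since $F$ is weight non-increasing, $\weight_{\cat{Y}}(F\gamma) \leq \weight_{\cat{X}}(\gamma)$ for every $2$-cell $\gamma$, so the pushed-forward witness costs at most as much as the original. \textbf{Step three} is to conclude: for each witness $W$ for $(x,y)$, its pushforward $F_* W$ is a witness for $(Fx, Fy)$ with $\operatorname{cost}(F_* W) \leq \operatorname{cost}(W)$, whence $d_{\cat{Y}}(Fx, Fy) \leq \operatorname{cost}(F_* W) \leq \operatorname{cost}(W)$; taking the infimum over all $W$ gives the desired inequality.

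The main obstacle I anticipate is Step one—the bookkeeping needed to confirm that every equation in the definition of interleaving data survives under $F$. The subtlety is not that $2$-functoriality fails, but that the interleaving equations mix whiskering of a $2$-cell by a $1$-morphism with the structural identifications implicit in composing the interleaving $1$-morphisms, so one must match each such expression with its image carefully. Once the equations are written out explicitly from Definition \ref{def:2-weighted-2-category-interleaving}, this reduces to repeated application of $F(\beta \cdot \alpha) = F\beta \cdot F\alpha$ for vertical composition, $F(\beta * \alpha) = F\beta * F\alpha$ for horizontal composition, and $F(\id) = \id$, applied equation by equation. A minor remaining point is the extended ($[0,\infty]$-valued) nature of the metric: if no interleaving data exists for $(x,y)$, then $d_{\cat{X}}(x,y) = \infty$ and the inequality is automatic, so that degenerate case requires no separate treatment.
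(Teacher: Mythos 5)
Your proposal is correct and matches the paper's proof: push the interleaving data forward along the 2-functor, observe that 2-functoriality makes the image again an interleaving, bound its cost using the weight non-increasing hypothesis, and take the infimum. The only remark worth making is that the bookkeeping you anticipate in Step one is essentially vacuous here: a $(g,h,\alpha,\beta)$-interleaving in Definition \ref{def:2-weighted-2-category-interleaving} imposes no commutativity equations at all, only the existence of 2-morphisms $\alpha:1_A \Rightarrow hg$ and $\beta:1_B \Rightarrow gh$, so the sole check is that the 2-functor sends identities to identities and composites to composites (and note that the cost also involves $\weight_1$ on the 1-morphisms $g,h$, not only $\weight_2$ on the 2-cells).
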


Due to the very general structure of 2-weighted 2-category interleaving distances, the proof of this result is almost obvious. However, as we specialize to more specific settings, we obtain corollaries which are less so. We give the details of a specialization to the setting of $\cat{G}$-interleaving distances (Proposition \ref{prop:stability_actegory}) and apply this to give a generalization of the famous sublevel set persistent homology stability result of \cite{chazal2009proximity} to the setting of group actions and generalized persistence modules (Proposition \ref{prop:sublevel_stability}). 

\subsection{Outline of the Paper} Section \ref{sec:background} provides some background on 2-categories and actegories, and serves to set general notation. Section \ref{sec:interleaving distances from actegories} treats the $\cat{G}$-interleaving distance and 2-functor interleaving distance constructions, includes proofs that they define extended pseudometrics, and provides examples. Section \ref{sec:interleaving_group_actions} contains constructions, results and examples related to generalized persistence modules. Section \ref{sec:weighted_2_category_interleaving}  defines 2-weighted 2-functor interleaving distances and shows that this concept generalizes the previously introduced notions of interleaving distance. Section \ref{sec:stability} concerns stability of interleaving distances, in the sense described above. The paper concludes with a short discussion on future directions, in Section \ref{sec:discussion}.

\section{Background on 2-Categories and Monoidal Actions}\label{sec:background}

This section introduces the categorical terminology and notation that will be used for the rest of the paper. We assume that the reader is familiar with the basic notions of category theory, such as categories, functors and monoidal categories.

\subsection{Basics of 2-Categories}

The goal of this section is to introduce a family of metrics determined by a categorical action of a monoidal category (an \emph{actegory}---see Section \ref{sec:monoidal_actions}). The construction is closely related to the theory of 2-categories, so we begin with a very brief introduction to this theory, which will also allow us to fix notation.

\subsubsection{2-Categories} We begin with a formal definition of a 2-category.

\begin{defn}[2-Category]\label{defn:2cat}
A \emph{2-category} $\cat{C}$ consists of:
\begin{enumerate}
    \item A class of \emph{objects}, $\cat{C}_0$. We generally denote objects of $\cat{C}$ as $A,B,C,\ldots$, except when we are dealing with specialized examples.
    \item For each pair $A,B \in \cat{C}_0$, a class $\cat{C}_1(A,B)$ of \emph{morphisms} (sometimes we refer to them as \emph{$1$-morphisms}, if clarity is required). We generally denote these as $f:A \to B$ or $A \xrightarrow{f} B$. The $1$-morphisms are endowed with a composition, which we generally denote with multiplicative notation: for $A \xrightarrow{f} B$ and $B \xrightarrow{g} C$, the composition is denoted $A \xrightarrow{gf} C$. When more notationally convenient, we will instead use traditional composition notation, $g \circ f$. The composition must satisfy the usual axioms of a category: it is associative and for each object $A$ there is a unique identity map, denoted $1_A$, satisfying $1_B f = f = f 1_A$ for all $f \in \cat{C}_1(A,B)$. We use $\cat{C}_1$ to indicate the collection of all 1-morphism classes, $\{\cat{C}_1(A,B)\}_{A,B \in \cat{C}_0}$.

    \item For $f,g \in \cat{C}_1(A,B)$, a class of \emph{$2$-morphisms} $\cat{C}_2(f,g)$. A 2-morphism will typically be denoted $\alpha: f \Rightarrow g$, $f \xRightarrow{\alpha} g$ or 
    \[
    \begin{tikzcd}
      A \arrow[r, bend left, "f", ""{name=U,inner sep=1pt,below}]
      \arrow[r, bend right, "g"{below}, ""{name=D,inner sep=1pt}]
      & B.
      \arrow[Rightarrow, from=U, to=D, "\alpha"]
    \end{tikzcd}
    \]
    We have two flavors of composition for 2-morphisms:
    \begin{enumerate}
        \item For $f,g,h \in \cat{C}_1(A,B)$ and $f \xRightarrow{\alpha} g$, $g \xRightarrow{\beta} h$, \emph{vertical composition} is denoted using multiplicative notation as $f \xRightarrow{\beta \alpha} h$. When required for clarity, we use $\beta \circ \alpha$ to denote vertical composition. We assume that vertical composition is unital and associative. Denote the identity of $f$ as $1_f$. 
        \item For $f,g \in \cat{C}_1(A,B)$, $h,k \in \cat{C}_1(B,C)$ and $f \xRightarrow{\alpha} g$, $h \xRightarrow{\gamma} k$, \emph{horizontal composition} is denoted $hf \xRightarrow{\gamma \bullet \alpha} kg$. We assume that horizontal composition is also unital and associative, with unit $1_{1_A}:1_A \Rightarrow 1_A$.
    \end{enumerate}
    These two compositions are required to satisfy the \emph{interchange law}. Given objects $A, B, C \in \cat{C}_0$; 1-morphisms $f, g, h: A \to B$ and $f', g', h': B \to C$; and 2-morphisms $\delta:f \Rightarrow g$, $\beta: g \Rightarrow h$, $\gamma: f' \Rightarrow g'$, and $\alpha: g' \Rightarrow h'$, we have:
    \[
    (\alpha \bullet \beta) (\gamma \bullet \delta) = (\alpha \gamma) \bullet (\beta \delta). 
    \]
    Diagrammatically: 
\[
\begin{tikzcd}
	& {} \\
	A && B && C
	\arrow[""{name=0, anchor=center, inner sep=0}, "g"{pos=0.7}, from=2-1, to=2-3]
	\arrow[""{name=1, anchor=center, inner sep=0}, "{g'}"{pos=0.7}, from=2-3, to=2-5]
	\arrow[""{name=2, anchor=center, inner sep=0}, "h"', curve={height=30pt}, from=2-1, to=2-3]
	\arrow[""{name=3, anchor=center, inner sep=0}, "f", curve={height=-30pt}, from=2-1, to=2-3]
	\arrow[""{name=4, anchor=center, inner sep=0}, "{f'}", curve={height=-30pt}, from=2-3, to=2-5]
	\arrow[""{name=5, anchor=center, inner sep=0}, "{h'}"', curve={height=30pt}, from=2-3, to=2-5]
	\arrow["\delta", shorten <=4pt, shorten >=4pt, Rightarrow, from=3, to=0]
	\arrow["\beta", shorten <=4pt, shorten >=4pt, Rightarrow, from=0, to=2]
	\arrow["\gamma", shorten <=4pt, shorten >=4pt, Rightarrow, from=4, to=1]
	\arrow["\alpha", shorten <=4pt, shorten >=4pt, Rightarrow, from=1, to=5]
\end{tikzcd}
    \]
    
    We use $\cat{C}_2$ to indicate the collection of all 2-morphism classes, $\{\cat{C}_2(f,g)\}_{f,g \in \cat{C}_1(A,B); A,B \in \cat{C}_0}$.
\end{enumerate}
\end{defn}

\begin{rem}
We will use similar notation for categories (which we sometimes refer to as \emph{1-categories}, when clarity is required). That is, a category $\cat{D}$ consists of a class of objects $\cat{D}_0$ and, for each $A,B \in \cat{D}_0$, a class of morphisms $\cat{D}_1(A,B)$, together with a unital and associative composition operation.
\end{rem}

\begin{rem}[Whiskering]\label{rem:whiskering}
    The axioms of a 2-category allow for compositions between 1-morphisms and 2-morphisms, called \emph{whiskerings}. Given objects $A, B, C \in \cat{C}_0$, 1-morphisms $f, g: A \to B$, $k: B \to C$, and a 2-morphism $\alpha: f \to g$, the whiskering $k \bullet \alpha$ is simply the horizontal composition $1_k \bullet \alpha$. This is given by either of the diagrams:
\[\begin{tikzcd}
	A && B & C
	\arrow[""{name=0, anchor=center, inner sep=0}, "f", curve={height=-18pt}, from=1-1, to=1-3]
	\arrow[""{name=1, anchor=center, inner sep=0}, "g"', curve={height=18pt}, from=1-1, to=1-3]
	\arrow["k", from=1-3, to=1-4]
	\arrow["\alpha", shorten <=5pt, shorten >=5pt, Rightarrow, from=0, to=1]
\end{tikzcd}
\text{ or }
\begin{tikzcd}
	A && B && C.
	\arrow[""{name=0, anchor=center, inner sep=0}, "f", curve={height=-18pt}, from=1-1, to=1-3]
	\arrow[""{name=1, anchor=center, inner sep=0}, "g"', curve={height=18pt}, from=1-1, to=1-3]
	\arrow[""{name=2, anchor=center, inner sep=0}, "k", curve={height=-18pt}, from=1-3, to=1-5]
	\arrow[""{name=3, anchor=center, inner sep=0}, "k"', curve={height=18pt}, from=1-3, to=1-5]
	\arrow["\alpha", shorten <=5pt, shorten >=5pt, Rightarrow, from=0, to=1]
	\arrow["{1_k}", shorten <=5pt, shorten >=5pt, Rightarrow, from=2, to=3]
\end{tikzcd}\]
Given another 1-morphism $h: A \to B$ and 2-morphism $\beta:g \Rightarrow h$, the composition $(\beta \circ \alpha) \bullet k$ may be expressed by the diagram
\[
\begin{tikzcd}
	& {} \\
	A && B && C
	\arrow[""{name=0, anchor=center, inner sep=0}, "g"{pos=0.7}, from=2-1, to=2-3]
	\arrow[""{name=1, anchor=center, inner sep=0}, "{k}"{pos=0.7}, from=2-3, to=2-5]
	\arrow[""{name=2, anchor=center, inner sep=0}, "h"', curve={height=30pt}, from=2-1, to=2-3]
	\arrow[""{name=3, anchor=center, inner sep=0}, "f", curve={height=-30pt}, from=2-1, to=2-3]
	\arrow[""{name=4, anchor=center, inner sep=0}, "{k}", curve={height=-30pt}, from=2-3, to=2-5]
	\arrow[""{name=5, anchor=center, inner sep=0}, "{k}"', curve={height=30pt}, from=2-3, to=2-5]
	\arrow["\alpha", shorten <=4pt, shorten >=4pt, Rightarrow, from=3, to=0]
	\arrow["\beta", shorten <=4pt, shorten >=4pt, Rightarrow, from=0, to=2]
	\arrow["1_k", shorten <=4pt, shorten >=4pt, Rightarrow, from=4, to=1]
	\arrow["1_k", shorten <=4pt, shorten >=4pt, Rightarrow, from=1, to=5]
\end{tikzcd}
    \]
    In this case, the interchange law states that $$(1_k \bullet \beta) \circ (1_k \bullet \alpha) = (1_k \circ 1_k) \bullet (\beta \circ \alpha),$$ which simplifies to $k \bullet (\beta \circ \alpha) = (k \bullet \beta) \circ (k \bullet \alpha).$ Thus, in the case of whiskerings, horizontal composition (left) distributes over vertical composition. Applying the interchange law to a whiskering in the reverse order establishes right distributivity: $(\beta \circ \alpha) \bullet k = (\beta \bullet k) \circ (\alpha \bullet k).$
\end{rem}

\begin{ex}[The 2-Category of Small Categories]\label{ex:2_category_small_categories}
    The motivating example for the above definition is $\cat{Cat}$, the 2-category whose objects are small categories, whose 1-morphisms are functors and whose 2-morphisms are natural transformations. One can verify that this data satisfies the axioms of a 2-category (see, e.g., \cite[Example 2.3.14]{johnson20212}). In particular, vertical composition of composable natural transformations is defined componentwise by $(\beta\alpha)_A = \beta_A\alpha_A$ and horizontal composition works as follows. For functors $f,g: \cat{C} \rightarrow \cat{D}$ and $h,k:\cat{D} \rightarrow \cat{E}$ and natural transformations $f \xRightarrow{\alpha} g$  and $h \xRightarrow{\beta} k$, the horizontal composition $\beta \bullet \alpha$ is the natural transformation $hf \xRightarrow{\beta \bullet \alpha} kg$ with component at $A \in \cat{C}_0$ given by
\begin{equation}\label{eqn:horizontal_composition_nat_trans}
(\beta \bullet \alpha)_A = \beta_{g(A)} h(\alpha_A) = k(\alpha_A)\beta_{f(A)}.
\end{equation}
\end{ex}

\begin{rem}[Bicategories]\label{rem:2-categories_and_bicategories}
    A more concise definition of a 2-category is that it is a 1-category such that each set of (1-)morphisms is itself a small category, subject to certain compatibility axioms. That is, a 2-category is a category enriched in $\cat{Cat}$. There are more general versions of this construction, such as the notion of a \emph{bicategory}, or a 1-category for which the 1-morphism sets are small categories, but where the enriched category axioms are only required to hold up to natural isomorphism---see \cite{johnson20212}. We work in the setting of (strict) 2-categories in this paper, but expect that many of our results can be generalized.
\end{rem}

\begin{ex}[2-Categories from Monoidal 1-Categories]\label{ex:delooping}
Let $\cat{G}$ be a (strict) monoidal category with tensor product $\otimes$. In anticipation of a 2-categorical interpretation, we take the following notational convention for monoidal categories: objects of $\cat{G}$ will generically be denoted as $f$, $g$, $h$, $\ldots$ (notation typically reserved for 1-morphisms in an arbitrary 2-category) and morphisms of $\cat{G}$ will generically be denoted as $\alpha$, $\beta$, $\gamma$, $\ldots$ (notation typically reserved for 2-morphisms in an arbitrary 2-category). The identity object of $\cat{G}$ will generically be denoted as $e$. 

The \emph{delooping} of $\cat{G}$ is the 2-category $\deloop\cat{G}$ with: 
\begin{enumerate}
    \item $\deloop \cat{G}_0$ consisting of a single abstract element, which we denote $\star$,
    \item $\deloop \cat{G}_1(\star,\star) = \cat{G}_0$, with composition of $f$ and $g$ given by $f \otimes g$, and
    \item $\deloop \cat{G}_2(f,g) = \cat{G}_1(f,g)$, with vertical composition of $\alpha \in \cat{G}_1(f,g)$ and $\beta \in \cat{G}_1(g,h)$ given by composition in $\cat{G}$, $\beta\alpha$, and with horizontal composition of $\alpha \in \cat{G}_1(f,g)$ and $\beta \in \cat{G}_1(h,k)$ given by $\beta \otimes \alpha \in \cat{G}_1(h \otimes f,k \otimes g)$ (i.e., $\beta \bullet \alpha = \beta \otimes \alpha)$.
\end{enumerate}
In fact, 2-categories with a single object correspond exactly to deloopings of strict monoidal categories (see \cite{johnson20212}, Examples 2.1.19 and 2.3.11).
\end{ex}

\begin{rem}\label{rem:delooping}
    The notation $\deloop \cat{G}$ is somewhat standard, intended to evoke notation used in the context of classifying spaces.

    If $\set{G}$ is a group, we also use $\deloop \set{G}$ to denote the 1-category obtained by \emph{delooping} the group. That is, $\deloop \set{G}_0$ contains a single abstract object $\star$, $\deloop \set{G}_1(\star,\star) = \set{G}$ and compositions are given by the group law. 
\end{rem}

We now describe two useful 1-categories determined by any 2-category.

\begin{defn}[Associated 1-Categories]\label{def:associated_1_categories}
    For a 2-category $\cat{C}$, the \emph{underlying 1-category of $\cat{C}$} is the 1-category determined by the data $\cat{C}_0$ and $\cat{C}_1$. By abuse of notation, we may continue to denote the underlying 1-category as $\cat{C}$. Given $A,B \in \cat{C}_0$, the data $\cat{C}_1(A,B)$ and $\{\cat{C}_2(f,g)\}_{f,g \in \cat{C}_1(A,B)}$ determines a 1-category (using vertical compositions) called the \emph{associated 1-category determined by $(A,B)$}. This is denoted $\cat{C}(A,B)$.
\end{defn}

\begin{ex}
    Let $\cat{C}$ be a 2-category with a single object $\star$. Then $\cat{C} = \deloop\cat{G}$ for some monoidal category $\cat{G}$, and $\cat{C}(\star,\star) = \cat{G}$.
\end{ex}

\subsubsection{2-Functors}\label{sec:2_functors}

The notion of a functor between categories generalizes to the 2-category setting.

\begin{defn}[2-Functor]\label{def:2-functor}
Let $\cat{C}$ and $\cat{D}$ be 2-categories. A \emph{2-functor}  $\Delta:\cat{C} \to \cat{D}$ consists of:
\begin{enumerate}
    \item For all $A \in \cat{C}_0$, an object $\Delta(A) \in \cat{D}_0$. 
    \item For each 1-morphism $A \xrightarrow{f} B$ in $\cat{C}_1$, a 1-morphism $\Delta(f) =: \Delta_f:\Delta(A) \to \Delta(B)$ in $\cat{D}_1$. Composition and identities must be preserved; i.e., $\Delta_{gf} = \Delta_g \Delta_f$ and $\Delta_{1_A} = 1_{\Delta(A)}$.
    \item For each 2-morphism $f \xRightarrow{\alpha} g$ in $\cat{C}_2$, a 2-morphism $\Delta(\alpha):\Delta_f \Rightarrow \Delta_g$ in $\cat{D}_2$. Identities and (horizontal and vertical) compositions must be preserved; that is, $\Delta(1_f) = 1_{\Delta_f}$, $\Delta(1_{1_A}) = 1_{1_{\Delta(A)}}$,  $\Delta(\beta \alpha) = \Delta(\beta)\Delta(\alpha)$ and $\Delta(\delta \bullet \gamma) = \Delta(\delta) \bullet \Delta(\gamma)$. 
\end{enumerate}
Points (1) and (2) say that $\Delta$ determines a functor from the underlying 1-category of $\cat{C}$ to the underlying 1-category of $\cat{D}$. Points (2) and (3) say that for all $A,B \in \cat{C}_0$, $\Delta$ determines a functor $\cat{C}(A,B) \to \cat{C}(\Delta(A),\Delta(B))$ (see Definition \ref{def:associated_1_categories}), such that horizontal compositions are also preserved. 
\end{defn}

\begin{rem}[Lax 2-Functors]\label{rem:laxness}
    It is frequently convenient to relax the definition of a 2-functor by allowing diagrams to commute only up to natural transformations. This results in the definition of a \emph{lax 2-functor}. Most of our constructions and results can be extended to the lax setting. In order to simplify the exposition, we work with \emph{strict} 2-functors (i.e., 2-functors as described in Definition \ref{def:2-functor}) in this paper.
\end{rem}

\subsection{Basics of Actegories}\label{sec:monoidal_actions} Let $\cat{X}$ be a small (1-)category. The category $\cat{End}(\cat{X})$ of \emph{endomorphisms of $\cat{X}$} has objects given by functors from $\cat{X}$ to itself, and morphisms given by natural transformations. This category has a monoidal structure: using the notation of Example \ref{ex:2_category_small_categories} and Definition \ref{def:associated_1_categories}, $\cat{End}(\cat{X}) = \cat{Cat}(\cat{X},\cat{X})$, and its monoidal structure comes from composition laws of 1- and 2-morphisms in $\cat{Cat}$. Let $1_\cat{X}$ denote the identity functor on $\cat{X}$, which serves as the identity object for the monoidal structure.

Let $\cat{G}$ be a (strict) monoidal category with bifunctor denoted $\otimes$ and identity object $e \in \cat{G}_0$. A \emph{(strict) monoidal functor} $T:\cat{G} \to \cat{End}(\cat{X})$ is a functor which also preserves the monoidal structure; i.e., $T(e) = 1_\cat{X}$, for objects $g,h \in \cat{G}_0$, $T(g \otimes h) = T(g)T(h)$ and for morphisms $g \xrightarrow{\alpha} h$ and $k \xrightarrow{\beta} \ell$, $T(\beta \otimes \alpha) = T(\beta) \bullet T(\alpha)$. We will typically write $T(g) = T_g$, so that the first condition becomes $T(g \otimes h) = T_g T_h$. 

\begin{rem}[Lax Monoidal Functors]\label{rem:lax_monoidal}
    Similar to the 2-category setting described in Remark \ref{rem:laxness}, it can be useful to relax the notion of a monoidal functor to that of a \emph{lax monoidal functor}, where diagrams commute up to natural transformation. For the sake of simplicity, we focus on the strict case, although most of what follows can be extended to the lax setting. 
\end{rem}

A monoidal functor of the form $T:\cat{G} \to \cat{End}(\cat{X})$ is referred to as an \emph{action of $\cat{G}$ on $\cat{X}$}, and $\cat{X}$ is sometimes referred to as a \emph{$\cat{G}$-actegory}; these structures (as well as lax versions) have been widely studied, going back at least to the 1970s~\cite{benabou1967introduction,pareigis1977non,kelly2006doctrinal,janelidze2001note,mccrudden2000categories}.

A monoidal functor $T:\cat{G} \to \cat{End}(\cat{X})$ determines a 2-functor $\deloop T:\deloop \cat{G} \to \cat{Cat}$ defined on objects by $\deloop T(\star) = \cat{X}$, and on 1- and 2-morphisms by observing that $\deloop \cat{G}(\star, \star) = \cat{G}$ and $\cat{Cat}(\cat{X},\cat{X}) = \cat{End}(\cat{X})$ and setting $\deloop T = T$ on these associated 1-categories. By \cite[Examples 2.1.19 and 2.3.11]{johnson20212}, this association is a bijection onto the collection of 2-functors $\Delta:\cat{C} \to \cat{Cat}$ whose domain 2-category has a single object. That is, we have the following.

\begin{prop}\label{prop:monoidal_functors_and_2_functors}
    Any monoidal functor $T:\cat{G} \to \cat{End}(\cat{X})$ gives rise to a 2-functor $\Delta = \deloop T: \deloop \cat{G} \to \cat{Cat}$. Conversely, if $\cat{C}$ is a 2-category with a single object and $\Delta:\cat{C} \to \cat{Cat}$ is a 2-functor, then $\cat{C} = \deloop \cat{G}$ for some strict monoidal category $\cat{G}$ and $\Delta = \deloop T$ for a  monoidal functor $T:\cat{G} \to \cat{End}(\Delta(\star))$.
\end{prop}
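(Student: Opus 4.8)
The plan is to recognize that a monoidal functor $T \colon \cat{G} \to \cat{End}(\cat{X})$ and a 2-functor $\deloop \cat{G} \to \cat{Cat}$ package exactly the same data, merely reorganized, and then to reduce the converse to the delooping correspondence recorded in Example~\ref{ex:delooping}. Both directions are verifications that the defining axioms on one side are term-by-term translations of those on the other, so there is no substantive obstacle beyond careful bookkeeping; the only point requiring real attention is the matching of horizontal composition in $\cat{Cat}$ with the tensor product of morphisms in $\cat{G}$.

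For the forward direction, I would define $\Delta := \deloop T$ on $\deloop \cat{G}$ as follows: send the unique object $\star$ to $\Delta(\star) := \cat{X} \in \cat{Cat}_0$; on $1$-morphisms, use the identification $\deloop \cat{G}_1(\star,\star) = \cat{G}_0$ together with the fact that $T$ sends objects of $\cat{G}$ to endofunctors of $\cat{X}$, i.e.\ $1$-morphisms $\cat{X} \to \cat{X}$ in $\cat{Cat}$; and on $2$-morphisms, use $\deloop \cat{G}_2(f,g) = \cat{G}_1(f,g)$ together with the fact that $T$ sends morphisms of $\cat{G}$ to natural transformations, i.e.\ $2$-morphisms in $\cat{Cat}$. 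I would then check the axioms of Definition~\ref{def:2-functor} by direct translation. Preservation of $1$-morphism composition, $\Delta_{gf} = \Delta_g \Delta_f$, is precisely the monoidal-functor condition $T(g \otimes h) = T_g T_h$, since composition of $1$-morphisms in $\deloop \cat{G}$ is $\otimes$ while composition of endofunctors is composition in $\cat{Cat}$; preservation of identities, $\Delta_{1_\star} = 1_{\cat{X}}$, is the condition $T(e) = 1_{\cat{X}}$. Preservation of vertical composition of $2$-morphisms is preservation of composition by the underlying functor $T$ (vertical composition in $\deloop \cat{G}$ being composition in $\cat{G}$), and preservation of horizontal composition, $\Delta(\beta \bullet \alpha) = \Delta(\beta) \bullet \Delta(\alpha)$, is exactly the remaining monoidal-functor condition $T(\beta \otimes \alpha) = T(\beta) \bullet T(\alpha)$, where the right-hand bullet is horizontal composition of natural transformations as in \eqref{eqn:horizontal_composition_nat_trans}. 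The identity-$2$-morphism conditions $\Delta(1_f) = 1_{\Delta_f}$ and $\Delta(1_{1_\star}) = 1_{1_{\cat{X}}}$ follow from functoriality and monoidality of $T$.

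For the converse, I would first invoke the observation (Example~\ref{ex:delooping} and the Example following Definition~\ref{def:associated_1_categories}) that a $2$-category $\cat{C}$ with a single object $\star$ is the delooping of the strict monoidal category $\cat{G} := \cat{C}(\star,\star)$, whose tensor product is horizontal composition and whose identity object is $1_\star$. Writing $\cat{X} := \Delta(\star)$, the $2$-functor $\Delta$ restricts, on the associated $1$-category $\cat{C}(\star,\star)$, to a functor $T \colon \cat{G} = \cat{C}(\star,\star) \to \cat{Cat}(\cat{X},\cat{X}) = \cat{End}(\cat{X})$; this restriction is precisely the content of the last sentence of Definition~\ref{def:2-functor}. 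Because $\Delta$ preserves horizontal composition and identity $1$-morphisms, and because horizontal composition is the monoidal product on both $\cat{G}$ and $\cat{End}(\cat{X})$, the functor $T$ is strict monoidal, and unwinding the definitions gives $\Delta = \deloop T$. The one thing to keep straight throughout is that the two composition operations on $2$-morphisms in $\cat{Cat}$ (vertical, given componentwise, and horizontal, given by \eqref{eqn:horizontal_composition_nat_trans}) correspond respectively to composition and tensor product in $\cat{G}$; once this dictionary is fixed, both directions are immediate, and the result is essentially a restatement of \cite[Examples 2.1.19 and 2.3.11]{johnson20212} in the present notation.
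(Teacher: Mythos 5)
Your proposal is correct and follows essentially the same route as the paper, which establishes this proposition in the paragraph preceding its statement by defining $\deloop T$ via the identifications $\deloop\cat{G}(\star,\star) = \cat{G}$ and $\cat{Cat}(\cat{X},\cat{X}) = \cat{End}(\cat{X})$ and then citing \cite[Examples 2.1.19 and 2.3.11]{johnson20212} for the bijectivity of the correspondence. You simply carry out explicitly the axiom-by-axiom bookkeeping that the paper delegates to that reference, and your dictionary (tensor product $\leftrightarrow$ composition of $1$-morphisms, composition in $\cat{G}$ $\leftrightarrow$ vertical composition, tensor of morphisms $\leftrightarrow$ horizontal composition) matches the paper's conventions.
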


\subsubsection{Category with a Flow}\label{sec:category_with_flow} A particular type of actegory was studied in \cite{de2018theory,stefanou2018dynamics}: a \emph{category with a flow} is a category $\cat{X}$, together with a (strict or lax) monoidal functor $T:\mathbb{R}_{\geq 0} \to \cat{End}(\cat{X})$ (i.e., an action of $\R_{\geq 0}$ on $\cat{X}$). For the sake of simplicity, let us focus on the case of a strict monoidal functor; the lax case can be easily adapted. Here, $\mathbb{R}_{\geq 0}$ is considered as a poset category (with respect to $\leq$), with monoidal structure given by addition.

In \cite{de2018theory,stefanou2018dynamics}, a category with a flow $\cat{X}$ is endowed with an extended pseudometric on $\cat{X}_0$. The distance is called \emph{interleaving distance} in the original paper (as it generalizes interleaving distances used previously in TDA~\cite{chazal2009proximity,bubenik2014categorification,bubenik2015metrics}), but we will refer to it as \emph{flow interleaving distance} to distinguish it from the generalized interleaving distances which are introduced in the following subsections. For $t \in \R_{\geq 0}$, we write $T_t = T(t) \in \cat{End}(\cat{X})$. A \emph{$t$-flow interleaving of $X,Y \in \cat{X}_0$} is a pair of morphisms $\phi:X \to T_t(Y)$ and $\psi:Y \to T_t(X)$ such that the diagrams
    \[
    \begin{tikzcd}
    X \ar[rr, "T(0 \leq 2t)_X"] \ar[rd, swap, "\phi"] & & T_{2t}(X) \\ 
    &  T_t(Y) \ar[ru, swap, "T_t(\psi)" ] & 
    \end{tikzcd}
    \qquad 
    \begin{tikzcd}
    Y \ar[rr, "T(0 \leq 2t)_Y"] \ar[rd, swap, "\psi"] & &T_{2t}(Y) \\ 
    &  T_t(X) \ar[ru, swap, "T_t(\phi)" ] & 
    \end{tikzcd}
    \]
commute. The \emph{flow interleaving distance} is given by
\begin{equation}\label{eqn:flow_interleaving}
d^\mathrm{Fl}_T(X,Y) = \inf \{t \in \R_{\geq 0} \mid \mbox{there exists a $t$-flow interleaving of $X$ and $Y$}\}.
\end{equation}
It is shown in \cite[Theorem 2.7]{de2018theory} that $d^\mathrm{Fl}_T$ defines an extended pseudometric on $\cat{X}_0$.

\section{Interleaving Distances from Actegories and 2-Functors}\label{sec:interleaving distances from actegories}

We now give generalizations of the flow interleaving distances described in Section \ref{sec:category_with_flow}. Our constructions require the data of an actegory (Section \ref{sec:actegory_interleaving}), or more generally a 2-functor from some 2-category into $\cat{Cat}$ (Section \ref{sec:2-functor_interleaving}). 

\subsection{Interleaving Distance Associated to an Actegory}\label{sec:actegory_interleaving} Let $\cat{G}$ be an arbitrary monoidal category. We generally omit the tensor product notation for objects and write $gh = g \otimes h$ for $g,h \in \cat{G}_0$ from now on. Let $\cat{X}$ be a $\cat{G}$-actegory, with monoidal functor $T:\cat{G} \to \cat{End}(\cat{X})$. We generalize the flow interleaving distance to an interleaving distance on $\cat{X}_0$ depending on a certain metric-like structure on $\cat{G}$.

\begin{defn}[Monoidal Weight]\label{def:monoidal_weight}
    A \emph{monoidal weight} on $\cat{G}$ is a function $\weight:\cat{G}_0 \to \R \cup \{\infty\}$ satisfying the following axioms:
    \begin{enumerate}
        \item for all $g \in \cat{G}_0$, $\weight(g) \geq 0$ and $\weight(e) = 0$;
        \item for all $f$ and $g$ in $\cat{G}_0$, 
        \[
        \weight(gf) \leq \weight(g) + \weight(f).
        \]
    \end{enumerate}
\end{defn}

\begin{rem}
    This structure is closely related to a well-known categorical construction due to Lawvere \cite{lawvere1973metric}. This connection is explained in detail below in Remark \ref{rem:lawvere_weight}.
\end{rem}

\begin{ex}
    For $\cat{G} = \R_{\geq 0}$ (see Section \ref{sec:category_with_flow}), $\weight(t) = t$ is a monoidal weight.
\end{ex}

\begin{rem}\label{rem:monoidal_weight_monoid}
    Let $\set{G}$ be a monoid. A function $\weight:\set{G} \to \R \cup \{\infty\}$ satisfying axioms analogous to those of Definition \ref{def:monoidal_weight} will, by abuse of terminology, also be referred to as a \emph{monoidal weight}.
\end{rem}

We now define interleavings and interleaving distances induced by a monoidal functor. 

\begin{defn}[Interleaving]\label{defn:interleaving_actegory}
    Let $T:\cat{G} \to \cat{End}(\cat{X})$ be a monoidal functor and let $X,Y \in \cat{X}_0$. For $g,h \in \cat{G}_0$, we say that $X$ and $Y$ are \emph{$(g, h)$-interleaved} if there exist a pair of morphisms $\phi: X \to T_g(Y)$, $\psi: Y \to T_h(X)$ in $\cat{X}$ and a pair of morphisms $\alpha:e \to gh$ and $\beta: e \Rightarrow hg$ in $\cat{G}$ such that the following diagrams commute: 
    \[
    \begin{tikzcd}
    X \ar[rr, "T(\alpha)_X"] \ar[rd, swap, "\phi"] & & T_{gh}(X) \\ 
    &  T_g(Y) \ar[ru, swap, "T_g(\psi)" ] & 
    \end{tikzcd}
    \qquad 
    \begin{tikzcd}
    Y \ar[rr, "T(\beta)_Y"] \ar[rd, swap, "\psi"] & &T_{hg}(Y) \\ 
    &  T_h(X) \ar[ru, swap, "T_h(\phi)" ] & 
    \end{tikzcd}
    \]
    In this case, we say that the pair $(\phi,\psi)$ is a \emph{$(g,h)$-interleaving of $X$ and $Y$}.
\end{defn}

\begin{defn}[$\cat{G}$-Interleaving Distance]\label{def:interleaving_distance_actegory}
    Let $T:\cat{G} \to \cat{End}(\cat{X})$ be a monoidal functor and $\weight$ a monoidal weight on $\cat{G}$.
    The \emph{$\cat{G}$-interleaving distance} determined by this data is the function
    \[
    d_T = d_{T,\weight}:\cat{X}_0 \times \cat{X}_0 \to \R \cup \{\infty\}
    \]
    defined by 
    \[
    d_T(X,Y) := \inf \{\max\{\weight(g),\weight(h)\} \mid \mbox{$X$ and $Y$ are $(g,h)$-interleaved}\}.
    \]
\end{defn}

\begin{rem}[Intuition Behind the Definition]\label{rem:intuition}
    We can intuit the definition of $\cat{G}$-interleaving distance by considering the monoidal functor $T$ as a framework for measuring the ``energy" that is required for the ``registration" of two objects $X,Y \in \cat{X}_0$. To elaborate on this:
    \begin{itemize}
        \item We can think of $T_g(Y)$ as using $g$ to register $Y$ to $X$, and likewise, $T_h(X)$ uses $h$ to register $X$ to $Y$---preservation of compositions by $T$ means that registrations roughly behave like an action of a transformation group.
        \item The natural transformations $T(\alpha)$ and $T(\beta)$ can be understood to control which registrations are admissible; e.g., the existence of $\alpha:e \Rightarrow gh$ tells us that $gh$ is sufficiently comparable to an identity transformation.
        \item The weight $\mathrm{W}$ allows us to the measure the ``cost" of a registration as $\max\{\weight(g),\weight(h)\}$.
    \end{itemize} 
    Thus the $\cat{G}$-interleaving distance $d_T(X,Y)$, at an intuitive level, measures the minimum weight that it takes for the objects $X$ and $Y$ to be admissibly registered to one another.
\end{rem}

\begin{thm}\label{thm:actegory_interleaving}
    Let $T:\cat{G} \to \cat{End}(\cat{X})$ be a monoidal functor and $\weight$ a monoidal weight on $\cat{G}$. The associated $\cat{G}$-interleaving distance $d_T$ is an extended pseudometric on $\cat{X}_0$. 
\end{thm}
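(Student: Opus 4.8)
The plan is to verify the four defining properties of an extended pseudometric for $d_T$: non-negativity, vanishing on the diagonal, symmetry, and the triangle inequality. The first three follow quickly from the monoidal weight axioms and the symmetric shape of Definition \ref{defn:interleaving_actegory}, so essentially all of the work lies in the triangle inequality. Non-negativity is immediate: axiom (1) of Definition \ref{def:monoidal_weight} gives $\weight(g),\weight(h)\ge 0$, so every candidate value $\max\{\weight(g),\weight(h)\}$ is nonnegative and hence so is the infimum (with the convention $\inf\emptyset=\infty$).

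For $d_T(X,X)=0$ I would exhibit an explicit $(e,e)$-interleaving of $X$ with itself: take $\phi=\psi=1_X$, which is legitimate since $T_e=T(e)=1_{\cat{X}}$, and take $\alpha=\beta=1_e$. Functoriality of $T$ forces $T(1_e)=1_{1_{\cat{X}}}$, so both interleaving triangles collapse to identities and commute; since $\weight(e)=0$, this contributes the value $0$ and pins $d_T(X,X)=0$. Symmetry is built into the definition: a $(g,h)$-interleaving $(\phi,\psi)$ of $X,Y$ with structure maps $\alpha,\beta$ is literally an $(h,g)$-interleaving $(\psi,\phi)$ of $Y,X$ with the roles of the two triangles and of $\alpha,\beta$ interchanged, and $\max\{\weight(g),\weight(h)\}=\max\{\weight(h),\weight(g)\}$, so the two infima coincide.

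The triangle inequality is the main obstacle, and it amounts to showing that interleavings compose. Given a $(g_1,h_1)$-interleaving $(\phi_1,\psi_1)$ of $X,Y$ with maps $\alpha_1,\beta_1$, and a $(g_2,h_2)$-interleaving $(\phi_2,\psi_2)$ of $Y,Z$ with maps $\alpha_2,\beta_2$, I would build a $(g_1g_2,\,h_2h_1)$-interleaving of $X$ and $Z$ via
\[
\phi = T_{g_1}(\phi_2)\circ\phi_1, \qquad \psi = T_{h_2}(\psi_1)\circ\psi_2,
\]
\[
\alpha = (1_{g_1}\otimes\alpha_2\otimes 1_{h_1})\circ\alpha_1, \qquad \beta = (1_{h_2}\otimes\beta_1\otimes 1_{g_2})\circ\beta_2.
\]
Checking the two interleaving triangles is the core calculation. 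Each is a diagram chase that repeatedly invokes: (i) functoriality of $T$ to collapse composites of functors and morphisms into single $T_{(-)}$ expressions, e.g. $T_{g_1g_2}(\psi_2)\circ T_{g_1}(\phi_2)=T_{g_1}\!\left(T_{g_2}(\psi_2)\circ\phi_2\right)$; (ii) the triangles for the two given interleavings, to rewrite $T_{g_2}(\psi_2)\circ\phi_2$ as $T(\alpha_2)_Y$ and $T_{g_1}(\psi_1)\circ\phi_1$ as $T(\alpha_1)_X$; (iii) naturality of $T(\alpha_2)\colon 1_{\cat{X}}\Rightarrow T_{g_2h_2}$ applied to the morphism $\psi_1$, which slides $T(\alpha_2)$ past $\psi_1$; and (iv) the monoidal-functor identity $T(\delta\otimes\gamma)=T(\delta)\bullet T(\gamma)$ together with the component formula for a whiskering, which is precisely what identifies the assembled composite with $T(\alpha)_X$. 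The second triangle is verified by the mirror-image computation using $\beta_1,\beta_2$. I expect the subscript bookkeeping---tracking the object at which each natural transformation is evaluated, e.g. $T(\alpha_2)_{T_{h_1}(X)}$ versus $T(\alpha_2)_Y$---to be the most error-prone step, with the naturality squares of step (iii) serving as exactly the device that reconciles these.

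Finally I would close the argument at the level of weights. Subadditivity (axiom (2) of Definition \ref{def:monoidal_weight}) gives $\weight(g_1g_2)\le\weight(g_1)+\weight(g_2)$ and $\weight(h_2h_1)\le\weight(h_2)+\weight(h_1)$, so
\[
\max\{\weight(g_1g_2),\weight(h_2h_1)\}\le\max\{\weight(g_1),\weight(h_1)\}+\max\{\weight(g_2),\weight(h_2)\}.
\]
Taking the infimum over all interleavings of $X,Y$ and of $Y,Z$---and disposing of the case where either distance is $\infty$ trivially, since then the right-hand side is $\infty$---yields $d_T(X,Z)\le d_T(X,Y)+d_T(Y,Z)$, which completes the verification that $d_T$ is an extended pseudometric.
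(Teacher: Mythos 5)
Your proposal is correct, and every step you sketch does go through: the composite interleaving $(g_1g_2,\,h_2h_1)$ with $\phi = T_{g_1}(\phi_2)\circ\phi_1$, $\psi = T_{h_2}(\psi_1)\circ\psi_2$ and the whiskered $2$-cells $\alpha=(1_{g_1}\otimes\alpha_2\otimes 1_{h_1})\circ\alpha_1$, $\beta=(1_{h_2}\otimes\beta_1\otimes 1_{g_2})\circ\beta_2$ is exactly what is needed, and the naturality square for $T(\alpha_2)$ at $\psi_1$ (resp.\ $T(\beta_1)$ at $\phi_2$) together with $T(\delta\otimes\gamma)=T(\delta)\bullet T(\gamma)$ closes both triangles. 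The difference from the paper is organizational rather than mathematical: the paper does not prove Theorem \ref{thm:actegory_interleaving} directly, but first establishes the more general Theorem \ref{thm:interleaving_distance} for an arbitrary $2$-functor $\Delta:\cat{C}\to\cat{Cat}$ with a Lawvere weight (using Lemma \ref{lem:interleavability}, whose whiskering formula $\gamma=(g\bullet\beta\bullet h)\circ\alpha$ is precisely your formula for $\alpha$ transported through the delooping, with $\otimes$ becoming horizontal composition), and then specializes via Proposition \ref{prop:G_interleaving_2_functor_interleaving}. Your route buys a self-contained argument that a reader interested only in actegories can follow without the $2$-categorical machinery; the paper's route buys the stronger theorem and avoids repeating the same diagram chase when the $2$-functor version is needed later. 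One small point of care worth making explicit if you write this up: in your step (iv) the identification of the assembled composite with $T(\alpha)_X$ uses that the component of $1_{T_{g_1}}\bullet T(\alpha_2)\bullet 1_{T_{h_1}}$ at $X$ is $T_{g_1}\bigl(T(\alpha_2)_{T_{h_1}(X)}\bigr)$, i.e.\ the horizontal-composition formula \eqref{eqn:horizontal_composition_nat_trans} for natural transformations, which is exactly where the strictness of the monoidal functor is used.
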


We will prove Theorem \ref{thm:actegory_interleaving} below in Section \ref{sec:2-functor_interleaving} by deriving it as a corollary of a more general theorem built around an interleaving distance constructed from the data of a 2-functor. First, we prove formally the intuitively obvious fact that the notion of $\cat{G}$-interleaving distance generalizes that of flow interleaving distance.

\begin{prop}\label{prop:equivalence_to_flow}
Let $\cat{G} = \R_{\geq 0}$, considered as a monoidal category as in Section \ref{sec:category_with_flow} and let $T:\cat{G} \to \cat{End}(\cat{X})$ be a monoidal functor. Then the associated flow interleaving distance $d^\mathrm{Fl}_T$ is equal to the associated $\cat{G}$-interleaving distance $d_{T,\weight}$, where $\weight$ is the monoidal weight $\weight(t) = t$.
\end{prop}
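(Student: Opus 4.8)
The plan is to establish the two inequalities $d_{T,\weight} \le d^{\mathrm{Fl}}_T$ and $d^{\mathrm{Fl}}_T \le d_{T,\weight}$ separately. Throughout I use that in $\cat{G} = \R_{\geq 0}$ the tensor product is addition, the unit is $e = 0$, and for $s \le t$ there is a unique morphism, which I write $(s \le t)$; in particular a morphism $\alpha:e \to gh$ (resp.\ $\beta:e \to hg$) as demanded by Definition \ref{defn:interleaving_actegory} always exists and is forced to be $(0 \le g+h)$, while the weight is $\weight(g)=g$.

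The first inequality is immediate. Given a $t$-flow interleaving $(\phi,\psi)$ of $X$ and $Y$, I observe that it is literally a $(t,t)$-interleaving: taking $g=h=t$ and $\alpha = \beta = (0 \le 2t)$, the two defining triangles of Definition \ref{defn:interleaving_actegory} coincide with the two flow-interleaving triangles. Since $\max\{\weight(t),\weight(t)\} = t$, this gives $d_{T,\weight}(X,Y) \le t$ for every $t$ admitting a flow interleaving, hence $d_{T,\weight} \le d^{\mathrm{Fl}}_T$.

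The reverse inequality is the crux: starting from an arbitrary $(g,h)$-interleaving $(\phi,\psi)$, I upgrade it to a flow interleaving at level $t=\max\{g,h\}$. Assume without loss of generality that $g \le h$ (the opposite case is symmetric, exchanging the roles of $\phi$ and $\psi$). I raise $\phi$ to level $h$ by post-composing with the structure map, setting $\phi' := T(g\le h)_Y \circ \phi : X \to T_h(Y)$ and $\psi' := \psi : Y \to T_h(X)$, and must then verify the flow triangles $T(0\le 2h)_X = T_h(\psi')\circ\phi'$ and $T(0\le 2h)_Y = T_h(\phi')\circ\psi'$. The verification rests on one structural identity: the whiskerings of $\eta := T(g\le h):T_g \Rightarrow T_h$ are themselves structure maps. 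Writing the whiskerings as horizontal composites and applying the monoidal-functor identity $T(\beta)\bullet T(\alpha) = T(\beta\otimes\alpha)$ to $(g\le h)$ and $\mathrm{id}_h$, using $(g\le h)\otimes\mathrm{id}_h = (g+h \le 2h) = \mathrm{id}_h\otimes(g\le h)$, yields
\[
\eta_{T_h(X)} = T(g+h \le 2h)_X, \qquad T_h(\eta_Y) = T(g+h \le 2h)_Y .
\]
For the first triangle, naturality of $\eta$ at $\psi$ gives $T_h(\psi)\circ\eta_Y = \eta_{T_h(X)}\circ T_g(\psi)$, so
\[
T_h(\psi')\circ\phi' = T_h(\psi)\circ\eta_Y\circ\phi = \eta_{T_h(X)}\circ T_g(\psi)\circ\phi = T(g+h\le 2h)_X \circ T(0\le g+h)_X = T(0\le 2h)_X,
\]
where the third equality uses the interleaving triangle $T(0\le g+h)_X = T_g(\psi)\circ\phi$ and the last uses functoriality of $T$ together with the fact that composites in the poset $\R_{\ge 0}$ are determined by their endpoints. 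The second triangle is identical, now invoking $T_h(\eta_Y) = T(g+h\le 2h)_Y$ and the interleaving triangle $T(0\le g+h)_Y = T_h(\phi)\circ\psi$. Thus $(\phi',\psi')$ is a flow interleaving at level $\max\{g,h\}$, so $d^{\mathrm{Fl}}_T(X,Y) \le \max\{g,h\}$; taking the infimum over all $(g,h)$-interleavings gives $d^{\mathrm{Fl}}_T \le d_{T,\weight}$.

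I expect the main obstacle to be precisely this structural identity in the hard direction: correctly identifying the whiskered natural transformation $T(g\le h)$ with the genuine structure map $T(g+h\le 2h)$ through strict monoidality of $T$, and keeping the orientation of the naturality square straight. Once that bookkeeping is settled, the remainder collapses to functoriality of $T$ and the triviality of composition in a poset category.
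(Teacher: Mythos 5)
Your proof is correct and follows essentially the same route as the paper's: both directions reduce to showing that a $(g,h)$-interleaving with $g\le h$ can be promoted to an $(h,h)$-interleaving by post-composing $\phi$ with $T(g\le h)_Y$, with the verification resting on naturality of $T(g\le h)$ and the identification of its whiskerings with the structure maps $T(g+h\le 2h)$. The paper packages these identities as commutativity of subdiagrams in two larger diagrams, whereas you make them explicit, but the argument is the same.
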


\begin{proof}
We first observe that the flow interleaving distance can be written as 
\[
d^{\mathrm{Fl}}_T(X,Y) = \inf \{t \geq 0 \mid \mbox{$X$ and $Y$ are $(t,t)$-interleaved}\},
\]
where $(t,t)$-interleaved is meant in the sense of Definition \ref{defn:interleaving_actegory}. Now suppose that $X,Y \in \mathsf{X}_0$ are $(s,t)$-interleaved and, without loss of generality, that $s \leq t$. It then suffices to show that $X$ and $Y$ are $(t,t)$-interleaved. Write the diagrams for the $(s,t)$-interleaving as
    \[
    \begin{tikzcd}
    X \ar[rr, "T(0 \leq s+t)_X"] \ar[rd, swap, "\phi"] & & T_{s+t}(X) \\ 
    &  T_s(Y) \ar[ru, swap, "T_s(\psi)" ] & 
    \end{tikzcd}
    \qquad 
    \begin{tikzcd}
    Y \ar[rr, "T(0 \leq t +s)_Y"] \ar[rd, swap, "\psi"] & &T_{t+s}(Y) \\ 
    &  T_t(X) \ar[ru, swap, "T_t(\phi)" ] & 
    \end{tikzcd}
    \]
We then have the following commutative diagrams
\[\begin{tikzcd}
	X && {T_{s+t}(X)} && {T_{t+t}(X)} \\
	& {T_s(Y)} \\
	&& {T_t(Y)}
	\arrow["{T(0 \leq s + t)_X}"', from=1-1, to=1-3]
	\arrow["\phi"', from=1-1, to=2-2]
	\arrow["{T_s(\psi)}"', from=2-2, to=1-3]
	\arrow["{T(s \leq t)_Y}"', from=2-2, to=3-3]
	\arrow["{T(s+t \leq t+t)_X}"', from=1-3, to=1-5]
	\arrow["{T_t(\psi)}"', from=3-3, to=1-5]
	\arrow["{T(0 \leq t + t)_X}", curve={height=-24pt}, from=1-1, to=1-5]
\end{tikzcd}\]
\[\begin{tikzcd}
	Y &&&& {T_{t+t}(Y)} \\
	&&& {T_{t+s}(Y)} \\
	&& {T_t(X)}
	\arrow["{T_t(\phi)}"', from=3-3, to=2-4]
	\arrow["{T_t(T(s \leq t)_Y)}"', from=2-4, to=1-5]
	\arrow["\psi"', from=1-1, to=3-3]
	\arrow["{T(0 \leq t + s)_Y}"{description}, from=1-1, to=2-4]
	\arrow["{T(0 \leq t + t)_Y}", from=1-1, to=1-5]
\end{tikzcd}\]
and the perimeters of these diagrams give a $(t,t)$-interleaving of $X$ and $Y$. 
\end{proof}

It is shown in \cite{de2018theory} that many familiar metrics, such as Hausdorff distance and $\ell_\infty$ distance between scalar functions defined on a common space, can be realized as flow interleaving distances for appropriately chosen (strict or lax) monoidal functors. These results show that the examples of \cite{de2018theory} are, in turn, realized as 2-functor interleaving distances, at least in the strict case. The same result in the lax setting can also be proved by a similar argument, which we omit for the sake of simplicity.

\subsection{Interleaving Distance Associated to a 2-Functor into $\cat{Cat}$}\label{sec:2-functor_interleaving}
Although the focus of this paper is the $\cat{G}$-interleaving distance construction, one can easily generalize this construction for input data consisting of a 2-functor of the form $\Delta:\cat{C} \to \cat{Cat}$, where $\cat{C}$ is an arbitrary 2-category. We believe that it is conceptually simpler to show that these more general interleaving distances are extended pseudometrics, and from this we derive Theorem \ref{thm:actegory_interleaving} as an immediate corollary.

Suppose that $\Delta:\cat{C} \to \cat{Cat}$ is a 2-functor. To make the 2-functor conditions in this setting more explicit, we have:
\begin{enumerate}
    \item For all $A \in \cat{C}_0$, $\Delta(A)$ is a small category.
    \item For all $A \xrightarrow{f} B$, there is a functor $\Delta_f:\Delta(A) \to \Delta(B)$. 
    \item For each $f,g \in \cat{C}_1(A,B)$ and $\alpha \in \cat{C}_2(f,g)$, we have a natural transformation $\Delta(\alpha)$. It is assumed that, for all $A,B \in \cat{C}_0$, $\Delta$ defines a functor $\cat{C}(A,B) \to \cat{C}(\Delta(A),\Delta(B))$. For $f,g \in \cat{C}_1(A,B)$, naturality of $\Delta(\alpha)$ means that for each $X \in \Delta(A)_0$, we have a morphism given by the \emph{component of $\Delta(\alpha)$ at $X$},
    \[
    \Delta(\alpha)_X: \Delta_f(X) \to \Delta_g(X)
    \]
    such that, for every morphism $X \xrightarrow{w} Y$ in $\Delta(A)_1$, the following diagram commutes:
    \begin{equation}\label{eqn:natural_transformation_diagram_cat}
    \begin{tikzcd}[row sep=scriptsize, column sep=scriptsize]
    \Delta_f(X) \ar[rr, "\Delta(\alpha)_X"] \ar[d,swap,"\Delta_f(w)"] &  & \Delta_g(X)\ar[d, "\Delta_g(w)"] \\ 
    \Delta_f(Y) \ar[rr, "\Delta(\alpha)_Y"] &&  \Delta_g(X)
    \end{tikzcd}
    \end{equation}
    
\end{enumerate} 
Points (1) and (2) are assumed to give a functor of underlying 1-categories and the functor of (3) is assumed to preserve horizontal compositions and units. Preservation of horizontal compositions, in this context, means that for every $f,g:A \to B$ and $h,k:B \to C$ in $\cat{C}_1$ and $f \xRightarrow{\alpha} g$ and $h \xRightarrow{\beta} k$ in $\cat{C}_2$, and for all $X \in \Delta(A)_0$, we have 
\begin{equation}\label{eqn:horizontal_composition_2_functor}
\Delta(\beta \bullet \alpha)_X = \Delta(\beta)_{\Delta_g(X)}\Delta_h(\Delta(\alpha)_X) = \Delta_k(\Delta(\alpha)_X)\Delta(\beta)_{\Delta_f(X)},
\end{equation}
as this is just  \eqref{eqn:horizontal_composition_nat_trans} in different notation. 

We now use the data of a 2-functor $\Delta: \cat{C} \to \cat{Cat}$ to generalize the notion of $\cat{G}$-interleaving distance (Definition \ref{def:interleaving_distance_actegory}). We begin with some preliminary definitions.

\begin{defn}\label{defn:object_image}
        Let $\Delta:\cat{C} \to \cat{Cat}$ be a 2-functor. Define the \emph{object-image of $\Delta$} to be
    \[
    \mathrm{Im}(\Delta) := \bigcup_{A \in \cat{C}_0} \Delta(A)_0.
    \]
This coincides with the collection of objects of the \emph{join} of the categories $\Delta(A)$ \cite[1.2.8]{Lurie2009}, i.e., $$\mathrm{Im}(\Delta) = \left(\underset{A \in \cat{C}_0}{\bigstar} \Delta(A)\right)_0.$$
\end{defn}

\begin{defn}[Interleaving]\label{defn:interleaving}
    Let $\Delta: \cat{C} \to \cat{Cat}$ be a 2-functor and let $X,Y \in \mathrm{Im}(\Delta)$, so that $X \in \Delta(A)_0$ and $Y \in \Delta(B)_0$ for some $A, B \in \cat{C}_0.$ For morphisms $g \in \cat{C}_1(B, A)$  and $h \in \cat{C}_1(A,B)$, we say that $X$ and $Y$ are $(g, h)$\emph{-interleaved} if there exist a pair of morphisms $ \phi: X \to \Delta_g(Y) \in \Delta(A)_1$, $ \psi: Y \to \Delta_h(X) \in \Delta(B)_1$ and a pair of 2-morphisms $\alpha: 1_A \Rightarrow gh$, $\beta: 1_B \Rightarrow hg \in \cat{C}_2$ such that the following diagrams commute: 
    \[
    \begin{tikzcd}
    X \ar[rr, "\Delta(\alpha)_X"] \ar[rd, swap, "\phi"] & & \Delta_{gh}(X) \\ 
    &  \Delta_g(Y) \ar[ru, swap, "\Delta_g(\psi)" ] & 
    \end{tikzcd}
    \qquad 
    \begin{tikzcd}
    Y \ar[rr, "\Delta(\beta)_Y"] \ar[rd, swap, "\psi"] & &\Delta_{hg}(Y) \\ 
    &  \Delta_h(X) \ar[ru, swap, "\Delta_h(\phi)" ] & 
    \end{tikzcd}
    \]
    In this case, we say that the pair $(\phi,\psi)$ is a \emph{$(g,h)$-interleaving of $X$ and $Y$}.
\end{defn}

\begin{defn}[Lawvere Weight]\label{def:lawvere_weight}
    Let $\cat{C}$ be a (1 or 2)-category. A \emph{Lawvere weight} on $\cat{C}$ is a function $\weight:\cat{C}_1 \to \R \cup \{\infty\}$ satisfying the following axioms:
    \begin{enumerate}
        \item for all $g \in \cat{C}_1(A,B)$, $\weight(g) \geq 0$ and $\weight(g) = 0$ if $A = B$ and $g = 1_A$;
        \item for all composable $f$ and $g$ in $\cat{C}_1$, 
        \[
        \weight(gf) \leq \weight(g) + \weight(f).
        \]
    \end{enumerate}
    A (1 or 2)-category endowed with a weight will be called a \emph{weighted (1 or 2)-category}.
\end{defn}

\begin{rem}\label{rem:induced_lawvere_weight}
    Let $\cat{G}$ be a monoidal category and $\weight$ a monoidal weight (Definition \ref{def:monoidal_weight}). Let $\cat{B}\cat{G}$ be the 2-category obtained by delooping (Example \ref{ex:delooping}). There is a natural Lawvere weight on $\cat{B}\cat{G}$ induced via $\cat{B}\cat{G}_1 = \cat{G}_0$.
\end{rem}

\begin{rem}\label{rem:lawvere_weight}
    Said differently, a Lawvere weight on $\cat{C}$ is an enrichment of (the underlying 1-category of) $\cat{C}$ over the monoidal category whose objects are elements of $[0,\infty]$, such that there exists a morphism $s \rightarrow t$ if and only if $s \leq t$, and with monoidal structure given by addition. Lawvere first considered structures of this form in the influential paper \cite{lawvere1973metric}, where he showed that the notion of a category endowed with a Lawvere weight is equivalent to the notion of an extended quasi-pseudo-metric space (a class $\set{X}$ endowed with a kernel $d:\set{X} \times \set{X} \to \R \cup \{\infty\}$ satisfying $d(x,x) = 0$ and the triangle inequality), also known as a \emph{Lawvere metric space}.
    
    The terminology \emph{weight} is not entirely standard; the structure described in Definition \ref{def:lawvere_weight} has been alternatively referred to as a \emph{metric} or \emph{norm} \cite{lawvere1973metric} or \emph{length} \cite{neeman2020metrics} on the category. The \emph{weight} terminology is used in, for example,  \cite{bubenik2017interleaving}.
\end{rem}

\begin{defn}[2-Functor Interleaving Distance]\label{def:interleaving_distance}
    Let $\Delta: \cat{C} \to \cat{Cat}$ be a 2-functor and $\weight$ a Lawvere weight on $\cat{C}$. 
    The \emph{2-functor interleaving distance} determined by this data is the function
    \[
    d_\Delta = d_{\Delta,\weight}:\mathrm{Im}(\Delta) \times \mathrm{Im}(\Delta) \to \R \cup \{\infty\}
    \]
    defined by 
    \[
    d_\Delta(X,Y) := \inf \{\max\{\weight(g),\weight(h)\} \mid \mbox{$X$ and $Y$ are $(g,h)$-interleaved}\}.
    \]
\end{defn}

\begin{thm}\label{thm:interleaving_distance}
    Let $\Delta:\cat{C} \to \cat{Cat}$ be a 2-functor and $\weight$ a Lawvere weight on $\cat{C}$. The 2-functor interleaving distance $d_\Delta$ defines an extended pseudometric on $\mathrm{Im}(\Delta)$. 
\end{thm}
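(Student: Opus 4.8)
The plan is to verify the four defining properties of an extended pseudometric directly from Definitions \ref{defn:interleaving} and \ref{def:interleaving_distance}, with the triangle inequality being the only substantial point. Nonnegativity is immediate: every $\weight(g)$ is $\geq 0$ by axiom (1) of Definition \ref{def:lawvere_weight}, so each quantity $\max\{\weight(g),\weight(h)\}$ in the infimum is nonnegative, and when no interleaving exists the infimum over the empty set is $+\infty$ (this is the ``extended'' clause). For $d_\Delta(X,X)=0$ with $X\in\Delta(A)_0$, I would exhibit the trivial interleaving: take $g=h=1_A$, $\phi=\psi=1_X$, and $\alpha=\beta=1_{1_A}$. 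Since $\Delta$ preserves identities, $\Delta_{1_A}=1_{\Delta(A)}$ and $\Delta(1_{1_A})_X=1_X$, so both triangles of Definition \ref{defn:interleaving} collapse to $1_X=1_X$; as $\weight(1_A)=0$, this forces $d_\Delta(X,X)=0$. Symmetry is formal: if $(\phi,\psi)$ with $\alpha,\beta$ is a $(g,h)$-interleaving of $X$ and $Y$, then $(\psi,\phi)$ with $\beta,\alpha$ is an $(h,g)$-interleaving of $Y$ and $X$, and since $\max$ is symmetric the two infima agree.

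The substance is the triangle inequality. Given $X\in\Delta(A)_0$, $Y\in\Delta(B)_0$, $Z\in\Delta(C)_0$, a $(g,h)$-interleaving $(\phi,\psi)$ of $X,Y$ (with $g\in\cat{C}_1(B,A)$, $h\in\cat{C}_1(A,B)$, $\alpha:1_A\Rightarrow gh$, $\beta:1_B\Rightarrow hg$) and a $(g',h')$-interleaving $(\phi',\psi')$ of $Y,Z$ (with $g'\in\cat{C}_1(C,B)$, $h'\in\cat{C}_1(B,C)$, $\alpha':1_B\Rightarrow g'h'$, $\beta':1_C\Rightarrow h'g'$), I would construct a $(gg',\,h'h)$-interleaving of $X$ and $Z$ by setting
\[
\Phi := \Delta_g(\phi')\circ\phi : X \to \Delta_{gg'}(Z), \qquad \Psi := \Delta_{h'}(\psi)\circ\psi' : Z \to \Delta_{h'h}(X),
\]
using $\Delta_{gg'}=\Delta_g\Delta_{g'}$ and $\Delta_{h'h}=\Delta_{h'}\Delta_h$, together with the composite $2$-morphisms $(g\bullet\alpha'\bullet h)\circ\alpha : 1_A\Rightarrow gg'h'h$ and $(h'\bullet\beta\bullet g')\circ\beta' : 1_C\Rightarrow h'hgg'$ (vertical composites of whiskerings). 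The weight estimate is then routine: subadditivity (axiom (2)) gives $\weight(gg')\leq\weight(g)+\weight(g')$ and $\weight(h'h)\leq\weight(h)+\weight(h')$, and combining these with the elementary inequality $\max\{a+b,c+d\}\leq\max\{a,c\}+\max\{b,d\}$ yields $\max\{\weight(gg'),\weight(h'h)\}\leq\max\{\weight(g),\weight(h)\}+\max\{\weight(g'),\weight(h')\}$. A standard $\varepsilon$-argument over the two infima then delivers $d_\Delta(X,Z)\leq d_\Delta(X,Y)+d_\Delta(Y,Z)$.

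The main obstacle is checking that the composite triangles commute, i.e.\ that $\Delta_{gg'}(\Psi)\circ\Phi=\Delta\big((g\bullet\alpha'\bullet h)\circ\alpha\big)_X$ and its mirror. I would carry this out by a diagram chase using exactly three facts about $\Delta$: functoriality of each $\Delta_g$, the identity $\Delta_{gg'}=\Delta_g\Delta_{g'}$ on $1$-morphisms, and preservation of $2$-categorical composition. Concretely, expanding $\Delta_{gg'}(\Psi)\circ\Phi$ and repeatedly pulling $\Delta_g$ outside the composites, then applying the first interleaving relation $\Delta_{g'}(\psi')\circ\phi'=\Delta(\alpha')_Y$, the left-hand side reduces to $\Delta_g\big(\Delta_{g'h'}(\psi)\circ\Delta(\alpha')_Y\big)\circ\phi$. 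On the right-hand side, preservation of vertical composition together with the whiskering form of \eqref{eqn:horizontal_composition_2_functor} gives $\Delta\big((g\bullet\alpha'\bullet h)\circ\alpha\big)_X=\Delta_g\big(\Delta(\alpha')_{\Delta_h(X)}\big)\circ\Delta(\alpha)_X$, and substituting $\Delta(\alpha)_X=\Delta_g(\psi)\circ\phi$ rewrites this as $\Delta_g\big(\Delta(\alpha')_{\Delta_h(X)}\circ\psi\big)\circ\phi$. Thus the whole identity collapses to the single claim
\[
\Delta_{g'h'}(\psi)\circ\Delta(\alpha')_Y = \Delta(\alpha')_{\Delta_h(X)}\circ\psi,
\]
which is precisely the naturality square \eqref{eqn:natural_transformation_diagram_cat} for the natural transformation $\Delta(\alpha'):1_{\Delta(B)}\Rightarrow\Delta_{g'h'}$ evaluated at the morphism $\psi:Y\to\Delta_h(X)$. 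The mirror triangle is handled identically with the roles of the two interleavings and of $\beta,\beta'$ interchanged. I expect this pasting argument to be the only delicate part; once it is in place, the estimate above completes the proof, and Theorem \ref{thm:actegory_interleaving} follows as the special case $\cat{C}=\deloop\cat{G}$ via Proposition \ref{prop:monoidal_functors_and_2_functors} and Remark \ref{rem:induced_lawvere_weight}.
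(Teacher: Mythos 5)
Your proposal is correct and follows essentially the same route as the paper: the same composite morphisms $\Delta_g(\phi')\circ\phi$ and $\Delta_{h'}(\psi)\circ\psi'$, the same whiskered $2$-morphisms $(g\bullet\alpha'\bullet h)\circ\alpha$ (which the paper factors out as Lemma \ref{lem:interleavability}), and the same weight estimate via subadditivity and the $\max$ inequality. Your explicit reduction of the commutativity check to the naturality square \eqref{eqn:natural_transformation_diagram_cat} for $\Delta(\alpha')$ at $\psi$ is exactly the step the paper leaves implicit in the phrase ``commutativity of the various subdiagrams.''
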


The proof will use a technical lemma.

\begin{lem}\label{lem:interleavability}
    For any 2-functor $\Delta: \cat{C} \to \cat{Cat}$, the following condition holds. Suppose $h,k:A \to B$ and $g,\ell: B \to A$ are 1-morphisms in $\cat{C}_1$ and that there exist 2-morphisms $1_A \xRightarrow{\alpha} gh$ and $1_B \xRightarrow{\beta} k\ell$ in $\cat{C}_2$; i.e., we have the diagram:
        \begin{equation}\label{eqn:interleavability_diagram}
        \begin{tikzcd}
	A && A \\
	B && B
	\arrow[""{name=0, anchor=center, inner sep=0}, "{1_A}"{pos=0.6}, curve={height=-12pt}, from=1-1, to=1-3]
	\arrow[""{name=1, anchor=center, inner sep=0}, "gh"', curve={height=6pt}, from=1-1, to=1-3]
	\arrow[""{name=2, anchor=center, inner sep=0}, "k\ell", curve={height=-6pt}, from=2-1, to=2-3]
	\arrow[""{name=3, anchor=center, inner sep=0}, "{1_B}"', curve={height=12pt}, from=2-1, to=2-3]
	\arrow["\ell"', curve={height=6pt}, from=2-1, to=1-1]
	\arrow["h"', curve={height=6pt}, from=1-1, to=2-1]
	\arrow["k", curve={height=-6pt}, from=1-3, to=2-3]
	\arrow["g", curve={height=-6pt}, from=2-3, to=1-3]
	\arrow["\alpha", shorten <=2pt, shorten >=2pt, Rightarrow, from=0, to=1]
	\arrow["\beta"', shorten <=2pt, shorten >=2pt, Rightarrow, from=3, to=2]
\end{tikzcd}
    \end{equation}
    Then there must be a 2-morphism $\gamma:1_A \Rightarrow gk\ell h$
\[\begin{tikzcd}
	A && A
	\arrow[""{name=0, anchor=center, inner sep=0}, "{1_A}"{description, pos=0.3}, from=1-1, to=1-3]
	\arrow[""{name=1, anchor=center, inner sep=0}, "gh"', curve={height=18pt}, from=1-1, to=1-3]
	\arrow[""{name=2, anchor=center, inner sep=0}, "{gk\ell h}", curve={height=-18pt}, from=1-1, to=1-3]
	\arrow["{\;\exists \; \gamma}"'{pos=0.4}, shorten <=2pt, shorten >=2pt, Rightarrow, from=0, to=2]
	\arrow["{\;\alpha}", shorten <=2pt, shorten >=2pt, Rightarrow, from=0, to=1]
\end{tikzcd}\]
such that, for any object $X \in \Delta(A)_0$,
    \begin{equation}\label{eqn:interleavability}
\Delta_g\big(\Delta(\beta)_{\Delta_h(X)}\big)  \Delta(\alpha)_X = \Delta(\gamma)_X;
    \end{equation}
    that is, the following diagram commutes:
\[\begin{tikzcd}
	{X=\Delta_{1_A}(X)} && {\Delta_{gk\ell h}(X) = \Delta_g \Delta_{k\ell} \Delta_h(X)} \\
	& {\Delta_{gh}(X) = \Delta_g \Delta_h(X)}
	\arrow["{\Delta(\alpha)_X}"', from=1-1, to=2-2]
	\arrow["{\Delta(\gamma)_X}", from=1-1, to=1-3]
	\arrow["{\Delta_g\big(\Delta(\beta)_{\Delta_h(X)}\big)}"', from=2-2, to=1-3]
\end{tikzcd}\]
\end{lem}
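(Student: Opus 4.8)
The plan is to build the required 2-morphism $\gamma$ explicitly out of $\alpha$ and $\beta$ using the composition operations of $\cat{C}$, and then to obtain the component identity \eqref{eqn:interleavability} by pushing this construction through $\Delta$ and invoking the fact (Definition \ref{def:2-functor}) that a 2-functor preserves both vertical and horizontal composition, as well as identities. The key structural observation is that $gh = g\,1_B\,h$, so that $\beta: 1_B \Rightarrow k\ell$ can be whiskered into the ``middle'' of $gh$ to turn it into $gk\ell h$; everything else is formal.

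Concretely, I would first form the double whiskering $g \bullet \beta \bullet h := (1_g \bullet \beta) \bullet 1_h$. By the definition of horizontal composition in Definition \ref{defn:2cat} (and associativity of $\bullet$), this is a 2-morphism $g\,1_B\,h \Rightarrow g\,(k\ell)\,h$, that is, $gh \Rightarrow gk\ell h$, where one checks the handedness: $\beta$ is whiskered on the left by $g: B \to A$ and on the right by $h: A \to B$. I would then set $\gamma := (g \bullet \beta \bullet h)\,\alpha$, the vertical composite $1_A \xRightarrow{\alpha} gh \xRightarrow{g \bullet \beta \bullet h} gk\ell h$. This is a 2-morphism $\gamma: 1_A \Rightarrow gk\ell h$ of the stated type, and by construction it factors through $\alpha$, matching the top diagram of the statement.

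For the component identity, I would apply $\Delta$ and use preservation of vertical composition to write $\Delta(\gamma) = \Delta(g \bullet \beta \bullet h)\,\Delta(\alpha)$, which is computed componentwise as $\Delta(\gamma)_X = \Delta(g \bullet \beta \bullet h)_X\,\Delta(\alpha)_X$. It then remains to identify the first factor. Since $\Delta$ preserves horizontal composition and identities, $\Delta(g \bullet \beta \bullet h) = 1_{\Delta_g} \bullet \Delta(\beta) \bullet 1_{\Delta_h}$, namely the whiskering of the natural transformation $\Delta(\beta)$ by the functors $\Delta_g$ and $\Delta_h$ in $\cat{Cat}$. Evaluating this whiskering at $X$ via the horizontal-composition formula \eqref{eqn:horizontal_composition_nat_trans} (equivalently, the whiskering description of Remark \ref{rem:whiskering}), the right whiskering by $\Delta_h$ reindexes the component to $\Delta(\beta)_{\Delta_h(X)}$, and the left whiskering by $\Delta_g$ applies the functor $\Delta_g$ to it, yielding $\Delta(g \bullet \beta \bullet h)_X = \Delta_g\big(\Delta(\beta)_{\Delta_h(X)}\big)$. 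Combining gives $\Delta(\gamma)_X = \Delta_g\big(\Delta(\beta)_{\Delta_h(X)}\big)\,\Delta(\alpha)_X$, which is precisely \eqref{eqn:interleavability}.

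I do not anticipate a genuine obstacle: once one recognizes the factorization $gh = g\,1_B\,h$, the construction of $\gamma$ is essentially forced, and the verification is entirely formal bookkeeping with the 2-functor axioms. The only points demanding care are the order and handedness of the two whiskerings---ensuring $\beta$ is whiskered by $g$ on the left and $h$ on the right rather than the reverse---and confirming that the resulting component nests as $\Delta_g\big(\Delta(\beta)_{\Delta_h(X)}\big)$ exactly, in agreement with the composition conventions fixed in Definition \ref{defn:2cat} and Example \ref{ex:2_category_small_categories}.
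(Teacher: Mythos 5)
Your proposal is correct and follows essentially the same route as the paper: both construct $\gamma = (g \bullet \beta \bullet h) \circ \alpha$ by whiskering $\beta$ into the middle of $gh = g\,1_B\,h$ and then verify \eqref{eqn:interleavability} by pushing through $\Delta$ and unpacking the horizontal-composition formula \eqref{eqn:horizontal_composition_2_functor}. The only difference is that you spell out the componentwise verification that the paper leaves as "unpacking," and your bookkeeping of the whiskering handedness is accurate.
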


\begin{proof}
    Suppose that we have the diagram \eqref{eqn:interleavability}. Consider the diagram
    \[\begin{tikzcd}
	A && B && B && A
	\arrow["g"{description}, from=1-5, to=1-7]
	\arrow["h"{description}, from=1-1, to=1-3]
	\arrow[""{name=0, anchor=center, inner sep=0}, "{k\ell}"{description, pos=0.3}, curve={height=-12pt}, from=1-3, to=1-5]
	\arrow[""{name=1, anchor=center, inner sep=0}, "{1_B}"{description, pos=0.3}, curve={height=12pt}, from=1-3, to=1-5]
	\arrow["{gk\ell h}"{description}, curve={height=-30pt}, from=1-1, to=1-7]
	\arrow[""{name=2, anchor=center, inner sep=0}, "gh"{description, pos=0.7}, curve={height=25pt}, from=1-1, to=1-7]
	\arrow[""{name=3, anchor=center, inner sep=0}, "{1_A}"{description, pos=0.7}, curve={height=50pt}, from=1-1, to=1-7]
	\arrow["\beta"', shorten <=2pt, shorten >=2pt, Rightarrow, from=1, to=0]
	\arrow["\alpha"', shorten <=2pt, shorten >=2pt, Rightarrow, from=3, to=2]
\end{tikzcd}\]
    Then we construct $\gamma:1_A \Rightarrow gk\ell h$ by whiskering (Remark \ref{rem:whiskering}) as 
    \[
    \gamma = (g \bullet \beta \bullet h) \circ \alpha.
    \]
    Equation \eqref{eqn:interleavability} follows by the assumption that $\Delta$ is a 2-functor and by unpacking equation \eqref{eqn:horizontal_composition_2_functor}.
\end{proof}

\begin{proof}[Proof of Theorem \ref{thm:interleaving_distance}]
Symmetry of $d_\Delta$ is obvious. To see that $X=Y$ implies $d_\Delta(X,Y) = 0$, suppose that $X=Y \in \Delta(A)$ for $A \in \cat{C}_0$. Then $X$ and $Y$ are $(1_A,1_A)$-interleaved, since
\[
\begin{tikzcd}
X \ar[rr, "\Delta(1_{1_A})_X = 1_X"] \ar[rd, swap, "1_X"] & & \Delta_{1_A}\Delta_{1_A}X = X \\ 
&  \Delta_{1_A}(Y) = X \ar[ru, swap, "\Delta_{1_A}(1_X) = 1_X" ] & 
\end{tikzcd}
\]
commutes. Therefore
\[
d_\Delta(X,Y) \leq \weight(1_A) = 0.
\]

It only remains to check the triangle inequality. Let $X,Y,Z \in \mathrm{Im}(\Delta)$, say $X \in \Delta(A)_0$, $Y \in \Delta(B)_0$ and $Z \in \Delta(C)_0$. Suppose that $X$ and $Y$ are $(g,h)$-interleaved and $Y$ and $Z$ are $(k,\ell)$-interleaved. In particular, choose morphisms 
\begin{align*}
\tau &\in \Delta(A)_1(X,\Delta_g(Y)) \qquad \chi \in \Delta(B)_1(Y,\Delta_h(X)) \\
\mu &\in \Delta(B)_1(Y,\Delta_k(Z)) \qquad
\nu \in \Delta(C)_1(Z,\Delta_\ell(Y))
\end{align*}
and 2-morphisms
\begin{align*}
\alpha &\in \cat{C}_2(e_A,gh) \qquad
\beta \in \cat{C}_2(e_B,hg) \\
\gamma &\in \cat{C}_2(e_B,k\ell) \qquad
\delta \in \cat{C}_2(e_C,\ell k)
\end{align*}
such that the following diagrams commute:
\[
\begin{tikzcd}
X \ar[rr, "\Delta(\alpha)_X"] \ar[rd, swap, "\tau"] & & \Delta_{gh}X \\ 
&  \Delta_g Y \ar[ru, swap, "\Delta_g(\chi)" ] & 
\end{tikzcd}
\qquad 
\begin{tikzcd}
Y \ar[rr, "\Delta(\beta)_Y"] \ar[rd, swap, "\chi"] & & \Delta_{hg}Y \\ 
&  \Delta_h X \ar[ru, swap, "\Delta_h(\tau)" ] & 
\end{tikzcd}
\]
\[
\begin{tikzcd}
Y \ar[rr, "\Delta(\gamma)_Y"] \ar[rd, swap, "\mu"] & & \Delta_{k\ell} Y \\ 
&  \Delta_k Z \ar[ru, swap, "\Delta_k(\nu)" ] & 
\end{tikzcd}
\qquad 
\begin{tikzcd}
Z \ar[rr, "\Delta(\delta)_Z"] \ar[rd, swap, "\nu"] & & \Delta_{\ell k} Z \\ 
&  \Delta_\ell Y \ar[ru, swap, "\Delta_\ell(\mu)" ] & 
\end{tikzcd}
\]
Now consider the diagram
\[\begin{tikzcd}
	X && {\Delta_{gh}X} && {\Delta_g \Delta_{k \ell} \Delta_h X = \Delta_{gk} \Delta_{\ell h} X} \\
	& {\Delta_g Y} && {\Delta_g \Delta_k \Delta_\ell Y} \\
	&& {\Delta_g \Delta_k Z = \Delta_{gk} Z}
	\arrow["\tau"', from=1-1, to=2-2]
	\arrow["{\Delta_g(\chi)}"', from=2-2, to=3-3]
	\arrow["{\Delta(\alpha)_X}", from=1-1, to=1-3]
	\arrow["{\Delta_g \Delta_k \nu}"', from=3-3, to=2-4]
	\arrow["{\Delta_{gk\ell}(\chi)}"', from=2-4, to=1-5]
	\arrow["{\Delta_g(\Delta(\gamma)_Y)}", from=2-2, to=2-4]
	\arrow["{\Delta_g(\Delta(\gamma)_{\Delta_h X})}", from=1-3, to=1-5]
	\arrow["{\Delta_g(\chi)}", from=2-2, to=1-3]
	\arrow["{\Delta(1_A \xRightarrow{\varepsilon} gk\ell h)_X}", curve={height=-30pt}, dashed, from=1-1, to=1-5]
\end{tikzcd}\]
The existence of the top (dotted) arrow follows from Lemma \ref{lem:interleavability}. Applying Lemma \ref{lem:interleavability},  together with commutativity of the various subdiagrams, also implies that the entire diagram commutes. The perimeter of this diagram is one of the necessary diagrams to conclude that $X$ and $Z$ are $(gk,\ell h)$-interleaved. The construction of the other diagram is similar. 

We have shown that if $X$ and $Y$ are $(g,h)$-interleaved and $Y$ and $Z$ are $(k,\ell)$-interleaved, then $X$ and $Z$ are $(gk,\ell h)$-interleaved. We also have
\[
\max\{\weight(gk), \weight(\ell h)\} \leq \max\{\weight(g) + \weight(k), \weight(\ell) + \weight(h)\} \leq \max\{\weight(g), \weight(h)\} + \max\{\weight(k), \weight(\ell)\}.
\]
The triangle inequality for $d_\Delta$ follows easily and this concludes the proof.
\end{proof}

The next result follows by checking the definitions.

\begin{prop}\label{prop:G_interleaving_2_functor_interleaving}
    Let $T:\cat{G} \to \cat{End}(\cat{X})$ be a monoidal functor and $\weight$ a monoidal weight on $\cat{G}$. Let $\cat{B}T:\cat{B}\cat{G} \to \cat{Cat}$ be the associated 2-functor from Proposition \ref{prop:monoidal_functors_and_2_functors}. Abusing notation, let $\weight$ denote the induced Lawvere weight on $\cat{B}\cat{G}$ (Remark \ref{rem:induced_lawvere_weight}). Then the interleaving distances on $\cat{X}_0 = \mathrm{Im}(\cat{B}T)$ agree, $d_{T,\weight} = d_{\cat{B}T,\weight}$.
\end{prop}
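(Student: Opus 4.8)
The plan is to show $d_{T,\weight} = d_{\cat{B}T,\weight}$ by unwinding the definitions and verifying that the two notions of $(g,h)$-interleaving (Definition \ref{defn:interleaving_actegory} for the actegory, and Definition \ref{defn:interleaving} for the 2-functor) literally coincide once we translate along the delooping dictionary of Example \ref{ex:delooping} and Proposition \ref{prop:monoidal_functors_and_2_functors}. Since both distances are defined as the same infimum, $\inf\{\max\{\weight(g),\weight(h)\}\}$, over all pairs $(g,h)$ witnessing an interleaving, it suffices to check that the underlying sets of admissible pairs agree and that $\weight$ assigns the same value in both settings; the latter is immediate since the induced Lawvere weight on $\cat{B}\cat{G}$ is defined precisely via the identification $\cat{B}\cat{G}_1 = \cat{G}_0$ (Remark \ref{rem:induced_lawvere_weight}), so it returns $\weight(g)$ on the 1-morphism corresponding to the object $g \in \cat{G}_0$.

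First I would fix the translation. The 2-category $\cat{B}\cat{G}$ has a single object $\star$, so in the 2-functor setting we necessarily have $A = B = \star$, and $\mathrm{Im}(\cat{B}T) = (\cat{B}T)(\star)_0 = \cat{X}_0$, matching the domain of $d_{T,\weight}$. Under the delooping, a 1-morphism $g \in \cat{B}\cat{G}_1(\star,\star)$ is an object $g \in \cat{G}_0$, composition $gh$ is the tensor product $g\otimes h = gh$, the identity $1_\star$ is the monoidal unit $e$, and a 2-morphism $\alpha: 1_\star \Rightarrow gh$ is a morphism $\alpha: e \to gh$ in $\cat{G}$. The 2-functor $\cat{B}T$ acts as $\cat{B}T(\star) = \cat{X}$, $(\cat{B}T)_g = T_g$, and $(\cat{B}T)(\alpha) = T(\alpha)$, by the construction in Proposition \ref{prop:monoidal_functors_and_2_functors}.

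Next I would place the two interleaving diagrams side by side. In Definition \ref{defn:interleaving_actegory}, a $(g,h)$-interleaving of $X,Y$ consists of $\phi: X \to T_g(Y)$, $\psi: Y \to T_h(X)$ in $\cat{X}$ and $\alpha: e \to gh$, $\beta: e \to hg$ in $\cat{G}$ making the stated triangles commute. Applying the translation above, this is exactly a $\phi: X \to (\cat{B}T)_g(Y)$, $\psi: Y \to (\cat{B}T)_h(X)$ in $\cat{X} = (\cat{B}T)(\star)$ together with $\alpha: 1_\star \Rightarrow gh$, $\beta: 1_\star \Rightarrow hg$ in $\cat{B}\cat{G}_2$, and the two commuting triangles of Definition \ref{defn:interleaving_actegory} become, term by term, the two commuting triangles of Definition \ref{defn:interleaving} (the natural transformation component $(\cat{B}T)(\alpha)_X$ equals $T(\alpha)_X$, and $(\cat{B}T)_g(\psi) = T_g(\psi)$). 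Hence the set of pairs $(g,h)$ for which $X$ and $Y$ are $(g,h)$-interleaved is identical in the two frameworks.

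I do not anticipate a genuine obstacle here: the content is entirely bookkeeping, and the only point requiring a moment's care is confirming that the single-object constraint of $\cat{B}\cat{G}$ forces the objects $A,B$ appearing in Definition \ref{defn:interleaving} to both be $\star$, so that every $(g,h)$-interleaving in the 2-functor sense indeed arises from objects $g,h \in \cat{G}_0$ (with no ``extra'' interleavings made available by a richer object set). Once the bijection of admissible pairs is established and the weights are seen to agree, the two infima are literally the same expression, giving $d_{T,\weight} = d_{\cat{B}T,\weight}$ and completing the proof.
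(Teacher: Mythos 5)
Your proposal is correct and follows exactly the route the paper intends: the paper dispatches this proposition with the single remark that it ``follows by checking the definitions,'' and your argument is precisely that definition-check, carried out via the delooping dictionary of Example \ref{ex:delooping} and Proposition \ref{prop:monoidal_functors_and_2_functors}. Nothing is missing; you have simply written out the bookkeeping the paper leaves implicit.
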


 We now easily derive the metric properties of $\cat{G}$-interleaving distances claimed in Theorem \ref{thm:actegory_interleaving}.

\begin{proof}[Proof of Theorem \ref{thm:actegory_interleaving}]
    Apply Theorem \ref{thm:interleaving_distance} and Proposition \ref{prop:G_interleaving_2_functor_interleaving}.
\end{proof}

\subsection{Examples of Interleaving Distances}\label{subsect:examples_of_interleaving_distances} The main examples of interest in this paper come from variations on persistent homology; these will be treated in detail below in Section \ref{sec:interleaving_group_actions}. For now, we give a few simple examples of commonly used distances, outside the traditional realm of TDA, which can be encoded in this framework.

\subsubsection{Group Actions}\label{sect:group_actions_on_set}
As was mentioned in the introduction, an inspiration for the definition of $\cat{G}$-interleaving distance was an analogy between interleaving distances and induced metrics on homogeneous spaces. The examples presented in this subsection make precise the connection between these ideas.

Let $\set{G}$ be a group. We denote its elements as $f,g,h,\ldots$, with the identity denoted $e$, and we use multiplicative notation for the group law. This notation coincides with the notation used for monoidal categories (and 2-categories) in anticipation of the following construction of an associated monoidal category. 

\begin{defn}[Associated Monoidal Category]\label{def:associated_monoidal}
Let $\cat{G}$ be the (1-)category with objects $\cat{G}_0 = \set{G}$ and a unique morphism $\alpha_{g,h} \in \cat{G}_1(g,h)$ for every pair $g,h \in \set{G}$. This defines a monoidal category with tensor product $\otimes$ defined by $g \otimes h = gh$ and $\alpha_{f,g} \otimes \alpha_{h,k} = \alpha_{fh,gk}$. We refer to $\cat{G}$ as the \emph{monoidal category associated to $\set{G}$.}
\end{defn}

Suppose that the group $\set{G}$ acts on a set $\set{X}$ (say, from the left); write the action as $\set{G} \times \set{X} \to \set{X}:(g,x) \mapsto gx$. There is a natural groupoid (1-category whose morphisms are all invertible) associated to this action, which we define below. This is, in fact, one of the prototypical constructions of a groupoid---see \cite[Example 3]{brown1987groups} for a discussion on terminology and further references.

\begin{defn}[Action Groupoid]\label{def:action_groupoid}
    The \emph{action groupoid} associated to the action of $\set{G}$ on $\set{X}$ is the (1-)category $\cat{X}$ with $\cat{X}_0 = \set{X}$ and 
\[
\cat{X}_1(x,y) = \{\beta_{g,x,y} \mid \mbox{$g \in \set{G}$ satisfies $gx = y$}\}.
\]
That is, we add a unique 1-morphism for every group element taking $x$ to $y$ via the action. Compositions are then defined by $\beta_{h,y,z}\beta_{g,x,y} :=\beta_{hg,x,z}$. Note that if the action of $\set{G}$ is not transitive, then $\cat{X}_1(x,y)$ may be empty. However, $\cat{X}_1(x,x)$ always contains an identity morphism $\beta_{e,x,x}$ (and this is the unique morphism if and only if $x$ has  trivial stabilizer). 
\end{defn}

We define a monoidal functor $T:\cat{G} \to \cat{End}(\cat{X})$ as follows:
\begin{enumerate}
    \item $T_g:\cat{X} \to \cat{X}$ is defined on objects by $T_g(x) = gx$ and on morphisms by $T_g(\beta_{h,x,y}):= \beta_{ghg^{-1},gx,gy}$,
    \item $T(\alpha_{g,h}):T_g \Rightarrow T_h$ is defined to be the natural transformation with component at $x$ given by $T(\alpha_{g,h})_x = \beta_{hg^{-1},gx,hx}$. Then, for any $\beta_{k,x,y}:x \to y$, diagram \eqref{eqn:natural_transformation_diagram_cat} reads
    \[
    \begin{tikzcd}[row sep=scriptsize, column sep=scriptsize]
    gx \ar[rr, "\beta_{hg^{-1},gx,hx}"] \ar[d,swap,"\beta_{gkg^{-1},gx,gy}"] &  & hx \ar[d, "\beta_{hkh^{-1},hx,hy}"] \\ 
    gy \ar[rr, "\beta_{hg^{-1},gy,hy}"] &&  hy
    \end{tikzcd}
    \]
    and this diagram commutes, so that $T(\alpha_{g,h})$ is a natural transformation.
\end{enumerate}
One can check that all compositions are preserved under these definitions.

\begin{defn}[Invariant Metric]\label{def:right_invariant}
    Let $\set{G}$ be a group. A \emph{right-invariant} (respectively \emph{left-invariant}) metric on $\set{G}$ is a metric $d:\set{G} \times \set{G} \to \R$ with the property that $d(fh,gh) = d(f,g)$ (respectively, $d(hf,hg) = d(f,g)$) for all $f,g,h \in \set{G}$.
\end{defn}

In what follows, we make statements mainly for right-invariant metrics, but similar statments hold for left-invariant metrics. 

\begin{ex}\label{ex:Lie_group_right_invariant}
    Any finite-dimensional Lie group $\set{G}$ admits a right-invariant metric, defined by first choosing an inner product on its Lie algebra, extending this to a Riemannian metric via right translations, then taking $d$ to be the associated geodesic distance.
\end{ex}

\begin{prop}\label{prop:right_invariant_weight}
    Let $\set{G}$ be a group with right-invariant metric $d$ and let $\weight:\set{G} \to \R$ be the map $\weight(g) := d(e,g)$, where $e \in \set{G}$ is the identity. Then $\weight(e) = 0$ and 
    \begin{equation}\label{eqn:right_invt_weight}
    \weight(gh) \leq \weight(g) + \weight(h).
    \end{equation}
\end{prop}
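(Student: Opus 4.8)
The plan is to verify the two claimed properties directly from the definition of a right-invariant metric together with the metric axioms. The statement is elementary, so the proof will be a short computation; the only genuine content is to see where right-invariance is used.

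First I would establish $\weight(e) = 0$. Since $\weight(g) = d(e,g)$, we have $\weight(e) = d(e,e) = 0$ by the identity-of-indiscernibles axiom for the metric $d$. This requires no invariance hypothesis.

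Next I would prove the subadditivity inequality \eqref{eqn:right_invt_weight}. The key step is to rewrite $\weight(gh) = d(e, gh)$ and apply the triangle inequality for $d$ through the intermediate point $h$:
\[
\weight(gh) = d(e, gh) \leq d(e, h) + d(h, gh).
\]
Now the first term is exactly $d(e,h) = \weight(h)$. For the second term, I would invoke right-invariance of $d$: multiplying both arguments of $d(e,g)$ on the right by $h$ leaves the distance unchanged, so
\[
d(h, gh) = d(eh, gh) = d(e, g) = \weight(g).
\]
Substituting these two identities back gives $\weight(gh) \leq \weight(h) + \weight(g)$, which is the desired inequality after commuting the two summands.

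There is no real obstacle here, as the result is a standard fact about length functions induced by invariant metrics on groups; the main point to be careful about is choosing the intermediate point in the triangle inequality so that right-invariance (rather than left-invariance) is the property that applies. Had we used a left-invariant metric, I would instead route the triangle inequality through the point $g$ and use left-invariance to rewrite $d(g, gh) = d(e,h)$, yielding the analogous bound; this is the sense in which the parallel statement for left-invariant metrics referenced before Example \ref{ex:Lie_group_right_invariant} holds.
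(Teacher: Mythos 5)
Your proof is correct and follows essentially the same route as the paper: both reduce \eqref{eqn:right_invt_weight} to one application of the triangle inequality combined with right-invariance (the paper first translates $d(e,gh)$ to $d(h^{-1},g)$ and then applies the triangle inequality, while you apply the triangle inequality through $h$ first and then translate $d(h,gh)$ to $d(e,g)$; these are the same computation read in different orders). No gaps.
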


\begin{proof}
    That $\weight(e) = 0$ is obvious and \eqref{eqn:right_invt_weight} is a simple consequence of right-invariance and the other metric properties of $d$:
    \[
    \weight(gh) = d(e,gh) = d(h^{-1},g) \leq d(h^{-1},e) + d(e,g) = d(e,h) + d(e,g) = \weight(g) + \weight(h).
    \]
\end{proof}

Now suppose that $\set{G}$ is endowed with a right-invariant metric $d_\set{G}$. Then Proposition \ref{prop:right_invariant_weight} says that the function $\weight:\set{G} \to \R$ defined by $\weight(g) = d_\set{G}(e,g)$ is a monoidal weight on $\cat{G}$. The following result characterizes the associated $\cat{G}$-interleaving distance.

\begin{prop}\label{prop:group_action_on_set}
Let $\set{G}$ be a group with right-invariant metric $d_{\set{G}}$. Suppose that $\set{G}$ acts transitively on a set $\set{X}$ and let  $T:\cat{G} \to \cat{End}(\cat{X})$ be the monoidal functor defined above. Let $\weight$ be the monoidal weight on $\cat{G}$ associated to $d_\set{G}$. Then, for all $x,y \in \set{X} = \cat{X}_0$,
\begin{equation}\label{eqn:group_action_interleaving_distance}
d_{\Delta,\weight}(x,y) = \inf_{g \in \set{G}} \{d_{\set{G}}(e,g) \mid gx = y\}.
\end{equation}
\end{prop}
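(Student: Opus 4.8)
The plan is to prove the two inequalities separately, using that $\weight(g)=d_{\set{G}}(e,g)$ is a monoidal weight (Proposition~\ref{prop:right_invariant_weight}) and that right-invariance together with symmetry of $d_{\set{G}}$ yields $\weight(g^{-1})=d_{\set{G}}(e,g^{-1})=d_{\set{G}}(g,e)=\weight(g)$. Recall also that $d_{\Delta,\weight}$ here is the $\cat{G}$-interleaving distance of Definition~\ref{def:interleaving_distance_actegory}, with interleavings as in Definition~\ref{defn:interleaving_actegory}.

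For the inequality ``$\le$'', I would construct an explicit low-weight interleaving for each $\gamma\in\set{G}$ with $\gamma x=y$. Since $\gamma x=y$ is equivalent to $\gamma^{-1}y=x$, we have $T_{\gamma^{-1}}(y)=\gamma^{-1}y=x$ and $T_\gamma(x)=\gamma x=y$, so the identity maps $\phi=1_x\colon x\to T_{\gamma^{-1}}(y)$ and $\psi=1_y\colon y\to T_\gamma(x)$ are legitimate morphisms of $\cat{X}$. Taking $(g,h)=(\gamma^{-1},\gamma)$, both $gh=\gamma^{-1}\gamma$ and $hg=\gamma\gamma^{-1}$ equal $e$, so the required $2$-morphisms $\alpha\colon e\to e$ and $\beta\colon e\to e$ are identities and the structure maps $T(\alpha)_x$, $T(\beta)_y$ collapse to $1_x$, $1_y$. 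Both triangles of Definition~\ref{defn:interleaving_actegory} then commute because every arrow involved is an identity, exhibiting a $(\gamma^{-1},\gamma)$-interleaving of cost $\max\{\weight(\gamma^{-1}),\weight(\gamma)\}=\weight(\gamma)=d_{\set{G}}(e,\gamma)$. Infimizing over all admissible $\gamma$ gives $d_{\Delta,\weight}(x,y)\le\inf_{g}\{d_{\set{G}}(e,g)\mid gx=y\}$.

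For ``$\ge$'', I would extract a group element from an arbitrary interleaving. Given a $(g,h)$-interleaving, the morphism $\phi\colon x\to T_g(y)=gy$ in the action groupoid has the form $\beta_{p,x,gy}$ for some $p\in\set{G}$ with $px=gy$; then $\gamma:=g^{-1}p$ satisfies $\gamma x=g^{-1}(gy)=y$, so $\gamma$ is admissible for the right-hand side and $\inf_{g'}\{d_{\set{G}}(e,g')\mid g'x=y\}\le\weight(g^{-1}p)$. The hard part will be the accompanying weight estimate $\weight(g^{-1}p)\le\max\{\weight(g),\weight(h)\}$: subadditivity alone only gives $\weight(g^{-1}p)\le\weight(g)+\weight(p)$, with $p$ a priori uncontrolled. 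To try to close the gap I would exploit the commutativity of both interleaving triangles, which (reading off the group elements labelling the composites) forces the companion morphism $\psi=\beta_{q,y,hx}$ to satisfy $q=hp^{-1}g$, in order to tie $p$ to $g$ and $h$; combined with right-invariance of $d_{\set{G}}$, this is the relation that must be leveraged to bound $\weight(\gamma)$ by $\max\{\weight(g),\weight(h)\}$. Controlling this extracted weight—equivalently, ensuring that the interleaving morphisms $\phi,\psi$ cannot be chosen so as to undercut the genuine group displacement from $x$ to $y$—is the crux of the proof.
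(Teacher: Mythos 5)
Your ``$\leq$'' direction is correct and is essentially the paper's argument: for $\gamma$ with $\gamma x=y$, the pair $(\gamma^{-1},\gamma)$ with identity morphisms and identity $2$-morphisms is a valid interleaving of weight $\max\{\weight(\gamma^{-1}),\weight(\gamma)\}=\weight(\gamma)=d_{\set{G}}(e,\gamma)$, using right-invariance to get $\weight(\gamma)=\weight(\gamma^{-1})$.

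The ``$\geq$'' direction is where the genuine gap lies, and the route you sketch cannot close it. Writing $\phi=\beta_{p,x,gy}$ and $\psi=\beta_{q,y,hx}$, commutativity of both triangles is equivalent to the single relation $q=hp^{-1}g$, exactly as you compute; but every admissible $p$ has the form $p=g\gamma$ for some $\gamma$ with $\gamma x=y$, and then the relation reads $q=h\gamma^{-1}$, which is satisfied for \emph{every} such $\gamma$. So commutativity imposes no constraint tying $\weight(g^{-1}p)=\weight(\gamma)$ to $\max\{\weight(g),\weight(h)\}$. Concretely, for any $x\neq y$ and any $\gamma$ with $\gamma x=y$, the morphisms $\phi=\beta_{\gamma,x,y}$ and $\psi=\beta_{\gamma^{-1},y,x}$ satisfy $\psi\phi=\beta_{e,x,x}=T(\alpha_{e,e})_x$ and $\phi\psi=\beta_{e,y,y}$, hence constitute an $(e,e)$-interleaving of weight $0$; this is just the observation that all objects in a single orbit of the action groupoid are isomorphic, and isomorphic objects are always $(e,e)$-interleaved. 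The inequality $\weight(g^{-1}p)\leq\max\{\weight(g),\weight(h)\}$ that you correctly identify as the crux is therefore false for the construction as defined, and no manipulation of $q=hp^{-1}g$ will rescue it. You should also be aware that the paper's own proof of this direction is not more complete: it asserts that an arbitrary $(g,h)$-interleaving has weight at least that of ``the $(g,g^{-1})$-interleaving,'' implicitly taking for granted that the $g$ occurring in any interleaving satisfies $gy=x$, which your analysis shows is not forced by the definitions. So you have located a real obstruction (one that would need an extra hypothesis, such as a restriction on the morphisms of $\cat{X}$ or a weight condition on stabilizers, to remove), not merely a technical step left unfinished.
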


This is a rather natural metric on $\set{X}$: intuitively, $\cat{G}$-interleaving distance gives a measurement of the  weight required to move one element of the set to another via the group action (cf. Remark \ref{rem:intuition}). This was the distance \eqref{eqn:group_distance} described in the introduction as a motivation for our interleaving distance framework.

\begin{proof}
Let $x,y \in \set{X}$. Since the action of $\set{G}$ is transitive, an interleaving of $x$ and $y$ always exists. Indeed, for any $g \in \set{G}$ such that $y = gx$ we have the diagrams 
\[
\begin{tikzcd}
x \ar[rr, "T(\alpha_{e,e})_x = \beta_{e,x,x}"] \ar[rd, swap, " \beta_{e,x,x}"] & & T_g T_{g^{-1}}(x)  \\ 
&  T_g(y) = gy = x \ar[ru, swap, "\beta_{e,x,x}" ] & 
\end{tikzcd}
\qquad 
\begin{tikzcd}
y \ar[rr, "T(\alpha_{e,e})_y = \beta_{e,y,y}"] \ar[rd, swap, "\beta_{e,y,y}"] & & T_{g^{-1}} T_g (y) \\ 
&  T_{g^{-1}}(x) = g^{-1}x = y \ar[ru, swap, "\beta_{e,y,y}" ] & 
\end{tikzcd}
\]
so that $x$ and $y$ are $(g,g^{-1})$-interleaved. 

On the other hand, suppose that $x$ and $y$ are $(g,h)$-interleaved. If, without loss of generality, $d_{\set{G}}(e,g) \leq d_{\set{G}}(e,h)$, then the weight $\max\{\weight(g),\weight(h)\}$ of the $(g,h)$-interleaving is at least as large as the weight $\max\{\weight(g),\weight(g^{-1})\}$ of the $(g,g^{-1})$-interleaving. This follows because right-invariance and symmetry of $d_{\set{G}}$ implies $d_{\set{G}}(e,g^{-1}) = d_{\set{G}}(e,g)\leq d_{\set{G}}(e,h)$. The formula for interleaving distance claimed in \eqref{eqn:group_action_interleaving_distance} therefore holds. 
\end{proof}

\begin{rem}\label{rem:inverse_weight}
    Going through the proof, one sees that the requirement that $\weight$ be defined with respect to a right-invariant metric is not necessary. In fact, all that we really used is that $\weight(g) = \weight(g^{-1})$ for all $g \in \set{G}$. If $\weight$ is any monoidal weight with this property, then the proof shows that 
    \[
    d_{\Delta,\weight}(x,y) = \inf_{g \in \set{G}} \{\weight(g) \mid gx = y\}.
    \]
\end{rem}

The following two subsections give some more specific examples of this result.

\subsubsection{Homogeneous Spaces} Consider a compact connected Lie group $\set{G}$. Suppose that $\set{G}$ acts properly and transitively on a smooth manifold $\set{X}$. Then there is a diffeomorphism $\set{G}/\set{K} \to \set{X}$, where $\set{K}$ is a compact stabilizer subgroup of $\set{G}$. Suppose that $\set{G}$ is endowed with a Riemannian metric which is left $\set{G}$-invariant and right $\set{K}$-invariant (such a metric will always exist). One can then induce a Riemannian metric on $\set{X}$ by first choosing a Riemannian structure on $\set{G}/\set{K}$ so that the quotient map $\set{G} \to \set{G}/\set{K}$ is a submersion, and then transferring to $\set{X}$ via the above diffeomorphism. It is a fact\footnote{We could not find a proper reference for this ``folklore" result. Please refer to the thorough explanation in the Math StackExchange answer \cite{3236071}.} that the geodesic distance between two points $x,y \in \set{X}$ is given by
\[
d_{\set{X}}(x,y) = \inf_{g \in \set{G}} \{d_{\set{G}}(e,g) \mid gx = y\}.
\]
Therefore, we have the following corollary of Proposition \ref{prop:group_action_on_set}.

\begin{cor}\label{cor:homogeneous_spaces}
The induced geodesic distance on a homogeneous space, in the sense described above, can be expressed as a  $\cat{G}$-interleaving distance.
\end{cor}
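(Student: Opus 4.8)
The plan is to realize the corollary as a direct consequence of Proposition \ref{prop:group_action_on_set} together with the quoted folklore identity for the geodesic distance on $\set{X}$. The homogeneous space setup already furnishes all of the required data: the compact connected Lie group $\set{G}$ acts transitively on the manifold $\set{X}$, so I would form the action groupoid $\cat{X}$ with $\cat{X}_0 = \set{X}$ and the monoidal functor $T:\cat{G} \to \cat{End}(\cat{X})$ exactly as in Section \ref{sect:group_actions_on_set}. The only new ingredient is the weight, for which I would set $\weight(g) := d_{\set{G}}(e,g)$, where $d_{\set{G}}$ is the geodesic distance of the chosen left $\set{G}$-invariant (and right $\set{K}$-invariant) Riemannian metric on $\set{G}$.

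The one point requiring care is that Proposition \ref{prop:group_action_on_set} is phrased for a \emph{right}-invariant metric, whereas the Riemannian metric on a homogeneous space is naturally \emph{left} $\set{G}$-invariant. I would therefore first record the left-invariant analogue of Proposition \ref{prop:right_invariant_weight}, namely that $\weight(g) = d_{\set{G}}(e,g)$ is a monoidal weight when $d_{\set{G}}$ is left-invariant: nonnegativity and $\weight(e)=0$ are immediate, and subadditivity follows from the triangle inequality together with left-invariance, via
\[
\weight(gh) = d_{\set{G}}(e,gh) \leq d_{\set{G}}(e,g) + d_{\set{G}}(g,gh) = d_{\set{G}}(e,g) + d_{\set{G}}(e,h) = \weight(g) + \weight(h),
\]
using $d_{\set{G}}(g,gh) = d_{\set{G}}(e,h)$. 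I would also check the inversion symmetry $\weight(g^{-1}) = \weight(g)$: left-invariance gives $d_{\set{G}}(e,g^{-1}) = d_{\set{G}}(g,e)$, and metric symmetry gives $d_{\set{G}}(g,e) = d_{\set{G}}(e,g)$.

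With these two facts in hand, the inversion symmetry is precisely the hypothesis of Remark \ref{rem:inverse_weight}, so the argument proving Proposition \ref{prop:group_action_on_set} applies verbatim (it only ever used $\weight(g) = \weight(g^{-1})$, not full right-invariance) and yields
\[
d_{\Delta,\weight}(x,y) = \inf_{g \in \set{G}} \{ d_{\set{G}}(e,g) \mid gx = y \}
\]
for all $x,y \in \set{X} = \cat{X}_0$. Comparing this with the quoted identity $d_{\set{X}}(x,y) = \inf_{g \in \set{G}} \{ d_{\set{G}}(e,g) \mid gx = y \}$ shows $d_{\set{X}} = d_{\Delta,\weight}$, which is exactly the claim that the induced geodesic distance is a $\cat{G}$-interleaving distance. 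The main (and essentially only) obstacle is the left/right-invariance bookkeeping just described; everything else is a direct citation of Proposition \ref{prop:group_action_on_set} and of the folklore fact, the latter of which the paper takes as given (the right $\set{K}$-invariance of the metric enters only through that fact, as it is what guarantees the descent of the metric to the quotient, and plays no role in the interleaving argument itself).
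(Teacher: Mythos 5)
Your proposal is correct and follows essentially the same route as the paper, which simply combines Proposition \ref{prop:group_action_on_set} with the quoted folklore identity for the geodesic distance. Your extra bookkeeping about left- versus right-invariance (verifying the monoidal weight axioms and the inversion symmetry $\weight(g)=\weight(g^{-1})$ for a left-invariant metric, then invoking Remark \ref{rem:inverse_weight}) addresses a point the paper glosses over and is exactly the right way to reconcile the hypotheses.
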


\subsubsection{Shape Analysis on Spaces of Landmarks and Embedded Submanifolds.}\label{sec:landmarks} A central object of study in the fields of statistical shape shape analysis and medical imaging is the space $\mathrm{Emb}(\set{M},\R^n)$ of all smooth submanifolds of $\R^n$ which are diffeomorphic to some fixed template manifold $\set{M}$. In particular, researchers are interested in metrics on this space which are justified for computational and/or theoretical reasons. Frequently, these metrics arise as geodesic distances coming from certain Riemannian metrics, which, in turn, are constructed from Riemannian metrics on the infinite-dimensional Lie group $\mathrm{Diff}(\R^n)$ of diffeomorphisms of $\R^n$ (or subgroups with various prescribed regularity conditions), together with the action of this space on $\mathrm{Emb}(\set{M},\R^n)$. This is the general perspective taken in Grenander's \emph{Pattern Theory} \cite{grenander1996elements}, and is closely tied to the \emph{Large Deformation Diffeomorphic Metric Mapping (LDDMM)} framework, a powerful tool used for registration of medical images \cite{beg2005computing}. When the template manifold $\set{M}$ is a discrete set of $k$ points, $\mathrm{Emb}(\set{M},\R^n)$ is also known as the \emph{configuration space} of $k$ points in $\R^n$. In this subsection, we view shape analysis metrics on this configuration space through the lens of $\cat{G}$-interleaving distances. Similar comparisons exist for more general embedding spaces, but we focus on the configuration space in order to keep the discussion more concrete. 

Let $\set{G}$ be a subgroup of $\mathrm{Diff}(\R^n)$ which is rich enough to act transitively on the set of configurations of $k$ points in $\R^n$ (e.g., the group of compactly supported diffeomorphisms) and suppose that we have chosen a right-invariant Riemannian metric on $\set{G}$. For technical reasons, such metrics are typically chosen in practice via a Hilbert space of time-dependent vector fields. That is, given smooth a path $\gamma:[0,1] \to \set{G}$, the space derivative $\frac{d}{dx} \gamma$ is a time-dependent vector field, so one can consider tangent vectors to $\set{G}$ as being determined by time-dependent vector fields; a Hilbert space structure on this space of vector fields can therefore be used to define a Riemannian metric on $\set{G}$. The details of such constructions are beyond the scope of this paper, and are not necessary in what follows, but we refer the reader to \cite[Section 8]{bauer2014overview} for an overview of the topic. Let $\set{H}$ denote the associated Hilbert space with norm $\|\cdot\|_\set{H}$. We define the \emph{geodesic weight} of a piecewise smooth path $\gamma:[0,1] \to \set{G}$ to be 
\[
\widehat{\weight}(\gamma) = \int_0^1 \left\| \frac{d}{dx} \gamma(t) \right\|^2_\set{H} \; dt.
\]

Let $\set{X} = \mathrm{Emb}(\set{M},\R^n)$, where $\set{M}$ is a collection of $k$ points. Then $\set{G}$ acts on $\set{X}$ transitively. Let $T:\cat{G} \to \cat{X}$ be the monoidal functor from the monoidal category associated to $\set{G}$ to the action groupoid (Definitions \ref{def:associated_monoidal} and \ref{def:action_groupoid}). We define a monoidal weight $\weight$ on $\cat{G}$ by 
\[
\weight(g) = \inf \left\{\widehat{\weight}(\gamma)^{\frac{1}{2}} \mid \gamma:[0,1] \to \set{G}, \; \gamma(0)= e, \; \gamma(1) = g\right\}.
\]
Then \cite[Equation (23)]{bauer2014overview} says that the geodesic distance between configurations $x,y \in \set{X}$ with respect to the Riemannian metric induced by the Riemannian metric on $\set{G}$ is given by 
\[
d(x,y) = \inf_{g \in \set{G}} \{\weight(g) \mid gx = y\}.
\]
Observing that $\weight(g) = \weight(g^{-1})$, Remark \ref{rem:inverse_weight} implies the following.

\begin{cor}\label{cor:configuration_space}
The geodesic distance on the configuration space of $k$ points in $\R^n$, defined as above, can be expressed as a  $\cat{G}$-interleaving distance.
\end{cor}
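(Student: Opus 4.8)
The plan is to reduce the corollary to two ingredients that are already in place: the characterization of $\cat{G}$-interleaving distance for a transitive action recorded in Remark \ref{rem:inverse_weight}, and the identification of the configuration-space geodesic distance with $\inf_{g}\{\weight(g) \mid gx = y\}$ supplied by \cite[Equation (23)]{bauer2014overview}. The only genuinely new things to verify are that the geodesic weight $\weight$ defined above is a bona fide monoidal weight on $\cat{G}$ and that it satisfies the symmetry $\weight(g) = \weight(g^{-1})$; everything else is then bookkeeping.

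First I would verify that $\weight$ is a monoidal weight (Definition \ref{def:monoidal_weight}). Nonnegativity is immediate, and $\weight(e) = 0$ follows by evaluating $\widehat{\weight}$ on the constant path at $e$, whose velocity vanishes. For the subadditivity axiom, the key observation is that $\weight(g)$ coincides with the geodesic distance $d_{\set{G}}(e,g)$ from the identity with respect to the right-invariant Riemannian metric on $\set{G}$: by the standard Cauchy--Schwarz argument, reparametrizing a path to constant speed shows that the infimal energy over paths $[0,1] \to \set{G}$ joining $e$ to $g$ equals the square of the infimal length, so $\weight(g) = \inf_{\gamma} \widehat{\weight}(\gamma)^{1/2}$ is exactly the geodesic distance (using monotonicity of the square root to pass the infimum inside). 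Since this metric is right-invariant, Proposition \ref{prop:right_invariant_weight} then yields $\weight(gh) \leq \weight(g) + \weight(h)$ directly.

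Next I would establish $\weight(g) = \weight(g^{-1})$. With $\weight(g) = d_{\set{G}}(e,g)$ in hand, this is the same computation used in the proof of Proposition \ref{prop:group_action_on_set}: right-invariance gives $d_{\set{G}}(e, g^{-1}) = d_{\set{G}}(e g, g^{-1} g) = d_{\set{G}}(g, e)$, and symmetry of the metric gives $d_{\set{G}}(g,e) = d_{\set{G}}(e,g)$. (Alternatively, one can argue directly on paths: if $\gamma$ joins $e$ to $g$, then $t \mapsto \gamma(1-t)g^{-1}$ joins $e$ to $g^{-1}$ and, as time reversal and right translation are energy-preserving, has the same geodesic weight, giving $\weight(g^{-1}) \leq \weight(g)$ and hence equality by symmetry.) With these two facts, Remark \ref{rem:inverse_weight} applies—the action is transitive, so interleavings always exist—and gives $d_{\Delta,\weight}(x,y) = \inf_{g \in \set{G}} \{\weight(g) \mid gx = y\}$, while \cite[Equation (23)]{bauer2014overview} identifies the right-hand side with the geodesic distance $d(x,y)$ on the configuration space; hence $d = d_{\Delta,\weight}$ is a $\cat{G}$-interleaving distance.

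The main obstacle is the analytic identification $\weight(g) = d_{\set{G}}(e,g)$: in the infinite-dimensional setting of $\mathrm{Diff}(\R^n)$ the equality of infimal energy and squared geodesic distance, and the very existence of a well-defined geodesic distance, carry genuine functional-analytic content (degeneracy and completeness of the metric, attainment of minimizers), which I would import wholesale from \cite{bauer2014overview} rather than re-prove. At the level of this paper the substantive content is thus purely formal: the right-invariance input to Proposition \ref{prop:right_invariant_weight} for subadditivity and the symmetry $\weight(g) = \weight(g^{-1})$, after which the result is a direct application of Remark \ref{rem:inverse_weight}.
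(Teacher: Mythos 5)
Your proposal is correct and follows essentially the same route as the paper, which likewise observes $\weight(g)=\weight(g^{-1})$, invokes Remark \ref{rem:inverse_weight}, and cites \cite[Equation (23)]{bauer2014overview} for the identification with the configuration-space geodesic distance. You simply make explicit the verifications (that $\weight$ is a monoidal weight, and the energy-versus-length identification) that the paper leaves implicit.
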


\subsubsection{Fundamental Groupoid and Length Metrics}\label{sec:fundamental_groupoid} We now give an example of an interleaving distance arising in the more general 2-functor setting. We recover a familiar metric construction in an admittedly convoluted manner; this section is mainly intended to illustrate how the pieces of the 2-functor definition come together. 

Fix a path connected topological space $\set{X}$ and let $\cat{G} = \pi_1(\set{X})$ denote its fundamental groupoid. We consider this as a 1-category by setting $\cat{G}_0 = \set{X}$ and, for $x,y \in \set{X}$, setting $\cat{G}_1(x,y)$ to be the collection of homotopy classes of continuous paths from $x$ to $y$ in $\set{X}$; we use the notation $x \xrightarrow{p} y$ for such a homotopy class. Composition of two homotopy classes is the homotopy class of the concatenation of the underlying paths. We promote this to a 2-category through the following general construction.

\begin{defn}[Indiscrete 2-Category]\label{def:indiscrete}
    Let $\cat{D}$ be a 2-category. The \emph{indiscrete 2-category associated to $\cat{D}$} is the 2-category\footnote{This notation is also intended to evoke classifying space---c.f.\ Remark \ref{rem:delooping}.} $\cat{E}\cat{D}$ with underlying 1-category equal to $\cat{D}$ and with $\cat{E}\cat{D}_2(f,g)$ consisting of a unique 2-morphism $\alpha_{f,g}$ for each $f,g \in \cat{D}_1(x,y)$. Horizontal and vertical composition laws are forced by uniqueness.
\end{defn}

Let $\cat{C} = \cat{E}\cat{G}$ be the indiscrete 2-category associated to the fundamental groupoid of $\set{X}$. We define a 2-functor $\Delta:\cat{C} \to \cat{Cat}$ as follows:
\begin{enumerate}
    \item For all $x \in \cat{C}_0 = \set{X}$, $\Delta(x) = \cat{B}\pi_1(\set{X},x)$, the delooping (Remark \ref{rem:delooping}) of the fundamental group of homotopy classes of loops in $\set{X}$ based at $x$. We denote the unique object of $\deloop \pi_1(X,x)$ as $\star_x$.
    \item For $p \in \cat{C}_1(x,y)$, we define $\Delta_p:\deloop \pi_1(\set{X},x) \rightarrow \deloop \pi_1(\set{X},y)$ to be the functor with $\Delta_p(\star_x) = \star_y$ and for a homotopy class $\ell$ of a loop based at $x$, $\Delta_p(\ell) = p\ell p^{-1}$. That is, $\Delta_p(\ell)$ is the (homotopy class of the) loop based at $y$ obtained by running the path $p$ in reverse, going around the loop $\ell$, then following $p$ in the forward direction. A schematic illustration is shown in Figure \ref{fig:fundamental_group}.
    \begin{figure}
        \centering
        \includegraphics[width = 0.5\textwidth]{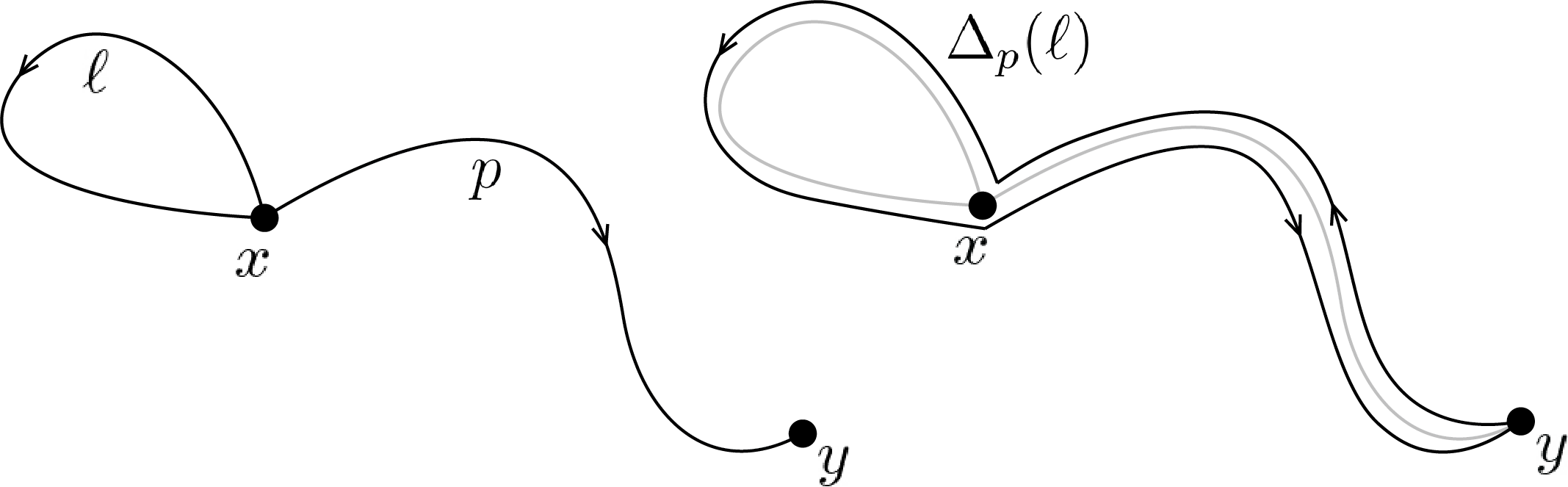}
        \caption{Schematic of the functor $\Delta_f$.}
        \label{fig:fundamental_group}
    \end{figure}
    \item For $p, q:x \to y$, define $\Delta(\alpha_{p,q})$ to be the natural transformation with component \[\Delta(\alpha_{p,q})_{\star_x} = qp^{-1}.\]
\end{enumerate}

Now suppose that $\set{X}$ comes with a \emph{length structure}: a function $\weight:C^0([0,1],\set{X}) \to \R$ from the collection $C^0([0,1],\set{X})$ of all continuous paths $\tilde{p}:[0,1] \to \set{X}$ to $\R$ such that $\weight(\tilde{p}) = 0$ if $\tilde{p}$ is a constant path and such that $\weight$ is subadditive under concatenation of paths: $\weight(\tilde{q} \circ \tilde{p}) \leq \weight(\tilde{q}) + \weight(\tilde{p})$. This descends to a Lawvere weight on $\cat{G}$, which we still denote $\weight$ by abuse of notation, defined by
\[
\weight(p) = \inf \{\weight(\tilde{p}) \mid \mbox{$p$ is the homotopy class of $\tilde{p}$}\}.
\]
We leave it to the reader to verify that the induced interleaving distance on $\mathrm{Im}(\Delta) \approx \set{X}$ is given by
\[
d_\Delta(x,y) = \inf \{\weight(\tilde{p}) \mid \tilde{p} \in C^0([0,1],\set{X}), \, \tilde{p}(0) = x, \, \tilde{p}(1) = y\},
\]
the usual metric induced by a length structure.

\section{Induced 2-Functors and Generalized Persistence Modules}\label{sec:interleaving_group_actions}

We now provide new examples of interleaving distances in the context of TDA, arising from actions of  monoids on posets. Doing so will require a Yoneda-like construction, which may be interesting in more general settings. Our main interest is in the $\cat{G}$-interleaving distance setting, with input data consisting of a monoidal functor. For the sake of generality, we give the Yoneda-like construction at the level of general 2-functors.

\subsection{Generalized Persistence Modules}
Early work in TDA considered persistent homology primarily from a computational perspective, with the barcode invariant of a function defined in a rather combinatorial fashion~\cite{edelsbrunner2002topological} (although earlier works such as \cite{cagliari2001size} already considered categorical aspects). The perspective that the fundamental objects in persistent homology theory are persistence modules was subsequently developed in \cite{chazal2009proximity}, and precisely formalized in categorical language in \cite{bubenik2014categorification}. Much of the theory of TDA can be expressed at the level of \emph{generalized persistence modules}~\cite{bubenik2015metrics}, which are functors of the form $M:\cat{P} \to \cat{Y}$, where $\cat{Y}$ is an arbitrary category and $\cat{P}$ is a \emph{preorder category}; that is $\cat{P}_0 = \set{P}$, where $(\set{P},\leq)$ is a preordered set, or \emph{proset}, and there is a unique morphism in $\cat{P}_1(p,q)$ if and only if $p \leq q$. For convenience, we denote this morphism simply as $p \leq q$.  

In the following, let $\cat{Fun}(\cat{C},\cat{D})$ denote the \emph{1-category of functors} from a category $\cat{C}$ to a category $\cat{D}$ (i.e., objects are functors and morphisms are natural transformations). Then the objects of $\cat{Fun}(\cat{P},\cat{Y})$, with $\cat{P}$ a preorder category, are generalized persistence modules. In~\cite{bubenik2015metrics}, two notions of distance are defined on the space of generalized persistence modules $\cat{Fun}(\cat{P},{Y})_0$---more details of this construction are given in Section \ref{sec:bubenik-daSilva-Scott-interleaving}. The main result of this section (Theorem \ref{thm:generalized_persistence_module_interleaving}) shows that any monoid action on a poset gives rise to a family of interleaving distances on the space of generalized persistence modules (with fixed target category). To state and prove the result, we introduce a general construction for inducing new 2-functors from old.

\subsection{2-Functors Associated to a Fixed 2-Functor Together with an Auxiliary Category}\label{sect:Yoneda_construction}

    Let $\Delta: \cat{C} \to \cat{Cat} $ be a 2-functor and fix a category $\cat{Y} \in (\cat{Cat})_0.$ Let $\cat{C}^{\mathrm{op}}$ denote the 2-category whose 1-morphisms are reversed (but whose 2-morphisms are not). We define an associated 2-functor $\DeltaYY: \cat{C}^{\mathrm{op}} \to \cat{Cat}$ according to the following proposition. We leave the proof, which is straightforward, to the reader.

\begin{prop}\label{prop:yoneda_construction}
    Let $\Delta:\cat{C} \to \cat{Cat}$ be a 2-functor and let $\cat{Y}$ be some fixed 1-category. Then one obtains a well-defined 2-functor $\Delta^\cat{Y}:\cat{C}^{\mathrm{op}} \to \cat{Cat}$ with:
    \begin{enumerate}
        \item for $A \in \cat{C}_0^{\mathrm{op}} = \cat{C}_0$, $\Delta^\cat{Y}(A) = \cat{Fun}(\Delta(A),\cat{Y})$;
        \item for $f \in \cat{C}_1^{\mathrm{op}}(A',A) = \cat{C}_1(A,A')$, 
        \[
        \Delta_f^\cat{Y}(H:\Delta(A') \rightarrow \cat{Y}) = H \circ \Delta_f:\Delta(A) \to \cat{Y}
        \]
        and
        \[
        \Delta_f^\cat{Y}(\lambda:H \Rightarrow H') = \lambda \bullet \Delta_f: \Delta_f^\cat{Y}(H) \Rightarrow \Delta_f^\cat{Y}(H');
        \]
        \item for $\alpha \in \cat{C}_2^{\mathrm{op}}(f,g) = \cat{C}_2(f,g)$, $f,g \in \cat{C}_1^{\mathrm{op}}(A',A)$, $H: \Delta(A') \to \cat{Y}$, $X \in \Delta(A)_0$,
        \[
        \big(\Delta^\cat{Y}(\alpha)_H\big)_X = H \circ \Delta(\alpha)_X.
        \]
    \end{enumerate}
\end{prop}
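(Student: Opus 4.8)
The plan is to verify the three defining conditions of a 2-functor (Definition \ref{def:2-functor}) directly for the proposed assignments, since this is fundamentally a bookkeeping exercise in the functoriality of $\cat{Fun}(-,\cat{Y})$ combined with the horizontal and vertical composition structure of $\cat{Cat}$. First I would confirm that the assignments are well-typed: for a 1-morphism $f \in \cat{C}_1^{\mathrm{op}}(A',A) = \cat{C}_1(A,A')$, precomposition with $\Delta_f:\Delta(A) \to \Delta(A')$ sends a functor $H:\Delta(A') \to \cat{Y}$ to $H \circ \Delta_f:\Delta(A) \to \cat{Y}$, which is indeed an object of $\Delta^\cat{Y}(A) = \cat{Fun}(\Delta(A),\cat{Y})$, so $\Delta_f^\cat{Y}$ has the correct source and target. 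The direction reversal is exactly what forces us to land in $\cat{C}^{\mathrm{op}}$, and checking this typing carefully at the outset will make the remaining verifications transparent.

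Next I would check preservation of 1-morphism composition and identities. For composable $f \in \cat{C}_1(A,A')$ and $f' \in \cat{C}_1(A',A'')$, the composite in $\cat{C}^{\mathrm{op}}$ runs $A'' \to A$ and equals $f f'$ read in $\cat{C}$; applying $\Delta^\cat{Y}$ gives precomposition by $\Delta_{ff'} = \Delta_f \Delta_{f'}$, and since $H \circ (\Delta_f \Delta_{f'}) = (H \circ \Delta_{f'}) \circ \Delta_f$ this matches $\Delta_f^\cat{Y} \circ \Delta_{f'}^\cat{Y}$ in the correct (reversed) order, using that $\Delta$ itself preserves composition. Identities are immediate since $\Delta_{1_A} = 1_{\Delta(A)}$ forces precomposition by the identity functor. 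I would then turn to the 2-morphism data: for $\alpha \in \cat{C}_2(f,g)$, the natural transformation $\Delta(\alpha):\Delta_f \Rightarrow \Delta_g$ whiskered against $H$ on the left, namely $H \bullet \Delta(\alpha)$ with component $H(\Delta(\alpha)_X)$ at $X$, should agree with the stated formula $\big(\Delta^\cat{Y}(\alpha)_H\big)_X = H \circ \Delta(\alpha)_X$, which I would unwind using the componentwise description of whiskering from Remark \ref{rem:whiskering} and the horizontal composition formula \eqref{eqn:horizontal_composition_nat_trans}.

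The remaining obligations are preservation of vertical composition, horizontal composition, and units of 2-morphisms. Vertical composition follows because whiskering distributes over vertical composition of natural transformations (the interchange-law consequence recorded in Remark \ref{rem:whiskering}), so $\Delta^\cat{Y}(\beta\alpha)_H = H \bullet \Delta(\beta\alpha) = H \bullet (\Delta(\beta)\Delta(\alpha))$ splits as the vertical composite of the whiskered pieces. The step I expect to demand the most care is \textbf{preservation of horizontal composition}, because the reversal of 1-morphisms in $\cat{C}^{\mathrm{op}}$ interacts nontrivially with the two equivalent expressions for horizontal composition in \eqref{eqn:horizontal_composition_nat_trans}; here one must track that a horizontal composite $\beta \bullet \alpha$ in $\cat{C}$ corresponds, after passing to $\cat{C}^{\mathrm{op}}$ and then applying $\Delta^\cat{Y}$, to the horizontal composite of the induced 2-morphisms in the order dictated by the opposite category, and confirm the two sides agree componentwise at each $X \in \Delta(A)_0$ by expanding both using \eqref{eqn:horizontal_composition_2_functor} and the 2-functoriality of $\Delta$. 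Unit preservation ($\Delta^\cat{Y}(1_f) = 1_{\Delta_f^\cat{Y}}$ and $\Delta^\cat{Y}(1_{1_A}) = 1_{1_{\Delta^\cat{Y}(A)}}$) is then routine, following from $\Delta(1_f) = 1_{\Delta_f}$ and the fact that whiskering an identity natural transformation yields an identity. Since each verification reduces to a componentwise identity in $\cat{Y}$, no genuinely new idea is required beyond careful orchestration of the op-reversal, which is why the proposition can reasonably be left to the reader.
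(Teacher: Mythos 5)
Your plan is correct and is exactly the direct verification the paper has in mind---the authors explicitly leave this proof to the reader as ``straightforward,'' so there is no written argument to diverge from, and your identification of horizontal-composition preservation (unwound via \eqref{eqn:horizontal_composition_nat_trans} and \eqref{eqn:horizontal_composition_2_functor}, with the op-reversal of 1-morphisms tracked carefully) as the only step needing real care is accurate. The one item worth making explicit in the write-up is that $\Delta^\cat{Y}(\alpha)$ is natural in $H$, i.e.\ that for $\lambda:H\Rightarrow H'$ the square formed by $\lambda\bullet\Delta_f$, $\lambda\bullet\Delta_g$ and the components $\Delta^\cat{Y}(\alpha)_H$, $\Delta^\cat{Y}(\alpha)_{H'}$ commutes; this reduces to naturality of $\lambda$ evaluated at the morphism $\Delta(\alpha)_X$, which is precisely the equality of the two expressions in \eqref{eqn:horizontal_composition_nat_trans}.
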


Recall the correspondence between monoidal functors and 2-functors given by identifying $T:\cat{G} \to \cat{End}(\cat{X})$ with $\cat{B}T:\cat{B}\cat{G} \to \cat{Cat}$ (Proposition \ref{prop:monoidal_functors_and_2_functors}). Using the fact that $\cat{B}\cat{G}^{\mathrm{op}} = \cat{B}\cat{G}$, Proposition \ref{prop:yoneda_construction} has an immediate corollary in the monoidal functor setting.

\begin{cor}\label{cor:monoidal_yoneda}
        Let $T:\cat{G} \to \cat{End}(\cat{X})$ be a monoidal functor and let $\cat{Y}$ be some fixed 1-category. Then one obtains a well-defined monoidal functor $T^\cat{Y}:\cat{G} \to \cat{End}(\cat{Fun}(\cat{X},\cat{Y}))$ with:
    \begin{enumerate}
        \item for $f \in \cat{G}_0$, $T_f^{\cat{Y}} \in \cat{End}(\cat{Fun}(\cat{X},\cat{Y}))$ is defined by
        \[
        T^\cat{Y}_f(H:\cat{X} \rightarrow \cat{Y}) = H \circ T_f: \cat{X} \to \cat{Y}
        \]
        and
        \[
        T_f^\cat{Y}(\lambda:H \Rightarrow H') = \lambda \bullet T_f: T_f^\cat{Y}(H) \Rightarrow T_f^\cat{Y}(H');
        \]
        \item for $\alpha \in \cat{G}_1(f,g)$, $H: \cat{X} \to \cat{Y}$, $X \in \cat{X}_0$,
        \[
        \big(T^\cat{Y}(\alpha)_H\big)_X = H \circ T(\alpha)_X.
        \]
    \end{enumerate}
\end{cor}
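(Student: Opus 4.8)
The plan is to derive Corollary~\ref{cor:monoidal_yoneda} directly from Proposition~\ref{prop:yoneda_construction} by transporting the construction through the delooping correspondence of Proposition~\ref{prop:monoidal_functors_and_2_functors}. First I would set $\cat{C} = \cat{B}\cat{G}$ and $\Delta = \cat{B}T$, so that $\Delta$ is the 2-functor corresponding to the monoidal functor $T$. The key observation enabling the specialization is that the delooping of a monoidal category is self-dual: since $\cat{B}\cat{G}$ has a single object $\star$ and its 1-morphisms are the objects of $\cat{G}$ (with composition given by $\otimes$), reversing 1-morphisms only reverses the order of the tensor product, and for the purposes of the induced functor this amounts to the same data; thus $\cat{B}\cat{G}^{\mathrm{op}} = \cat{B}\cat{G}$ as asserted in the excerpt. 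Consequently Proposition~\ref{prop:yoneda_construction} produces a 2-functor $\Delta^\cat{Y}:\cat{B}\cat{G} \to \cat{Cat}$, and by the converse direction of Proposition~\ref{prop:monoidal_functors_and_2_functors} this must be of the form $\cat{B}(T^\cat{Y})$ for a uniquely determined monoidal functor $T^\cat{Y}:\cat{G} \to \cat{End}(\cat{Fun}(\cat{X},\cat{Y}))$.

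Next I would read off the formulas for $T^\cat{Y}$ by unwinding the delooping dictionary on each of the three levels of data in Proposition~\ref{prop:yoneda_construction}. On objects, $\Delta(\star) = \cat{X}$ forces $\Delta^\cat{Y}(\star) = \cat{Fun}(\cat{X},\cat{Y})$, which is exactly the domain/codomain of the endofunctors produced by $T^\cat{Y}$. On 1-morphisms, an element $f \in \cat{G}_0$ is a 1-endomorphism of $\star$, and part~(2) of the proposition gives $\Delta^\cat{Y}_f(H) = H \circ \Delta_f = H \circ T_f$, together with the action on natural transformations $\Delta^\cat{Y}_f(\lambda) = \lambda \bullet \Delta_f = \lambda \bullet T_f$; these are precisely the two formulas claimed in part~(1) of the corollary. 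On 2-morphisms, a morphism $\alpha \in \cat{G}_1(f,g)$ is a 2-morphism of $\cat{B}\cat{G}$, and part~(3) of the proposition yields $\big(\Delta^\cat{Y}(\alpha)_H\big)_X = H \circ \Delta(\alpha)_X = H \circ T(\alpha)_X$, matching part~(2) of the corollary.

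Finally, although the correspondence of Proposition~\ref{prop:monoidal_functors_and_2_functors} already guarantees that the resulting data assembles into a genuine monoidal functor, I would note for completeness that one may instead verify the monoidal functor axioms directly: $T^\cat{Y}_e = T^\cat{Y}_{1_\cat{X}}$ is the identity endofunctor on $\cat{Fun}(\cat{X},\cat{Y})$ since precomposition with $1_\cat{X}$ is the identity; the composition identity $T^\cat{Y}_{gh} = T^\cat{Y}_g T^\cat{Y}_h$ follows from associativity of functor composition together with $T_{gh} = T_g T_h$; and preservation of horizontal composition reduces to the whiskering and interchange identities recorded in Remark~\ref{rem:whiskering} and Example~\ref{ex:2_category_small_categories}. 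I expect the only genuinely delicate point to be the self-duality $\cat{B}\cat{G}^{\mathrm{op}} = \cat{B}\cat{G}$, since one must confirm that reversing 1-morphisms in the delooping is harmless for the induced construction; everything else is a mechanical unwinding of notation, which is why the proof is left to the reader in the stated proposition and only recorded as an immediate corollary here.
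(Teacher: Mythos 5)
Your proposal matches the paper's own derivation exactly: the paper obtains the corollary by identifying $T$ with the 2-functor $\cat{B}T:\cat{B}\cat{G}\to\cat{Cat}$, invoking $\cat{B}\cat{G}^{\mathrm{op}}=\cat{B}\cat{G}$, and specializing Proposition \ref{prop:yoneda_construction}, just as you do. Your extra care in flagging the self-duality of the delooping (which silently reverses the tensor order) and in offering a direct verification of the monoidal axioms goes slightly beyond the paper's one-line justification, but the route is the same.
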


\subsection{Proset 2-Functors and Induced Interleaving Distances.}\label{sec:poset_2_functors} Returning to the setting of interleavings of generalized persistence modules, let $\set{G}$ be a monoid acting on a proset $(\set{P},\leq)$. That is, for all $g \in \set{G}$ and $p \in \set{P}$, there is an associated $g(p) \in \set{P}$ such that $h(g(p)) = (hg)(p)$ and $e(p) = p$, and the maps are \emph{monotone} in the sense that $p \leq q$ implies $g(p) \leq g(q)$. 

\begin{defn}[Proset Monoidal Category]\label{def:proset_monoidal_category}
    The \emph{proset monoidal category associated to $\set{G}$} is the monoidal category $\cat{G}^\set{P}$ with:
    \begin{enumerate}
        \item $\cat{G}^\set{P}_0 = G$, with product given by the monoid law of $G$.
        \item For 1-morphisms $g,h$, let
        \[
        \cat{G}^\set{P}_1(g,h) = \left\{\begin{array}{cl}
        \{\alpha_{g,h}\} & \mbox{if $g(p) \leq h(p)$ for all $p \in \set{P}$,} \\
        \emptyset & \mbox{otherwise.}
        \end{array}\right.
        \]
        Composition is given by $\alpha_{h,k}\alpha_{g,h} = \alpha_{g,k}$; this is well-defined, since the existence of $\alpha_{g,h}$ and $\alpha_{h,k}$ imply $g(p) \leq h(p) \leq k(p)$ for all $p$. The tensor product is given by $\alpha_{f,g} \otimes \alpha_{h,k} = \alpha_{fh,gk}$. To see that this is well-defined, observe that $h(p) \leq k(p)$ and $f(p) \leq g(p)$ for all $p \in \set{P}$. This implies that $fh(p) \leq gh(p)$ and that $gh(p) \leq gk(p)$ (since $g$ is order-preserving), so we have $fh(p) \leq gk(p)$.
    \end{enumerate}
\end{defn}

\begin{defn}[Proset Action Monoidal Functor]\label{def:preordered_set_action_functor}
Let $\cat{P}$ be the proset category associated to $\set{P}$. We define the \emph{proset action} monoidal functor $T:\cat{G}^\set{P} \to \cat{End}(\cat{P})$ as follows:
\begin{enumerate}
    \item For $g \in \set{G}$, $T_g:\cat{P} \to \cat{P}$ is the functor defined on objects by $T_g(p) = g(p)$ and on morphisms by $T_g(p \leq q) = g(p) \leq g(q)$.
    \item For a morphism $\alpha_{g,h}$, $T(\alpha_{g,h}):T_g \Rightarrow T_h$ is the natural transformation with component at $p \in \set{P}$ given by $T(\alpha_{g,h})_p = g(p) \leq h(p)$. 
\end{enumerate}
\end{defn}

This leads to the main result of this section. 

\begin{thm}\label{thm:generalized_persistence_module_interleaving}
    Let $\set{P}$ be a proset with associated proset category $\cat{P}$ and let $\cat{Y}$ be some other category. Any action on $\set{P}$ by a monoid endowed with a monoidal weight gives rise to an extended pseudometric on the space of generalized persistence modules $\cat{Fun}(\cat{P},\cat{Y})_0$.
\end{thm}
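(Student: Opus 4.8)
The plan is to assemble the result entirely from constructions and theorems established earlier, so that the desired extended pseudometric emerges as a $\cat{G}$-interleaving distance in the sense of Definition \ref{def:interleaving_distance_actegory}. First I would form the proset monoidal category $\cat{G}^\set{P}$ (Definition \ref{def:proset_monoidal_category}) together with the proset action monoidal functor $T:\cat{G}^\set{P} \to \cat{End}(\cat{P})$ (Definition \ref{def:preordered_set_action_functor}) determined by the given monoid action. Since $(\cat{G}^\set{P})_0 = \set{G}$ by construction, with tensor product of objects equal to the monoid law of $\set{G}$, the monoidal weight $\weight$ on $\set{G}$ (Remark \ref{rem:monoidal_weight_monoid}) is immediately a monoidal weight on the monoidal category $\cat{G}^\set{P}$ in the sense of Definition \ref{def:monoidal_weight}: the two axioms are literally the same statements, merely read on the object set of $\cat{G}^\set{P}$ rather than on the monoid.

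Next I would push this data through the Yoneda-like construction. Applying Corollary \ref{cor:monoidal_yoneda} to $T$ with the fixed target category $\cat{Y}$ produces a monoidal functor
\[
T^\cat{Y}:\cat{G}^\set{P} \to \cat{End}(\cat{Fun}(\cat{P},\cat{Y})),
\]
whose domain carries the monoidal weight $\weight$ just transferred. The essential bookkeeping observation is that the object set of the codomain endomorphism category is exactly the space of generalized persistence modules, namely $\cat{Fun}(\cat{P},\cat{Y})_0$, so that a distance on $\cat{End}(\cat{Fun}(\cat{P},\cat{Y}))_0$ is precisely a distance on the space we care about.

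Finally I would invoke Theorem \ref{thm:actegory_interleaving} applied to the pair $(T^\cat{Y},\weight)$. That theorem guarantees that the associated $\cat{G}$-interleaving distance
\[
d_{T^\cat{Y},\weight}:\cat{Fun}(\cat{P},\cat{Y})_0 \times \cat{Fun}(\cat{P},\cat{Y})_0 \to \R \cup \{\infty\}
\]
is an extended pseudometric, which is exactly the assertion of the theorem. No further verification of the metric axioms is required, as they are subsumed by Theorem \ref{thm:actegory_interleaving} (and hence, via Proposition \ref{prop:G_interleaving_2_functor_interleaving}, by Theorem \ref{thm:interleaving_distance}).

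The proof is therefore a matter of correctly chaining the previously established pieces, and I do not anticipate a genuine obstacle; the only delicate points are matters of bookkeeping. The mildest such point — rather than an actual difficulty — is confirming that $T$ from Definition \ref{def:preordered_set_action_functor} is a well-defined strict monoidal functor, so that Corollary \ref{cor:monoidal_yoneda} genuinely applies; this reduces to checking preservation of composition and of the tensor product on the uniquely-determined morphisms $\alpha_{g,h}$ of $\cat{G}^\set{P}$, which is immediate from their uniqueness and from the order-preservation built into Definition \ref{def:proset_monoidal_category}. The one conceptual check worth stating explicitly is that the monoidal weight on the monoid does satisfy the monoidal weight axioms for $\cat{G}^\set{P}$, which I would dispatch in a single sentence using the identification $(\cat{G}^\set{P})_0 = \set{G}$.
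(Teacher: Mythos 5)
Your proposal is correct and follows exactly the paper's proof: form $\cat{G}^\set{P}$ and the proset action monoidal functor $T$, transfer the monoidal weight via the identification $(\cat{G}^\set{P})_0 = \set{G}$, apply Corollary \ref{cor:monoidal_yoneda} to obtain $T^\cat{Y}$, and conclude by Theorem \ref{thm:actegory_interleaving}. One small wording slip: the resulting distance lives on the object set of the acted-upon category $\cat{Fun}(\cat{P},\cat{Y})$, i.e.\ on $\cat{Fun}(\cat{P},\cat{Y})_0$, not on $\cat{End}(\cat{Fun}(\cat{P},\cat{Y}))_0$, which consists of endofunctors; this does not affect the argument.
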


\begin{proof}
    Let $\set{G}$ be a monoid acting on $\set{P}$, endowed with a monoidal weight $\weight$, and let $T:\cat{G}^\set{P} \to \cat{End}(\cat{P})$ be the proset action monoidal functor. Corollary \ref{cor:monoidal_yoneda} yields an induced monoidal functor $T^\cat{Y}:\cat{G}^\set{P} \to \cat{End}(\cat{Fun}(\cat{P},\cat{Y}))$. By Remark \ref{rem:monoidal_weight_monoid} and the defintion of $\cat{G}^\set{P}$, a monoidal weight on $\set{G}$ can be considered as a monoidal weight on $\cat{G}^\set{P}$. Theorem \ref{thm:actegory_interleaving} says that the $\cat{G}^\set{P}$-interleaving distance $d_{T^\cat{Y},\weight}$, defines an extended pseudometric on $\cat{Fun}(\cat{P},\cat{Y})_0$. 
\end{proof}

\begin{rem}[Explicit $\cat{G}$-Interleavings]\label{rem:explicit_2_functor_interleavings}
    We can describe the metrics which arise from the construction in Theorem \ref{thm:generalized_persistence_module_interleaving} somewhat explicitly. Given an action of a monoid $\set{G}$, endowed with a monoidal weight $\weight$, on a proset $\set{P}$, the general definition of $\cat{G}^\set{P}$-interleaving says that generalized persistence modules $M,N:\cat{P} \to \cat{Y}$ are $(g,h)$-interleaved (for $g,h \in \set{G}$) with respect to $T^\cat{Y}$ (where $T:\cat{G}^\set{P} \to \cat{End}(\cat{P})$ is the proset action monoidal functor) if there are natural transformations (i.e., morphisms in $\cat{Fun}(\cat{P},\cat{Y})_1$) $\phi:M \Rightarrow T^\cat{Y}_g(N)$ and $\psi:N \Rightarrow T^{\cat{Y}}_h(M)$ and 2-morphisms $\alpha_{e,gh}$ and $\alpha_{e,hg}$ in $\cat{G}^\set{P}_2$ such that the diagrams
    \[\begin{tikzcd}
	{M} && {T^\mathsf{Y}_{gh}(M)} && {N} && {T^\mathsf{Y}_{hg}(N)} \\
	& {T^\mathsf{Y}_g(N)} &&&& {T^\mathsf{Y}_h(M)}
	\arrow["{\phi}"', Rightarrow, from=1-1, to=2-2]
	\arrow["{T^\mathsf{Y}_g(\psi)}"', Rightarrow, from=2-2, to=1-3]
	\arrow["{T^\mathsf{Y}(\alpha_{e,gh})_M}", Rightarrow, from=1-1, to=1-3]
	\arrow["{\psi}"', Rightarrow, from=1-5, to=2-6]
	\arrow["{T^\mathsf{Y}_h(\phi)}"', Rightarrow, from=2-6, to=1-7]
	\arrow["{T^\mathsf{Y}(\alpha_{e,hg})_N}", Rightarrow, from=1-5, to=1-7]
\end{tikzcd}\]
    commute. This means that for all $p \in \set{P}$, the diagrams
\[\begin{tikzcd}
	{M(p)} && {T^\mathsf{Y}_{gh}(M)(p)} && {N(p)} && {T^\mathsf{Y}_{hg}(N)(p)} \\
	& {T^\mathsf{Y}_g(N)(p)} &&&& {T^\mathsf{Y}_h(M)(p)}
	\arrow["{\phi_p}"', from=1-1, to=2-2]
	\arrow["{T^\mathsf{Y}_g(\psi)_p}"', from=2-2, to=1-3]
	\arrow["{\big(T^\mathsf{Y}(\alpha_{e,gh})_M\big)_p}", from=1-1, to=1-3]
	\arrow["{\psi_p}"', from=1-5, to=2-6]
	\arrow["{T^\mathsf{Y}_h(\phi)_p}"', from=2-6, to=1-7]
	\arrow["{\big(T^\mathsf{Y}(\alpha_{e,hg})_N\big)_p}", from=1-5, to=1-7]
\end{tikzcd}\]
    commute. Further unpacking the definitions, this is the case if and only if the following diagrams commute
\begin{equation}\label{eqn:generalized_module_interleaving}\begin{tikzcd}
	{M(p)} && {M(gh(p))} && {N(p)} && {N(hg(p))} \\
	& {N(g(p))} &&&& {M(h(p))}
	\arrow["{\phi_p}"', from=1-1, to=2-2]
	\arrow["{\psi_{g(p)}}"', from=2-2, to=1-3]
	\arrow["{M(p \leq gh(p))}", from=1-1, to=1-3]
	\arrow["{\psi_p}"', from=1-5, to=2-6]
	\arrow["{\phi_{h(p)}}"', from=2-6, to=1-7]
	\arrow["{N(p \leq hg(p))}", from=1-5, to=1-7]
\end{tikzcd}\end{equation}
    for all $p \in \set{P}$. Therefore, the interleaving distance of Theorem \ref{thm:generalized_persistence_module_interleaving} is given by
    \[
    d_{T^\cat{Y},\weight}(M,N) = \inf \{\max\{\weight(g),\weight(h)\} \mid \mbox{$\exists$ $\phi,\psi$ s.t. the diagrams \eqref{eqn:generalized_module_interleaving} commute $\forall$ $p \in \set{P}$}\}.
    \]
\end{rem}

\subsection{Examples of 2-Functor Interleaving Distances for Generalized Persistence Modules}\label{subsec:examples_of_2_functor_interleaving}

We now present some examples of interleaving distances arising from  Theorem \ref{thm:generalized_persistence_module_interleaving}.

\subsubsection{Bubenik-de Silva-Scott Interleaving Distances}\label{sec:bubenik-daSilva-Scott-interleaving} We now fill in some details of the interleaving constructions of Bubenik, de Silva and Scott in \cite{bubenik2015metrics}, first mentioned at the beginning of this section.

\begin{defn}[$\omega$-Interleaving Distance~\cite{bubenik2015metrics}]
    Let $(\set{P},\leq)$ be a proset. A function $g:\set{P} \to \set{P}$ is called \emph{monotone} if $p \leq q$ implies $g(p) \leq g(q)$. A monotone function $g$ is called a \emph{translation} if $p \leq g(p)$ for all $p \in \set{P}$. The collection of translations of $\set{P}$ is denoted $\mathrm{Trans}_\set{P}$; it is easy to show that $\mathrm{Trans}_\set{P}$ is a monoid. The identity is the identity map $e_\set{P}$. A \emph{sublinear projection} is a map $\omega:\mathrm{Trans}_\set{P} \to [0,\infty]$ such that $\omega(e_\set{P}) = 0$ and $\omega(h \circ g) \leq \omega(h) + \omega(g)$. 
    
    The above data leads to an extended pseudometric on $\cat{Fun}(\cat{P},{Y})_0$. Given $M \in \cat{Fun}(\cat{P},\cat{Y})_0$ and $g \in \mathrm{Trans}_\set{P}$, let $gM \in \cat{Fun}(\cat{P},\cat{Y})_0$ be the functor with $gM(p) = M(g(p))$ and $gM(p \leq q) = M(g(p) \leq g(q))$. There is a natural transformation $\eta^M_g:M \Rightarrow gM$ with component $(\eta^M_g)_p:M(p) \rightarrow M(g(p))$ induced by $p \leq g(p)$. We say that functors $M,N:\cat{P} \to \cat{Y}$ are \emph{$t$-interleaved with respect to $\omega$} if there are translations $g$ and $h$ of $\set{P}$ with $\omega(g),\omega(h) \leq t$ and natural transformations $\phi:M \Rightarrow gN$ and $\psi:N \Rightarrow hM$ such that the diagrams
    \[\begin{tikzcd}
    	{M(p)} & {} & {M(gh(p))} && {N(p)} && {N(hg(p))} \\
    	& {N(g(p))} &&&& {M(h(p))}
    	\arrow["{\phi_p}"', from=1-1, to=2-2]
    	\arrow["{\phi_{g(p)}}"', from=2-2, to=1-3]
    	\arrow["{M(p \leq gh(p))}", from=1-1, to=1-3]
    	\arrow["{\psi_p}"', from=1-5, to=2-6]
    	\arrow["{\phi_{h(p)}}"', from=2-6, to=1-7]
    	\arrow["{N(p \leq hg(p))}", from=1-5, to=1-7]
    \end{tikzcd}\]
    commute for all $p \in \set{P}$. 
    The \emph{$\omega$-interleaving distance} between $M$ and $N$ is
        \[
        d_\omega(M,N) = \inf\{t \geq 0 \mid \mbox{$M$ and $N$ are $t$-interleaved}\}.
        \]
\end{defn}

Comparing this definition with Remark \ref{rem:explicit_2_functor_interleavings}, the following result becomes clear.

\begin{prop}\label{prop:bubenik_de_silva_scott_interleaving}
    Let $(\set{P},\leq)$ be a proset and let $\omega$ be a sublinear family for $\mathrm{Trans}_\mathcal{P}$. Let $\cat{Y}$ be an arbitrary category. Then the 2-functor interleaving distance $d_{T^\cat{Y},\weight}$ from Theorem \ref{thm:generalized_persistence_module_interleaving} is equal to the $\omega$-interleaving distance $d_\omega$ on $\cat{Fun}(\cat{P},\cat{Y})_0$ when we take the monoid to be $\set{G} = \mathrm{Trans}_\set{P}$ and the monoidal weight to be $\weight = \omega$.
\end{prop}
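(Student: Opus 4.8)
The plan is to check that the two notions of interleaving coincide term by term once we set $\set{G} = \mathrm{Trans}_\set{P}$ and $\weight = \omega$, and then observe that the two distances are the same infimum of the same quantity. Concretely, I would show that a quadruple $(g,h,\phi,\psi)$ exhibits $M$ and $N$ as $(g,h)$-interleaved in the sense of Remark \ref{rem:explicit_2_functor_interleavings} if and only if the same quadruple exhibits $M$ and $N$ as $t$-interleaved with respect to $\omega$ for $t = \max\{\omega(g),\omega(h)\}$. Granting this equivalence, equality of distances follows at once, since $d_{T^\cat{Y},\weight}(M,N)$ is by definition $\inf\{\max\{\omega(g),\omega(h)\}\}$ over interleaving data, while $d_\omega(M,N) = \inf\{t \geq 0 \mid M,N \text{ are } t\text{-interleaved}\}$, and for fixed data the least admissible $t$ is exactly $\max\{\omega(g),\omega(h)\}$.

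First I would unwind the objects appearing on the $\cat{G}^\set{P}$ side. The objects of $\cat{G}^\set{P}$ are the elements of $\set{G} = \mathrm{Trans}_\set{P}$, i.e.\ translations, so the $1$-morphisms $g,h$ in a $\cat{G}^\set{P}$-interleaving range over exactly the translations used in the $\omega$-interleaving. By Corollary \ref{cor:monoidal_yoneda}, $T^\cat{Y}_g(N) = N \circ T_g$, hence $T^\cat{Y}_g(N)(p) = N(g(p)) = gN(p)$ and $T^\cat{Y}_g(N)(p \leq q) = N(g(p) \leq g(q)) = gN(p \leq q)$; that is, $T^\cat{Y}_g(N) = gN$ as functors $\cat{P} \to \cat{Y}$. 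Consequently a morphism $\phi \colon M \to T^\cat{Y}_g(N)$ in $\cat{Fun}(\cat{P},\cat{Y})_1$ is precisely a natural transformation $\phi \colon M \Rightarrow gN$, and similarly for $\psi$. Remark \ref{rem:explicit_2_functor_interleavings} has already reduced the commuting triangles of a $\cat{G}^\set{P}$-interleaving to the diagrams \eqref{eqn:generalized_module_interleaving}, which are exactly the triangles in the definition of $\omega$-interleaving; moreover the structure maps are forced, since by Corollary \ref{cor:monoidal_yoneda} and Definition \ref{def:preordered_set_action_functor} one has $\big(T^\cat{Y}(\alpha_{e,gh})_M\big)_p = M\big(T(\alpha_{e,gh})_p\big) = M(p \leq gh(p))$, matching the hypotenuse in the $\omega$-diagram.

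The one substantive point---and the only place the hypothesis $\set{G} = \mathrm{Trans}_\set{P}$ is used essentially---is that the auxiliary $2$-morphisms $\alpha_{e,gh}$ and $\alpha_{e,hg}$ required by the $\cat{G}^\set{P}$-interleaving always exist, so they impose no constraint beyond what the $\omega$-definition already demands. By Definition \ref{def:proset_monoidal_category}, $\cat{G}^\set{P}_1(e,gh)$ is nonempty (and then a singleton) exactly when $e(p) \leq gh(p)$, i.e.\ $p \leq gh(p)$, for all $p \in \set{P}$. Since $\mathrm{Trans}_\set{P}$ is a monoid, the composite $gh$ of two translations is again a translation, so $p \leq gh(p)$ holds automatically; the same applies to $hg$. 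Thus $\alpha_{e,gh}$ and $\alpha_{e,hg}$ exist uniquely, and the passage between interleaving data is a bijection in both directions. I expect the main (and only mild) obstacle to be the bookkeeping of this step: confirming that the translation-monoid structure makes the $2$-morphisms automatic, and that their induced structure maps agree with the maps $M(p \leq gh(p))$ and $N(p \leq hg(p))$ of the $\omega$-interleaving; everything else is a direct unwinding of definitions.
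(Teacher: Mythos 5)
Your proposal is correct and follows the same route as the paper, whose entire proof is the one-line observation that the result becomes clear upon comparing the $\omega$-interleaving definition with Remark \ref{rem:explicit_2_functor_interleavings}; you simply carry out that comparison explicitly, including the one point the paper leaves implicit (that $\alpha_{e,gh}$ and $\alpha_{e,hg}$ exist automatically because composites of translations are translations).
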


\begin{rem}
    It is shown in \cite[Theorem 3.5]{de2018theory} that another Bubenik-de Silva-Scott construction of interleaving distance, depending on a structure called a \emph{superlinear family of translations}, can be realized as a flow interleaving distance. While the interleaving distances induced by sublinear projections and superlinear families are sometimes related \cite[Theorem 3.24]{bubenik2015metrics}, the concepts are, in general, distinct.
    We showed in Proposition \ref{prop:equivalence_to_flow} that flow interleaving distances can always be realized as 2-functor interleaving distances. Putting all of this together, we see that the $\cat{G}$-interleaving framework fully generalizes the interleaving distances of Bubenik, de Silva and Scott.
\end{rem}

\subsubsection{Multiplication Group}\label{sec:multiplication_group} We next present a simple example of an interleaving distance arising in the setting of Theorem \ref{thm:generalized_persistence_module_interleaving} which differs from the classical notion in TDA.

Let $(\set{P},\leq)$ be the poset of non-negative real numbers. Let $\set{G}$ be the group of positive real numbers under multiplication. Then $\set{G}$ acts on $\set{P}$ (but does not contain and is not contained in the monoid of translations $\mathrm{Trans}_\set{P}$). We define a monoidal weight $\weight$ on $\cat{G}^\set{P}$ by 
$
\weight(t) = |\log(t)|.
$
Then we obtain an interleaving distance on $ d_{T^\cat{Y},\weight}$ on any space of generalized persistence modules $\cat{Fun}(\cat{P},\cat{Y})_0$. 

\begin{ex}\label{ex:multiplication_group}
To see that the behavior of the induced interleaving distance is qualitatively different than that of classical interleaving distance, let us consider a basic example, where $\cat{Y} = \cat{Vec}$, the category of vector spaces over (say) $\R$. Functors $\cat{P}\to \cat{Vec}$ are herein referred to as \emph{classical persistence modules}, as they are the basic objects in the foundational theory of TDA \cite{chazal2009proximity}. Taking the sublinear projection $\omega:\mathrm{Trans}_\mathcal{P} \to \R$ defined by $\omega(g) = \sup\{g(p)-p \mid p \in \set{P}\}$ yields the \emph{classical interleaving distance} for classical persistence modules, as studied in \cite{chazal2009proximity}. Under mild tameness assumptions, this distance agrees with the \emph{bottleneck distance} $d_\mathrm{B}$ between persistence diagrams (see, e.g., \cite{chazal2009proximity,lesnick2015theory,bauer2015induced}), which is defined as a more straightforward optimization problem and was an essential component in the early research on TDA \cite{cohen2005stability}.

For $a \leq b$, let $M_{[a,b)}:\cat{P} \to \cat{Vec}$ denote the \emph{interval module} with
\[
M_{[a,b)}(r) = \left\{
\begin{array}{cl}
\R & \mbox{if $r \in [a,b)$} \\
0 & \mbox{otherwise.}
\end{array}\right.
\]
and where $M_{[a,b)}(r \leq s)$ is the identity map on $\R$ whenever possible and is otherwise the zero map. If $a = b$, $M_{[a,a)} = M_\emptyset$, the persistence module which is identically zero. It is well-known that the classical interleaving distance (or bottleneck distance) between $M_{[a,b)}$ and $M_\emptyset$ is given by $d_\mathrm{B}(M_{[a,b)},M_\emptyset) = \frac{b-a}{2}$. On the other hand, one can show that the $\cat{G}^\set{P}$-interleaving distance is
\[
d_{T^\cat{Y},\weight}(M_{[a,b)},M_\emptyset) = \frac{1}{2}(\log(b) - \log(a)).
\]
The classical interleaving distance and $\cat{G}^\set{P}$-interleaving distance behave qualitatively differently; for example
\[
d_\mathrm{B}(M_{[a,a+1)},M_\emptyset) = \frac{1}{2}
\]
for all $a$, whereas
\[
\lim_{a \rightarrow \infty} d_{T^\cat{Y},\weight}(M_{[a,a+1)},M_\emptyset) = 0.
\]
\end{ex}

\begin{rem}\label{rem:interpretation_of_multiplication_group}
 We end this subsection with an informal remark, geared towards researchers dealing with practical applications of TDA. The above example is meant to be a simple expository one, and we do not advocate it as a serious replacement for standard metrics in TDA. However, even this toy example suggests the potential of exploring alternate metrics in a data analysis context. An interval module $M_{[a,b)}$ is typically interpreted in a TDA setting as a persistent topological feature in a data set, subject to some notion of filtration (e.g., persistent homology barcodes of a Vietoris-Rips complex associated to point cloud data; see \cite{carlsson2014topological}). The distance from one of these interval modules to the zero module with respect to the bottleneck distance---i.e., $\frac{1}{2}(b-a)$---can be understood as the ``importance" of a feature. The behavior exhibited above by $d_{T^\cat{Y},\weight}$ says that the importance of a feature with respect to this metric not only depends on its persistence (i.e., distance between endpoints), but on the temporal location of the feature---that is, the ``birth time", or lower limit of the interval. The interleaving distance induced by the multiplication group says that features born near zero become infinitely important, while features with a fixed length have importance decaying to zero as the birth time increases. This agrees with the assertion of the recent paper~\cite{bobrowski2023universal} that the ratio $b/a$ is a more relevant indicator of statistical significance of a topological feature $M_{[a,b)}$ than the traditional persistence $\frac{1}{2}(b-a)$. 
 
 We believe that it would be useful to incorporate different importance  dependencies into metrics, depending on the data analysis context. Such a focus on learning appropriate TDA metrics based on data structure is advocated for in, e.g., \cite{chacholski2020metrics}, and our framework gives a rich theory-backed source of such metrics. Designing data-driven metrics based on these generalized interleaving distances will be the subject of future research.
\end{rem}

\subsubsection{Diffeomorphism Group of $\R$} We now consider the case where $(\set{P},\leq)$ is the real line, but extend the class of monoids under consideration to diffeomorphism groups; these are well-studied by certain communities in geometric data science \cite{srivastava2010shape,bauer2014overview,srivastava2016functional,needham2020simplifying,bauer2022elastic}, and the ideas presented here are meant to serve as a bridge between these techniques in TDA and geometric shape analysis (see also Section \ref{sec:landmarks}). 

Let $\mathrm{Diff}_c(\R)$ denote the group of orientation-preserving, compactly supported (i.e., differing from the identity map only on a compact set) smooth diffeomorphisms of $\R$ and let $\set{G}$ be a subgroup of $\mathrm{Diff}_c(\R)$. Then $\set{G}$ acts on $\set{P}$ (by $g \cdot p = g(p)$), so Theorem \ref{thm:generalized_persistence_module_interleaving} implies that we obtain an interleaving distance on the space of generalized persistence modules once we have chosen a monoidal weight on $\set{G}$. Recall from Proposition \ref{prop:right_invariant_weight} that such a weight is induced by any right-invariant metric on $\set{G}$. This discussion is summarized in the following proposition.

\begin{prop}\label{prop:diffeos_to_interleaving}
    Let $(\set{P},\leq)$ be the real line and let $\set{G}$ be a subgroup of the compactly-supported orientation-preserving diffeomorphism group $\mathrm{Diff}_c(\R)$. Any choice of right-invariant metric on the diffeomorphism group gives rise to an interleaving distance on the space of generalized persistence modules $\cat{Fun}(\cat{P},\cat{Y})_0$, where $\cat{Y}$ is any category.
\end{prop}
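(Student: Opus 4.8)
The plan is to exhibit this statement as a direct application of Theorem \ref{thm:generalized_persistence_module_interleaving}, whose hypotheses require (i) a monoid acting on the proset $\set{P}$ by monotone maps and (ii) a monoidal weight on that monoid. We are handed a subgroup $\set{G} \le \mathrm{Diff}_c(\R)$ together with a right-invariant metric $d_\set{G}$, so the two remaining tasks are to verify that the prescribed action is of the required form and that $d_\set{G}$ induces a monoidal weight.

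First I would check the action. Here $\set{P}$ is the real line with its usual order and $\set{G}$ acts by evaluation, $g \cdot p = g(p)$. Since $\set{G}$ is a group it is in particular a monoid, and the action is a genuine monoid action because composition of diffeomorphisms corresponds to composition of the evaluation maps and the identity diffeomorphism acts trivially. The one point that genuinely uses the hypotheses is monotonicity: an orientation-preserving diffeomorphism of $\R$ is strictly increasing, so $p \le q$ implies $g(p) \le g(q)$. This is precisely the monotonicity condition required in Section \ref{sec:poset_2_functors} for the proset action monoidal functor $T:\cat{G}^\set{P} \to \cat{End}(\cat{P})$ of Definition \ref{def:preordered_set_action_functor} to be well defined.

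Next I would produce the weight. By hypothesis $d_\set{G}$ is right-invariant, so Proposition \ref{prop:right_invariant_weight} applies: setting $\weight(g) = d_\set{G}(e,g)$ gives $\weight(e)=0$ and the subadditivity $\weight(gh) \le \weight(g)+\weight(h)$. Nonnegativity $\weight(g) \ge 0$ is immediate from the metric axioms, so $\weight$ satisfies the axioms of a monoidal weight on $\set{G}$ in the sense of Remark \ref{rem:monoidal_weight_monoid} (equivalently, Definition \ref{def:monoidal_weight}). With a monotone monoid action and a monoidal weight in hand, Theorem \ref{thm:generalized_persistence_module_interleaving} immediately yields an extended pseudometric on $\cat{Fun}(\cat{P},\cat{Y})_0$, namely the $\cat{G}^\set{P}$-interleaving distance $d_{T^\cat{Y},\weight}$, which completes the argument.

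Because every structural input is supplied by results already established, there is no serious obstacle internal to the argument; the only place demanding genuine care is confirming that orientation-preservation forces monotonicity of the action, since this is exactly what licenses the invocation of Theorem \ref{thm:generalized_persistence_module_interleaving}. I would also note, as a caveat rather than a gap, that the existence of a right-invariant metric on an infinite-dimensional group such as $\mathrm{Diff}_c(\R)$ is a nontrivial analytic matter; however, the statement takes such a metric as given (cf.\ the Hilbert-space-of-vector-fields constructions referenced in Section \ref{sec:landmarks}), so this analytic difficulty lies outside the scope of the proof.
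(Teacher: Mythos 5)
Your proposal is correct and follows exactly the route the paper takes: the paper likewise treats this as an immediate consequence of Theorem \ref{thm:generalized_persistence_module_interleaving} applied to the evaluation action of $\set{G}$ on $\R$, with the monoidal weight supplied by Proposition \ref{prop:right_invariant_weight}. Your explicit check that orientation-preservation forces monotonicity of the action is a detail the paper leaves implicit, but it is the right thing to verify.
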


\begin{rem}
    A natural question is whether any right-invariant metrics on $\mathrm{Diff}_c(\R)$ exist, and, if so, whether they are computationally feasible to work with---this matter was discussed briefly for diffeomorphisms of $\R^n$ in Section \ref{sec:landmarks}, but we add more discussion here, since there are simpler right-invariant metrics in this one-dimensional setting. The details of the construction of these metrics and their computational approximation require substantial knowledge of functional analysis and optimization theory, and are beyond the scope of the present paper. We sketch briefly the main ideas below, but their development remains a direction of future research.

    The group $\mathrm{Diff}_c(\R)$ is a Lie group, but it is infinite-dimensional, modeled locally on a Fr\'{e}chet vector space \cite{hamilton1982inverse}. There are several candidates for right-invariant metrics in the literature, and we describe one family of them here. Consider the weak (see, e.g., \cite{abraham2012manifolds}) Riemannian metric $\langle \cdot, \cdot \rangle$ on $\mathrm{Diff}_c(\R)$ defined at a basepoint $g \in \mathrm{Diff}_c(\R)$ and tangent vectors $x,y \in T_g \mathrm{Diff}_c(\R)$ (the tangent space is isomorphic to the space of smooth maps $\R \to \R$ which are equal to zero off a compact set \cite[Section 43.1]{kriegl1997convenient}) by
    \[
    \langle x, y \rangle_g = \int_\R \sum_{j=0}^k a_j \cdot D_g^j x(t) \cdot D_g^j y(t) \cdot g'(t) dt,
    \]
    where $D_g$ is the operator $\frac{1}{g'(t)}\frac{d}{dt}$, $k$ is a nonnegative integer and the $a_j$'s are non-negative real number hyperparameters. Riemannian metrics of this form are referred to as \emph{Sobolev-type} metrics, and are studied intensely by researchers interested in applications of Riemannian geometry to shape analysis~\cite{michor2007overview,bauer2014overview,bruveris2014geodesic}, although these works tend to focus on similar metrics on spaces of immersions of $\R$ or the unit circle into a higher-dimensional Euclidean space. It is straightforward to check that geodesic distance with respect to this metric is a right-invariant metric on $\mathrm{Diff}_c(\R)$, as desired. Moreover, there has been substantial development of computational pipelines for computing geodesic distances for Sobolev-type metrics~\cite{bauer2017numerical} (once again focusing on the setting of immersions into higher-dimensional spaces). The existence of theory and numerical pipelines for shape analysis applications suggest the potential for developing the application of these metrics to interleaving distances in future work.
\end{rem}

\subsubsection{Multiparameter Persistence} Multiparameter persistence~\cite{lesnick2015theory,botnan2022introduction} generally concerns generalized persistence modules where the proset $(\set{P},\leq)$ is the poset $\R^n$, endowed with the product partial order: $(r_1,\ldots,r_n) \leq (s_1,\ldots,s_n)$ if and only if $r_i \leq s_i$ for all $i$. To distinguish vectors from scalars, we use the notation $\mathbf{r} = (r_1,\ldots,r_n)$ in this subsection. We use the notation $M:\cat{P} \to \cat{Y}$ for a multiparameter persistence module. 

The standard notion of interleaving distance between multiparameter persistence modules (see, e.g., \cite{botnan2022introduction}) is easiest to state using the notion of a category with a flow: the \emph{standard flow} $T^\mathrm{st}:\R_{\geq 0} \rightarrow \cat{End}(\cat{Fun}(\cat{P},\cat{Y}))$ is given by 
\[
T^\mathrm{st}_t M(\mathbf{r}) = M(\mathbf{r} + t \mathbf{1})
\]
where $\mathbf{1}$ is the vector of all ones. For $s \leq t$, $(T^\mathrm{st}_{s\leq t})_M:T^\mathrm{st}_s M \Rightarrow T^\mathrm{st}_t M$ has component at $\mathbf{r}$ given by
\[
M(\mathbf{r} + s \mathbf{1} \leq \mathbf{r} + t \mathbf{1}).
\]
Then the interleaving distance between $M$ and $N$ is the infimum over $t$ such that there exist natural transformations $\phi:M \Rightarrow T^\mathrm{st}_t N$ and $\psi:N \Rightarrow T^\mathrm{st}_t M$ so that the diagrams
\[\begin{tikzcd}
	{M(\mathbf{r})} && {M(\mathbf{r} + 2t\mathbf{1})} && {N(\mathbf{r})} && {N(\mathbf{r} + 2t\mathbf{1})} \\
	& {N(\mathbf{r} + t \mathbf{1})} &&&& {M(\mathbf{r} + t\mathbf{1})}
	\arrow["{\phi_\mathbf{r}}"', from=1-1, to=2-2]
	\arrow["{\psi_{\mathbf{r} + t\mathbf{1}}}"', from=2-2, to=1-3]
	\arrow["{M(\mathbf{r} \leq \mathbf{r} + 2t\mathbf{1})}", from=1-1, to=1-3]
	\arrow["{\psi_{\mathbf{r}}}"', from=1-5, to=2-6]
	\arrow["{\phi_{\mathbf{r} + t \mathbf{1}}}"', from=2-6, to=1-7]
	\arrow["{N(\mathbf{r} \leq \mathbf{r} + 2t \mathbf{1})}", from=1-5, to=1-7]
\end{tikzcd}\]
commute for all $\mathbf{r} \in \R^n$. We denote this interleaving distance as $d_{T^\mathrm{st}}^\mathrm{Fl}$ for the remainder of this subsection.

Using the construction of Theorem \ref{thm:generalized_persistence_module_interleaving}, we can define interleaving distances which are natural, but more refined than the version described above. Let $\set{G} = \R^n$ denote the group (under addition) of vectors in $\R^n$ with non-negative entries. Then $\set{G}$ acts on $\set{P}$ via $\mathbf{r} \cdot \mathbf{p} = \mathbf{p} + \mathbf{r}$. The function $\weight:\set{G} \to \R$ defined by $\weight(\mathbf{t}) = \|\mathbf{t}\|_p$ ($\|\cdot\|_p$ denoting the Minkowski $p$-norm) gives a Lawvere weight on $\cat{G}^\set{P}$. This yields an interleaving distance on $\cat{Fun}(\cat{P},\cat{Y})_0$. Explicitly, let $T:\cat{G}^\set{P} \to \cat{End}(\cat{Fun}(\cat{P},\cat{Y}))$ be the monoidal functor from Theorem \ref{thm:generalized_persistence_module_interleaving}; from Remark \ref{rem:explicit_2_functor_interleavings}, multiparameter persistence modules $M,N: \cat{P} \to \cat{Y}$ are $(\mathbf{s},\mathbf{t})$-interleaved if there exist natural transformations $\phi:M \Rightarrow T_\mathbf{s} N$ and $\psi:N \Rightarrow T_\mathbf{t} M$ such that the diagrams
\[\begin{tikzcd}
	{M(\mathbf{r})} && {M(\mathbf{r} + \mathbf{s} + \mathbf{t})} && {N(\mathbf{r})} && {N(\mathbf{r} + \mathbf{t} + \mathbf{s})} \\
	& {N(\mathbf{r} + \mathbf{s})} &&&& {M(\mathbf{r} + \mathbf{t})}
	\arrow["{\phi_\mathbf{r}}"', from=1-1, to=2-2]
	\arrow["{\psi_{\mathbf{r} + \mathbf{s}}}"', from=2-2, to=1-3]
	\arrow["{M(\mathbf{r} \leq \mathbf{r} + \mathbf{s} + \mathbf{t})}", from=1-1, to=1-3]
	\arrow["{\psi_{\mathbf{r}}}"', from=1-5, to=2-6]
	\arrow["{\phi_{\mathbf{r} + \mathbf{t}}}"', from=2-6, to=1-7]
	\arrow["{N(\mathbf{r} \leq \mathbf{r} + \mathbf{t} + \mathbf{s})}", from=1-5, to=1-7]
\end{tikzcd}\]
commute for all $\mathbf{r} \in \R^n$. The associated $\cat{G}^\set{P}$-interleaving distance $d_{T^\cat{Y},\weight}$ is then expressed as the infimum of $\max\{\|\mathbf{s}\|_p,\|\mathbf{t}\|_p\}$ such that there is an $(\mathbf{s},\mathbf{t})$-interleaving. 

\begin{ex}\label{ex:rectangle_modules}
    To see that the $\cat{G}^\set{P}$-interleaving distance differs qualitatively from the standard one, let us consider a simple example. For $\mathbf{a},\mathbf{b} \in \R^n$ with $\mathbf{a} \leq \mathbf{b}$, let $[\mathbf{a},\mathbf{b})$ denote the set of $\mathbf{c} \in \R^n$ such that $\mathbf{a} \leq \mathbf{c} < \mathbf{b}$.  Similar to the one-parameter setting of Section \ref{sec:multiplication_group}, we consider \emph{interval modules} (also called \emph{rectangle modules}~\cite{botnan2020rectangle}) of the form $M_{[\mathbf{a},\mathbf{b})}:\cat{P} \to \cat{Vec}$, where 
\[
M_{[\mathbf{a},\mathbf{b})}(\mathbf{r}) = \left\{
\begin{array}{cl}
\R & \mbox{if $\mathbf{r} \in [\mathbf{a},\mathbf{b})$} \\
0 & \mbox{otherwise,}
\end{array}\right.
\]
and where $M_{[\mathbf{a},\mathbf{b})}(\mathbf{r} \leq \mathbf{s})$ is the identity on $\R$ whenever possible, and otherwise the zero map. 

\begin{figure}
    \centering
    \includegraphics[width = 0.3\textwidth]{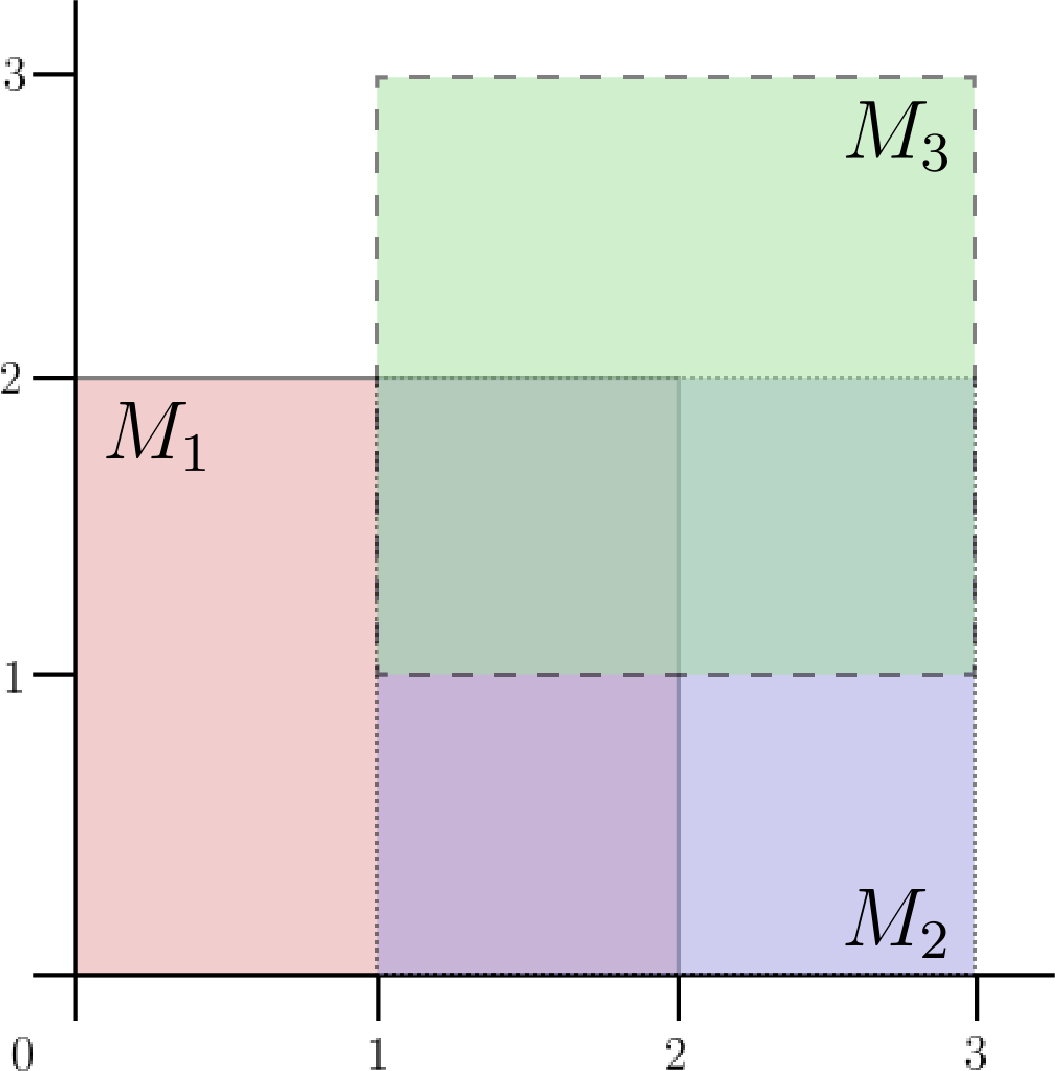}
    \caption{Rectangles used to define the modules in Example \ref{ex:rectangle_modules}. Differing dash patterns on the boundaries are used to distinguish the rectangles, not to indicate (lack of) inclusion in the rectangle.}
    \label{fig:rectangle_modules}
\end{figure}

As a simple example, consider the interval modules over $\R^2$:
\[
M_1 = M_{[(0,0),(2,2))}, \quad M_2 = M_{[(1,0),(3,2))}, \quad M_3 = M_{[(1,1),(3,3))}
\]
(see Figure \ref{fig:rectangle_modules}). Using the conditions for existence of morphisms between interval modules described in~\cite{dey2018computing}, one can show that the flow interleaving distance satisfies
\[
d_{T^\mathrm{st}}^\mathrm{Fl}(M_1,M_2) = d_{T^\mathrm{st}}^\mathrm{Fl}(M_1,M_3) = 1,
\]
while the 2-functor interleaving distances with, say, $p=2$ in the Lawvere weight, are given by
\[
d_{T^\cat{Y},\weight}(M_1,M_2) = 1 \quad \mbox{ and } \quad d_{T^\cat{Y},\weight}(M_1,M_3) = \sqrt{2}.
\]
(The former is realized by shifts $(\mathbf{s},\mathbf{t}) = ((0,0),(1,0))$ and the latter by $(\mathbf{s},\mathbf{t}) = ((0,0),(1,1))$.) That is, the $\cat{G}^\set{P}$-interleaving distance respects the geometrical intuition that $M_2$ is closer to $M_1$ than $M_3$, while the flow interleaving distance does not. 
\end{ex} 

\section{Weighted 2-Category Interleaving Distances}\label{sec:weighted_2_category_interleaving}

In this section, we introduce a new family of interleaving distances which generalize those introduced above. We also show that this construction generalizes another family of interleaving distances associated to \emph{locally persistent categories} \cite{scoccola2020locally}.

\subsection{2-Weighted 2-Categories} Let $\cat{C}$ be a 2-category. We generalize the notion of a Lawvere weight as follows.

\begin{defn}\label{def:lawvere_2_weight}
    A \emph{Lawvere 2-weight} on $\cat{C}$ is a pair $\weight = (\weight_1,\weight_2)$, where the functions $\weight_i:\cat{C}_i \to \R_{\geq 0}$ satisfy the following properties:
    \begin{enumerate}
        \item (Zero on Identities) We have $\weight_1(1_A) = 0$ for all $A \in \cat{C}_0$ and $\weight_2(1_f) = 0$ for all $f \in \cat{C}_1$.
        \item (Triangle Inequalities) For all $f \in \cat{C}_1(A,B)$ and $g \in \cat{C}_2(B,C)$, 
        \[
        \weight_1(gf) \leq \weight_1(g) + \weight_1(f).
        \]
        For all $h,j \in \cat{C}_1(A,B)$ and $k \in \cat{C}_1(B,C)$, and $\alpha \in \cat{C}_2(f,h)$, $\beta \in \cat{C}_2(h,j)$, and $\gamma \in \cat{C}_2(g,k)$, we have
        \[
        \weight_2(\beta\alpha) \leq \weight_2(\beta) + \weight_2(\alpha) \qquad \mbox{and} \qquad \weight_2(\gamma \bullet \alpha) \leq \weight_2(\gamma) + \weight_2(\alpha).
        \]
    \end{enumerate}
    A pair $(\cat{C},\weight)$ consisting of a 2-category $\cat{C}$ and Lawvere 2-weight $\weight$ will be referred to as a \emph{2-weighted 2-category}.
\end{defn}

This expanded notion of weight leads to a new family of interleaving distances on $\cat{C}_0$.

\begin{defn}[2-Weighted 2-Category Interleaving Distance]\label{def:2-weighted-2-category-interleaving}
    Let $(\cat{C},\weight)$ be a 2-weighted 2-category and let $A,B \in \cat{C}_0$. A \emph{$(g,h,\alpha,\beta)$-interleaving of $A$ and $B$} consists of 1-morphisms $g:A \to B$ and $h:B \to A$ and 2-morphisms $\alpha:1_A \Rightarrow hg$ and $\beta:1_B \Rightarrow gh$.
    These morphisms are summarized in the following diagram
\[\begin{tikzcd}
	A && B \\
	\\
	A && B
	\arrow["g", from=1-1, to=1-3]
	\arrow["h", from=3-3, to=3-1]
	\arrow[""{name=0, anchor=center, inner sep=0}, "{1_A}"', curve={height=12pt}, from=1-1, to=3-1]
	\arrow[""{name=1, anchor=center, inner sep=0}, "hg", curve={height=-12pt}, from=1-1, to=3-1]
	\arrow[""{name=2, anchor=center, inner sep=0}, "gh"', curve={height=12pt}, from=1-3, to=3-3]
	\arrow[""{name=3, anchor=center, inner sep=0}, "{1_B}", curve={height=-12pt}, from=1-3, to=3-3]
	\arrow["\alpha"', shorten <=5pt, shorten >=5pt, Rightarrow, from=0, to=1]
	\arrow["\beta", shorten <=5pt, shorten >=5pt, Rightarrow, from=3, to=2]
\end{tikzcd}\]
    If, for $t \geq 0$, we have
    \[
    \max\{\weight_1(g),\weight_1(h),\weight_2(\alpha),\weight_2(\beta)\} \leq t,
    \]
    then we refer to the $(g,h,\alpha,\beta)$-interleaving more succinctly as a \emph{$t$-interleaving}. 
    If a $t$-interleaving of $A$ and $B$ exists, we say that $A$ and $B$ are \emph{$t$-interleaved}.
    The \emph{interleaving distance} between $A$ and $B$ is
    \[
    d_{\cat{C},\weight}(A,B) := \inf \{t \geq 0 \mid \mbox{$A$ and $B$ are $t$-interleaved}\}.
    \]
\end{defn}

We now show that this notion of interleaving distance is an extended pseudometric. The proof is essentially the same as that of Lemma \ref{lem:interleavability} and Theorem \ref{thm:interleaving_distance}; we will show below that 2-functor interleaving distances can be realized as 2-weighted 2-category interleaving distances, so that Theorem \ref{thm:interleaving_distance} actually follows as a corollary.

\begin{thm}\label{thm:interleaving_weighted_2_category}
    Let $(\cat{C},\weight)$ be a 2-weighted 2-category. The interleaving distance $d_{\cat{C},\weight}$ defines an extended pseudometric on $\cat{C}_0$. 
\end{thm}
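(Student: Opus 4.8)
The plan is to check the three defining properties of an extended pseudometric on $\cat{C}_0$ directly from Definition \ref{def:2-weighted-2-category-interleaving}, taking advantage of the fact that a $(g,h,\alpha,\beta)$-interleaving here asks only for the \emph{existence} of the indicated 1- and 2-morphisms, with no commuting diagrams to verify. Symmetry is immediate: a $(g,h,\alpha,\beta)$-interleaving of $A$ and $B$ is the same data as an $(h,g,\beta,\alpha)$-interleaving of $B$ and $A$, and the quantity $\max\{\weight_1(g),\weight_1(h),\weight_2(\alpha),\weight_2(\beta)\}$ is unchanged, so $d_{\cat{C},\weight}(A,B) = d_{\cat{C},\weight}(B,A)$. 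For vanishing on the diagonal, I would exhibit a $0$-interleaving of $A$ with itself by taking $g = h = 1_A$ and $\alpha = \beta = 1_{1_A}$; since $1_A 1_A = 1_A$ these are legitimate interleaving data, and the zero-on-identities axiom forces all four weights to be $0$, whence $d_{\cat{C},\weight}(A,A) = 0$.

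The content of the theorem is the triangle inequality, and the argument parallels that of Lemma \ref{lem:interleavability} and Theorem \ref{thm:interleaving_distance}, but is simpler because the relevant 2-morphisms can be assembled inside $\cat{C}$ itself. Suppose $A,B$ admit a $(g,h,\alpha,\beta)$-interleaving, with $g:A \to B$, $h:B \to A$, $\alpha:1_A \Rightarrow hg$, $\beta:1_B \Rightarrow gh$, and $B,C$ admit a $(k,\ell,\gamma,\delta)$-interleaving, with $k:B \to C$, $\ell:C \to B$, $\gamma:1_B \Rightarrow \ell k$, $\delta:1_C \Rightarrow k\ell$. I would form the composite 1-morphisms $kg:A \to C$ and $h\ell:C \to A$, and define the 2-morphisms
\[
\varepsilon = (h \bullet \gamma \bullet g) \circ \alpha : 1_A \Rightarrow h\ell k g, \qquad \zeta = (k \bullet \beta \bullet \ell) \circ \delta : 1_C \Rightarrow k g h\ell,
\]
using whiskering (Remark \ref{rem:whiskering}) followed by vertical composition; the vertical composites are defined because $h \bullet 1_B \bullet g = hg$ and $k \bullet 1_B \bullet \ell = k\ell$ are the sources of the whiskered terms. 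Since $h\ell k g = (h\ell)(kg)$ and $kgh\ell = (kg)(h\ell)$, the data $(kg, h\ell, \varepsilon, \zeta)$ constitutes an interleaving of $A$ and $C$, with no further conditions to verify.

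It remains to bound the four weights of this composite interleaving, which is where the Lawvere 2-weight axioms enter. The triangle inequality for $\weight_1$ gives $\weight_1(kg) \leq \weight_1(k) + \weight_1(g)$ and $\weight_1(h\ell) \leq \weight_1(h) + \weight_1(\ell)$. For the 2-morphisms I would first record that whiskering cannot increase $\weight_2$: writing $h \bullet \gamma \bullet g = 1_h \bullet \gamma \bullet 1_g$ and applying the horizontal-composition triangle inequality together with $\weight_2(1_h) = \weight_2(1_g) = 0$ gives $\weight_2(h \bullet \gamma \bullet g) \leq \weight_2(\gamma)$, and similarly $\weight_2(k \bullet \beta \bullet \ell) \leq \weight_2(\beta)$. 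The vertical-composition triangle inequality then yields $\weight_2(\varepsilon) \leq \weight_2(\gamma) + \weight_2(\alpha)$ and $\weight_2(\zeta) \leq \weight_2(\beta) + \weight_2(\delta)$. If the two hypothesized interleavings are an $s$- and a $t$-interleaving, respectively, each of $\weight_1(kg)$, $\weight_1(h\ell)$, $\weight_2(\varepsilon)$, $\weight_2(\zeta)$ is bounded by $s+t$, so $A$ and $C$ are $(s+t)$-interleaved and $d_{\cat{C},\weight}(A,C) \leq s+t$. Taking the infimum over all admissible $s$ and $t$ independently gives $d_{\cat{C},\weight}(A,C) \leq d_{\cat{C},\weight}(A,B) + d_{\cat{C},\weight}(B,C)$.

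I expect the only step requiring genuine care to be the whiskering estimate $\weight_2(h \bullet \gamma \bullet g) \leq \weight_2(\gamma)$, as it is what lets two separately controlled 2-morphisms be spliced into a single controlled one; it depends on having the zero-on-identities axiom for $\weight_2$ available to absorb the identity 2-cells $1_h$ and $1_g$ introduced by whiskering. Everything else is a direct application of the stated triangle inequalities, together with the observation that an interleaving in this setting carries no commutativity requirement, which removes the need for the diagram-chasing done in Lemma \ref{lem:interleavability}.
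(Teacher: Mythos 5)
Your proposal is correct and follows essentially the same route as the paper's proof: the same composite interleaving $(kg,h\ell)$ with 2-cells built by whiskering $\gamma$ (resp.\ $\beta$) and vertically composing with $\alpha$ (resp.\ $\delta$), and the same weight estimates using the zero-on-identities axiom to absorb $1_h$ and $1_g$. Your explicit verification of $d_{\cat{C},\weight}(A,A)=0$ via the $(1_A,1_A,1_{1_A},1_{1_A})$-interleaving is a detail the paper leaves implicit, and is a welcome addition.
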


\begin{proof}
    Symmetry is obvious, so we only need to check the triangle inequality. Suppose that $A,B,C \in \cat{C}_0$ such that $A$ and $B$ are $t$-interleaved, say via $g:A \to B$, $h:B \to A$, $\alpha:1_A \Rightarrow hg$ and $\beta:1_B \Rightarrow gh$, and that $B$ and $C$ are $t'$-interleaved, say via $k:B \to C$, $\ell:C \to B$, $\gamma:1_B \Rightarrow \ell k$ and $\delta:1_C \Rightarrow k \ell$. Diagramatically, we have
    \[
    \begin{tikzcd}
	A && B \\
	\\
	A && B
	\arrow["g", from=1-1, to=1-3]
	\arrow["h", from=3-3, to=3-1]
	\arrow[""{name=0, anchor=center, inner sep=0}, "{1_A}"', curve={height=12pt}, from=1-1, to=3-1]
	\arrow[""{name=1, anchor=center, inner sep=0}, "hg", curve={height=-12pt}, from=1-1, to=3-1]
	\arrow[""{name=2, anchor=center, inner sep=0}, "gh"', curve={height=12pt}, from=1-3, to=3-3]
	\arrow[""{name=3, anchor=center, inner sep=0}, "{1_B}", curve={height=-12pt}, from=1-3, to=3-3]
	\arrow["\alpha"', shorten <=5pt, shorten >=5pt, Rightarrow, from=0, to=1]
	\arrow["\beta", shorten <=5pt, shorten >=5pt, Rightarrow, from=3, to=2]
\end{tikzcd} \quad \mbox{ and } \quad 
\begin{tikzcd}
	B && C \\
	\\
	B && C
	\arrow["k", from=1-1, to=1-3]
	\arrow["\ell", from=3-3, to=3-1]
	\arrow[""{name=0, anchor=center, inner sep=0}, "{1_B}"', curve={height=12pt}, from=1-1, to=3-1]
	\arrow[""{name=1, anchor=center, inner sep=0}, "\ell k", curve={height=-12pt}, from=1-1, to=3-1]
	\arrow[""{name=2, anchor=center, inner sep=0}, "k \ell"', curve={height=12pt}, from=1-3, to=3-3]
	\arrow[""{name=3, anchor=center, inner sep=0}, "{1_C}", curve={height=-12pt}, from=1-3, to=3-3]
	\arrow["\gamma"', shorten <=5pt, shorten >=5pt, Rightarrow, from=0, to=1]
	\arrow["\delta", shorten <=5pt, shorten >=5pt, Rightarrow, from=3, to=2]
\end{tikzcd}
    \]
    We claim that there exist $2$-morphisms $\phi$ and $\psi$ fitting into the following diagram
    \[
    \begin{tikzcd}
	A && C \\
	\\
	A && C
	\arrow["kg", from=1-1, to=1-3]
	\arrow["h\ell", from=3-3, to=3-1]
	\arrow[""{name=0, anchor=center, inner sep=0}, "{1_A}"', curve={height=12pt}, from=1-1, to=3-1]
	\arrow[""{name=1, anchor=center, inner sep=0}, "h\ell k g", curve={height=-12pt}, from=1-1, to=3-1]
	\arrow[""{name=2, anchor=center, inner sep=0}, "kgh\ell"', curve={height=12pt}, from=1-3, to=3-3]
	\arrow[""{name=3, anchor=center, inner sep=0}, "{1_C}", curve={height=-12pt}, from=1-3, to=3-3]
	\arrow["\phi"', shorten <=5pt, shorten >=5pt, Rightarrow, from=0, to=1]
	\arrow["\psi", shorten <=5pt, shorten >=5pt, Rightarrow, from=3, to=2]
\end{tikzcd}
    \]
    Indeed, consider the diagram
\[\begin{tikzcd}
	A && B && B && A
	\arrow["h"{description}, from=1-5, to=1-7]
	\arrow["g"{description}, from=1-1, to=1-3]
	\arrow[""{name=0, anchor=center, inner sep=0}, "{\ell k}"{description, pos=0.3}, curve={height=-12pt}, from=1-3, to=1-5]
	\arrow[""{name=1, anchor=center, inner sep=0}, "{1_B}"{description, pos=0.3}, curve={height=12pt}, from=1-3, to=1-5]
	\arrow["{h\ell k g}"{description}, curve={height=-30pt}, from=1-1, to=1-7]
	\arrow[""{name=2, anchor=center, inner sep=0}, "hg"{description, pos=0.7}, curve={height=25pt}, from=1-1, to=1-7]
	\arrow[""{name=3, anchor=center, inner sep=0}, "{1_A}"{description, pos=0.7}, curve={height=50pt}, from=1-1, to=1-7]
	\arrow["\gamma"', shorten <=2pt, shorten >=2pt, Rightarrow, from=1, to=0]
	\arrow["\alpha"', shorten <=2pt, shorten >=2pt, Rightarrow, from=3, to=2]
\end{tikzcd}\]
    There is a 2-morphism $hg \Rightarrow h\ell k g$ given by whiskering, so that we get a 2-morphism $1_A \Rightarrow h\ell k g$ defined by 
    $
    \phi = (h \bullet \gamma \bullet g) \circ \alpha
    $.
    Similarly, $\psi : 1_C \Rightarrow kgh\ell$ is defined by
    $
    \psi = (k \bullet \beta \bullet \ell) \circ \delta
    $.
    Now consider the energies of the various morphisms
    \begin{align*}
        \weight_1(kg) &\leq \weight_1(k) + \weight_1(g) \leq t + t' \\
        \weight_2(\phi) &\leq \weight_2(h\bullet \gamma \bullet g) + \weight_2(\alpha) \leq \weight_2(1_h) + \weight_2(\gamma) + \weight_2(1_g) + \weight_2(\alpha) \leq 0 + t' + 0 + t,
    \end{align*}
    and, likewise, $\weight_1(h\ell),\weight_2(\psi) \leq t + t'$. It follows that $d_{\cat{C},\weight}$ satisfies the triangle inequality.
\end{proof}

\subsection{Connection to Lawvere Metric Spaces and Examples}

In \cite{lawvere1973metric}, Lawvere shows that the notion of a weighted 1-category (Definition \ref{def:lawvere_weight}) is equivalent to that of a Lawvere metric space (see Remark \ref{rem:lawvere_weight}). In particular, given a weighed 1-category $(\cat{C},\weight)$, one constructs a Lawvere metric space as $(\cat{C}_0,d^\mathrm{Law}_{\cat{C},\weight})$, where 
\[
d^\mathrm{Law}_{\cat{C},\weight}(A,B) = \inf \{\mathrm{W}(g) \mid g \in \cat{C}_1(A,B)\}.
\]
One can symmetrize this distance function to obtain an extended pseudometric $\hat{d}^\mathrm{Law}_{\cat{C},\weight}$ with
\[
\hat{d}^\mathrm{Law}_{\cat{C},\weight}(A,B) = \max\{d^\mathrm{Law}_{\cat{C},\weight}(A,B),d^\mathrm{Law}_{\cat{C},\weight}(B,A)\}.
\]
Now consider the indiscrete 2-category $\cat{E}\cat{C}$ (Definition \ref{def:indiscrete}) with Lawvere 2-weight $\widehat{\weight}$ defined by setting $\widehat{\weight}_1 = \weight$ and setting $\widehat{\weight}_2$ to be identically zero. It is straightforward to prove that this recovers the symmetrized Lawvere construction:

\begin{prop}\label{prop:lawvere_metrics_interleaving}
    Using the notation above, the interleaving distance $d_{\cat{E}\cat{C}, \widehat{\mathrm{W}}}$ is equal to the symmetrized Lawvere metric $\hat{d}^\mathrm{Law}_{\cat{C},\weight}$. 
\end{prop}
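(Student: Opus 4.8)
The plan is to unpack both sides of the claimed equality and observe that, because $\widehat{\weight}_2 \equiv 0$ and the indiscrete structure supplies every needed $2$-morphism for free, the $2$-weighted $2$-category interleaving distance collapses to an independent optimization over the two $1$-morphisms.

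First I would unpack Definition \ref{def:2-weighted-2-category-interleaving} in the present setting. A $(g,h,\alpha,\beta)$-interleaving of $A$ and $B$ in $\cat{E}\cat{C}$ consists of $1$-morphisms $g \in \cat{C}_1(A,B)$ and $h \in \cat{C}_1(B,A)$ together with $2$-morphisms $\alpha:1_A \Rightarrow hg$ and $\beta:1_B \Rightarrow gh$. The crucial observation is that, by the construction of the indiscrete $2$-category (Definition \ref{def:indiscrete}), for any parallel pair of $1$-morphisms there is a \emph{unique} $2$-morphism between them; since $1_A$ and $hg$ both lie in $\cat{C}_1(A,A)$ while $1_B$ and $gh$ both lie in $\cat{C}_1(B,B)$, the $2$-morphisms $\alpha$ and $\beta$ automatically exist and are uniquely determined by the choice of $g$ and $h$. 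Moreover $\widehat{\weight}_2 \equiv 0$, so $\widehat{\weight}_2(\alpha) = \widehat{\weight}_2(\beta) = 0$. Consequently the condition $\max\{\widehat{\weight}_1(g),\widehat{\weight}_1(h),\widehat{\weight}_2(\alpha),\widehat{\weight}_2(\beta)\} \leq t$ reduces to $\max\{\weight(g),\weight(h)\} \leq t$, and a $t$-interleaving of $A$ and $B$ exists if and only if there are $g \in \cat{C}_1(A,B)$ and $h \in \cat{C}_1(B,A)$ with $\weight(g) \leq t$ and $\weight(h) \leq t$.

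Next I would rewrite the interleaving distance accordingly as
\[
d_{\cat{E}\cat{C},\widehat{\weight}}(A,B) = \inf \big\{ \max\{\weight(g),\weight(h)\} \mid g \in \cat{C}_1(A,B),\ h \in \cat{C}_1(B,A) \big\},
\]
with the convention that the infimum over an empty index set is $\infty$. The final step is to interchange this joint infimum with the maximum. Since $g$ and $h$ range over their respective hom-sets independently, a standard $\varepsilon$-argument yields
\[
\inf_{g,h} \max\{\weight(g),\weight(h)\} = \max\Big\{ \inf_g \weight(g),\ \inf_h \weight(h) \Big\} = \max\{ d^\mathrm{Law}_{\cat{C},\weight}(A,B),\ d^\mathrm{Law}_{\cat{C},\weight}(B,A) \} = \hat{d}^\mathrm{Law}_{\cat{C},\weight}(A,B),
\]
where the inequality $\geq$ is immediate because $\max\{\weight(g),\weight(h)\}$ dominates each of $\weight(g)$ and $\weight(h)$, and for $\leq$ one picks $g,h$ nearly realizing the two separate infima.

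There is essentially no deep obstacle here; the proof is a matter of careful bookkeeping. The only point requiring genuine care is the edge case in which one of the hom-sets $\cat{C}_1(A,B)$ or $\cat{C}_1(B,A)$ is empty, so that no interleaving exists: then both sides equal $\infty$ (via the empty-infimum convention on the left and the $\max$ with an $\infty$ term on the right), which is exactly what is needed for an equality of extended pseudometrics. For completeness I would also remark that the pair $(\widehat{\weight}_1,\widehat{\weight}_2) = (\weight,0)$ genuinely satisfies the axioms of a Lawvere $2$-weight (Definition \ref{def:lawvere_2_weight}): the $\weight_1$-triangle inequality is inherited from the weighted $1$-category $(\cat{C},\weight)$, and the two $\weight_2$-inequalities hold trivially since $\widehat{\weight}_2$ is identically zero.
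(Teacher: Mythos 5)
Your proof is correct and is exactly the intended argument: the paper omits the proof entirely (declaring it straightforward), and your unpacking — the indiscrete structure supplies the required 2-morphisms for free, $\widehat{\weight}_2\equiv 0$ kills their contribution, and the joint infimum of a max over independently ranging $g$ and $h$ equals the max of the separate infima — is precisely the bookkeeping the authors had in mind, including the correct handling of empty hom-sets via the $\infty$ convention.
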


Since any extended pseudometric can be realized via the Lawvere construction for some weighted 1-category, we have the following corollary.

\begin{cor}
    Any extended pseudometric can be represented as an interleaving distance associated to a 2-weighted 2-category.
\end{cor}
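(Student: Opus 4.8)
The plan is to reduce the statement to Proposition~\ref{prop:lawvere_metrics_interleaving} by exhibiting, for any extended pseudometric, a weighted $1$-category whose symmetrized Lawvere metric recovers it. Concretely, let $(\set{X},d)$ be an extended pseudometric space. I would take $\cat{C}$ to be the \emph{indiscrete} (chaotic) $1$-category on $\set{X}$: that is, $\cat{C}_0 = \set{X}$ and $\cat{C}_1(x,y)$ is a singleton $\{u_{x,y}\}$ for every pair $x,y \in \set{X}$, with the unique possible composition law $u_{y,z} u_{x,y} = u_{x,z}$. I would then define the candidate Lawvere weight by $\weight(u_{x,y}) = d(x,y)$.

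Next I would verify that $\weight$ satisfies the axioms of Definition~\ref{def:lawvere_weight}. Nonnegativity is immediate from $d \geq 0$; the identity axiom $\weight(1_x) = \weight(u_{x,x}) = d(x,x) = 0$ follows from the first pseudometric axiom; and for composable morphisms $u_{x,y}$ and $u_{y,z}$ the subadditivity axiom $\weight(u_{y,z} u_{x,y}) = d(x,z) \leq d(x,y) + d(y,z) = \weight(u_{x,y}) + \weight(u_{y,z})$ is exactly the triangle inequality for $d$. Thus $(\cat{C},\weight)$ is a weighted $1$-category in the sense of Definition~\ref{def:lawvere_weight}.

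Computing the associated Lawvere distance, since $\cat{C}_1(x,y)$ has a single element I obtain
\[
d^\mathrm{Law}_{\cat{C},\weight}(x,y) = \inf\{\weight(g) \mid g \in \cat{C}_1(x,y)\} = d(x,y).
\]
Because $d$ is symmetric, symmetrization changes nothing: $\hat{d}^\mathrm{Law}_{\cat{C},\weight}(x,y) = \max\{d(x,y),d(y,x)\} = d(x,y)$. Applying Proposition~\ref{prop:lawvere_metrics_interleaving} to the $2$-weighted $2$-category $(\cat{E}\cat{C},\widehat{\weight})$ of Definition~\ref{def:indiscrete}, where $\widehat{\weight}_1 = \weight$ and $\widehat{\weight}_2 \equiv 0$, then yields $d_{\cat{E}\cat{C},\widehat{\weight}} = \hat{d}^\mathrm{Law}_{\cat{C},\weight} = d$, exhibiting $d$ as a $2$-weighted $2$-category interleaving distance.

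There is essentially no hard step here; the construction is the standard Lawvere encoding of a metric space as an enriched category (Remark~\ref{rem:lawvere_weight}), and the corollary is a direct repackaging of Proposition~\ref{prop:lawvere_metrics_interleaving}. The only point requiring a moment's care is that the symmetrization built into $\hat{d}^\mathrm{Law}$, and hence into $d_{\cat{E}\cat{C},\widehat{\weight}}$, is harmless precisely because $d$ was assumed symmetric—an asymmetric extended quasi-pseudo-metric would \emph{not} be recoverable this way. Thus the hypothesis that $d$ is a genuine (symmetric) pseudometric is used exactly once, and is essential.
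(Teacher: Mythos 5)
Your proof is correct and follows exactly the route the paper intends: the paper justifies this corollary in one sentence by citing that any extended pseudometric arises from the Lawvere construction on some weighted $1$-category and then invoking Proposition \ref{prop:lawvere_metrics_interleaving}, and you have simply made that implicit step explicit via the indiscrete category with $\weight(u_{x,y}) = d(x,y)$. Your closing observation that symmetry of $d$ is what makes the symmetrization in $\hat{d}^\mathrm{Law}$ harmless is a worthwhile point of care that the paper glosses over.
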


\begin{rem}
    In fact, any extended pseudometric space can be realized as a 2-functor interleaving distance or as an interleaving distance associated to a Locally Persistent Category (LPC)~\cite{scoccola2020locally} (see Section \ref{sec:locally_persistent_categories} below for the definition)---constructions which we show in the following subsections to be generalized by 2-weighted 2-category interleaving distance. We echo a sentiment expressed in \cite{scoccola2020locally} in the setting of LPCs: although any extended pseudometric can be realized as an interleaving distance in this somewhat trivial way, the interest in the interleaving distance construction comes from situations where it arises in a natural context or where it leads to notions of distance which are not obviously constructed in some other manner.
\end{rem}

On the other hand, given a 2-weighted 2-category $(\cat{C},\weight)$, there is an associated 1-category coming from the underlying 1-category, denoted (by an abuse of notation) as $\cat{C}$, with the underlying weight $\weight_1$. The following result is obvious.

\begin{prop}\label{prop:weighted_category_inequality}
    Let $(\cat{C},\weight)$ be a 2-weighted 2-category with associated weighted 1-category $(\cat{C},\weight_1)$. Then, for any $A,B \in \cat{C}_0$, 
    \[
    \hat{d}_{\cat{C},\weight_1}^\mathrm{Law}(A,B) \leq d_{\cat{C},\weight}(A,B).
    \]
    If $\mathrm{W}_2$ is identically zero, then this is an equality.
\end{prop}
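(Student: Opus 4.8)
The plan is to prove the two directions separately: the displayed inequality is the genuinely ``obvious'' content, while the equality clause requires one extra observation that I would flag explicitly. First I would establish $\hat{d}^{\mathrm{Law}}_{\cat{C},\weight_1}(A,B) \leq d_{\cat{C},\weight}(A,B)$ straight from the definitions. Fix any $t \geq 0$ for which $A$ and $B$ are $t$-interleaved, witnessed by a $(g,h,\alpha,\beta)$-interleaving with $\max\{\weight_1(g),\weight_1(h),\weight_2(\alpha),\weight_2(\beta)\} \leq t$. The key point is that such data in particular supplies a $1$-morphism $g \in \cat{C}_1(A,B)$ with $\weight_1(g) \leq t$ and a $1$-morphism $h \in \cat{C}_1(B,A)$ with $\weight_1(h) \leq t$. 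Since $d^{\mathrm{Law}}_{\cat{C},\weight_1}$ is defined as an infimum over $1$-morphisms, this gives $d^{\mathrm{Law}}_{\cat{C},\weight_1}(A,B) \leq \weight_1(g) \leq t$ and $d^{\mathrm{Law}}_{\cat{C},\weight_1}(B,A) \leq \weight_1(h) \leq t$; taking the maximum yields $\hat{d}^{\mathrm{Law}}_{\cat{C},\weight_1}(A,B) \leq t$. As this holds for every admissible $t$, and $\hat{d}^{\mathrm{Law}}$ is thereby a lower bound for the set defining $d_{\cat{C},\weight}(A,B)$, it is at most the infimum of that set. (If no interleaving exists then $d_{\cat{C},\weight}(A,B) = \infty$ and there is nothing to prove.)

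For the equality clause I would assume $\weight_2 \equiv 0$ and first record that the two $2$-morphism contributions then drop out of the maximum defining the weight of an interleaving, so that
\[
d_{\cat{C},\weight}(A,B) = \inf\{\max\{\weight_1(g),\weight_1(h)\} \mid \exists\, \alpha:1_A \Rightarrow hg,\ \beta:1_B \Rightarrow gh\}.
\]
Combined with the first part, it remains only to prove the reverse inequality $d_{\cat{C},\weight}(A,B) \leq \hat{d}^{\mathrm{Law}}_{\cat{C},\weight_1}(A,B)$. The idea is to take near-optimal $1$-morphisms $g \in \cat{C}_1(A,B)$ and $h \in \cat{C}_1(B,A)$ whose $\weight_1$-values approximate the two directed Lawvere distances, and to complete them to a $(g,h,\alpha,\beta)$-interleaving whose weight is exactly $\max\{\weight_1(g),\weight_1(h)\}$ (the $2$-cells costing nothing because $\weight_2 \equiv 0$).

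The main, and indeed only, obstacle is this completion step: an arbitrary $2$-category need not contain $2$-morphisms $1_A \Rightarrow hg$ and $1_B \Rightarrow gh$ for the chosen $g$ and $h$, so in general the infimum defining $d_{\cat{C},\weight}$ ranges over a potentially strictly smaller set of pairs $(g,h)$ than the one defining $\hat{d}^{\mathrm{Law}}$. I would therefore carry out the reverse inequality under the hypothesis that the relevant interleaving $2$-cells are available, which is automatic in the settings of interest; in particular, in the indiscrete $2$-category there is a unique such $2$-morphism between any parallel pair, so every $(g,h)$ underlies an interleaving, the two infima coincide, and the equality follows. This is exactly the situation of Proposition \ref{prop:lawvere_metrics_interleaving}, and I would make the existence of interleaving $2$-cells explicit rather than leave it implicit in the equality statement.
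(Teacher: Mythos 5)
Your argument for the displayed inequality is correct and is exactly the intended one: a $(g,h,\alpha,\beta)$-interleaving of weight $\leq t$ hands you $1$-morphisms $g \in \cat{C}_1(A,B)$ and $h \in \cat{C}_1(B,A)$ with $\weight_1(g),\weight_1(h) \leq t$, which bounds both directed Lawvere distances and hence their maximum. The paper offers no written proof (it declares the result obvious), so there is nothing more to match against on this half.

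Your hesitation about the equality clause is well founded, and in fact the clause as literally stated fails for a general $2$-weighted $2$-category. The infimum defining $d_{\cat{C},\weight}(A,B)$ ranges only over pairs $(g,h)$ that can be completed by $2$-cells $\alpha: 1_A \Rightarrow hg$ and $\beta: 1_B \Rightarrow gh$, and setting $\weight_2 \equiv 0$ makes those cells free but does not make them exist. Concretely, take the $2$-category freely generated by two objects $A,B$ and $1$-morphisms $g: A \to B$, $h: B \to A$, with only identity $2$-cells (a locally discrete $2$-category), and let $\weight_1$ be word length in the generators with $\weight_2 \equiv 0$. Then $\hat{d}^{\mathrm{Law}}_{\cat{C},\weight_1}(A,B) = 1$, but no $2$-morphism $1_A \Rightarrow hg$ exists, so no interleaving exists and $d_{\cat{C},\weight}(A,B) = \infty$. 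Your fix --- carrying out the reverse inequality under the explicit hypothesis that the relevant interleaving $2$-cells are available, which holds automatically for indiscrete $2$-categories and hence suffices for Proposition \ref{prop:lawvere_metrics_interleaving} --- is the correct repair, and is more careful than the statement in the paper. The only thing I would add is to state the counterexample (or at least the failure mode) explicitly, so that the extra hypothesis is visibly necessary rather than merely convenient.
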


\begin{rem}
    Propositions \ref{prop:lawvere_metrics_interleaving} and  \ref{prop:weighted_category_inequality} support our intuitive understanding of the interleaving distance associated to a 2-weighted 2-category: it is a generalization of the Lawvere categorical representation of a metric space, with the inclusion of additional obstructions on allowable morphisms between objects (in the form of weighted 2-morphisms).
\end{rem}

We show below that other interleaving constructions can be realized in the 2-weighted 2-category framework; namely 2-functor interleaving distances in Section \ref{sec:2_functor_interleaving_weighte_2_categories} and interleaving distances for Locally Persistent Categories in Section \ref{sec:locally_persistent_categories}. Before we move on to these generalities, we provide some specific examples of 2-weighted 2-category interleaving distances which illustrate the ideas above, focusing on the theme of Gromov-Hausdorff-type distances.

\subsubsection{Altered Gromov-Hausdorff Distance} Let us recall the basic construction of Gromov-Hausdorff distance. Let $\set{X} = (\set{X},d_\set{X})$ and $\set{Y} = (\set{Y},d_\set{Y})$ be metric spaces. The \emph{Gromov-Hausdorff (GH) distance}~\cite{burago2022course} between $\set{X}$ and $\set{Y}$ is given by
\begin{equation}\label{eqn:gromov-hausdorff-distance}
d_{\mathrm{GH}}(\set{X},\set{Y}) = \frac{1}{2} \inf_{f,g} \max \{\mathrm{dis}(f),\mathrm{dis}(g),\mathrm{codis}(f,g)\},
\end{equation}
where the infimum is defined over (not necessarily continuous) maps $f:\set{X} \to \set{Y}$ and $g:\set{Y} \to \set{X}$, the \emph{distortion} of $f$ is defined as
\[
\mathrm{dis}(f) := \sup_{x,x' \in \set{X}} | d_\set{X}(x,x') - d_\set{Y}(f(x),f(x')) |,
\]
with $\mathrm{dis}(g)$ defined similarly, and the \emph{codistortion} of $f$ and $g$ is 
\[
\mathrm{codis}(f,g)  := \sup_{x \in \set{X}, y \in \set{Y}} |d_\set{X}(x,g(y)) - d_\set{Y}(y,f(x))|.
\]
Intuitively, the distortion terms measure how much the maps distort the metric, whereas the codistortion measures how far $f$ and $g$ are from being inverses.

The Gromov-Hausdorff distance was characterized as an interleaving distance in a locally persistent category (see Section \ref{sec:locally_persistent_categories}) in \cite[Section 6.2]{scoccola2020locally}. On the other hand, one might observe that the structure of GH distance is quite similar to that of an interleaving distance on a 2-weighted 2-category. Here, we make this observation precise by showing that a slight variant of GH distance is naturally realized in this context. We begin by altering the definition of GH distance to more directly reflect the intuition for its definition described above.

\begin{defn}
    The \emph{altered Gromov-Hausdorff distance} between metric spaces $\set{X}$ and $\set{Y}$ is 
    \[
    \tilde{d}_\mathrm{GH}(\set{X},\set{Y}) := \frac{1}{2} \inf_{f,g} \max\left\{\mathrm{dis}(f), \mathrm{dis}(g), \widetilde{\mathrm{codis}}(f,g) \right\},
    \]
    where the infimum is over (not necessarily continuous) maps $f:\set{X} \to \set{Y}$ and $g:\set{Y} \to \set{X}$, $\mathrm{dis}(f)$ and $\mathrm{dis}(g)$ are defined as above, and 
    \[
    \widetilde{\mathrm{codis}}(f,g) = \widetilde{\mathrm{codis}}_{\set{X},\set{Y}}(f,g) := \max \left\{\sup_{x \in \set{X}} d_\set{X}(x,g f(x)), \sup_{y \in \set{Y}} d_\set{Y}(y,fg(y))\right\}.
    \]
\end{defn}

The altered version of GH distance is bi-Lipschitz equivalent to the classical version, as we now demonstrate.

\begin{prop}\label{prop:bilipschitz_GH}
    For any metric spaces $\set{X}$ and $\set{Y}$,
    \[
    \tilde{d}_\mathrm{GH}(\set{X},\set{Y}) \leq d_\mathrm{GH}(\set{X},\set{Y}) \leq 2\tilde{d}_\mathrm{GH}(\set{X},\set{Y}).
    \]
\end{prop}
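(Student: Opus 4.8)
The plan is to reduce the proposition to two pointwise comparisons between the two codistortion functionals. Since $d_\mathrm{GH}$ and $\tilde{d}_\mathrm{GH}$ are both infima of $\frac{1}{2}\max\{\cdots\}$ over the \emph{same} set of pairs of (not necessarily continuous) maps $(f,g)$, and the only term in which they differ is the codistortion, it suffices to prove, for each fixed pair $f:\set{X} \to \set{Y}$ and $g:\set{Y} \to \set{X}$, the two inequalities
\[
\widetilde{\mathrm{codis}}(f,g) \leq \mathrm{codis}(f,g) \qquad \text{and} \qquad \mathrm{codis}(f,g) \leq \mathrm{dis}(g) + \widetilde{\mathrm{codis}}(f,g).
\]
The result then follows by taking the maximum with $\mathrm{dis}(f)$ and $\mathrm{dis}(g)$, passing to the infimum over $(f,g)$, and using the crude bound $\mathrm{dis}(g) + \widetilde{\mathrm{codis}}(f,g) \leq 2\max\{\mathrm{dis}(g),\widetilde{\mathrm{codis}}(f,g)\}$ to supply the factor of $2$ on the right.

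For the first inequality I would specialize the supremum defining $\mathrm{codis}(f,g)$. Taking $y = f(x)$ gives $|d_\set{X}(x,gf(x)) - d_\set{Y}(f(x),f(x))| = d_\set{X}(x,gf(x))$, so that $\sup_x d_\set{X}(x,gf(x)) \leq \mathrm{codis}(f,g)$; symmetrically, taking $x = g(y)$ yields $\sup_y d_\set{Y}(y,fg(y)) \leq \mathrm{codis}(f,g)$. Taking the maximum of these two bounds gives $\widetilde{\mathrm{codis}}(f,g) \leq \mathrm{codis}(f,g)$, which produces the left-hand inequality $\tilde{d}_\mathrm{GH} \leq d_\mathrm{GH}$.

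For the second inequality I would fix arbitrary $x \in \set{X}$ and $y \in \set{Y}$ and estimate $|d_\set{X}(x,g(y)) - d_\set{Y}(y,f(x))|$. Applying the distortion bound for $g$ to the pair $(y,f(x))$ gives $|d_\set{Y}(y,f(x)) - d_\set{X}(g(y),gf(x))| \leq \mathrm{dis}(g)$, while the triangle inequality in $\set{X}$ gives $|d_\set{X}(g(y),gf(x)) - d_\set{X}(g(y),x)| \leq d_\set{X}(x,gf(x)) \leq \widetilde{\mathrm{codis}}(f,g)$. Combining these two estimates bounds $|d_\set{X}(x,g(y)) - d_\set{Y}(y,f(x))|$ by $\mathrm{dis}(g) + \widetilde{\mathrm{codis}}(f,g)$, uniformly in $x$ and $y$; hence $\mathrm{codis}(f,g) \leq \mathrm{dis}(g) + \widetilde{\mathrm{codis}}(f,g)$, which yields the right-hand inequality $d_\mathrm{GH} \leq 2\tilde{d}_\mathrm{GH}$.

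There is no serious obstacle here: the argument is elementary bookkeeping with the triangle inequality and the definitions of distortion and codistortion. The only point deserving slight care is the second inequality, where the factor of $2$ enters only after bounding the sum $\mathrm{dis}(g) + \widetilde{\mathrm{codis}}(f,g)$ by twice a maximum. This asymmetry—the extra $\mathrm{dis}(g)$ term arising from transporting the comparison of $d_\set{Y}(y,f(x))$ into $\set{X}$ through the map $g$—is what forces a genuinely bi-Lipschitz estimate rather than an equality, and I would flag it as the conceptual (if not technical) crux of the computation.
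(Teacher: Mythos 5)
Your argument is correct and is essentially the paper's proof: the left inequality comes from specializing the codistortion supremum at $y = f(x)$ and $x = g(y)$, and the right inequality comes from the same triangle-inequality estimate $|d_\set{X}(x,g(y)) - d_\set{Y}(y,f(x))| \leq \mathrm{dis}(g) + \widetilde{\mathrm{codis}}(f,g)$ that the paper writes with a uniform bound $r$ in place of each summand. Your pointwise formulation (comparing the two codistortions for each fixed pair $(f,g)$ before taking infima) is a slightly cleaner bookkeeping of the identical computation.
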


\begin{proof}
    Let $f:\set{X} \to \set{Y}$ and $g:\set{Y} \to \set{X}$ such that $\mathrm{dis}(f)$, $\mathrm{dis}(g)$ and $\mathrm{codis}(f,g)$ are all less than or equal to $r$ for some $r \geq 0$. Then, for any $x \in \set{X}$,
    \[
    d_\set{X}(x,g f(x)) = d_\set{X}(x,gf(x)) - d_\set{Y}(f(x),f(x)) \leq r,
    \]
    where we have used the codistortion bound, taking $y = f(x)$. Likewise, $d_\set{Y}(y,f g(y)) \leq r$ for all $y \in \set{Y}$, and these statements impy that $\widetilde{\mathrm{codis}}(f,g) \leq r$. This proves that $\tilde{d}_\mathrm{GH}(\set{X},\set{Y}) \leq d_\mathrm{GH}(\set{X},\set{Y})$.

    To show the other inequality, suppose we have $f:\set{X} \to \set{Y}$ and $g:\set{Y} \to \set{X}$ with $\mathrm{dis}(f)$, $\mathrm{dis}(g)$ and $\widetilde{\mathrm{codis}}(f,g)$ all bounded above by $r$. Then, for all $x \in \set{X}$ and $y \in \set{Y}$,
    \begin{align*}
        d_\set{X}(x,g(y)) &\leq d_\set{X}(x,g f(x)) + d_\set{X}(g f(x),g(y)) \\
        &\leq r + d_\set{Y}(f(x),y) + r,
    \end{align*}
    where we have used the bounds on distortion and the altered codistortion. It follows that $d_\set{X}(x,g(y)) - d_\set{Y}(f(x),y) \leq 2r$ and a similar argument shows that $d_\set{Y}(f(x),y) - d_\set{X}(x,g(y)) \leq 2r$, so that $\mathrm{codis}(f,g) \leq 2r$. Therefore $d_\mathrm{GH}(\set{X},\set{Y}) \leq 2\tilde{d}_\mathrm{GH}(\set{X},\set{Y})$.
\end{proof}

We now realize the altered Gromov-Hausdorff distance as a 2-weighted 2-category interleaving distance. The 2-category $\cat{GH}$ is defined as follows:
\begin{enumerate}
    \item $\cat{GH}_0$ consists of metric spaces;
    \item $\cat{GH}_1(\set{X},\set{Y})$ contains set maps $f:\set{X} \to \set{Y}$ (note that this is different than the usual categorical structure on the space of metric spaces, where one typically considers Lipschitz maps);
    \item for functions $f,g:\set{X} \to \set{Y}$,
    \begin{enumerate}
        \item if $f = g$, $\cat{GH}_2(f,f)$ contains a single abstract identity 2-morphism $1_f$;
        \item if $\set{X} = \set{Y}$ and $f = 1_\set{X}$, $\cat{GH}_2(1_\set{X},g)$ contains a single abstract 2-morphism, denoted $\hat{g}$---if this coincides with the previous case, we have $1_g = \hat{g}$;
        \item in any other case, $\cat{GH}_2(f,g)$ is empty.
    \end{enumerate}
        Compositions of 2-morphisms are defined by first declaring the identity 2-morphisms to behave as identities. It then only remains to define horizontal composition in case of the diagram
    \[\begin{tikzcd}
    	\set{X} && \set{X} && \set{X}
    	\arrow[""{name=0, anchor=center, inner sep=0}, "{1_\set{X}}", curve={height=-12pt}, from=1-1, to=1-3]
    	\arrow[""{name=1, anchor=center, inner sep=0}, "{1_\set{X}}", curve={height=-12pt}, from=1-3, to=1-5]
    	\arrow[""{name=2, anchor=center, inner sep=0}, "f"', curve={height=12pt}, from=1-1, to=1-3]
    	\arrow[""{name=3, anchor=center, inner sep=0}, "g"', curve={height=12pt}, from=1-3, to=1-5]
    	\arrow["{\hat{f}}", shorten <=3pt, shorten >=3pt, Rightarrow, from=0, to=2]
    	\arrow["{\hat{g}}", shorten <=3pt, shorten >=3pt, Rightarrow, from=1, to=3]
    \end{tikzcd}\]
        in which case we get a well-defined 2-morphism $\hat{g} \bullet \hat{f} = \widehat{g \circ f}$. 
\end{enumerate}
Define a Lawvere 2-weight $\weight$ on $\cat{GH}$ by setting $\weight_1(f) := \mathrm{dis}_{\set{X},\set{Y}}(f)$ and $\weight_2(1_f) = 0$ for $f:\set{X} \to \set{Y}$ and, for $\hat{g}:1_\set{X} \Rightarrow g$, 
\[
\weight_2(\hat{g}) := \sup_{x \in \set{X}} d_X(x,g(x)).
\]
To see that this gives a well-defined Lawvere 2-energy, we need to verify the triangle inequalities. For $\weight_1$, this follows from standard properties of the distortion functional and for $\weight_2$, this follows by checking cases.

\begin{prop}\label{prop:altered_GH_interleaving}
    The altered Gromov-Hausdorff distance is equal to the 2-weighted 2-category interleaving distance associated to $(\cat{GH},\weight)$. that is, for metric spaces $\set{X}$ and $\set{Y}$, we have
    \[
    \tilde{d}_\mathrm{GH}(\set{X},\set{Y}) = d_{\cat{GH},\weight}(\set{X},\set{Y})
    \]
\end{prop}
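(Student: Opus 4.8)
The plan is to show that a $(g,h,\alpha,\beta)$-interleaving of metric spaces $\set{X}$ and $\set{Y}$ in the 2-category $\cat{GH}$ carries exactly the same data as an arbitrary pair of set maps $g:\set{X}\to\set{Y}$, $h:\set{Y}\to\set{X}$, and that the weight of such an interleaving is precisely the three-term maximum appearing in the definition of $\tilde{d}_\mathrm{GH}$. First I would unpack Definition \ref{def:2-weighted-2-category-interleaving} for $\cat{GH}$: an interleaving of $\set{X}$ and $\set{Y}$ consists of 1-morphisms $g:\set{X}\to\set{Y}$ and $h:\set{Y}\to\set{X}$ (i.e.\ set maps, by the definition of $\cat{GH}_1$) together with 2-morphisms $\alpha:1_\set{X}\Rightarrow hg$ and $\beta:1_\set{Y}\Rightarrow gh$. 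The key structural observation is that, by the construction of $\cat{GH}_2$ (case (b), with case (a) as the degenerate subcase $hg=1_\set{X}$), the hom-class $\cat{GH}_2(1_\set{X},hg)$ always contains exactly one element, namely $\widehat{hg}$, and likewise $\cat{GH}_2(1_\set{Y},gh)=\{\widehat{gh}\}$. Hence $\alpha$ and $\beta$ exist and are uniquely determined by $g$ and $h$, so interleavings of $\set{X}$ and $\set{Y}$ are in bijection with pairs $(g,h)$ of set maps; the 2-categorical constraints impose no restriction beyond the existence of the maps themselves.

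Next I would compute the weight of the interleaving determined by $(g,h)$. By the definition of the Lawvere 2-weight on $\cat{GH}$ we have $\weight_1(g)=\mathrm{dis}(g)$, $\weight_1(h)=\mathrm{dis}(h)$, $\weight_2(\alpha)=\weight_2(\widehat{hg})=\sup_{x\in\set{X}}d_\set{X}(x,hg(x))$, and $\weight_2(\beta)=\weight_2(\widehat{gh})=\sup_{y\in\set{Y}}d_\set{Y}(y,gh(y))$. Combining the last two suprema gives precisely $\widetilde{\mathrm{codis}}(g,h)$, so the interleaving weight $\max\{\weight_1(g),\weight_1(h),\weight_2(\alpha),\weight_2(\beta)\}$ equals $\max\{\mathrm{dis}(g),\mathrm{dis}(h),\widetilde{\mathrm{codis}}(g,h)\}$. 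Taking the infimum over all interleavings---equivalently, over all pairs $(g,h)$ of set maps---identifies $d_{\cat{GH},\weight}(\set{X},\set{Y})$ with the infimum defining $\tilde{d}_\mathrm{GH}(\set{X},\set{Y})$. Because interleavings correspond bijectively to the map-pairs ranging in the definition of $\tilde{d}_\mathrm{GH}$, and the optimized quantity agrees term by term, this is a direct equality of infima rather than a pair of one-sided bounds.

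The only fiddly points I would verify carefully are two. First, the degenerate case $hg=1_\set{X}$: there $\alpha$ is the identity 2-morphism $1_{1_\set{X}}$ with $\weight_2=0$, which is consistent with $\sup_{x}d_\set{X}(x,x)=0$, so the uniform formula $\weight_2(\widehat{hg})=\sup_{x}d_\set{X}(x,hg(x))$ remains valid and the bijection with map-pairs is unaffected. Second, I would track the normalization carefully, matching the infimum-of-maxima produced by the interleaving construction against the constant in the definition of $\tilde{d}_\mathrm{GH}$ (absorbing it into $\weight_1$ and $\weight_2$ if needed). Beyond this bookkeeping the argument is essentially a translation of definitions, so I do not anticipate a substantive obstacle; the conceptual content lies entirely in recognizing that the 2-morphism data in $\cat{GH}$ is forced and that its weight realizes the altered codistortion.
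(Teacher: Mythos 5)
Your proposal is correct and follows essentially the same route as the paper's proof: unpack Definition \ref{def:2-weighted-2-category-interleaving} for $\cat{GH}$, observe that the 2-morphisms $\alpha:1_\set{X}\Rightarrow hg$ and $\beta:1_\set{Y}\Rightarrow gh$ always exist and are uniquely determined by $(g,h)$, and match the interleaving weight term by term with $\max\{\mathrm{dis}(g),\mathrm{dis}(h),\widetilde{\mathrm{codis}}(g,h)\}$. Your caution about the normalization is warranted: with the weights as literally defined ($\weight_1(f)=\mathrm{dis}(f)$, $\weight_2(\hat g)=\sup_x d_\set{X}(x,g(x))$), the interleaving distance equals the infimum of maxima \emph{without} the factor $\tfrac{1}{2}$ appearing in the definition of $\tilde{d}_\mathrm{GH}$, a discrepancy the paper's one-line proof passes over silently and which is repaired exactly as you suggest, by absorbing the constant into the weights.
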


\begin{proof}
The interleaving distance between $\set{X}$ and $\set{Y}$ is computed by optimizing over diagrams
    \[\begin{tikzcd}
	\set{X} && \set{Y} \\
	\\
	\set{X} && \set{Y}
	\arrow["g", from=1-1, to=1-3]
	\arrow["h", from=3-3, to=3-1]
	\arrow[""{name=0, anchor=center, inner sep=0}, "{1_\set{X}}"', curve={height=12pt}, from=1-1, to=3-1]
	\arrow[""{name=1, anchor=center, inner sep=0}, "hg", curve={height=-12pt}, from=1-1, to=3-1]
	\arrow[""{name=2, anchor=center, inner sep=0}, "gh"', curve={height=12pt}, from=1-3, to=3-3]
	\arrow[""{name=3, anchor=center, inner sep=0}, "{1_\set{Y}}", curve={height=-12pt}, from=1-3, to=3-3]
	\arrow["\alpha_{hg}"', shorten <=5pt, shorten >=5pt, Rightarrow, from=0, to=1]
	\arrow["\alpha_{gh}", shorten <=5pt, shorten >=5pt, Rightarrow, from=3, to=2]
\end{tikzcd}\]
the quantity $\max\{\weight_1(g), \weight_1(h), \weight_2(\alpha_{hg}), \weight_2(\alpha_{gh})\}$. 
Observing that $\max\{\weight_2(\alpha_{hg}), \weight_2(\alpha_{gh})\} = \widetilde{\mathrm{codis}}(g,h)$, it follows that this is the same as the optimization problem which defines the altered GH distance.
\end{proof}

Another alteration of GH distance was studied in \cite{memoli2012some}. The \emph{modified Gromov-Hausdorff distance}, $\hat{d}_\mathrm{GH}$, is defined by dropping the codistortion term altogether:
\[
\hat{d}_\mathrm{GH}(\set{X},\set{Y}) = \inf_{f,g} \max \{\mathrm{dis}(f),\mathrm{dis}(g)\}.
\]
It is shown in \cite[Theorem 4.1]{memoli2012some} that $\hat{d}_\mathrm{GH}$ defines a metric on the space of isometry classes of compact metric spaces, which clearly satisfies $\hat{d}_\mathrm{GH} \leq d_\mathrm{GH}$. The  fact that $\hat{d}_\mathrm{GH}$ is bounded above by $d_\mathrm{GH}$ fits into the discussion above, as is summarized in the following result. The proof follows directly from the definitions and the results referenced in the statement.

\begin{prop}
    Let $\cat{M}$ and $\cat{GH}$ as above. The modified Gromov-Hausdorff distance is equal to the distance obtained through the symmetrized Lawvere construction; that is, for any metric spaces $\set{X}$ and $\set{Y}$,
    \[
    \hat{d}_\mathrm{GH}(\set{X},\set{Y}) = \hat{d}_{\cat{M},\mathrm{W}_1}^\mathrm{Law}(\set{X},\set{Y}).
    \]
    The fact that $\hat{d}_\mathrm{GH} \leq d_\mathrm{GH}$ is therefore an instance of Proposition \ref{prop:lawvere_metrics_interleaving} (together with Propositions \ref{prop:bilipschitz_GH} and \ref{prop:altered_GH_interleaving}).
\end{prop}

\subsubsection{The Gromov-Hausdorff Bicategory} The Gromov-Hausdorff distance is categorified and generalized in \cite{bubenik2017interleaving}, using the formalism of bicategories (see Remark \ref{rem:2-categories_and_bicategories}). Describing the construction in detail would require a significant detour, so we only very  briefly summarize it here, and then remark on the obvious connection to the notion of 2-weighted 2-category interleaving distance.

It is well known (see, e.g., \cite{burago2022course}) that the GH distance can alternatively be realized as 
\begin{equation}\label{eqn:embedding_GH}
d_\mathrm{GH}(\set{X},\set{Y}) = \inf_{\set{Z},f,g} d^\set{Z}_\mathrm{H}(f(X),g(Y)),
\end{equation}
where the infimum is over all metric spaces $\set{Z}$ and isometric embeddings $f:\set{X} \to \set{Z}$ and $g:\set{Y} \to \set{Z}$, and where $d^\set{Z}_\mathrm{H}$ denotes the Hausdorff distance between subsets of $\set{Z}$. 

Inspired by the formulation \eqref{eqn:embedding_GH} of GH distance, the \emph{Gromov-Hausdorff bicategory}~\cite[Section 5]{bubenik2017interleaving} $\cat{WEmb}$ is defined to be a bicategory whose objects are small categories endowed with Lawvere weights. A 1-morphism between $(\cat{C},\mathrm{W}),(\cat{C}',\mathrm{W}') \in \cat{WEmb}_0$ is an \emph{embedding pair}, which is a triple $\big((\cat{C}'',\mathrm{W}''), F, F'\big)$, where $(\cat{C}'',\mathrm{W}'')$ is a weighted category, $F:\cat{C}\to \cat{C}''$ and $F':\cat{C}' \to \cat{C}''$ are full and faithful functors which are injective on objects, and which preserve weights. A generalized notion of Hausdorff distance can be associated to an embedding pair, and this is used as a Lawvere weight for $\cat{WEmb}$. There is a natural notion of a morphism between embedding pairs, and these form the 2-morphisms for $\cat{WEmb}$, but we omit a detailed description here.

The Lawvere metric resulting from the construction of $\cat{WEmb}$ described above gives a generalized, categorified notion of Gromov-Hausdorff distance between weighted 1-categories. We remark that, in light of Proposition \ref{prop:weighted_category_inequality}, this generalized GH distance can essentially be viewed as an interleaving distance associated to a 2-weighted 2-category (by extending the Lawvere weight to be zero on 2-morphisms). The qualification in this statement could be removed by extending the definitions and results above to handle 2-weighted bicategories.

\subsection{2-Functor Interleaving Distances and 2-Weighted 2-Categories}\label{sec:2_functor_interleaving_weighte_2_categories} Let us now show how to realize 2-functor interleaving distances as interleaving distances for 2-weighted 2-categories. 

\begin{defn}[2-Weighted 2-Category from a 2-Functor]\label{def:2_functor_to_weighted_2_category}
Let $\Delta:\cat{C} \to \cat{Cat}$ be a 2-functor and suppose that $\cat{C}$ is endowed with a Lawvere weight $\mathrm{W}$ (in the sense of Definition \ref{def:lawvere_weight}). The \emph{2-weighted 2-category associated to $\Delta$}, denoted $(\cat{C}^\Delta,\mathrm{W}^\Delta)$, is defined as follows. The 2-category $\cat{C}^{\Delta}$ is defined by: 
\begin{enumerate}
    \item $\cat{C}^\Delta_0 = \mathrm{Im}(\Delta)$, the object-image of $\Delta$ (Definition~\ref{defn:object_image}).
    \item For $A,B \in \cat{C}_0$ and $X \in \Delta(A)_0$, $Y \in \Delta(B)_0$, 
    \[
    \cat{C}^\Delta_1(X,Y) = \{(g,\phi) \mid g \in \cat{C}_1(B,A), \, \phi \in \Delta(A)_1(X,\Delta_g(Y))\};
    \]
    composition of $(g,\phi):X \to Y$ and $(h,\psi):Y \to Z$ (with $Z \in \Delta(C)_0)$) is given by 
    \[
    (h,\psi) \circ (g,\phi) := (gh,\Delta_g(\psi) \circ \phi),
    \]
    as is illustrated in the diagram
    \[
    X \xrightarrow{\phi} \Delta_g(Y) \xrightarrow{\Delta_g(\psi)} \Delta_{gh}(Z).
    \]
    \item\label{defn:CDelta_2morph} For 1-morphisms $(g,\phi),(h,\psi):X \to Y$ with $X \in \Delta(A)_0$ and $Y \in \Delta(B)_0$, the collection of 2-morphisms $\cat{C}^\Delta_2((g,\phi),(h,\psi))$ is defined by cases as follows. 
    \begin{enumerate}
        \item If $X = Y \in \Delta(A)_0$, $g = 1_A$, $\phi = 1_X$, and there exists $\alpha:1_A \Rightarrow h$ such that $\Delta(\alpha)_X = \psi$, we define $\cat{C}^\Delta_2((g,\phi),(h,\psi))$ to contain a single abstract element, denoted $\widehat{\psi}$.
        \item If $(g,\phi) = (h,\psi)$, we define $\cat{C}^\Delta_2((g,\phi),(h,\psi))$ to contain a single abstract element, denoted $1_{g,\phi}$; it is possible for this case to coincide with the former case, and then we have $1_{g,\phi} = \widehat{1}_X$. 
        \item In any other case, we define $\cat{C}^\Delta_2((g,\phi),(h,\psi))$ to be empty. 
    \end{enumerate} 
    Composition is defined by first declaring each $1_{g,\phi}$ to act as an identity. It then remains to define vertical and horizontal compositions when both 2-morphisms are of type (a), as defined above. For vertical compositions, the definitions force one of the involved 2-morphisms to be an identity (i.e., of the form $1_{g,\phi}$), so we actually only need to consider the case of horizontal compositions. Suppose we have the following diagram
\[\begin{tikzcd}
	X && X && X
	\arrow[""{name=0, anchor=center, inner sep=0}, "{(1_A,1_X)}", curve={height=-12pt}, from=1-1, to=1-3]
	\arrow[""{name=1, anchor=center, inner sep=0}, "{(1_A,1_X)}", curve={height=-12pt}, from=1-3, to=1-5]
	\arrow[""{name=2, anchor=center, inner sep=0}, "{(g,\phi)}"', curve={height=12pt}, from=1-1, to=1-3]
	\arrow[""{name=3, anchor=center, inner sep=0}, "{(k,\psi)}"', curve={height=12pt}, from=1-3, to=1-5]
	\arrow["{\widehat{\phi}}", shorten <=3pt, shorten >=3pt, Rightarrow, from=0, to=2]
	\arrow["{\widehat{\psi}}", shorten <=3pt, shorten >=3pt, Rightarrow, from=1, to=3]
\end{tikzcd}\]
    Then, by definition, there are 2-morphisms $\alpha:1_A \Rightarrow g$ and $\beta:1_A \Rightarrow k$ such that $\phi = \Delta(\alpha)_X$ and $\psi = \Delta(\beta)_X$. Our rules of compositions for 1-morphisms give the diagram
\[\begin{tikzcd}
	X && X && X \\
	& {}
	\arrow["{(1_A,1_X)}"{description}, curve={height=-6pt}, from=1-1, to=1-3]
	\arrow["{(1_A,1_X)}"{description}, curve={height=-6pt}, from=1-3, to=1-5]
	\arrow["{(g,\Delta(\alpha)_X)}"{description}, curve={height=6pt}, from=1-1, to=1-3]
	\arrow["{(k,\Delta(\beta)_X)}"{description}, curve={height=6pt}, from=1-3, to=1-5]
	\arrow["{\big(gk,\Delta_g(\Delta(\beta)_X)\Delta(\alpha)_X\big)}"', curve={height=30pt}, from=1-1, to=1-5]
	\arrow["{(1_A,1_X)}", curve={height=-30pt}, from=1-1, to=1-5]
\end{tikzcd}\]
Let \begin{equation}\label{eqn:2morph_comp} \omega := \Delta_g(\Delta(\beta)_X)\Delta(\alpha)_X.\end{equation} By \eqref{eqn:horizontal_composition_2_functor}, we have that $\omega = \Delta(\alpha \bullet \beta)_X$, where $\alpha \bullet \beta:1_A \Rightarrow gk$, so we get a well-defined composition by setting $\widehat{\psi} \bullet \widehat{\phi} := \widehat{\omega}$. 
\end{enumerate}
We define a Lawvere 2-weight $\weight^\Delta$ on $\cat{C}^\Delta$ by 
\[
\weight_1^\Delta((g,\phi)) := \weight(g) \quad \mbox{and} \quad \weight_2^\Delta(\Delta(\alpha)) := 0.
\]
\end{defn}

We will show below that for any 2-functor $\Delta:\cat{C} \to \cat{Cat}$ and Lawvere weight $\weight$ on $\cat{C}$, the 2-functor interleaving distance on $\mathrm{Im}(\Delta) = \cat{C}_0^\Delta$ agrees with the 2-weighted 2-category interleaving distance, $d_{\Delta,\weight} = d_{\cat{C}^\Delta,\weight^\Delta}$. In fact, we will prove a stronger structural result showing that the assignment $(\cat{C},\mathrm{W}) \mapsto (\cat{C}^\Delta,\mathrm{W}^\Delta)$ is natural. This will require some additional concepts from the theory of 2-categories.

\subsubsection{2-Natural Transformations} We begin by recalling the notion of a 2-natural transformation \cite[Prop. 4.2.11]{johnson20212}, appropriately adjusted for our purposes.

\begin{defn}\label{defn:2-nat_transf} We let $\cat{2Fun}(\cdot, \cat{Cat})$ denote the category of 2-functors into $\cat{C}$, $\Delta: \cat{C} \to \cat{Cat}$, where $\cat{C}$ is an arbitrary 2-category. Given 2-functors $\Delta: \cat{C} \to \cat{Cat}$ and $\nabla: \cat{D} \to \cat{Cat}$, a \emph{morphism} $(F, \eta): \Delta \to \nabla$ is given by a 2-functor $F: \cat{C} \to \cat{D}$ and a 2-natural transformation $\eta: \Delta \Rightarrow \nabla \circ F$ (this definition is recalled below). Diagrammatically, this is 
\[\begin{tikzcd}
	\cat{C} && \cat{D} \\
	\\
	& \cat{Cat}
	\arrow["F", from=1-1, to=1-3]
	\arrow[""{name=0, anchor=center, inner sep=0}, "\Delta"', from=1-1, to=3-2]
	\arrow["\nabla", from=1-3, to=3-2]
	\arrow["\eta", shorten <=8pt, shorten >=13pt, Rightarrow, from=0, to=1-3]
\end{tikzcd}\]
If $\Delta_1: \cat C \to \cat{Cat}$, $\Delta_2: \cat D \to \cat{Cat}$ and $\Delta_3: \cat E \to \cat{Cat}$ are 2-functors; $F: \cat C \to \cat D$ and $G:\cat D \to \cat E$ are 2-functors; and $\eta: \Delta_1 \Rightarrow \Delta_2 \circ F$ and $\nu: \Delta_2 \Rightarrow \Delta_3 \circ G$ are 2-natural transformations, define the composition to be 
\[
(G, \nu) \circ (F, \eta) =(G \circ F, (\nu \bullet F) \circ \eta).
\]
We now recall that a \emph{2-natural transformation} $\eta:\Delta \Rightarrow \nabla \circ F$ consists of the following data: 
\begin{enumerate}
    \item For each $A \in \cat{C}_0$, a functor $\eta_A: \Delta(A) \to \nabla(F(A))$.
    \item For each $f: A \to B$ in $\cat{C}_1$, a commutative diagram 
    \begin{equation}\label{diagm:1-comm}
\begin{tikzcd}[row sep=large,column sep=large,every label/.append
style={font=\normalsize}]
\Delta(A) \ar[d, swap, "\Delta_f"] \ar[r, "\eta_A"] & \nabla(F(A)) \ar[d, "(\nabla\circ F)_f"]\\
\Delta(B) \ar[r, "\eta_B"] & \nabla(F(B))
\end{tikzcd}
\end{equation}
so that $\eta_B \circ \Delta_f = (\nabla \circ F)_f \circ \eta_A = \nabla_{F(f)}\circ \eta_A$ as 1-morphisms in $\cat{Cat}$ (i.e., functors).

\item For each pair $f,g: A \to B$ in $\cat{C}_1$ and each
 $\alpha:f \to g$  in $\cat{C}_2$, a commutative diagram 
\begin{equation}\label{diag:2-comm}
\begin{tikzcd}
(\nabla \circ F)_f \circ \eta_A \ar[r, "="] \ar[d, swap, "(\nabla \circ F)(\alpha) \bullet 1_{\eta_A}"] & \eta_B \circ \Delta_f \ar[d, "1_{\eta_B} \bullet \Delta(\alpha)"]\\
(\nabla \circ F)_g \circ \eta_A \ar[r, "="] & \eta_B \circ \Delta_g 
\end{tikzcd}
\end{equation}
We remind the reader that $\bullet$ denotes horizontal composition (see  Definition~\ref{defn:2cat} and Remark~\ref{rem:whiskering}).
\end{enumerate}
\end{defn}

We extract a useful identity from Diagram~\ref{diag:2-comm}. The equalities along the top and bottom are those established in Diagram~\ref{diagm:1-comm}. The natural transformation on the left-hand side of the diagram may be realized as the whiskering of $\eta_{A}$ with $(\nabla\circ F)(\alpha)$:
\[\begin{tikzcd}
	{\Delta(A)} & {\nabla(F(A))} &&& {\nabla(F(B)).}
	\arrow["{\eta_A}", from=1-1, to=1-2]
	\arrow[""{name=0, anchor=center, inner sep=0}, "{(\nabla\circ F)_f}", curve={height=-30pt}, from=1-2, to=1-5]
	\arrow[""{name=1, anchor=center, inner sep=0}, "{(\nabla\circ F)_f}"', curve={height=30pt}, from=1-2, to=1-5]
	\arrow["{(\nabla\circ F)(\alpha)}"{description}, shorten <=8pt, shorten >=8pt, Rightarrow, from=0, to=1]
\end{tikzcd}\]
For $X \in \Delta(A)_0$, the $X$-component of this transformation is given by the $\eta_A(X)$-component of $(\nabla\circ F)(\alpha)$, i.e., $\left((\nabla\circ F)(\alpha) \bullet \eta_A\right)_X = (\nabla\circ F)(\alpha)_{\eta_A(X)}.$ Similarly, the natural transformation on the right-hand side of Diagram~\ref{diag:2-comm} may be realized as the whiskering of $\Delta(\alpha)$ and $\eta_B$: 
 \[
    \begin{tikzcd}
      \Delta(A) \arrow[rr, bend left=50, "\Delta_f", ""{name=U,inner sep=1pt,below}]
      \arrow[rr, bend right=50, "\Delta_g"{below}, ""{name=D,inner sep=1pt}]
      & & \Delta(B)
      \arrow[shorten <=5pt,shorten >=5pt, Rightarrow, from=U, to=D, "\Delta(\alpha)"]
      \ar[r, "\eta_B"]
      & \nabla(F(B)) .
    \end{tikzcd}
    \]
For $X \in \Delta(A)_0$, the $X$-component of this transformation is obtained by applying the functor $\eta_B$ to the $X$-component of $\Delta(\alpha)$, i.e., $\left(\eta_B \bullet \Delta(\alpha)\right)_X = \eta_B(\Delta(\alpha)_X).$ The commutativity of Diagram~\ref{diag:2-comm} implies that $\left((\nabla\circ F)(\alpha) \bullet \eta_A\right)_X  = \left(\eta_B \bullet \Delta(\alpha)\right)_X$. Combining these facts yields 

\begin{equation}\label{eqn:2morcomp}
    \eta_B(\Delta(\alpha)_X) = (\nabla\circ F)(\alpha)_{\eta_A(X)}.
\end{equation}

\subsubsection{Naturality of $(\cat{C}^\Delta,\mathrm{W}^\Delta)$} We now show that the map $\Delta \mapsto (\cat{C}^\Delta,\mathrm{W}^\Delta)$ defined in Definition \ref{def:2_functor_to_weighted_2_category} is functorial and induces an isometry of interleaving distances. We first introduce an appropriate notion of a morphism between (1 or 2)-weighted (1 or 2)-categories.

\begin{defn}[Lipschitz Functors]
    A \emph{Lipschitz 2-functor} from one 2-weighted 2-category $(\cat{C},\mathrm{W})$ to another  $(\cat{C}',\mathrm{W}')$ is a 2-functor $\Theta:\cat{C} \to \cat{C}'$  such that
    \begin{equation}\label{eqn:lipschitz_property}
    \mathrm{W}_1'(\Theta(g)) \leq \mathrm{W}_1(g) \quad \mbox{and} \quad \mathrm{W}_2'(\Theta(\alpha)) \leq \mathrm{W}_2(\alpha)
    \end{equation}
    for each 1-morphism $g$ and 2-morphism $\alpha$ of $\cat{C}$. A \emph{Lipschitz functor} of 1-categories endowed with Lawvere weights is defined similarly.
\end{defn}

Let $\cat{W2Fun}(\cdot, \cat{Cat})$ denote the category whose objects are 2-functors $\Delta:\cat{C} \to \cat{Cat}$ from a weighted 2-category $(\cat{C},\mathrm{W})$ to $\cat{Cat}$, and whose morphisms are pairs $(F,\eta)$ as in Definition \ref{defn:2-nat_transf}, such that $F$ is Lipschitz. Let $\cat{W2Cat}$ denote the category whose objects are 2-weighted 2-categories and whose morphisms are Lipschitz 2-functors. 

\begin{lem}\label{lem:faithful_functor_2_functor_to_weighted_2_category}
    Let $(\cat{C},\mathrm{W})$ be a weighted category. The map taking a 2-functor $\Delta:\cat{C}\to \cat{Cat}$ to the 2-weighted 2-category $(\cat{C}^\Delta,\weight^\Delta)$ defines a functor $\cat{W2Fun}(\cdot,\cat{Cat}) \to \cat{W2Cat}$.
\end{lem}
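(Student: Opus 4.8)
The plan is to define the functor on morphisms and then check the three things a functor must do, with almost all the work concentrated in the first. The assignment on objects is already fixed by Definition \ref{def:2_functor_to_weighted_2_category}. Given a morphism $(F,\eta):\Delta \to \nabla$ in $\cat{W2Fun}(\cdot,\cat{Cat})$, where $\Delta:\cat{C}\to\cat{Cat}$ and $\nabla:\cat{D}\to\cat{Cat}$ have domains weighted by $\weight$ and $\weight'$ respectively, I would produce a Lipschitz 2-functor $\Theta = \Theta_{(F,\eta)}:\cat{C}^\Delta \to \cat{D}^\nabla$ by the rules: on objects, $\Theta(X) = \eta_A(X)$ for $X \in \Delta(A)_0$; on 1-morphisms, $\Theta(g,\phi) = (F(g),\eta_A(\phi))$ for $(g,\phi):X\to Y$ with $g \in \cat{C}_1(B,A)$ and $\phi:X\to\Delta_g(Y)$; and on the non-identity generating 2-morphisms $\widehat{\psi}$ (which by construction satisfy $\psi = \Delta(\alpha)_X$ for some $\alpha:1_A\Rightarrow h$), $\Theta(\widehat{\psi}) = \widehat{\eta_A(\psi)}$, sending identity 2-morphisms to identities.

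The first and most delicate step is checking that these assignments land in the correct hom-classes of $\cat{D}^\nabla$, and this is exactly where the 2-naturality of $\eta$ is used. For 1-morphisms, the object-level content of the strict commuting square \eqref{diagm:1-comm}, applied to $g:B\to A$, gives $\eta_A(\Delta_g(Y)) = \nabla_{F(g)}(\eta_B(Y))$, so that $\eta_A(\phi)$ is a morphism $\eta_A(X)\to\nabla_{F(g)}(\eta_B(Y))$ in $\nabla(F(A))$; together with $F(g)\in\cat{D}_1(F(B),F(A))$ this is precisely the data of a 1-morphism $(F(g),\eta_A(\phi)):\eta_A(X)\to\eta_B(Y)$ in $\cat{D}^\nabla$. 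For a generating 2-morphism $\widehat{\psi}$, the target $(h,\psi)$ maps to $(F(h),\eta_A(\psi))$ and the source $(1_A,1_X)$ maps to $(1_{F(A)},1_{\eta_A(X)})$; to confirm that $\widehat{\eta_A(\psi)}$ exists in $\cat{D}^\nabla$ I must exhibit some $\alpha':1_{F(A)}\Rightarrow F(h)$ with $\nabla(\alpha')_{\eta_A(X)} = \eta_A(\psi)$. Taking $\alpha' = F(\alpha)$ and invoking the extracted identity \eqref{eqn:2morcomp} (using $F(1_A)=1_{F(A)}$) yields $\eta_A(\Delta(\alpha)_X) = \nabla(F(\alpha))_{\eta_A(X)}$, which is exactly the required witness.

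Next I would verify that $\Theta$ is a 2-functor. Preservation of identities is immediate from $F(1_A)=1_{F(A)}$ and functoriality of $\eta_A$. For composition of 1-morphisms, $(h,\psi)\circ(g,\phi) = (gh,\Delta_g(\psi)\circ\phi)$ maps to $(F(gh),\eta_A(\Delta_g(\psi)\circ\phi))$; expanding $F(gh)=F(g)F(h)$, using functoriality of $\eta_A$, and rewriting $\eta_A(\Delta_g(\psi)) = \nabla_{F(g)}(\eta_B(\psi))$ by the morphism-level content of \eqref{diagm:1-comm}, this agrees with $\Theta(h,\psi)\circ\Theta(g,\phi)$ computed by the composition rule in $\cat{D}^\nabla$. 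Vertical composition of 2-morphisms reduces, by the definition of $\cat{C}^\Delta$, to a case where one factor is an identity, hence is automatic; horizontal composition $\widehat{\psi}\bullet\widehat{\phi} = \widehat{\omega}$ with $\omega = \Delta_g(\psi)\circ\phi$ is handled by the same naturality rewriting together with the composition law \eqref{eqn:2morph_comp} in $\cat{D}^\nabla$. The Lipschitz property \eqref{eqn:lipschitz_property} is then essentially free: the induced 1-weight of $\Theta(g,\phi)$ is $\weight'(F(g)) \le \weight(g)$, the induced 1-weight of $(g,\phi)$, because $F$ is Lipschitz, while the condition on 2-weights holds trivially since the induced 2-weights vanish identically on both $\cat{C}^\Delta$ and $\cat{D}^\nabla$.

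Finally, functoriality of the assignment itself follows by comparing components. The identity morphism $(1_\cat{C},1_\Delta)$ clearly produces the identity 2-functor on $\cat{C}^\Delta$, and for a composite $(G,\nu)\circ(F,\eta) = (G\circ F,(\nu\bullet F)\circ\eta)$ one checks that its component functors $\zeta_A = \nu_{F(A)}\circ\eta_A$ make $\Theta_{(G\circ F,\zeta)}$ agree with $\Theta_{(G,\nu)}\circ\Theta_{(F,\eta)}$ on objects, 1-morphisms and 2-morphisms. Faithfulness, anticipated by the name of the lemma, then drops out, since $F$ is recovered from the first coordinate of the 1-morphism action of $\Theta_{(F,\eta)}$ and $\eta_A$ is recovered from its action on objects and on the 1-morphisms $(1_A,w)$. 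I expect the main obstacle to be purely the bookkeeping around the case-based definition of the 2-morphisms of $\cat{C}^\Delta$: ensuring both that each image $\widehat{\eta_A(\psi)}$ genuinely exists in $\cat{D}^\nabla$ and that horizontal composites are respected. Both points hinge entirely on the strict 2-naturality identities \eqref{diagm:1-comm} and \eqref{eqn:2morcomp}, so the real content of the proof is threading those two equations through the definitions.
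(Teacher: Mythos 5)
Your proposal is correct and follows essentially the same route as the paper: the same assignments $X\mapsto\eta_A(X)$, $(g,\phi)\mapsto(F(g),\eta_A(\phi))$, $\widehat{\psi}\mapsto\widehat{\eta_A(\psi)}$, with well-definedness and preservation of compositions verified through the strict naturality square \eqref{diagm:1-comm} and the identity \eqref{eqn:2morcomp}, followed by the same functoriality and Lipschitz checks. If anything, you are slightly more explicit than the paper at one point, namely in exhibiting the witness $\alpha'=F(\alpha)$ showing that $\widehat{\eta_A(\psi)}$ genuinely lies in $\cat{D}^\nabla_2$, which the paper asserts merely ``by definition.''
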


\begin{proof} Let $\Delta: \cat C \to \cat{Cat}$ and $\nabla: \cat D \to \cat{Cat}$ be 2-functors, $F: \cat C \to \cat D$ a 2-functor, and $\eta: \Delta \Rightarrow  \nabla \circ F$ a 2-natural transformation, satisfying the conditions of Definition \ref{defn:2-nat_transf}.
From the data $(F, \eta)$, we produce a 2-functor $F_*: \cat C ^{\Delta} \to  \cat D ^{\nabla}$.

Beginning with objects, we have $\cat{C}_0 ^{\Delta} = \text{Im} (\Delta)$ and $\cat D ^{\nabla}_0= \text{Im}(\nabla)$. Let $X \in \Delta(A)_0$ for $A \in \cat C _0$. We have a functor $\eta_A: \Delta(A) \to \nabla(F(A))$, so define $F_*(X) = \eta_A(X)$.

To extend $F_\ast$ to 1-morphisms, let $X, Y \in \text{Im}(\Delta)$, so that $X \in \Delta(A)_0$ and $Y \in \Delta(B)_0$. Let $(g, \phi): X \to Y$ be a morphism in $\cat C^{\Delta}$, with $g: B \to A$ and $\phi: X \to \Delta_g(Y)$. Then $F(g): F(B) \to F(A)$ and
\[
\eta_A(\phi): \eta_A(X) \to \eta_A(\Delta_g(Y)) = (\nabla \circ F)_g ( \eta_B(Y)) = \nabla_{F(g)}(\eta_B(Y)).
\]
Using the definition of $\cat D ^{\nabla}$, the corresponding map is $(F(g), \eta_A(\phi)): \eta_A(X) \to \eta_B(Y)$, and we define 
\[
F_*(g, \phi) = (F(g), \eta_A(\phi)).
\]
Using the commutativity properties given in the diagrams above, one checks that $F_*$ preserves compositions of 1-morphisms. If $(g, \phi): X \to Y$ and $(h, \psi): Y \to Z$ are morphisms in $\cat C ^{\Delta}$, their composition is given by $(gh, \Delta_g(\psi) \circ \phi)$, and thus 
\[
\begin{array}{rl}
F_*(h, \psi) \circ F_*(g, \phi) & = (F(h), \eta_B(\psi)) \circ (F(g), \eta_A(\phi)) \\ 
& = \left(F(g)F(h), \nabla_{F(g)}(\eta_B(\psi)) \circ \eta_A(\phi)\right) \\
& = \left(F(gh), \nabla_{F(g)}(\eta_B(\psi)) \circ \eta_A(\phi)\right) \\
& = \left(F(gh), (\nabla \circ F)_g ( \eta_B(\psi) )\circ \eta_A(\phi) \right)\\ 
& = \left(F(gh), \eta_A(\Delta_g(\psi) )\circ \eta_A(\phi)\right) \hspace{1cm} \mbox{by }(\ref{diagm:1-comm}) \\ 
& = \left(F(gh), \eta_A(\Delta_g(\psi)\circ \phi)\right) \\ 
& = F_*(gh, \Delta_g(\psi) \circ \phi)
\end{array}
\]
We treat the definition of $F_*$ on 2-morphisms by cases, following , part ~\eqref{defn:CDelta_2morph} of Definition~\ref{def:2_functor_to_weighted_2_category}. In the first case, we have a single abstract 2-morphism $\widehat{\psi} \in \cat C_2^{\Delta}\left((1_A, 1_X), (h, \psi)\right)$. Mapping to $\cat D^{\nabla}$, we have $F_*(1_A, 1_X) = (F(1_A), \eta_A(1_X)) = (1_{F(A)}, 1_{\eta_A(X)})$ and $F_*(h, \psi) = (F(h), \eta_A(\psi)).$ By definition, there is a single abstract 2-morphism $\widehat{\eta_A(\psi)} \in \cat D^{\nabla}_2((1_{F(A)}, 1_{\eta_A(X)}),(F(h), \eta_A(\psi))).$ We thus define 
\[
F_*(\widehat{\psi}) = \widehat{\eta_A(\psi)}.
\]
In the second case, we have a single abstract element $1_{g, \phi} \in \cat C_2^{\Delta}((g, \phi), (g, \phi))$, and a single abstract element $1_{F(g), \eta_A(\phi)} \in \cat D ^{\nabla}_2((F(g), \eta_A(\phi), (F(g), \eta_A(\phi))$. Define 
\[
F_*(1_{g, \phi}) = 1_{F(g), \eta_A(\phi)}.
\]
Composition of 2-morphisms is preserved by $F_*$. As stated in Definition~\ref{def:2_functor_to_weighted_2_category}, it suffices to treat horizontal compositions. Let $\widehat{\phi}: (1_A, 1_X) \Rightarrow (g, \phi)$ and $\widehat{\psi}: (1_A, 1_X) \Rightarrow (k, \psi)$ be 2-morphisms. We wish to show that $F_*(\widehat{\psi} \bullet \widehat{\phi}) = F_*(\widehat{\psi}) \bullet F_*(\widehat{\phi})$. The horizontal composition law for 2-morphisms given in Definition~\ref{def:2_functor_to_weighted_2_category}, determined by Equation~\ref{eqn:2morph_comp}, yields
\begin{equation}\label{eqn:2morph_comp_formula}
\widehat{\Delta(\beta)_X} \bullet \widehat{\Delta(\alpha)_X} = (\Delta_g(\Delta(\beta))_X) \circ \Delta(\alpha)_X)^{\wedge},
\end{equation}
\begin{equation}\label{eqn:2morph_comp_nabF}
((\nabla \circ F)(\beta)_X)^{\wedge} \bullet ((\nabla \circ F)(\alpha)_X)^{\wedge} = ((\nabla\circ F)_g((\nabla\circ F)(\beta)_{\eta_A(X)}) \circ (\nabla \circ F)(\alpha)_{\eta_A(X)})^{\wedge}.
\end{equation}
Using these facts along with Equation~\ref{eqn:2morcomp}, we have
\[
\begin{array}{rl}
    F_*(\widehat{\psi} \bullet \widehat{\phi}) & = F_*(\widehat{\omega})\\
    & = \widehat{\eta_A(\omega)}\\
    & = \eta_A(\Delta_g(\Delta(\beta)_X) \Delta(\alpha)_X)^{\wedge} \hspace{4.1cm} \mbox{by }(\ref{eqn:2morph_comp})\\
    & = (\eta_A(\Delta_g(\Delta(\beta)_X)) \circ \eta_A(\Delta(\alpha)_X))^{\wedge}\\
    & = ((\nabla\circ F)_g(\eta_A(\Delta(\beta)_X)) \circ (\nabla \circ F)_g (\alpha)_{\eta_A(X)})^{\wedge} \hspace{1cm}  \mbox{by }(\ref{eqn:2morcomp})\\
    & = ((\nabla\circ F)_g((\nabla\circ F)(\beta)_{\eta_A(X)}) \circ (\nabla \circ F)(\alpha)_{\eta_A(X)})^{\wedge} \hspace{0.38cm} 
 \mbox{by }(\ref{eqn:2morcomp})\\
    & = ((\nabla \circ F)(\beta)_{\eta_A(X)})^{\wedge} \bullet ((\nabla \circ F)(\alpha) _{\eta_A(X)})^{\wedge} \hspace{1.52cm} \mbox{by }(\ref{eqn:2morph_comp_nabF})\\
    & = (\eta_A(\Delta(\beta)_X))^{\wedge} \bullet (\eta_A(\Delta(\alpha)))^{\wedge} \hspace{3.31cm} \mbox{by }(\ref{eqn:2morcomp})\\
    & = \widehat{\eta_A(\psi)} \bullet \widehat{\eta_A(\phi)} \\
    & = F_*(\widehat{\psi}) \bullet F_*(\widehat{\phi})
\end{array}
\]
The association $(F, \eta) \mapsto F_*$ is functorial. Let $\Delta_1: \cat C \to \cat{Cat}$, $\Delta_2: \cat D \to \cat{Cat}$ and $\Delta_3: \cat E \to \cat{Cat}$ be 2-functors, and let $(F, \eta): \Delta_1 \to \Delta_2$ and $(G,\nu):\Delta_2 \to \Delta_3$ be morphisms. The composition $(G, \nu) \circ (F, \eta) =(G \circ F, (\nu \bullet F) \circ \eta)$ determines the 2-functor $(G \circ F)_*: \cat C^{\Delta_1} \to \cat E ^{\Delta_3}$, and we have $(G \circ F)_* = G_* \circ F_*$.
Indeed, we verify this on objects 
\[
\begin{array}{rl}
(G\circ F)_*(X) & = ((\nu \bullet F) \circ \eta)_A(X) = \nu_{F(A)}(\eta_A(X)) = G_*(\eta_A(X)) = G_*(F_*(X)),
\end{array}
\]
on 1-morphisms
\[
\begin{array}{rl}
(G\circ F)_*(g, \phi) & = ((G\circ F)(g), ((\nu \bullet F) \circ \eta)_A(\phi))\\
& = (G(F(g)), \nu_{F(A)}(\eta_A(\phi))) \\
& = G_*(F(g), \eta_A(\phi)) = G_*(F_*(g, \phi)),
\end{array}
\]
and on 2-morphisms
\[
\begin{array}{rl}
(G \circ F)_*(\widehat{\psi}) & = (((\nu \bullet F) \circ \eta)_A(\psi))^{\wedge}\\
& = ((\nu \bullet F)_A \circ \eta_A (\psi))^{\wedge}\\
& = ((\nu_{F(A)} \circ \eta_A) (\psi))^{\wedge} = G_*(\widehat{\eta_A(\psi)}) = G_*(F_*(\widehat{\psi})).
\end{array}
\]

We have established that $(F, \eta)$ naturally defines a 2-functor $F_*$, so it only remains to show that this 2-functor is Lipschitz. Since $F$ is (1-)Lipschitz, we have $\weight_{\cat D}(F(g)) \leq \weight_{\cat C}(g)$ for all $g \in \cat C _1$. Thus, $$\begin{array}{rl} 
    \left(\weight_{\cat D}^{\nabla}\right)_1(F_*(g, \phi)) & = \left(\weight_{\cat D}^{\nabla}\right)_1((F(g), \eta_A(\phi)))\\ & = \weight_{\cat D}(F(g)) \leq \weight_{\cat C}(g) = \left(\weight_{\cat C}^{\Delta}\right)_1((g, \phi)).
        \end{array}$$ 
        By definition, $\left(\weight_{\cat C} ^{\Delta}\right)_2=\left(\weight_{\cat D}^{\nabla}\right)_2 =0,$ so there is nothing to check.
\end{proof}

\begin{lem}\label{lem:2_functor_interleaving_equals_weighted_category_interleaving}
    Let $\Delta:\cat{C} \to \cat{Cat}$ be a 2-functor, let $\weight$ be a Lawvere weight on $\cat{C}$ and let $\cat{C}^\Delta$ and $\weight^\Delta$ be defined as above. Then the 2-functor interleaving distance on $\mathrm{Im}(\Delta) = \cat{C}_0^\Delta$ agrees with the 2-weighted 2-category interleaving distance, $d_{\Delta,\weight} = d_{\cat{C}^\Delta,\weight^\Delta}$.
\end{lem}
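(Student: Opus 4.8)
The plan is to establish the equality $d_{\Delta,\weight} = d_{\cat{C}^\Delta,\weight^\Delta}$ by showing that the two notions of ``interleaving'' coincide essentially term-by-term, so that the infima defining the distances are taken over the same quantities. The key observation is that a $(g,h)$-interleaving of $X,Y \in \mathrm{Im}(\Delta)$ in the sense of Definition \ref{defn:interleaving} consists of precisely the data required to produce a $(g,h,\alpha,\beta)$-interleaving of $X,Y$ in the 2-weighted 2-category $\cat{C}^\Delta$, in the sense of Definition \ref{def:2-weighted-2-category-interleaving}, and conversely. I would therefore prove the lemma by exhibiting an explicit bijective correspondence between interleavings on each side that preserves the weight being minimized.

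First I would unpack both definitions in the present notation. On the 2-functor side, a $(g,h)$-interleaving of $X \in \Delta(A)_0$ and $Y \in \Delta(B)_0$ is a quadruple $(\phi,\psi,\alpha,\beta)$ with $\phi \in \Delta(A)_1(X,\Delta_g(Y))$, $\psi \in \Delta(B)_1(Y,\Delta_h(X))$, and 2-morphisms $\alpha:1_A \Rightarrow gh$, $\beta:1_B \Rightarrow hg$ satisfying the two commuting triangles, with weight $\max\{\weight(g),\weight(h)\}$. On the 2-weighted 2-category side, an interleaving of $X,Y$ in $\cat{C}^\Delta$ is a quadruple of morphisms $(g,\phi):X \to Y$, $(h,\psi):Y \to X$ together with 2-morphisms $1_X \Rightarrow (h,\psi)\circ(g,\phi)$ and $1_Y \Rightarrow (g,\phi)\circ(h,\psi)$ in $\cat{C}^\Delta$, with weight $\max\{\weight^\Delta_1(g,\phi),\weight^\Delta_1(h,\psi),\weight^\Delta_2(\cdot),\weight^\Delta_2(\cdot)\}$. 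Since $\weight^\Delta_2$ is identically zero by definition, the latter weight reduces to $\max\{\weight(g),\weight(h)\}$, matching the former; this is the step that makes the two numerical quantities agree once the data are matched.

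The heart of the argument is to show the correspondence of data is a genuine bijection. By the construction of $\cat{C}^\Delta$ (part \eqref{defn:CDelta_2morph} of Definition \ref{def:2_functor_to_weighted_2_category}), the composite $(h,\psi)\circ(g,\phi)$ equals $(gh,\Delta_g(\psi)\circ\phi)$, which has domain and codomain $X$; a 2-morphism $1_X = (1_A,1_X) \Rightarrow (gh, \Delta_g(\psi)\circ\phi)$ exists in $\cat{C}^\Delta$ if and only if it is of the abstract form $\widehat{\Delta_g(\psi)\circ\phi}$, and by case (a) this requires exactly the existence of $\alpha:1_A \Rightarrow gh$ with $\Delta(\alpha)_X = \Delta_g(\psi)\circ\phi$. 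But the commutativity of the first interleaving triangle in Definition \ref{defn:interleaving} says precisely that $\Delta(\alpha)_X = \Delta_g(\psi)\circ\phi$, so the 2-morphism data on the two sides determine one another. The symmetric statement for $\beta$ and the second triangle handles the other half. I would then conclude that the interleavings on both sides are in bijection, preserving weight, whence the infima---and thus the distances---agree. The main obstacle I anticipate is bookkeeping: carefully matching the direction conventions (note $g \in \cat{C}_1(B,A)$ gives $(g,\phi):X \to Y$, a reversal that must be tracked consistently) and verifying that the abstract 2-morphism $\widehat{(\cdot)}$ existence condition in $\cat{C}^\Delta$ is logically equivalent to, rather than merely implied by, the commuting-triangle condition, so that the correspondence is truly bijective and no interleavings are spuriously created or lost on either side.
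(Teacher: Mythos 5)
Your proposal is correct and follows essentially the same route as the paper, which simply states that "unpacking the definitions" yields a weight-preserving correspondence between $(g,h)$-interleavings and $(g,h,\alpha,\beta)$-interleavings; your write-up fills in the details the paper leaves implicit, correctly identifying that the abstract 2-morphism $1_X \Rightarrow (gh,\Delta_g(\psi)\circ\phi)$ in $\cat{C}^\Delta$ exists precisely when the commuting triangle $\Delta(\alpha)_X = \Delta_g(\psi)\circ\phi$ holds, and that $\weight^\Delta_2 \equiv 0$ makes the optimized quantities coincide.
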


\begin{proof}
    Let $X,Y \in \mathrm{Im}(\Delta) = \cat{C}_0^\Delta$. Unpacking the definitions, there is a $(g,h)$-interleaving (in the sense of 2-functor interleaving) of $X$ and $Y$ if and only if there is a $(g,h,\alpha,\beta)$-interleaving (in the sense of weighted 2-categories). Moreover,
    \[
    \max\{\weight(g),\weight(h)\} = \max \{\weight^\Delta_1(g),\weight^\Delta_1(h),\weight^\Delta_2(\alpha),\weight^\Delta_2(\beta)\},
    \]
    so this completes the proof.
\end{proof}

From Lemmas \ref{lem:faithful_functor_2_functor_to_weighted_2_category} and \ref{lem:2_functor_interleaving_equals_weighted_category_interleaving}, we immediately deduce the following result.

\begin{thm}\label{thm:2-Functors_to_2-categories}
    The map from Definition \ref{def:2_functor_to_weighted_2_category} induces a functor $\cat{W2Fun}(\cdot,\cat{Cat}) \to \cat{W2Cat}$ and an isometry at the level of interleaving distances.
\end{thm}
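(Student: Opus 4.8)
The plan is to observe that the two assertions in the statement---that Definition \ref{def:2_functor_to_weighted_2_category} \emph{induces a functor}, and that this functor is \emph{an isometry at the level of interleaving distances}---are precisely the contents of the two preceding lemmas, so the proof amounts to assembling them. First I would record that the functoriality claim is exactly Lemma \ref{lem:faithful_functor_2_functor_to_weighted_2_category}: the object assignment $\Delta \mapsto (\cat{C}^\Delta,\weight^\Delta)$ lands in $\cat{W2Cat}$ (it is a $2$-weighted $2$-category by construction, since $\weight_2^\Delta \equiv 0$ makes the $2$-morphism triangle inequalities trivial and $\weight_1^\Delta$ inherits subadditivity from $\weight$), and the morphism assignment $(F,\eta) \mapsto F_*$ produces a Lipschitz $2$-functor that preserves identities and composition. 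Hence the assignment is a well-defined functor $\cat{W2Fun}(\cdot,\cat{Cat}) \to \cat{W2Cat}$, with no further verification required.

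Next I would invoke Lemma \ref{lem:2_functor_interleaving_equals_weighted_category_interleaving} to handle the isometry claim. The key point is bookkeeping: the functor sends an object $\Delta$, whose underlying space $\mathrm{Im}(\Delta)$ carries the $2$-functor interleaving distance $d_{\Delta,\weight}$, to the $2$-weighted $2$-category $(\cat{C}^\Delta,\weight^\Delta)$, whose object class $\cat{C}_0^\Delta$ carries the interleaving distance $d_{\cat{C}^\Delta,\weight^\Delta}$. Since $\mathrm{Im}(\Delta) = \cat{C}_0^\Delta$ as sets, Lemma \ref{lem:2_functor_interleaving_equals_weighted_category_interleaving} gives $d_{\Delta,\weight} = d_{\cat{C}^\Delta,\weight^\Delta}$, so the identity on the underlying set is an isometry between the two metric spaces. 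This is the sense in which the induced functor is an isometry on objects.

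There is essentially no obstacle at the level of the theorem itself: all of the genuine content has been pushed into the lemmas---in particular the nontrivial verification in Lemma \ref{lem:faithful_functor_2_functor_to_weighted_2_category} that $F_*$ preserves horizontal composition of $2$-morphisms, which relies on the $2$-naturality identity \eqref{eqn:2morcomp}, and the unpacking of definitions in Lemma \ref{lem:2_functor_interleaving_equals_weighted_category_interleaving} that matches each $(g,h)$-interleaving with a $(g,h,\alpha,\beta)$-interleaving of equal weight. The only point deserving a word of care is pinning down the precise meaning of ``isometry'': I would clarify that it refers to the object-wise coincidence of the two interleaving metrics, while the Lipschitz property of each $F_*$ (together with the stability principle of Theorem \ref{thm:stability}) ensures the functor is compatible with these metrics at the morphism level as well.
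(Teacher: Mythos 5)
Your proposal is correct and matches the paper's argument exactly: the theorem is stated immediately after Lemmas \ref{lem:faithful_functor_2_functor_to_weighted_2_category} and \ref{lem:2_functor_interleaving_equals_weighted_category_interleaving}, and the paper's proof is precisely to deduce it from those two lemmas, with the functoriality and Lipschitz claims coming from the first and the coincidence of metrics on $\mathrm{Im}(\Delta) = \cat{C}_0^\Delta$ from the second. Your added remark clarifying the intended sense of ``isometry'' is a reasonable gloss but not needed beyond what the lemmas already supply.
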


\subsection{Locally Persistent Categories}\label{sec:locally_persistent_categories} In~\cite{scoccola2020locally}, Scoccola introduces an alternate perspective on interleaving distances through the lens of enriched categories. We will show that these interleaving distances can be realized in our 2-category setting. In what follows, we use the font $\LPC{D}$ to denote an enriched 1-category.

\begin{defn}[Locally Persistent Category \cite{scoccola2020locally}]\label{defn:LPC}
    A \emph{locally persistent category (LPC)} is a 1-category $\LPC{D}$ enriched in $\cat{Fun}(\cat{R},\cat{Set})$, where $\cat{R}$ is the poset category associated to the nonnegative reals $\R_{\geq 0}$ and $\cat{Set}$ is the category of sets. Explicitly, this entails the following data:
    \begin{enumerate}
        \item A class of objects $\LPC{D}_0$.
        \item For each $A,B \in \LPC{D}_0$ and each $s \geq 0$, a set of morphisms $\LPC{D}_1(A,B)_s$. 
        \begin{enumerate}
            \item For $A = B$ and $s = 0$, there is a distinguished identity morphism $1_A \in \LPC{D}_1(A,A)_0$. 
            \item Compositions of morphisms respect the $\R_{\geq 0}$-grading in the sense that $g \in \LPC{D}_1(A,B)_s$ and $h \in \LPC{D}_1(B,C)_t$ compose to give $hg \in \LPC{D}_1(A,C)_{s + t}$. 
        \end{enumerate}
        \item For $s \leq t$, there is a \emph{shift operation} $\mathrm{S}_{s,t}:\LPC{D}_1(A,B)_s \rightarrow \LPC{D}_1(A,B)_t$.
    \end{enumerate}
    This data is required to satisfy various natural axioms; we refer to \cite[Definition 3.11]{scoccola2020locally} for details.
\end{defn}

Interleaving distance in the LPC setting is defined as follows.

\begin{defn}[LPC Interleaving Distance \cite{scoccola2020locally}]
    Let $\LPC{D}$ be an LPC and let $A,B \in \LPC{D}_0$. We say that $A$ and $B$ are \emph{$(s,t)$-interleaved} if there exist $f \in \LPC{D}_1(A,B)_s$ and $g \in \LPC{D}_1(B,A)_t$ such that $gf = \mathrm{S}_{0,s+t}(1_A)$ and $fg = \mathrm{S}_{0,s+t}(1_B)$. 

    The \emph{LPC interleaving distance} between $A$ and $B$ is 
    \[
    d_\LPC{D}(A,B) = \inf \{\max\{s,t\} \mid \mbox{$A$ and $B$ are $(s,t)$-interleaved}\}.
    \]
\end{defn}

Next we will show that $d_\LPC{D}$ is equivalent to a 2-weighted 2-category interleaving distance. We will show that the connection between these distances is natural, in a precise sense.

\subsubsection{From LPCs to 2-Categories}

We now fit the notion of LPCs into the framework of 2-categories, using a construction similar to that of $\cat{GH}$ above. 

\begin{defn}[2-Category Associated to an LPC]\label{defn:LPC_to_2_category} Let $\LPC{D}$ be an LPC. We define a  2-category $\cat{C}^\LPC{D}$ consisting of the following data:
    \begin{enumerate}
        \item $\cat{C}^\LPC{D}_0 = \LPC{D}_0$.
        \item For $A,B \in \LPC{D}_0$, 
        \[
        \cat{C}^\LPC{D}_1(A,B) = \{(g,s) \mid g \in \LPC{D}_1(A,B)_s\}.
        \]
        Compositions are defined by
        \[
        (h,t) \circ (g,s) = (hg,s+t)
        \]
        and the identity morphism is $(1_A,0)$.
        \item For $(g,s),(h,t) \in \cat{C}^\LPC{D}_1(A,B)$, the class of 2-morphisms $\cat{C}^\LPC{D}_2(g,h)$ contains a unique 2-morphism, denoted  $\alpha_{g,h}$, if and only if $s \leq t$ and $\mathrm{S}_{s,t}(g) = h$. Vertical and horizontal compositions are defined, respectively, by the formulas 
        \[
        \alpha_{g,h} \alpha_{f,g} = \alpha_{f,h} \quad \mbox{and} \quad \alpha_{h,k} \bullet \alpha_{f,g} = \alpha_{hf,kg}.
        \]
        One can use the axioms of an LPC~\cite[Definition 3.11]{scoccola2020locally} to deduce that these compositions are well-defined. 
    \end{enumerate}
\end{defn}

The 2-category defined above inherits a Lawvere 2-weight.

\begin{lem}\label{lem:LPC_lawvere}
    Let $\LPC{D}$ be an LPC with associated 2-category $\cat{C}^\LPC{D}$. Let $\weight^\LPC{D}_1:\cat{C}^\LPC{D}_1 \to \R$ be the function $\weight^\LPC{D}_1((g,s)) = s$ and let $\weight^\LPC{D}_2:\cat{C}^\LPC{D}_2 \to \R$ be constantly zero. Then $\weight^\LPC{D} = (\weight^\LPC{D}_1,\weight^\LPC{D}_2)$ is a Lawvere 2-weight.
\end{lem}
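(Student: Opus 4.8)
The plan is to verify directly the two clauses of Definition \ref{def:lawvere_2_weight}, and the verification will be almost entirely bookkeeping once the structure of $\cat{C}^\LPC{D}$ from Definition \ref{defn:LPC_to_2_category} is unwound. First I would record that both functions land in $\R_{\geq 0}$: an object of $\cat{C}^\LPC{D}_1$ is a pair $(g,s)$ with $g \in \LPC{D}_1(A,B)_s$ and $s \geq 0$, so $\weight^\LPC{D}_1((g,s)) = s \geq 0$, while $\weight^\LPC{D}_2 \equiv 0 \geq 0$ trivially. I would also note that $\weight^\LPC{D}_1$ is genuinely well defined, since the grading index $s$ is carried as part of the datum of a 1-morphism of $\cat{C}^\LPC{D}$.

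Next I would check the \emph{zero on identities} clause. The identity 1-morphism on $A$ is $(1_A, 0)$ by Definition \ref{defn:LPC_to_2_category}, so $\weight^\LPC{D}_1((1_A,0)) = 0$; and since $\weight^\LPC{D}_2$ is constantly zero, it vanishes on every identity 2-morphism $1_{(g,s)}$ automatically.

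For the triangle inequalities, the key observation is that composition in $\cat{C}^\LPC{D}$ is strictly additive in the grading. Given composable 1-morphisms $(g,s):A \to B$ and $(h,t):B \to C$, Definition \ref{defn:LPC_to_2_category} gives $(h,t)\circ(g,s) = (hg, s+t)$---this is exactly the grading compatibility axiom of an LPC recalled in Definition \ref{defn:LPC}. Hence $\weight^\LPC{D}_1((h,t)\circ(g,s)) = s+t = \weight^\LPC{D}_1((h,t)) + \weight^\LPC{D}_1((g,s))$, so the first triangle inequality holds, in fact with equality. The two triangle inequalities for $\weight^\LPC{D}_2$ are immediate: since $\weight^\LPC{D}_2 \equiv 0$, both sides of each inequality are zero for any vertically or horizontally composable pair of 2-morphisms.

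I do not anticipate any real obstacle here: the only point needing even momentary care is that $\weight^\LPC{D}_1$ is well defined, which holds because the index $s$ is part of each 1-morphism, and the subadditivity of $\weight^\LPC{D}_1$ is even upgraded to additivity by the LPC grading axiom. Everything concerning $\weight^\LPC{D}_2$ is vacuous because it is identically zero.
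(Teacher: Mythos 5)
Your proof is correct and follows essentially the same route as the paper's: identity preservation gives $\weight^\LPC{D}_1((1_A,0))=0$, the additive grading of composition gives $\weight^\LPC{D}_1((h,t)\circ(g,s))=s+t$ so subadditivity holds with equality, and all conditions on $\weight^\LPC{D}_2$ are trivial since it is identically zero. No gaps.
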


\begin{proof}
    The properties of $\weight^\LPC{D}_2$ are trivial, so we only need to check the properties of $\weight^\LPC{D}_1$.
    By definition, $\weight^\LPC{D}_1((1_A,0)) = 0$. For $(g,s) \in \cat{C}_1^\LPC{D}(A,B)$ and $(h,t) \in \cat{C}_1^\LPC{D}(B,C)$, we have that 
    \[
    \weight^\LPC{D}((h,t) \circ (g,s)) = \weight^\LPC{D}_1((gh,s+t) = s + t = \weight^\LPC{D}_1((g,s)) + \weight^\LPC{D}_1((h,t)).
    \]
\end{proof}

Similar to the treatement we gave in the 2-functor setting, we now wish to show that the construction of Definition \ref{defn:LPC_to_2_category} is functorial. Let $\LPC{D}$ and $\LPC{E}$ be locally persistent categories. Following \cite[Definition 3.16]{scoccola2020locally}, we define the appropriate notion of morphism between LPCs as follows. A \emph{locally persistent functor} $F: \LPC D \to \LPC E$ is a functor which is compatible with the $\mathbb{R}$-grading on morphism sets and which commutes with the shift functor (Defintion~\ref{defn:LPC}). More precisely, for any $s \in \mathbb R$, the functor $F$ induces a map on morphism sets $$\LPC D_1(A, B)_s \to \LPC E_1(F(A), F(B))_{s},$$ and for any $s \leq t$ and $g \in \LPC D_1$, we have $\mathrm S_{s, t}(F(g)) = F(\mathrm S_{s,t}(g))$. Let $\cat{LPC}$ denote the category whose objects are LPC's and whose morphisms are locally persistent functors. 

\begin{lem}\label{lem:LPC_to_2_category_functorial}
 The map taking an LPC $\LPC{D}$ to the 2-weighted 2-category $(\cat{C}^\LPC{D},\mathrm{W}^\cat{D})$ defines a faithful functor $\cat{LPC} \to \cat{W2Cat}$.
\end{lem}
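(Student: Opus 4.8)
The plan is to mirror the structure of the proof of Lemma~\ref{lem:faithful_functor_2_functor_to_weighted_2_category}: first extend the object-level assignment $\LPC{D} \mapsto (\cat{C}^\LPC{D},\weight^\LPC{D})$ to morphisms, then check that the resulting 2-functor is well-defined and Lipschitz, and finally verify functoriality and faithfulness. Given a locally persistent functor $F:\LPC{D} \to \LPC{E}$, I would define a 2-functor $F_*:\cat{C}^\LPC{D} \to \cat{C}^\LPC{E}$ as follows. On objects, set $F_*(A) = F(A)$, using $\cat{C}^\LPC{D}_0 = \LPC{D}_0$. On a 1-morphism $(g,s)$ with $g \in \LPC{D}_1(A,B)_s$, set $F_*((g,s)) = (F(g),s)$; this is legitimate because $F$ respects the $\R_{\geq 0}$-grading, so that $F(g) \in \LPC{E}_1(F(A),F(B))_s$. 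On a 2-morphism $\alpha_{g,h}$, which by Definition~\ref{defn:LPC_to_2_category} exists precisely when $s \leq t$ and $\mathrm{S}_{s,t}(g) = h$, set $F_*(\alpha_{g,h}) = \alpha_{F(g),F(h)}$.

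The key point to check is that this last assignment is well-defined: since $F$ commutes with the shift operators, $\mathrm{S}_{s,t}(F(g)) = F(\mathrm{S}_{s,t}(g)) = F(h)$, so the target 2-morphism $\alpha_{F(g),F(h)}$ genuinely exists in $\cat{C}^\LPC{E}$. After this, verifying that $F_*$ is a 2-functor is a direct unpacking of Definition~\ref{defn:LPC_to_2_category}: functoriality of $F$ gives $F(hg) = F(h)F(g)$ and $F(1_A) = 1_{F(A)}$, which yields preservation of 1-morphism composition, $F_*((h,t)\circ(g,s)) = (F(hg),s+t) = F_*((h,t)) \circ F_*((g,s))$, and of the identity $(1_A,0)$; meanwhile the composition laws $\alpha_{g,h}\alpha_{f,g} = \alpha_{f,h}$ and $\alpha_{h,k} \bullet \alpha_{f,g} = \alpha_{hf,kg}$ (together with preservation of identity 2-morphisms $\alpha_{g,g}$) are preserved because $F_*$ acts on the subscripts through $F$ alone.

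For the Lipschitz condition, I would observe that $F_*$ is in fact weight-preserving: $\weight^\LPC{E}_1(F_*((g,s))) = \weight^\LPC{E}_1((F(g),s)) = s = \weight^\LPC{D}_1((g,s))$, and both $\weight^\LPC{D}_2$ and $\weight^\LPC{E}_2$ are identically zero by Lemma~\ref{lem:LPC_lawvere}, so the inequalities \eqref{eqn:lipschitz_property} hold with equality. Functoriality of the assignment $F \mapsto F_*$ is then immediate, since $F_*$ is built componentwise from $F$: for composable locally persistent functors $F$ and $G$ one has $(G \circ F)_* = G_* \circ F_*$ on objects, 1-morphisms and 2-morphisms by direct substitution, and the identity locally persistent functor is sent to the identity 2-functor.

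Finally, faithfulness follows because $F_*$ retains all the data of $F$: the object map of $F_*$ recovers $F$ on $\LPC{D}_0$, and the 1-morphism map $(g,s) \mapsto (F(g),s)$ recovers $F$ on every graded morphism set $\LPC{D}_1(A,B)_s$; since a locally persistent functor is determined by these two assignments, $F_* = F'_*$ forces $F = F'$. I expect the only genuinely load-bearing step to be the well-definedness of $F_*$ on 2-morphisms, which is exactly where the defining compatibility of a locally persistent functor with the shift operators is used; everything else is a routine verification analogous to the 2-functor case already treated in Lemma~\ref{lem:faithful_functor_2_functor_to_weighted_2_category}.
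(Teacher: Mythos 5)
Your proposal is correct and follows essentially the same route as the paper's own proof: the same definitions of $F_*$ on objects, 1-morphisms $(g,s)\mapsto(F(g),s)$ and 2-morphisms $\alpha_{g,h}\mapsto\alpha_{F(g),F(h)}$, the same use of shift-compatibility for well-definedness on 2-morphisms, and the same arguments for functoriality, faithfulness, and the (in fact weight-preserving) Lipschitz property. No gaps.
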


\begin{proof}
Given LPC's $\LPC D$, $\LPC E$, and a LP-functor $F: \LPC D \to \LPC E$, we define $F_*:\cat{C}^{\LPC D} \to \cat C ^{\LPC E}$ as follows. On objects, $F_*$ is given by $F$ since $\cat C ^{\LPC D}_0 = \LPC D _0$ and $\cat C ^{\LPC E}_0 = \LPC E _0$. Given a morphism $(g, s): A \to B \in \cat C _1^{\LPC D}$, so that $g \in \LPC D_1(A, B)_s$, we have $F(g) \in \LPC E _1(F(A), F(B))_s$. We thus define
\[F_*((g, s)) = (F(g), s): F(A) \to F(B) \in \cat C ^{\LPC E}_1.
\]
Composition of 1-morphisms is preserved. Indeed, we have
\[
\begin{array}{rl}
F_*((h, t) \circ (g, s)) & = F_*(hg, s+t)  = (F(hg), s+t) \\ & = (F(h) \circ F(g), t+s) = (F(h), t) \circ (F(g), s) = F_*((h, t)) \circ F_*((g, s)).
\end{array}
\]
There is a 2-morphism $\alpha_{g,h} \in \cat C _2 ^{\LPC D}((g, s), (h, t))$ if and only if $s \leq t$ and $\mathrm S _{s,t}(g) = h$. Since $F$ commutes with shifts, this yields $\mathrm S_{s,t}(F(g)) = F(\mathrm S_{s,t}(g)) = F(h),$ and so we have a 2-morphism $\alpha_{F(g), F(h)} \in \cat C_2 ^{\LPC E}((F(g), s), (F(h), t))$. We define 
\[
F_*(\alpha_{g, h}) = \alpha_{F(g), F(h)}.
\]
Then vertical composition and horizontal compositions are preserved:
\[
\begin{array}{rl}
F_*(\alpha_{g,h} \alpha_{f,g}) & = F_*(\alpha_{f,h})  = \alpha_{F(f), F(h)} = \alpha_{F(g), F(h)} \alpha_{F(f), F(g)} = F_*(\alpha_{g,h}) F_*(\alpha_{f, g})
\end{array}
\]
and
\[
\begin{array}{rl}
F_*(\alpha_{h,k} \bullet \alpha_{f,g}) & = F_*(\alpha_{hf, kg} ) = \alpha_{F(hf), F(kg)} \\ & = \alpha_{F(h)F(f), F(k)F(g)}  = \alpha_{F(h), F(k)} \bullet \alpha_{F(f), F(g)}  = F_*(\alpha_{h, k}) \bullet F_*(\alpha_{f, g}).
\end{array}
\]
This association is functorial; that is, given $F: \LPC D \to \LPC E$ and $G: \LPC E \to \LPC F$, we have $(G \circ F)_* = G_* \circ F_*$. Indeed, this is clear on objects, on 1-morphisms, we have
\[\begin{array}{rl}
(G\circ F)_*(g,s) & = (G(F(g)), s)  = G_*(F(g), s)  = G_*(F_*(g, s))  = (G_* \circ F_*)(g,s)
\end{array}
\]
and on 2-morphisms, we have 
\[\begin{array}{rl}
(G\circ F)_*(\alpha_{g,h}) & = \alpha_{G(F(g)), G(F(h))}  = G_*(\alpha_{F(g), F(h)})  = (G_* \circ F_*)(\alpha_{g, h}).
\end{array}
\]
Moreover, this functor is faithful. To see this, let $F, G: \LPC D \to \LPC E$ be LP-functors with $F_* = G_* : \cat C ^{\LPC D} \to \cat C ^{\LPC E}$. Then $F(A) = G(A)$ for all $A \in \LPC D_0 = \cat C ^{\LPC D}_0,$ and $F_*((g,s)) = G_*((g, s))$ for all $(g,s) \in \cat C ^{\LPC D}_1.$ This second statement implies $(F(g), s) = (G(g), s)$, so that $F(g) = G(g)$. Since $F$ and $G$ agree on objects and morphisms, we must have $F = G$.

It only remains to check that $F_\ast$ is Lipschitz. Indeed, a stronger condition holds, as we have
\[\begin{array}{rl}
\weight ^{\LPC E}_1(F_*(g,s)) & =\weight ^{\LPC E}_1((F(g), s)) = s = \weight ^{\LPC D}_1( (g,s)).
\end{array}
\]
\end{proof}

\begin{lem}\label{lem:equivalence_to_LPC}
    Let $\LPC{D}$ be an LPC and let $\cat{C}^\LPC{D}$ and $\weight^\LPC{D}$ be defined as above. Then the LPC interleaving distance on $\LPC{D}_0 = \cat{C}_0^\LPC{D}$ agrees with the 2-weighted 2-category interleaving distance, $d_{\LPC{D}} = d_{\cat{C}^\LPC{D},\weight^\LPC{D}}$.
\end{lem}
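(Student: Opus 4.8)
The plan is to prove the identity $d_{\LPC{D}} = d_{\cat{C}^\LPC{D},\weight^\LPC{D}}$ by exhibiting a weight-preserving correspondence between LPC $(s,t)$-interleavings of $A,B \in \LPC{D}_0$ and $(g,h,\alpha,\beta)$-interleavings of $A,B$ in the 2-weighted 2-category $(\cat{C}^\LPC{D},\weight^\LPC{D})$. As with Lemma \ref{lem:2_functor_interleaving_equals_weighted_category_interleaving}, the argument is purely a matter of unpacking the definitions given in Definition \ref{defn:LPC_to_2_category} and Lemma \ref{lem:LPC_lawvere}; once the correspondence is set up, the two distances are manifestly infima of the same set of real numbers.

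First I would spell out the forward direction. Suppose $A$ and $B$ are LPC $(s,t)$-interleaved, witnessed by $f \in \LPC{D}_1(A,B)_s$ and $g \in \LPC{D}_1(B,A)_t$ with $gf = \mathrm{S}_{0,s+t}(1_A)$ and $fg = \mathrm{S}_{0,s+t}(1_B)$. These produce 1-morphisms $(f,s)\colon A \to B$ and $(g,t)\colon B \to A$ in $\cat{C}^\LPC{D}$, whose composites are $(g,t)\circ(f,s) = (gf,s+t)$ and $(f,s)\circ(g,t) = (fg,s+t)$. By the 2-morphism clause of Definition \ref{defn:LPC_to_2_category}, a (necessarily unique) 2-morphism $(1_A,0) \Rightarrow (gf,s+t)$ exists precisely when $0 \leq s+t$ and $\mathrm{S}_{0,s+t}(1_A) = gf$; the first condition holds automatically and the second is exactly the interleaving identity. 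The analogous statement holds for $(1_B,0) \Rightarrow (fg,s+t)$. Hence the data assembles into a $\big((f,s),(g,t),\alpha,\beta\big)$-interleaving in the sense of Definition \ref{def:2-weighted-2-category-interleaving}.

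Next I would establish the converse and match weights. Every 1-morphism of $\cat{C}^\LPC{D}$ has the form $(f,s)$ with $f$ of grade $s$, so an arbitrary $\big((f,s),(g,t),\alpha,\beta\big)$-interleaving of $A$ and $B$ furnishes LPC morphisms $f,g$ of the correct grading, and the existence of $\alpha$ and $\beta$ forces the shift equations $gf = \mathrm{S}_{0,s+t}(1_A)$ and $fg = \mathrm{S}_{0,s+t}(1_B)$, i.e., an LPC $(s,t)$-interleaving. Since $\weight^\LPC{D}_1((f,s)) = s$, $\weight^\LPC{D}_1((g,t)) = t$ and $\weight^\LPC{D}_2 \equiv 0$, and since $s,t \geq 0$, the cost matches: $\max\{\weight^\LPC{D}_1((f,s)),\weight^\LPC{D}_1((g,t)),\weight^\LPC{D}_2(\alpha),\weight^\LPC{D}_2(\beta)\} = \max\{s,t,0,0\} = \max\{s,t\}$. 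Thus the two defining sets of attainable values coincide, and passing to the infimum yields the claimed equality.

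The proof contains no substantive obstacle; the only points demanding care are bookkeeping ones, namely tracking the $\R_{\geq 0}$-grading under composition (so that both composites land in grade $s+t$) and respecting the direction of the 2-morphisms—both $\alpha$ and $\beta$ must emanate from an identity 1-morphism of grade $0$, which is why the inequality $0 \leq s+t$ is precisely what is needed for them to exist. Confirming that the correspondence is exhaustive in both directions is the one place where one must check that nothing is lost, but this is immediate from the explicit description of 1- and 2-morphisms in $\cat{C}^\LPC{D}$.
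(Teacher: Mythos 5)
Your proof is correct and follows the same route as the paper: the paper's own argument is a one-line observation that $A$ and $B$ are LPC $(s,t)$-interleaved via $f,g$ if and only if they are $\big((f,s),(g,t),\alpha,\beta\big)$-interleaved in $\cat{C}^\LPC{D}$, with matching weights. You simply spell out the same definitional correspondence in both directions, which is a reasonable expansion but not a different argument.
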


\begin{proof}
    This follows immediately from the construction, as $A,B \in \LPC{D}_0 = \cat{C}_0^\LPC{D}$ are $(s,t)$ -interleaved via $f \in \LPC{D}_1(A,B)_s$ and $g \in \LPC{D}_1(B,A)_t$, in the sense of LPC interleaving distance, if and only if they are $\big((f,s),(g,t),\alpha_{f,g},\alpha_{g,f}\big)$-interleaved, in the sense of 2-weighted 2-category interleaving distance.
\end{proof}

From Lemmas \ref{lem:LPC_to_2_category_functorial} and \ref{lem:equivalence_to_LPC}, we deduce the following result.

\begin{thm}\label{thm:LPC_to_2_functor}
    The map from Definition \ref{defn:LPC_to_2_category} induces a faithful functor $\cat{LPC} \to \cat{W2Cat}$ and an isometry at the level of interleaving distances.
\end{thm}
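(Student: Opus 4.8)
The plan is to deduce the theorem directly from the two preceding lemmas, which between them furnish exactly the two assertions in the statement. Lemma~\ref{lem:LPC_to_2_category_functorial} already shows that the object-level assignment $\LPC{D} \mapsto (\cat{C}^{\LPC{D}},\weight^{\LPC{D}})$ of Definition~\ref{defn:LPC_to_2_category}, together with the morphism-level assignment $F \mapsto F_*$ on locally persistent functors, is a well-defined faithful functor $\cat{LPC} \to \cat{W2Cat}$; this supplies the functoriality and faithfulness claims. Lemma~\ref{lem:equivalence_to_LPC} supplies the remaining claim: under the identification $\LPC{D}_0 = \cat{C}_0^{\LPC{D}}$, the LPC interleaving distance $d_{\LPC{D}}$ agrees with the 2-weighted 2-category interleaving distance $d_{\cat{C}^{\LPC{D}},\weight^{\LPC{D}}}$, which is precisely the statement that the induced map on objects is an isometry. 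Thus I would first invoke Lemma~\ref{lem:LPC_to_2_category_functorial} for the functor and its faithfulness, then invoke Lemma~\ref{lem:equivalence_to_LPC} for the isometry, and the theorem follows with no additional argument. This is the exact analogue of the deduction of Theorem~\ref{thm:2-Functors_to_2-categories} from Lemmas~\ref{lem:faithful_functor_2_functor_to_weighted_2_category} and~\ref{lem:2_functor_interleaving_equals_weighted_category_interleaving}.

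Consequently there is essentially no obstacle at the level of the theorem itself; all of the real content lives in the two lemmas, and I would flag those as the places where care is required. In Lemma~\ref{lem:LPC_to_2_category_functorial}, the delicate verifications are that $F_*$ respects both vertical and horizontal composition of $2$-morphisms and that it is Lipschitz. Composition-preservation hinges on $F$ commuting with the shift operation, so that $\mathrm{S}_{s,t}(g)=h$ forces $\mathrm{S}_{s,t}(F(g))=F(h)$ and hence $\alpha_{g,h}$ is sent to the legitimate $2$-morphism $\alpha_{F(g),F(h)}$; the Lipschitz condition is in fact an equality, since $\weight^{\LPC{D}}_1((g,s))=s$ is carried to $\weight^{\LPC{E}}_1((F(g),s))=s$ and $\weight^{\LPC{D}}_2 \equiv 0$.

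In Lemma~\ref{lem:equivalence_to_LPC}, the key step is to match the data of an $(s,t)$-interleaving in the LPC sense---morphisms $f \in \LPC{D}_1(A,B)_s$ and $g \in \LPC{D}_1(B,A)_t$ satisfying $gf = \mathrm{S}_{0,s+t}(1_A)$ and $fg = \mathrm{S}_{0,s+t}(1_B)$---with a $\big((f,s),(g,t),\alpha_{f,g},\alpha_{g,f}\big)$-interleaving in the $2$-weighted sense. The point is that the requisite $2$-morphisms $\alpha_{f,g}\colon 1_A \Rightarrow gf$ and $\alpha_{g,f}\colon 1_B \Rightarrow fg$ of $\cat{C}^{\LPC{D}}$ exist exactly when those shift identities hold, so the two notions of interleaving have identical witnessing data. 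Once this bijection of interleavings is established, the equality of the quantities being optimized,
\[
\max\{s,t\} = \max\big\{\weight^{\LPC{D}}_1((f,s)),\,\weight^{\LPC{D}}_1((g,t)),\,\weight^{\LPC{D}}_2(\alpha_{f,g}),\,\weight^{\LPC{D}}_2(\alpha_{g,f})\big\},
\]
is immediate from $\weight^{\LPC{D}}_1((f,s))=s$, $\weight^{\LPC{D}}_1((g,t))=t$ and $\weight^{\LPC{D}}_2 \equiv 0$, yielding the isometry and hence the theorem.
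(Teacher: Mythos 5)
Your proposal is correct and matches the paper exactly: the theorem is stated there as an immediate consequence of Lemma \ref{lem:LPC_to_2_category_functorial} (faithful functoriality of $\LPC{D} \mapsto (\cat{C}^{\LPC{D}},\weight^{\LPC{D}})$, $F \mapsto F_*$) and Lemma \ref{lem:equivalence_to_LPC} (the identification of $(s,t)$-interleavings with $\big((f,s),(g,t),\alpha_{f,g},\alpha_{g,f}\big)$-interleavings and the equality of the optimized quantities), with no further argument. Your additional remarks on where the real work lies inside the two lemmas are accurate and consistent with the paper's proofs of those lemmas.
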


\section{Stability}\label{sec:stability}

An important property of the topological invariants of datasets studied in TDA is that they are stable under perturbations of the data. The prototypical quantification of this statement is the main theorem of \cite{cohen2005stability}, stated here informally: for a topological space $\set{X}$, continuous maps $f,g: \set{X} \to \R$, and persistence diagrams $D(f)$ and $D(g)$ obtained from degree-$k$ sublevel set persistent homology, we have (under mild tameness assumptions)
\begin{equation}\label{eqn:bottleneck_stability}
d_\mathrm{B}(D(f),D(g)) \leq \|f - g\|_{L^\infty(\set{X})},
\end{equation}
where $d_\mathrm{B}$ denotes bottleneck distance (see Example \ref{ex:multiplication_group}).

A common theme in the literature on interleaving distances is the idea that certain stability results of TDA can be viewed naturally from the categorical perspective; for example, see \cite[Theorem 4.3]{de2018theory}, \cite[Theorem 4.2.2]{scoccola2020locally}, \cite[Theorem 3.16]{bubenik2015metrics} and \cite[Theorem 4.6]{bubenik2017interleaving}. Stability in the setting of 2-weighted 2-categories is very natural, and more general than other notions in the literature, in light of the results of the previous section. We give a brief overview of stability and some applications in this section.

\subsection{Stability for 2-Weighted 2-Categories} Recall that $\cat{W2Cat}$ denotes the 1-category whose objects are small 2-weighted 2-categories and whose morphisms are Lipschitz 2-functors. Let $\cat{EPMet}$ denote the 1-category whose objects are extended pseudometrics and whose morphisms are Lipschitz functions.

\begin{thm}[Stability]\label{thm:stability}
    The map taking a 2-weighted 2-category $(\cat{C},\mathrm{W})$ to the extended pseudometic space $(\cat{C}_0,d_{\cat{C},\mathrm{W}})$ defines a functor from $\cat{W2Cat}$ to $\cat{EPMet}$. In particular, for any Lipschitz 2-functor of 2-weighted 2-categories $\Theta:(\cat{C},\mathrm{W}) \to (\cat{C}',\mathrm{W}')$, we have
    \[
    d_{\cat{C}',\mathrm{W}'}(\Theta(A),\Theta(B)) \leq d_{\cat{C},\mathrm{W}}(A,B)
    \]
    for all $A,B \in \cat{C}_0$.
\end{thm}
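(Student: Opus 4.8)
The plan is to show that a Lipschitz 2-functor transports interleavings to interleavings without increasing their weight, so that the induced map on object classes is $1$-Lipschitz; the functoriality statement then follows by inspection. This is the ``almost obvious'' proof the introduction promises, so the emphasis is on getting the bookkeeping of (co)domains correct rather than on any substantive difficulty.

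First I would fix a Lipschitz 2-functor $\Theta:(\cat{C},\mathrm{W}) \to (\cat{C}',\mathrm{W}')$ and objects $A,B \in \cat{C}_0$, and suppose that $A$ and $B$ are $t$-interleaved for some $t \geq 0$. By Definition \ref{def:2-weighted-2-category-interleaving} this supplies $1$-morphisms $g:A \to B$, $h:B \to A$ and $2$-morphisms $\alpha:1_A \Rightarrow hg$, $\beta:1_B \Rightarrow gh$ with $\max\{\mathrm{W}_1(g),\mathrm{W}_1(h),\mathrm{W}_2(\alpha),\mathrm{W}_2(\beta)\} \leq t$. Applying $\Theta$ and invoking that a 2-functor preserves identities and the composition of $1$-morphisms (Definition \ref{def:2-functor}), I would observe that $\Theta(g):\Theta(A) \to \Theta(B)$ and $\Theta(h):\Theta(B) \to \Theta(A)$, together with $\Theta(\alpha):1_{\Theta(A)} \Rightarrow \Theta(h)\Theta(g)$ and $\Theta(\beta):1_{\Theta(B)} \Rightarrow \Theta(g)\Theta(h)$, constitute a $(\Theta(g),\Theta(h),\Theta(\alpha),\Theta(\beta))$-interleaving of $\Theta(A)$ and $\Theta(B)$. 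The one point deserving care here is that the source and target of $\Theta(\alpha)$ and $\Theta(\beta)$ come out as required, which is precisely the content of the identities $\Theta(1_A)=1_{\Theta(A)}$ and $\Theta(hg)=\Theta(h)\Theta(g)$.

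Next I would apply the Lipschitz property \eqref{eqn:lipschitz_property}: from $\mathrm{W}_1'(\Theta(g)) \leq \mathrm{W}_1(g)$, $\mathrm{W}_1'(\Theta(h)) \leq \mathrm{W}_1(h)$, $\mathrm{W}_2'(\Theta(\alpha)) \leq \mathrm{W}_2(\alpha)$ and $\mathrm{W}_2'(\Theta(\beta)) \leq \mathrm{W}_2(\beta)$, each of the four image weights is bounded by $t$. Hence $\Theta(A)$ and $\Theta(B)$ are $t$-interleaved in $(\cat{C}',\mathrm{W}')$. Since this holds for every $t$ witnessing a $t$-interleaving of $A$ and $B$, taking infima yields $d_{\cat{C}',\mathrm{W}'}(\Theta(A),\Theta(B)) \leq d_{\cat{C},\mathrm{W}}(A,B)$, the asserted inequality.

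Finally, for the functoriality claim I would assemble the pieces: Theorem \ref{thm:interleaving_weighted_2_category} guarantees that each $(\cat{C}_0,d_{\cat{C},\mathrm{W}})$ is an object of $\cat{EPMet}$; the displayed inequality says the object-level assignment $\Theta \mapsto \Theta|_{\cat{C}_0}$ produces a Lipschitz function; and compatibility with identities and composition is immediate, since restricting 2-functors to their action on objects is manifestly functorial, $(\Theta \circ \Phi)|_{\cat{C}_0} = \Theta|_{\cat{C}_0} \circ \Phi|_{\cat{C}_0}$ and $\mathrm{id}|_{\cat{C}_0} = \mathrm{id}$. I do not anticipate a genuine obstacle: the essential work is entirely in the first step, verifying that the transported $2$-morphisms have the correct domains and codomains so that they genuinely form an interleaving, after which the weight estimate and functoriality are formal.
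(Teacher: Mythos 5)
Your proposal is correct and follows essentially the same route as the paper: apply $\Theta$ to a $t$-interleaving diagram, use preservation of identities and composition to see that the image is again an interleaving, and invoke the Lipschitz property \eqref{eqn:lipschitz_property} to bound its weight by $t$ before taking infima. Your explicit remarks on functoriality of the object-level assignment are a harmless elaboration of what the paper dismisses with ``we only need to check the Lipschitz inequality.''
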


\begin{proof}
    We only need to check the Lipschitz inequality. Given a diagram
    \[\begin{tikzcd}
	A && B \\
	\\
	A && B
	\arrow["g", from=1-1, to=1-3]
	\arrow["h", from=3-3, to=3-1]
	\arrow[""{name=0, anchor=center, inner sep=0}, "{1_A}"', curve={height=12pt}, from=1-1, to=3-1]
	\arrow[""{name=1, anchor=center, inner sep=0}, "hg", curve={height=-12pt}, from=1-1, to=3-1]
	\arrow[""{name=2, anchor=center, inner sep=0}, "gh"', curve={height=12pt}, from=1-3, to=3-3]
	\arrow[""{name=3, anchor=center, inner sep=0}, "{1_B}", curve={height=-12pt}, from=1-3, to=3-3]
	\arrow["\alpha"', shorten <=5pt, shorten >=5pt, Rightarrow, from=0, to=1]
	\arrow["\beta", shorten <=5pt, shorten >=5pt, Rightarrow, from=3, to=2]
\end{tikzcd}\]
    in $\cat{C}$, we obtain a diagram
    \[\begin{tikzcd}
	\Theta(A) && \Theta(B) \\
	\\
	\Theta(A) && \Theta(B)
	\arrow["\Theta(g)", from=1-1, to=1-3]
	\arrow["\Theta(h)", from=3-3, to=3-1]
	\arrow[""{name=0, anchor=center, inner sep=0}, "{\Theta(1_A)}"', curve={height=12pt}, from=1-1, to=3-1]
	\arrow[""{name=1, anchor=center, inner sep=0}, "\Theta(hg)", curve={height=-12pt}, from=1-1, to=3-1]
	\arrow[""{name=2, anchor=center, inner sep=0}, "\Theta(gh)"', curve={height=12pt}, from=1-3, to=3-3]
	\arrow[""{name=3, anchor=center, inner sep=0}, "{\Theta(1_B)}", curve={height=-12pt}, from=1-3, to=3-3]
	\arrow["\Theta(\alpha)"', shorten <=5pt, shorten >=5pt, Rightarrow, from=0, to=1]
	\arrow["\Theta(\beta)", shorten <=5pt, shorten >=5pt, Rightarrow, from=3, to=2]
\end{tikzcd}
\quad \mbox{or, equivalently, } \quad
\begin{tikzcd}
	{\Theta(A)} &&& {\Theta(B)} \\
	\\
	{\Theta(A)} &&& {\Theta(B)}
	\arrow[""{name=0, anchor=center, inner sep=0}, "{1_{\Theta(A)}}"', curve={height=12pt}, from=1-1, to=3-1]
	\arrow[""{name=1, anchor=center, inner sep=0}, "{\Theta(h)\Theta(g)}", curve={height=-12pt}, from=1-1, to=3-1]
	\arrow[""{name=2, anchor=center, inner sep=0}, "{\Theta(g)\Theta(h)}"', curve={height=12pt}, from=1-4, to=3-4]
	\arrow[""{name=3, anchor=center, inner sep=0}, "{1_{\Theta(B)}}", curve={height=-12pt}, from=1-4, to=3-4]
	\arrow["{\Theta(g)}", from=1-1, to=1-4]
	\arrow["{\Theta(h)}", from=3-4, to=3-1]
	\arrow["{\Theta(\alpha)}"', shorten <=5pt, shorten >=5pt, Rightarrow, from=0, to=1]
	\arrow["{\Theta(\beta)}", shorten <=5pt, shorten >=5pt, Rightarrow, from=3, to=2]
\end{tikzcd}
\]
    in $\cat{C}'$ via functoriality of $\Theta$. By the Lipschitz property \eqref{eqn:lipschitz_property} of $\Theta$, we have
    \[
    \max\{\mathrm{W}'_1(\Theta(g)),\mathrm{W}'_1(\Theta(h)),\mathrm{W}'_2(\Theta(\alpha)),\mathrm{W}'_2(\Theta(\beta))\} \leq \max\{\mathrm{W}_1(g),\mathrm{W}_1(h),\mathrm{W}_2(\alpha),\mathrm{W}_2(\beta)\}.
    \]
    The claim follows.
\end{proof}

\subsection{Stability for Monoidal Functors} Since most examples of interest of interleaving distances in this paper arise from monoidal functors, let us show how to apply Theorem \ref{thm:stability} in this context. Let $T:\cat{G} \to \cat{End}(\cat{X})$ and $T':\cat{G}' \to \cat{End}(\cat{X}')$ be monoidal functors and suppose that $\cat{G}$ and $\cat{G}'$ are endowed with monoidal weights $\mathrm{W}$ and $\mathrm{W}'$, respectively. 

\begin{defn}  
    An \emph{equivariant Lipschitz functor pair} is a pair $(H,K)$ consisting of a Lipschitz monoidal functor $H: \cat{G} \to \cat{G}'$ and a functor $K:\cat{X} \to \cat{X}'$ such that, for all $g \in  \cat{G}_0$, $K \circ T_g = T'_{H(g)} \circ K$. 
\end{defn}

Recall from Proposition \ref{prop:monoidal_functors_and_2_functors} that a monoidal functor $T: \cat{G} \to \cat{End}(\cat{X})$ can be promoted to a 2-functor into $\cat{Cat}$ via the delooping construction. Suppose that $\cat{G}$ is endowed with a monoidal weight $\mathrm{W}$; this can then be considered as a weight on the 1-category $\cat{B}\cat{G}$. We slightly abuse notation and denote the 2-weighted 2-category associated to this 2-functor data (Definition \ref{def:2_functor_to_weighted_2_category}) as $(\cat{C}^T,\mathrm{W}^T)$. 

\begin{prop}\label{prop:stability_actegory}
    An equivariant Lipschitz functor pair $(H,K)$ from $T:\cat{G} \to \cat{End}(\cat{X})$ to $T':\cat{G}' \to \cat{End}(\cat{X}')$ induces a Lipschitz 2-functor from $(\cat{C}^T,\mathrm{W}^T)$ to $(\cat{C}^{T'},\mathrm{W}^{T'})$. It follows that, for all $X,Y \in \cat{X}_0$, we have the Lipschitz bound on interleaving distances
    \[
    d_{T',\mathrm{W}'}(K(X),K(Y)) \leq d_{T,\mathrm{W}}(X,Y).
    \]
\end{prop}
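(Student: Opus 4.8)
The plan is to avoid any direct manipulation of interleavings and instead route the argument through the two functorial constructions already in place: the functor $\cat{W2Fun}(\cdot,\cat{Cat}) \to \cat{W2Cat}$ of Theorem~\ref{thm:2-Functors_to_2-categories} and the stability functor $\cat{W2Cat} \to \cat{EPMet}$ of Theorem~\ref{thm:stability}. Recall (Proposition~\ref{prop:monoidal_functors_and_2_functors}) that $T$ and $T'$ deloop to 2-functors $\cat{B}T:\cat{B}\cat{G}\to\cat{Cat}$ and $\cat{B}T':\cat{B}\cat{G}'\to\cat{Cat}$, that the monoidal weights $\mathrm{W},\mathrm{W}'$ become Lawvere weights on $\cat{B}\cat{G},\cat{B}\cat{G}'$ (Remark~\ref{rem:induced_lawvere_weight}), and that $(\cat{C}^T,\mathrm{W}^T)$ and $(\cat{C}^{T'},\mathrm{W}^{T'})$ are precisely the 2-weighted 2-categories these data produce under Definition~\ref{def:2_functor_to_weighted_2_category}. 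Thus it suffices to exhibit a morphism $(\cat{B}H,\eta):\cat{B}T \to \cat{B}T'$ in $\cat{W2Fun}(\cdot,\cat{Cat})$; applying the functor of Theorem~\ref{thm:2-Functors_to_2-categories} then yields a Lipschitz 2-functor $(\cat{B}H)_*:\cat{C}^T\to\cat{C}^{T'}$, and feeding this into Theorem~\ref{thm:stability} gives the Lipschitz bound.

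To build this morphism I would take $F=\cat{B}H:\cat{B}\cat{G}\to\cat{B}\cat{G}'$, the delooping of $H$. It is Lipschitz because on 1-morphisms $\cat{B}H$ is just $H$ on objects of $\cat{G}$, so the Lipschitz inequality reduces to the hypothesis $\mathrm{W}'(H(g))\le\mathrm{W}(g)$. For $\eta:\cat{B}T\Rightarrow \cat{B}T'\circ\cat{B}H$, note $\cat{B}\cat{G}$ has a single object $\star$ with $\cat{B}T(\star)=\cat{X}$ and $(\cat{B}T'\circ\cat{B}H)(\star)=\cat{X}'$, so the sole datum is a functor $\eta_\star:\cat{X}\to\cat{X}'$, for which I take $\eta_\star=K$. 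The 1-cell naturality condition (Diagram~\ref{diagm:1-comm}), evaluated at a 1-morphism $g\in\cat{B}\cat{G}_1(\star,\star)=\cat{G}_0$, reads $K\circ T_g = T'_{H(g)}\circ K$, which is exactly the equivariance hypothesis on $(H,K)$.

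The main obstacle is the 2-cell naturality condition (Diagram~\ref{diag:2-comm}), whose component form~\eqref{eqn:2morcomp} demands, for every $\alpha\in\cat{B}\cat{G}_2(g,h)=\cat{G}_1(g,h)$ and every $X\in\cat{X}_0$,
\[
K\big(T(\alpha)_X\big) = T'\big(H(\alpha)\big)_{K(X)},
\]
equivalently the equality of whiskered natural transformations $K\bullet T(\alpha) = T'(H(\alpha))\bullet K:\; T'_{H(g)}\circ K \Rightarrow T'_{H(h)}\circ K$ (the common source and target being identified by the 1-cell naturality just established). This is the substantive step: the object-level equivariance only matches the endpoints, whereas the displayed identity is the further compatibility of $K$ with the comparison 2-cells $T(\alpha)$, and verifying it is where the real work lies. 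I would attempt it by applying $K$ to the naturality squares defining $T(\alpha)$, using equivariance to rewrite $K\circ T_g$ and $K\circ T_h$ as $T'_{H(g)}\circ K$ and $T'_{H(h)}\circ K$, and matching the result against the naturality of $T'(H(\alpha))$, invoking functoriality of $H$ and $K$ and preservation of horizontal composition by the monoidal functors \eqref{eqn:horizontal_composition_2_functor}.

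Granting this, $(\cat{B}H,\eta)$ is a morphism in $\cat{W2Fun}(\cdot,\cat{Cat})$, and Theorem~\ref{thm:2-Functors_to_2-categories} produces the Lipschitz 2-functor $(\cat{B}H)_*:\cat{C}^T\to\cat{C}^{T'}$; by the construction in Lemma~\ref{lem:faithful_functor_2_functor_to_weighted_2_category} it acts on objects by $X\mapsto\eta_\star(X)=K(X)$, which proves the first assertion. Theorem~\ref{thm:stability} then gives $d_{\cat{C}^{T'},\mathrm{W}^{T'}}(K(X),K(Y)) \le d_{\cat{C}^T,\mathrm{W}^T}(X,Y)$, and identifying $d_{T,\mathrm{W}}=d_{\cat{C}^T,\mathrm{W}^T}$ and $d_{T',\mathrm{W}'}=d_{\cat{C}^{T'},\mathrm{W}^{T'}}$ via Proposition~\ref{prop:G_interleaving_2_functor_interleaving} together with Lemma~\ref{lem:2_functor_interleaving_equals_weighted_category_interleaving} yields the stated bound $d_{T',\mathrm{W}'}(K(X),K(Y)) \le d_{T,\mathrm{W}}(X,Y)$.
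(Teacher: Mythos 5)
Your overall strategy---delooping $(H,K)$ to a morphism $(\cat{B}H,\eta)$ in $\cat{W2Fun}(\cdot,\cat{Cat})$ with $\eta_\star = K$ and then invoking Theorems \ref{thm:2-Functors_to_2-categories} and \ref{thm:stability}---is a legitimate and in principle cleaner alternative to the paper's argument, which instead writes down the 2-functor $\Theta:\cat{C}^T \to \cat{C}^{T'}$ explicitly ($\Theta(X)=K(X)$, $\Theta(g,\phi)=(H(g),K(\phi))$, $\Theta(\widehat{\psi})=\widehat{K(\psi)}$) and verifies the 2-functor axioms and the Lipschitz property by hand before applying Lemma \ref{lem:2_functor_interleaving_equals_weighted_category_interleaving} and Theorem \ref{thm:stability}. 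The two routes manufacture the same 2-functor, and your identification of the 1-cell naturality square \eqref{diagm:1-comm} with the equivariance hypothesis $K\circ T_g = T'_{H(g)}\circ K$ is correct, as is the final chain of identifications of the various interleaving distances.

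The gap is in the step you yourself flag as substantive, and your proposed method for closing it does not work. The 2-cell condition \eqref{eqn:2morcomp} demands $K\big(T(\alpha)_X\big) = T'\big(H(\alpha)\big)_{K(X)}$ for every morphism $\alpha:g\to h$ of $\cat{G}$. Applying $K$ to the naturality squares of $T(\alpha)$ and rewriting via equivariance shows only that $K\bullet T(\alpha)$ and $T'(H(\alpha))\bullet K$ are both natural transformations $T'_{H(g)}\circ K \Rightarrow T'_{H(h)}\circ K$; two natural transformations with the same source and target need not coincide, and the definition of an equivariant Lipschitz functor pair constrains only the functors $T_g$, saying nothing about the 2-cells $T(\alpha)$. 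So the identity is an additional compatibility hypothesis on $(H,K)$, not a consequence of the stated ones, and no amount of ``matching naturality squares'' will produce it. (It does hold automatically in the applications of Proposition \ref{prop:sublevel_stability}, where the relevant hom-sets are at most singletons.) To be fair, the paper's own proof quietly needs a closely related fact---well-definedness of $\Theta(\widehat{\psi})=\widehat{K(\psi)}$ requires exhibiting some $\alpha'$ in $\cat{G}'_1(e',H(h))$ with $T'(\alpha')_{K(X)}=K(\psi)$---and dismisses it as similar to earlier verifications; your route needs the slightly stronger on-the-nose equality with $\alpha'=H(\alpha)$. Either way, your write-up must either add this 2-cell compatibility to the hypotheses or supply a genuine argument for it; the sketch as written does neither.
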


\begin{proof}
    We define a 2-functor $\Theta:\cat{C}^T \to \cat{C}^{T'}$ by first defining it on objects via the observation that $\cat{C}^T_0 = \cat{X}_0$, hence we may define $\Theta(X) = K(X)$. The definition of $\Theta$ on 1 and 2-morphisms requires more work, and follows below.
   
    Let $(g,\phi):X \to Y$ be a morphism of $\cat{C}^T$; that is, we have $\phi:X \to T_g(Y)$ in $\cat{X}$. Define $\Theta((g,\phi)):\Theta(X) \to \Theta(Y)$ by $\Theta((g,\phi)) = (H(g),K(\phi))$. This is well-defined: indeed, it is the case that $K(\phi)$ is a morphism from $\Theta(X) = K(X)$ to $T_{H(g)}(\Theta(Y)) = T_{H(g)}(K(Y))$, since, definitionally, $K(\phi):K(X) \to K(T_g(Y))$, and $K(T_g(Y)) = T_{H(g)}'(K(Y))$, by equivariance. Finally, we need to show that this preserves identities and compositions. Identity preservation follows immediately by the assumption that $H$ is a monoidal functor and $K$ is a functor, since $\Theta((e,1_X)) = (H(e),K(1_X)) = (e',1_{K(X)})$, where $e'$ denotes the identity object in $\cat{G}'$. To establish preservation of compositions, consider $(g,\phi):X \to Y$ and $(h,\psi):Y \to Z$. Then, using (monoidal) functoriality and equivariance, we have
    \begin{align*}
    \Theta((h,\psi) \circ (g,\phi)) &= \Theta((gh,T_g(\psi) \circ \phi)) = (H(gh),K(T_g(\psi) \circ \phi)) \\
    &= (H(g)H(h),K(T_g(\psi)) \circ K(\phi)) = (H(g)H(h),T_{H(g)}'(K(\psi)) \circ K(\phi)) \\
    &= (H(h),K(\psi)) \circ (H(g),K(\phi)).
    \end{align*}

    Now consider a 2-morphism in $\cat{C}^T_2((g,\phi),(h,\psi))$. There are two cases to consider: $(g,\phi) = (h,\psi)$ and the 2-morphism is of the form $1_{g,\phi}$, or $g = e$, $\phi = 1_X$, and the 2-morphism is of the form $\hat{\psi}$. In the former case, define $\Theta(1_{g,\phi}) = 1_{H(g),K(\psi)}$, and in the latter, define $\Theta(\hat{\psi}) = \widehat{K(\psi)}$. Verification that this is well-defined, and that it preserves identities and compositions is similar to the work done in the previous paragraph.
   
    It is obvious, by definition, that $\Theta$ satisfies the Lipschitz property \eqref{eqn:lipschitz_property}, so this completes the proof. The last statement of the proposition then follows from Lemma \ref{lem:2_functor_interleaving_equals_weighted_category_interleaving} and Theorem \ref{thm:stability}.
\end{proof}

\subsection{Sublevel Set Persistent Homology} A common application of category-theoretic metrics in TDA is to illustrate the stability under perturbations of various flavors of persistent homology; see, for example, the statement at the beginning of this section, from \cite{cohen2005stability}, or related results in~\cite{chazal2009proximity,bubenik2015metrics,de2018theory}. We extend these results to the setting of prosets with monoid actions that we considered in Section \ref{sec:interleaving_group_actions}. Our general approach is similar to those previously explored in the literature, so we give a somewhat terse exposition here; please see the above references for more details.

Let $(\set{P},\leq)$ be a proset with associated category $\cat{P}$. Suppose that $\set{P}$ is endowed with an action by a group $\set{G}$ with associated proset monoidal category $\cat{G}^\set{P}$ and proset action monoidal functor $T:\cat{G}^\set{P} \to \cat{End}(\cat{P})$ (Definitions \ref{def:proset_monoidal_category} and \ref{def:preordered_set_action_functor}). Finally, suppose that $\set{G}$ is endowed with a monoidal weight $\mathrm{W}$. 

Consider the (1-)category of functions from some fixed topological space $\set{X}$ into $\set{P}$, denoted $\cat{Map}(\set{X},\set{P})$---the objects of this category are functions $\phi:\set{X} \to \set{P}$, and there is a unique morphism from $\phi$ to $\psi$, denoted $\phi \leq \phi$, if $\phi(x) \leq \psi(x)$ for all $x \in \set{X}$. 

Let $\cat{Top}$ denote the category of topological spaces with continuous maps and let $\cat{Vec} = \cat{Vec}_\mathbb{k}$ denote the category of vector fields over some fixed category $\mathbb{k}$ with linear maps. We now define several additional monoidal functors; we leave it to the reader to check that these are well-defined.
\begin{enumerate}
    \item Let $T':\cat{G}^\set{P} \to \cat{End}(\cat{Map}(\set{X},\set{P}))$ be defined as follows:
    \begin{enumerate}
        \item for $g \in \set{G}$, $\phi:\set{X} \to \set{P}$, $T'_g(\phi):\set{X} \to \set{P}$ is the map $T'_g(\phi)(x) = g(\phi(x))$---in other words, $T'_g(\phi) = g \circ \phi$;
        \item for a morphism $\alpha_{g,h}$ in $\cat{G}^\set{P}$ (i.e., $g(p) \leq h(p)$ for all $p \in \set{P}$), $T'(\alpha_{g,h}):T'_g \Rightarrow T'_h$ has component at $\phi$ given by $T'(\alpha_{g,h})_\phi = g \circ \phi  \leq h \circ \phi $.
    \end{enumerate}
    \item Let $\big(\cat{G}^\set{P}\big)^{\mathrm{op}}$ denote the opposite category of $\cat{G}^\set{P}$; there is a unique morphism denoted $\overline{\alpha}_{g,h}:g \to h$ if and only if $g(p) \geq h(p)$ for all $p \in \set{P}$. Let $T'':\big(\cat{G}^\set{P}\big)^{\mathrm{op}} \to \cat{End}(\cat{Fun}(\cat{P},\cat{Top}))$ be defined as follows:
    \begin{enumerate}
        \item for $g \in \set{G}$ and a functor $F:\cat{P} \to \cat{Top}$, $T''_g(F)$ is the functor defined by $T''_g(F)(p) = F(g^{-1}(p))$ and $T''_g(F)(p \leq q) = F(g^{-1}(p) \leq g^{-1}(q))$; for a natural transformation $\beta:F \Rightarrow G$ of functors $F,G:\cat{P} \to \cat{Top}$, $T''_g(\beta)$ has component at $p$ given by $T''_g(\beta)_p = \beta_{g^{-1}(p)}$, the component of $\beta$ at $g^{-1}(p)$;
        \item for a morphism $\overline{\alpha}_{g,h}$ in $\big(\cat{G}^\set{P}\big)^{\mathrm{op}}$, $T''(\overline{\alpha}_{g,h}):T''_g \Rightarrow T''_h$ has component at $F:\cat{P} \to \cat{Top}$ given by the natural transformation
        \[
        T''(\overline{\alpha}_{g,h})_F:T_g''(F) \Rightarrow T_h''(F)
        \]
        with component at $p \in \set{P}$ given by
        \[
        \big(T''(\overline{\alpha}_{g,h})_F\big)_p = F(g^{-1}(p) \leq h^{-1}(p))
        \]
        (note that $g(p) \geq h(p)$ for all $p \in \set{P}$ implies $g^{-1}(p) \leq h^{-1}(p)$).
    \end{enumerate}
    \item Let $T''':\big(\cat{G}^\set{P}\big)^{\mathrm{op}} \to \cat{End}(\cat{Fun}(\cat{P},\cat{Vec}))$ be defined similarly to $T''$ (observe that the definition of $T''$ didn't use the assumption that the target in the functor category was $\cat{Top}$ in any meaningful way, so that the definition extends in general to a monoidal functor $\big(\cat{G}^\set{P}\big)^{\mathrm{op}} \to \cat{End}(\cat{Fun}(\cat{P},\cat{C}))$ for any category $\cat{C}$). 
\end{enumerate}

We then define two functors which link the constructions above.

\begin{enumerate}
    \item The \emph{sublevel set filtration functor} $S:\cat{Map}(\set{X},\set{P}) \to \cat{Fun}(\cat{P},\cat{Top})$ is defined as follows.
    \begin{enumerate}
        \item For $\phi \in \cat{Map}(\set{X},\set{P})_0$, $S(\phi):\cat{P} \to \cat{Top}$ is the functor with 
        \begin{enumerate}
            \item for $p \in \set{P} = \cat{P}_0$, $S(\phi)(p) = \phi^{-1}(\{r \in \set{P} \mid r \leq p\}) \subset \set{X}$, endowed with the subspace topology, so that $S(\set{X},f)(p) \in \cat{Top}_0$;
            \item For the morphism $p \leq q$  in $\cat{P}_1(p,q)$, 
            \[
            S(\phi)(p \leq q): \phi^{-1}(\{r \in \set{P} \mid q \leq p\}) \hookrightarrow \phi^{-1}(\{q \in \set{P} \mid r \leq q\})
            \]
            is the inclusion map.
        \end{enumerate}
        \item For a morphism $\phi \leq \psi$ in $\cat{Map}(\set{X},\set{P})$, $S(\phi \leq \psi):S(\phi) \Rightarrow S(\psi)$ is the natural transformation with component at $p \in \set{P}$ given by inclusion
        \[
        S(\phi \leq \psi)_p:\phi^{-1}(\{r \in \set{P} \mid r \leq p\}) \to \psi^{-1}(\{r \in \set{P} \mid r \leq p\});
        \]
        this is well-defined due to the assumption that $\phi \leq \psi$ and it is easy to see that the requisite diagram commutes in order for this to define a natural transformation.
    \end{enumerate}
    \item For any choice of degree $n \in \mathbb{Z}_{\geq 0}$, let $H_n:\cat{Top} \to \cat{Vec}$ denote the \emph{degree-$n$} homology functor over the field $\mathbb{k}$. This induces the \emph{degree-$n$ persistent homology functor} $PH_n:\cat{Fun}(\cat{P},\cat{Top}) \to \cat{Fun}(\cat{P},\cat{Vec})$:
    \begin{enumerate}
        \item for a functor $F:\cat{P} \to \cat{Top}$, we define $PH_n(F) = H_n \circ F$
        \item for a natural transformation $\beta:F \Rightarrow G$, $H_n(\beta):H_n(F) \Rightarrow H_n(G)$ has component at $p \in P$ given by $H_n(\beta)_p = H_n(\beta_p)$.
    \end{enumerate}
\end{enumerate}

We now apply these constructions to generalize the classical TDA stability result \eqref{eqn:bottleneck_stability}. Let $1_{\big(\cat{G}^\set{P}\big)^{\mathrm{op}}}$ denote the identity functor on $\big(\cat{G}^\set{P}\big)^{\mathrm{op}}$ and let $O:\cat{G}^\set{P} \to \big(\cat{G}^\set{P}\big)^{\mathrm{op}}$ be the \emph{opposite functor} which is the identity on objects and which takes  $\alpha_{g,h}$ to $\overline{\alpha}_{h,g}$. 

\begin{prop}\label{prop:sublevel_stability}
    With the notation above, we have the following statements.
    \begin{enumerate}
        \item $(O,S)$ is an equivariant Lipschitz functor pair for $T'$ and $T''$.
        \item $(1_{\big(\cat{G}^\set{P}\big)^{\mathrm{op}}},PH_n)$ is an equivariant Lipschitz functor pair for $T''$ and $T'''$, for any degree $n$. 
        \item Interleaving distance for sublevel set persistent homology is stable, in the sense that for any functions $\phi,\psi:\set{X} \to \set{P}$, 
        \[
        d_{T'''}(PH_n \circ S(\phi),PH_n \circ S(\psi)) \leq d_{T''}(S(\phi),S(\psi)) \leq d_{T'}(\phi,\psi).
        \]
        \item The interleaving distance on $\cat{Map}(\set{X},\set{P})$ can be expressed as
        \[
        d_{T'}(\phi,\psi) = \inf \{\max\{\weight(g),\weight(h)\} \mid \phi(x) \leq g(\psi(x)), \; \psi(x) \leq h(\phi(x)) \; \forall \, x \in \set{X}\}.
        \]
        \item When $\set{P} = \R$ with $\mathrm{W}(p) = |p|$, the above results recover classical stability of sublevel set persistent homology  \eqref{eqn:bottleneck_stability}.
    \end{enumerate}
\end{prop}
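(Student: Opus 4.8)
The plan is to handle the five parts in the order (1), (2), (3), (5), using (4) to supply the explicit formula that (5) needs; parts (1) and (2) carry the real content, (3) is a formal consequence of Proposition \ref{prop:stability_actegory}, and (5) is a specialization. To begin with (1), that $(O,S)$ is an equivariant Lipschitz functor pair from $T'$ to $T''$: since $O$ is the identity on objects and the monoidal weight on both $\cat{G}^\set{P}$ and $\big(\cat{G}^\set{P}\big)^{\mathrm{op}}$ is the one transported from $\set{G}$, we have $\weight(O(g))=\weight(g)$, so $O$ is weight-preserving and hence Lipschitz, and $O$ is monoidal directly from $\alpha_{g,h}\mapsto\overline{\alpha}_{h,g}$ together with $\alpha_{f,g}\otimes\alpha_{h,k}=\alpha_{fh,gk}$. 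The crux is equivariance $S\circ T'_g=T''_{O(g)}\circ S=T''_g\circ S$. Evaluating both sides on $\phi$ at $p\in\set{P}$ gives $S(T'_g\phi)(p)=\{x\mid g(\phi(x))\le p\}$ on the one hand and $T''_g(S\phi)(p)=S(\phi)(g^{-1}(p))=\{x\mid \phi(x)\le g^{-1}(p)\}$ on the other; these coincide because each $g\in\set{G}$ acts as an order-isomorphism of $\set{P}$ (its inverse is also monotone), so $g(a)\le p\iff a\le g^{-1}(p)$. The identifications on inclusion maps and on morphisms $\phi\le\psi$ then follow formally.

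For (2), the identity monoidal functor $1_{\big(\cat{G}^\set{P}\big)^{\mathrm{op}}}$ is trivially weight-preserving, so only equivariance $PH_n\circ T''_g=T'''_g\circ PH_n$ needs checking. Both $T''$ and $T'''$ act by reindexing $F\mapsto F(g^{-1}(-))$, while $PH_n$ acts by post-composition with $H_n$; evaluating at $p$ yields $H_n\big(F(g^{-1}(p))\big)$ on each side, so the identity is the commutation of post-composition with reindexing, i.e.\ functoriality of $H_n$. With (1) and (2) established, (3) is immediate: Proposition \ref{prop:stability_actegory} applied to $(O,S)$ gives $d_{T''}(S(\phi),S(\psi))\le d_{T'}(\phi,\psi)$, and applied to $(1,PH_n)$ with inputs $S(\phi),S(\psi)$ gives $d_{T'''}(PH_n\circ S(\phi),PH_n\circ S(\psi))\le d_{T''}(S(\phi),S(\psi))$; composing produces the displayed chain.

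For (4) I would unpack $d_{T'}$ directly. Because $\cat{Map}(\set{X},\set{P})$ is thin, every interleaving square commutes automatically, so a $(g,h)$-interleaving of $\phi,\psi$ (Definition \ref{defn:interleaving_actegory}) reduces to the existence of its constituent morphisms: the morphisms $\phi\to T'_g(\psi)$ and $\psi\to T'_h(\phi)$ exist iff $\phi(x)\le g(\psi(x))$ and $\psi(x)\le h(\phi(x))$ for all $x$, whereas the $2$-morphisms $\alpha\colon e\Rightarrow gh$ and $\beta\colon e\Rightarrow hg$ in $\cat{G}^\set{P}$ exist iff $p\le gh(p)$ and $p\le hg(p)$ for all $p\in\set{P}$. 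The displayed formula retains only the first pair of conditions, so the real content of (4) is that the second pair does not raise the infimum. Applying the monotone maps $h$ and $g$ to the two pointwise inequalities yields $\phi(x)\le gh(\phi(x))$ and $\psi(x)\le hg(\psi(x))$, so the required inequalities already hold along the images of $\phi$ and $\psi$; reconciling these image-level inequalities with the all-$p$ conditions needed for $\alpha,\beta$ to exist in $\cat{G}^\set{P}$ is the step I expect to demand the most care, and it is the main obstacle of the whole proposition. It is vacuous precisely when the relevant products act as translations, which is the setting of (5).

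Finally, for (5) I would specialize to $\set{P}=\R$, with $\set{G}=\R$ acting by translation $g(p)=p+g$ and $\weight(g)=|g|$. The formula of (4) becomes $d_{T'}(\phi,\psi)=\inf\{\max\{|g|,|h|\}\mid \phi(x)-\psi(x)\le g,\ \psi(x)-\phi(x)\le h\ \forall x\}$; writing $a=\sup_x(\phi(x)-\psi(x))$ and $b=\sup_x(\psi(x)-\phi(x))$, the minimum of $\max\{|g|,|h|\}$ subject to $g\ge a$, $h\ge b$ is $\max\{a,b\}=\|\phi-\psi\|_{L^\infty(\set{X})}$. At this optimum $g,h\ge 0$, so $g+h\ge 0$ and the conditions $p\le gh(p)=p+g+h$ (and likewise for $hg$) hold automatically, confirming the obstacle above is inert here. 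Since for $\set{P}=\R$ with $\weight=|\cdot|$ the distance $d_{T'''}$ on $PH_n\circ S(\phi)=D(\phi)$ is the classical one-parameter interleaving distance, which coincides with bottleneck distance $d_\mathrm{B}$ under mild tameness (Example \ref{ex:multiplication_group}), the chain of (3) specializes to $d_\mathrm{B}(D(\phi),D(\psi))\le\|\phi-\psi\|_{L^\infty(\set{X})}$, recovering \eqref{eqn:bottleneck_stability}.
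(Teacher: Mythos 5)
Your parts (1)--(3) are correct and essentially identical to the paper's proof: the same object-level computations showing $S\circ T'_g = T''_g\circ S$ and $PH_n\circ T''_g = T'''_g\circ PH_n$ (the paper implicitly uses, as you make explicit, that each $g\in\set{G}$ is an order-isomorphism of $\set{P}$, so $g(a)\leq p \iff a \leq g^{-1}(p)$), the observation that the weights match on the nose, and the same appeal to Proposition \ref{prop:stability_actegory}. Your closing argument that $d_{T'}(\phi,\psi)=\|\phi-\psi\|_{L^\infty(\set{X})}$ in the translation setting, including the check that feasibility forces $g+h\geq 0$ so the $2$-morphism conditions are inert, is also correct (note it uses $\set{X}\neq\emptyset$).

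There are, however, two genuine gaps. First, you do not prove (4): you correctly unpack Definition \ref{defn:interleaving_actegory} and note that a $(g,h)$-interleaving in $\cat{Map}(\set{X},\set{P})$ requires, beyond the pointwise conditions $\phi(x)\leq g(\psi(x))$ and $\psi(x)\leq h(\phi(x))$, the existence of $\alpha\colon e\to gh$ and $\beta\colon e\to hg$ in $\cat{G}^\set{P}$, i.e.\ $p\leq gh(p)$ and $p\leq hg(p)$ for \emph{all} $p\in\set{P}$, not just for $p$ in the images of $\phi$ and $\psi$ --- and then you leave the reconciliation as an admitted ``obstacle.'' An acknowledged open step is not a proof; you would need either to show the extra conditions never raise the infimum (which you rightly suspect may fail for general group actions), or to read the formula in (4) with those conditions included. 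For what it is worth, the paper's own proof of (4) is the single sentence ``follows by unpacking the definitions,'' so your unpacking is actually more careful than the paper's and exposes a real imprecision there; as stated, the displayed right-hand side is in general only a lower bound for $d_{T'}$, with equality guaranteed in the translation case used in (5). Second, in (5) you simply assert that $d_{T'''}$ on $PH_n\circ S(\phi)$ is the classical one-parameter interleaving distance, whereas the paper devotes the bulk of its point-(5) argument to proving exactly this: one first reduces to symmetric $(u,u)$-interleavings by an argument analogous to Proposition \ref{prop:equivalence_to_flow}, and then observes that the existence of $\overline{\alpha}_{0,2u}$ in $\big(\cat{G}^\set{P}\big)^{\mathrm{op}}$ forces $u\leq 0$ --- because $T'''$ reindexes by $g^{-1}$ through the opposite category --- so that after the change of variables $t=-u\geq 0$ the interleaving diagrams unravel to \eqref{eqn:basic_interleaving_distance_1} and \eqref{eqn:basic_interleaving_distance_2}. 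This sign bookkeeping is precisely where an unchecked identification could go wrong, so it must be written out rather than cited as known.
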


\begin{proof}
    To prove point (1), we need to show that for all $g \in \set{G}$, $S \circ T'_g = T''_{g} \circ S$. First, let $\phi \in \cat{Map}(\set{X},\set{P})_0$. For any $p \in \set{P}$, 
    \[
    S(T'_g(\phi))(p) = S(g \circ \phi)(p) = (g \circ \phi)^{-1} \big(\{r \in \set{P} \mid r \leq p\}\big) = \{x \in \set{X} \mid g(\phi(x)) \leq p\}
    \]
    and
    \begin{align*}
    T''_{g} (S(\phi))(p) = S(\phi)(g^{-1}(p)) &= \phi^{-1}\big(\{r \in \set{P} \mid r \leq g^{-1}(p)\}\big) \\
    &= \{x \in \set{X} \mid \phi(x) \leq g^{-1}(p)\} = \{x \in \set{X} \mid g(\phi(x)) \leq p\},
    \end{align*}
    so that $S \circ T'_g = T''_{g} \circ S$ at the object level. A similar computation shows that the equivariance property holds at the morphism level. Since $\cat{G}^\set{P}_0 = (\big(\cat{G}^\set{P}\big)^{\mathrm{op}})_0 = \set{G}$ share the same monoidal weight, the Lipschitz property is obvious, so this proves point (1). 

    Point (2) also only requires us to show the equivariance property. For a functor $F:\cat{P} \to \cat{Top}$ and point $p \in \set{P}$, we have
    \[
    PH_n(T''_g(F))(p) = H_n\big(T''_g(F)(p)\big) = H_n\big(F(g^{-1}(p))\big)
    \]
    and
    \[
    T'''_g(PH_n(F))(p) = PH_n(F)(g^{-1}(p)) = H_n\big(F(g^{-1}(p))\big).
    \]
    Equivariance at the level of morphisms follows similarly.

    Point (3) is a direct corollary of points (1) and (2) and Proposition \ref{prop:stability_actegory}. Point (4) follows by unpacking the definitions.

    Finally, we prove point (5). By point (4), the interleaving distance on $\cat{Map}(\set{X},\R)$ is given by 
    \[
    d_{T'}(\phi,\psi) = \inf \{ \max \{|t|,|s|\} \mid \phi(x) \leq \psi(x) + t, \, \psi(x) \leq \phi(x) + s \, \forall \, x \in \set{X}\},
    \]
    and this is equal to $\|\phi - \psi\|_{L^\infty(\set{X})}$ by an elementary argument. The objects $PH_n \circ S(\phi)$ and $PH_n \circ S(\psi)$ are standard (one parameter) persistent homology modules, and we claim that $d_{T'''}$ is the standard interleaving distance on persistence modules in $\cat{Fun}(\cat{R},\cat{Vec})$ (i.e., the one described in the introduction in equations \eqref{eqn:basic_interleaving_distance_1} and \eqref{eqn:basic_interleaving_distance_2}), where $\cat{R}$ is the poset category associated to $\R$. Indeed, an argument similar to the one used in the proof of Proposition \ref{prop:equivalence_to_flow} shows that it suffices to consider $(u,u)$-interleavings (rather than general $(u,v)$-interleavings). Such an interleaving of persistence modules $M,N:\cat{R} \to \cat{Vec}$ consists of natural transformations $\phi:M \to T'''_u N$ and $\psi:N \to T'''_u M$ such that the diagrams 
\begin{equation}\label{eqn:new_interleaving_distance}\begin{tikzcd}
	M && {T'''_{2u}M} \\
	& {T'''_u N}
	\arrow[""{name=0, anchor=center, inner sep=0}, "\phi"',Rightarrow, from=1-1, to=2-2]
	\arrow[""{name=1, anchor=center, inner sep=0}, "{T'''(\overline{\alpha}_{0,2u})_M}", Rightarrow,from=1-1, to=1-3]
	\arrow["{T'''_u(\psi)}"',Rightarrow, from=2-2, to=1-3]
\end{tikzcd}
\mbox{ and }
\begin{tikzcd}
	N && {T'''_{2u}N} \\
	& {T'''_u M}
	\arrow[""{name=0, anchor=center, inner sep=0}, "\psi"',Rightarrow, from=1-1, to=2-2]
	\arrow[""{name=1, anchor=center, inner sep=0}, "{T'''(\overline{\alpha}_{0,2u})_N}", Rightarrow,from=1-1, to=1-3]
	\arrow["{T'''_u(\phi)}"',Rightarrow, from=2-2, to=1-3]
\end{tikzcd}\end{equation}
commute. The existence of $\overline{\alpha}_{0,2u}$ means that $u \leq 0$; let us make the change of variables $t = -u \geq 0$. The fact that $\phi$ and $\psi$ are natural transformations then imply the existence of commutative diagrams \eqref{eqn:basic_interleaving_distance_1} for all $r \leq s$. Moreover, after unraveling the definitions, we see that the diagrams \eqref{eqn:new_interleaving_distance} imply the existence of diagrams \eqref{eqn:basic_interleaving_distance_2} for all $s$. Putting this together, we get that standard interleaving distance between the sublevel set persistent homology modules is bounded above by $\|f-g\|_{L^\infty(\set{X})}$, which yields an upper bound on bottleneck distance under reasonable tameness conditions, by standard results of TDA~\cite{chazal2009proximity}.
\end{proof}

\section{Discussion}\label{sec:discussion}

This paper establishes the fundamental theory for some novel generalizations of interleaving distance, suggesting several directions for future research; we briefly discuss some of them here. First, we note that this work concerns the theory of interleaving distance, but was inspired by ideas coming from applied fields such as TDA and statistical shape analysis. A major question is whether any of the metrics considered here lead to computationally feasible methods. For example, can interleaving distances based on diffeomorphism group actions on 1-parameter persistence modules be computed using barcode decompositions and numerical methods from shape analysis? One of our initial motivations for developing this framework was to relate these distances to $p$-Wasserstein distances between barcodes, with a view toward a categorical proof of $p$-Wasserstein stability (a version of which was recently proved via combinatorial and algebraic methods in \cite{skraba2020wasserstein}). At this point, however, this remains an open question. There are also many possibilities for theoretical extensions of these ideas. One obvious direction is to generalize our constructions and results to lax or bicategory settings. Another natural idea is to extend the 2-weighted 2-category interleaving distances to higher categories. These generalizations should be motivated by natural applications, which we believe could be useful for studying (bounded, coherent) derived categories of schemes and their higher-categorical counterparts. Work of Neeman \cite{neeman2020metrics} highlights the utility of metrics in relation to categorical constructions in the triangulated setting.

\section*{Acknowledgements}

This research was partially supported by NSF grants  DMS 2107808 and DMS 2324962.

\bibliographystyle{plain}
  \bibliography{biblio}

\begin{thebibliography}{10}

\bibitem{abraham2012manifolds}
Ralph Abraham, Jerrold~E Marsden, and Tudor Ratiu.
\newblock {\em Manifolds, tensor analysis, and applications}, volume~75.
\newblock Springer Science \& Business Media, 2012.

\bibitem{bauer2017numerical}
Martin Bauer, Martins Bruveris, Philipp Harms, and Jakob M{\o}ller-Andersen.
\newblock A numerical framework for {S}obolev metrics on the space of curves.
\newblock {\em SIAM Journal on Imaging Sciences}, 10(1):47--73, 2017.

\bibitem{bauer2014overview}
Martin Bauer, Martins Bruveris, and Peter~W Michor.
\newblock Overview of the geometries of shape spaces and diffeomorphism groups.
\newblock {\em Journal of Mathematical Imaging and Vision}, 50:60--97, 2014.

\bibitem{bauer2022elastic}
Martin Bauer, Nicolas Charon, Eric Klassen, Sebastian Kurtek, Tom Needham, and
  Thomas Pierron.
\newblock Elastic metrics on spaces of euclidean curves: Theory and algorithms.
\newblock {\em arXiv preprint arXiv:2209.09862}, 2022.

\bibitem{bauer2015induced}
Ulrich Bauer and Michael Lesnick.
\newblock Induced matchings and the algebraic stability of persistence
  barcodes.
\newblock {\em Journal of Computational Geometry}, 6(2):162--191, 2015.

\bibitem{beg2005computing}
M~Faisal Beg, Michael~I Miller, Alain Trouv{\'e}, and Laurent Younes.
\newblock Computing large deformation metric mappings via geodesic flows of
  diffeomorphisms.
\newblock {\em International journal of computer vision}, 61:139--157, 2005.

\bibitem{benabou1967introduction}
Jean B{\'e}nabou, R~Davis, A~Dold, J~Isbell, S~MacLane, U~Oberst, J~E Roos, and
  Jean B{\'e}nabou.
\newblock Introduction to bicategories.
\newblock In {\em Reports of the midwest category seminar}, pages 1--77.
  Springer, 1967.

\bibitem{bobrowski2023universal}
Omer Bobrowski and Primoz Skraba.
\newblock A universal null-distribution for topological data analysis.
\newblock {\em Scientific Reports}, 13(1):12274, 2023.

\bibitem{botnan2020rectangle}
Magnus~Bakke Botnan, Vadim Lebovici, and Steve Oudot.
\newblock On rectangle-decomposable 2-parameter persistence modules.
\newblock In {\em 36th International Symposium on Computational Geometry, SoCG
  2020}, pages 1--16. Schloss Dagstuhl-Leibniz-Zentrum fur Informatik GmbH,
  Dagstuhl Publishing, 2020.

\bibitem{botnan2022introduction}
Magnus~Bakke Botnan and Michael Lesnick.
\newblock An introduction to multiparameter persistence.
\newblock {\em arXiv preprint arXiv:2203.14289}, 2022.

\bibitem{brown1987groups}
Ronald Brown.
\newblock From groups to groupoids: a brief survey.
\newblock {\em Bull. London Math. Soc}, 19(2):113--134, 1987.

\bibitem{bruveris2014geodesic}
Martins Bruveris, Peter~W Michor, and David Mumford.
\newblock Geodesic completeness for {S}obolev metrics on the space of immersed
  plane curves.
\newblock In {\em Forum of Mathematics, Sigma}, volume~2, page e19. Cambridge
  University Press, 2014.

\bibitem{bubenik2015metrics}
Peter Bubenik, Vin De~Silva, and Jonathan Scott.
\newblock Metrics for generalized persistence modules.
\newblock {\em Foundations of Computational Mathematics}, 15(6):1501--1531,
  2015.

\bibitem{bubenik2017interleaving}
Peter Bubenik, Vin De~Silva, and Jonathan Scott.
\newblock Interleaving and {G}romov-{H}ausdorff distance.
\newblock {\em arXiv preprint arXiv:1707.06288}, 2017.

\bibitem{bubenik2014categorification}
Peter Bubenik and Jonathan~A Scott.
\newblock Categorification of persistent homology.
\newblock {\em Discrete \& Computational Geometry}, 51(3):600--627, 2014.

\bibitem{burago2022course}
Dmitri Burago, Yuri Burago, and Sergei Ivanov.
\newblock {\em A course in metric geometry}, volume~33.
\newblock American Mathematical Society, 2022.

\bibitem{cagliari2001size}
Francesca Cagliari, Massimo Ferri, and Paola Pozzi.
\newblock Size functions from a categorical viewpoint.
\newblock {\em Acta Applicandae Mathematica}, 67:225--235, 2001.

\bibitem{carlsson2014topological}
Gunnar Carlsson.
\newblock Topological pattern recognition for point cloud data.
\newblock {\em Acta Numerica}, 23:289--368, 2014.

\bibitem{chacholski2020metrics}
Wojciech Chach{\'o}lski and Henri Riihimaki.
\newblock Metrics and stabilization in one parameter persistence.
\newblock {\em SIAM Journal on Applied Algebra and Geometry}, 4(1):69--98,
  2020.

\bibitem{chazal2009proximity}
Fr{\'e}d{\'e}ric Chazal, David Cohen-Steiner, Marc Glisse, Leonidas~J Guibas,
  and Steve~Y Oudot.
\newblock Proximity of persistence modules and their diagrams.
\newblock In {\em Proceedings of the twenty-fifth annual symposium on
  Computational geometry}, pages 237--246, 2009.

\bibitem{cohen2005stability}
David Cohen-Steiner, Herbert Edelsbrunner, and John Harer.
\newblock Stability of persistence diagrams.
\newblock In {\em Proceedings of the twenty-first annual symposium on
  Computational geometry}, pages 263--271, 2005.

\bibitem{de2018theory}
V~De~Silva, E~Munch, and A~Stefanou.
\newblock Theory of interleavings on categories with a flow.
\newblock {\em Theory and Applications of Categories}, 33(21):583--607, 2018.

\bibitem{dey2018computing}
Tamal~K Dey and Cheng Xin.
\newblock Computing bottleneck distance for 2-d interval decomposable modules.
\newblock In {\em 34th International Symposium on Computational Geometry (SoCG
  2018)}. Schloss Dagstuhl-Leibniz-Zentrum fuer Informatik, 2018.

\bibitem{edelsbrunner2002topological}
Edelsbrunner, Letscher, and Zomorodian.
\newblock Topological persistence and simplification.
\newblock {\em Discrete \& Computational Geometry}, 28:511--533, 2002.

\bibitem{grenander1996elements}
Ulf Grenander.
\newblock {\em Elements of pattern theory}.
\newblock JHU Press, 1996.

\bibitem{hamilton1982inverse}
Richard~S Hamilton.
\newblock The inverse function theorem of {N}ash and {M}oser.
\newblock {\em Bulletin of the American Mathematical Society}, 7(1):65--222,
  1982.

\bibitem{3236071}
Moishe~Kohan (https://math.stackexchange.com/users/84907/moishe kohan).
\newblock Relationship between distances on homogeneous spaces and their {L}ie
  groups.
\newblock Mathematics Stack Exchange.
\newblock URL:https://math.stackexchange.com/q/3236071 (version: 2019-05-22).

\bibitem{janelidze2001note}
George Janelidze and Gregory~M Kelly.
\newblock A note on actions of a monoidal category.
\newblock {\em Theory Appl. Categ}, 9(61-91):02, 2001.

\bibitem{johnson20212}
Niles Johnson and Donald Yau.
\newblock {\em 2-dimensional categories}.
\newblock Oxford University Press, USA, 2021.

\bibitem{kelly2006doctrinal}
G~Max Kelly.
\newblock Doctrinal adjunction.
\newblock In {\em Category Seminar: Proceedings Sydney Category Theory Seminar
  1972/1973}, pages 257--280. Springer, 2006.

\bibitem{kriegl1997convenient}
Andreas Kriegl and Peter~W Michor.
\newblock {\em The convenient setting of global analysis}, volume~53.
\newblock American Mathematical Soc., 1997.

\bibitem{lawvere1973metric}
F~William Lawvere.
\newblock Metric spaces, generalized logic, and closed categories.
\newblock {\em Rendiconti del seminario mat{\'e}matico e fisico di Milano},
  43:135--166, 1973.

\bibitem{lesnick2015theory}
Michael Lesnick.
\newblock The theory of the interleaving distance on multidimensional
  persistence modules.
\newblock {\em Foundations of Computational Mathematics}, 15(3):613--650, 2015.

\bibitem{Lurie2009}
Jacob Lurie.
\newblock {\em Higher topos theory}.
\newblock Princeton University Press, 2009.

\bibitem{mccrudden2000categories}
Paddy McCrudden.
\newblock Categories of representations of coalgebroids.
\newblock {\em Advances in Mathematics}, 154(2):299--332, 2000.

\bibitem{memoli2012some}
Facundo M{\'e}moli.
\newblock Some properties of {G}romov--{H}ausdorff distances.
\newblock {\em Discrete \& Computational Geometry}, 48:416--440, 2012.

\bibitem{michor2007overview}
Peter~W Michor and David Mumford.
\newblock An overview of the {R}iemannian metrics on spaces of curves using the
  hamiltonian approach.
\newblock {\em Applied and Computational Harmonic Analysis}, 23(1):74--113,
  2007.

\bibitem{needham2020simplifying}
Tom Needham and Sebastian Kurtek.
\newblock Simplifying transforms for general elastic metrics on the space of
  plane curves.
\newblock {\em SIAM journal on imaging sciences}, 13(1):445--473, 2020.

\bibitem{neeman2020metrics}
Amnon Neeman.
\newblock Metrics on triangulated categories.
\newblock {\em Journal of Pure and Applied Algebra}, 224(4):106206, 2020.

\bibitem{pareigis1977non}
Bodo Pareigis.
\newblock Non-additive ring and module theory. ii. c-categories, c-functors and
  c-morphisms.
\newblock {\em Publ. Math. Debrecen}, 24(3-4):351--361, 1977.

\bibitem{scoccola2020locally}
Luis~N Scoccola.
\newblock {\em Locally persistent categories and metric properties of
  interleaving distances}.
\newblock PhD thesis, The University of Western Ontario (Canada), 2020.

\bibitem{skraba2020wasserstein}
Primoz Skraba and Katharine Turner.
\newblock Wasserstein stability for persistence diagrams.
\newblock {\em arXiv preprint arXiv:2006.16824}, 2020.

\bibitem{srivastava2010shape}
Anuj Srivastava, Eric Klassen, Shantanu~H Joshi, and Ian~H Jermyn.
\newblock Shape analysis of elastic curves in euclidean spaces.
\newblock {\em IEEE transactions on pattern analysis and machine intelligence},
  33(7):1415--1428, 2010.

\bibitem{srivastava2016functional}
Anuj Srivastava and Eric~P Klassen.
\newblock {\em Functional and shape data analysis}, volume~1.
\newblock Springer, 2016.

\bibitem{stefanou2018dynamics}
Anastasios Stefanou.
\newblock {\em Dynamics on categories and applications}.
\newblock State University of New York at Albany, 2018.

\end{thebibliography}

\end{document}